\documentclass[12pt,oneside]{amsart}

\pagestyle{plain}

\usepackage{geometry}                
\usepackage{graphicx}
\usepackage{amssymb}
\usepackage{color}
\usepackage{bbm, dsfont}
\usepackage[hidelinks]{hyperref}

\hypersetup{
	colorlinks=false,
	pdfborder={0 0 0},
	pdfborderstyle={/S/U/W 0},
}

\numberwithin{equation}{section}

\usepackage{verbatim}

\usepackage[dvipsnames]{xcolor}


\usepackage{ulem}
\newtheorem{theorem}{Theorem}[section]
\newtheorem{lemma}[theorem]{Lemma}
\newtheorem{corollary}[theorem]{Corollary}

\theoremstyle{definition}

\theoremstyle{remark}
\newtheorem{remark}[theorem]{Remark}

\newtheorem*{remark*}{Note}


\numberwithin{equation}{section}

\usepackage{amsmath}
\usepackage{mathrsfs}

\usepackage{verbatim}
\usepackage{amsmath,amssymb,amsthm}           
\usepackage{amssymb}            
\usepackage{amsfonts}            
\usepackage{mathrsfs}          
\usepackage{amsthm}
\usepackage{algorithm}  
\usepackage{algorithmicx}  
\usepackage{algpseudocode}  
\usepackage{mathtools}
\usepackage{commath}
\usepackage{physics}
\usepackage{bm}
\usepackage{graphicx}

\usepackage{float}
\usepackage{listings}
\usepackage{subfigure}
\usepackage{multirow}
\usepackage{color}
\usepackage{bbm}
\usepackage[export]{adjustbox}
\geometry{a4paper} 
\usepackage{enumerate}
\usepackage{bbm}

\usepackage{amsmath}
\usepackage{mathrsfs}

\newcommand{\RNum}[1]{\uppercase\expandafter{\romannumeral #1\relax}}

\setcounter{tocdepth}{1}

\newcommand{\specificthanks}[1]{\@fnsymbol{#1}}

\DeclareFontFamily{OML}{rsfs}{\skewchar\font'177}
\DeclareFontShape{OML}{rsfs}{m}{n}{ <5> <6> rsfs5 <7> <8> <9>
	rsfs7 <10> <10.95> <12> <14.4> <17.28> <20.74> <24.88> rsfs10 }{}
\DeclareMathAlphabet{\mathfs}{OML}{rsfs}{m}{n}

\newcounter{cnstcnt}
\newcommand{\cl}{%
	\refstepcounter{cnstcnt}%
	\ensuremath{c_{\thecnstcnt}}}
\newcommand{\cref}[1]{\ensuremath{c_{\ref*{#1}}}}

\newcounter{newcnstcnt}
\newcommand{\Cl}{%
	\refstepcounter{newcnstcnt}%
	\ensuremath{C_{\thenewcnstcnt}}}
\newcommand{\Cref}[1]{\ensuremath{C_{\ref*{#1}}}}



\DeclareFontFamily{U}{mathx}{}
\DeclareFontShape{U}{mathx}{m}{n}{<-> mathx10}{}
\DeclareSymbolFont{mathx}{U}{mathx}{m}{n}
\DeclareMathAccent{\widehat}{0}{mathx}{"70}
\DeclareMathAccent{\widecheck}{0}{mathx}{"71}

\begin{document}

	\title{Heterochromatic two-arm probabilities for metric graph Gaussian free fields
}


	
	
		\author{Zhenhao Cai$^1$}
		\address[Zhenhao Cai]{Faculty of Mathematics and Computer Science, Weizmann Institute of Science}
		\email{zhenhao.cai@weizmann.ac.il}
		\thanks{$^1$Faculty of Mathematics and Computer Science, Weizmann Institute of Science}

		\author{Jian Ding$^2$}
		\address[Jian Ding]{New Cornerstone Science Laboratory, School of Mathematical Sciences, Peking University}
		\email{dingjian@math.pku.edu.cn}
		\thanks{$^2$New Cornerstone Science Laboratory, School of Mathematical Sciences, Peking University}
	
	\maketitle
	%
	%
	
	 	\begin{abstract}

	 	For the Gaussian free field on the metric graph of $\mathbb{Z}^d$ ($d\ge 3$), we consider the heterochromatic two-arm probability, i.e., the probability that two points $v$ and $v'$ are contained in distinct clusters of opposite signs with diameters at least $N$. For all $d\ge 3$ except the critical dimension $d_c=6$, we prove that this probability is asymptotically proportional to $N^{-[(\frac{d}{2}+1)\land 4]}$. Furthermore, we prove that conditioned on this two-arm event, the volume growth of each involved cluster is comparable to that of a typical (unconditioned) cluster; precisely, each cluster has a volume of order $M^{(\frac{d}{2}+1)\land 4}$ within a box of size $M$.



  	 	\end{abstract}

\section{Introduction}\label{section_intro}

In this paper, we study the sign clusters of Gaussian free fields (GFF) on metric graphs, with a particular focus on understanding the interactions between them. For clarity, we start with a review of some basic definitions. We denote the edge set of the $d$-dimensional integer lattice $\mathbb{Z}^d$ by $\mathbb{L}^d:=\{\{x,y\}: x,y\in \mathbb{Z}^d,\|x-y\|=1\}$, where $\|\cdot\|$ represents the Euclidean norm. Throughout this paper, we assume that $d\ge 3$. For each $e = \{x, y\} \in \mathbb{L}^d$, consider a compact interval $I_e$ of length $d$, with endpoints identified with $x$ and $y$ respectively. The metric graph of $\mathbb{Z}^d$, denoted by $\widetilde{\mathbb{Z}}^d$, is defined as the union of all these intervals. The GFF on $\widetilde{\mathbb{Z}}^d$, which we denote by $\{\widetilde{\phi}_v\}_{v\in \widetilde{\mathbb{Z}}^d}$, was introduced in \cite{lupu2016loop} as a natural extension of the discrete GFF on $\mathbb{Z}^d$. To be precise, $\{\widetilde{\phi}_v\}_{v\in \widetilde{\mathbb{Z}}^d}$ can be constructed in the following two steps.  
\begin{enumerate}
	\item[(i)]    Let $\widetilde{\phi}_x$ for $x\in \mathbb{Z}^d$ be a discrete GFF on $\mathbb{Z}^d$, i.e., a family of mean-zero Gaussian random variables whose covariance is given by 
	\begin{equation}
		\mathbb{E}\big[ \widetilde{\phi}_{x}\widetilde{\phi}_{y} \big] = G(x,y), \ \ \forall x,y\in \mathbb{Z}^d. 
	\end{equation}
	Here $G(x,y)$ represents the Green's function on $\mathbb{Z}^d$---the expected number of times that a simple random walk on $\mathbb{Z}^d$ starting from $x$ visits $y$.

	\item[(ii)]  For any $e = \{x,y\} \in \mathbb{L}^d$, $\{\widetilde{\phi}_v\}_{v\in I_e}$ is distributed as a Brownian bridge with boundary conditions $\widetilde{\phi}_x$ at $x$ and $\widetilde{\phi}_y$ at $y$, generated by a one-dimensional Brownian motion with variance $2$ at time $1$.

\end{enumerate}
In fact, $\{\widetilde{\phi}_v\}_{v\in \widetilde{\mathbb{Z}}^d}$ can be equivalently considered as a mean-zero Gaussian field on $\widetilde{\mathbb{Z}}^d$ with covariance $\mathbb{E}\big[ \widetilde{\phi}_{v}\widetilde{\phi}_{w} \big] = \widetilde{G}(v,w)$ for all $v,w\in \widetilde{\mathbb{Z}}^d$. Here $\widetilde{G}(\cdot,\cdot )$ is the Green's function for the canonical Brownian motion on $\widetilde{\mathbb{Z}}^d$ (the precise definition will be provided in Section \ref{section3_notation}). Moreover, $\widetilde{G}(\cdot, \cdot )$ can be expressed as a linear interpolation of the Green's function on the lattice $\mathbb{Z}^d$: suppose that $v \in I_{\{x_1,x_2\}}$ and $w\in I_{\{y_1,y_2\}}$ (where $\{x_1,x_2\},\{y_1,y_2\}\in \mathbb{L}^d$), then one has
\begin{equation}\label{def_Green}
	\begin{split}
		\widetilde{G}(v,w)= d^{-2}\sum\nolimits_{ j,k \in \{1,2\} }  |v-x_{3-j}|\cdot |w-y_{3-k}| \cdot   G(x_j,y_k),
	\end{split}
\end{equation}
where $|u-u'|$ denotes the graph distance between $u$ and $u'$ on $\widetilde{\mathbb{Z}}^d$, with respect to the Lebesgue measure. In particular, $\widetilde{G}(v,w)=G(v,w)$ when $v,w\in \mathbb{Z}^d$.

Many properties of the GFF level-sets $\widetilde{E}^{\ge h}:=\{v\in \widetilde{\mathbb{Z}}^d: \widetilde{\phi}_v\ge h \}$ for $h\in \mathbb{R}$ have been established in previous works. In particular, it was proved in \cite{lupu2016loop} that $\widetilde{E}^{\ge h}$ percolates (i.e., contains an infinite connected component) if and only if $h<0$. As in most percolation models, the connecting probabilities at the critical level $\widetilde{h}_*=0$ have attracted major research interest. To be precise, for any disjoint subsets $A_1,A_2\subset \widetilde{\mathbb{Z}}^d$, we denote by $A_1\xleftrightarrow{\ge 0} A_2$ the event that there exists a path in $\widetilde{E}^{\ge 0}$ connecting $A_1$ and $A_2$ (we will omit the braces when $A_i$ is a singleton $\{v\}$ with $v\in \widetilde{\mathbb{Z}}^d$). \cite[Proposition 5.2]{lupu2016loop} showed that the two-point function satisfies
 \begin{equation}\label{two-point1}
 	\mathbb{P}\big(v\xleftrightarrow{\ge 0} w  \big)= \pi^{-1}\arcsin\Big( \tfrac{\widetilde{G}(v,w)}{\sqrt{\widetilde{G}(v,v)\widetilde{G}(w,w)}} \Big)\asymp (|v-w|+1)^{2-d} 
 \end{equation}
 for all $v,w\in \widetilde{\mathbb{Z}}^d$. Here $f\asymp g$ means that for the functions $f$ and $g$, there exist constants $C>c>0$ depending only on $d$ such that $cg\le f\le Cg$. Based on this result, the one-arm probability $\theta_d(N):=\mathbb{P}(\bm{0}\xleftrightarrow{\ge 0} \partial B(N))$ has been estimated rigorously in a series of works \cite{ding2020percolation, drewitz2023critical, cai2024high, drewitz2023arm, drewitz2024critical, cai2024one}, where $\bm{0}:=(0,0,...,0)$ denotes the origin of $\mathbb{Z}^d$, $B(N):=[-N,N]^d\cap \mathbb{Z}^d$ is the box of side length $\lfloor 2N \rfloor$ in $\mathbb{Z}^d$ centered at $\bm{0}$, and $\partial A:=\{x\in A: \exists y\in \mathbb{Z}^d\setminus A\ \text{such that}\ \{x,y\}\in \mathbb{L}^d\}$ represents the boundary of $A\subset \mathbb{Z}^d$. Specifically, it has been proved that 
 \begin{align}
	&\text{when}\ 3\le d<6,\ \ \ \ \ \ \ \ \  \theta_d(N) \asymp N^{-\frac{d}{2}+1};\label{one_arm_low} \\
	&\text{when}\ d=6,\ \ \ \ \  N^{-2}\lesssim \theta_6(N) \lesssim N^{-2+\varsigma(N)}, \  \text{where}\ \varsigma(N):= \tfrac{\ln\ln(N)}{[\ln(N)]^{1/2}}\ll 1; \label{one_arm_6} \\
	&\text{when}\ d>6,\ \ \ \ \ \ \ \ \ \ \ \ \ \ \ \theta_d(N) \asymp N^{-2}.\label{one_arm_high}
\end{align}
 Here $f\lesssim g$ means that $f\le Cg$ for some constant $C>0$ depending only on $d$. The exact order of $\theta_6(N)$ remains open, and it has been conjectured to be $N^{-2}[\ln(N)]^\delta$ for some $\delta>0$ (see \cite[Remark 1.5]{cai2024one}).

 (\textbf{P.S.} Due to the diverging disparity between the current upper and lower bounds on $\theta_6(N)$, the analysis at the critical dimension $d_c=6$ involves additional complexities compared to other dimensions. Nevertheless, the arguments for $d\neq 6$ can usually be extended to $d=6$, albeit yielding weaker estimates with error terms similar to that in (\ref{one_arm_6}). Accordingly, we assume $d \neq 6$ throughout the remainder of this paper, unless stated otherwise.)

  As a natural extension of the one-arm probability $\theta_d(N)$, the crossing probability $\rho_d(n,N):=\mathbb{P}(B(n)\xleftrightarrow{\ge 0}\partial B(N))$ (where $N> n\ge 1$) was proved in \cite{cai2024one} to satisfy
   \begin{align}
	&\text{when}\ 3\le d<6,\  \rho_d(n,N) \asymp \big(n/N\big)^{\frac{d}{2}-1};\label{crossing_low} \\
	&\text{when}\ d>6, \ \ \ \ \   \ \    \rho_d(n,N) \asymp  (n^{d-4}N^{-2})\land 1. \label{crossing_high}
\end{align}
    For the connecting probabilities between general sets, numerous regularity properties have been established in \cite{cai2024quasi}. Among these, the property known as quasi-multiplicativity played a crucial role in \cite{cai2024incipient}, which constructed four types of incipient infinite clusters (IIC), including the initial version introduced by \cite{kesten1986incipient}---the positive cluster containing $\bm{0}$ under the limiting measure 
    \begin{equation}
    	\mathbb{P}_{\mathrm{IIC}}(\cdot):=\lim_{N\to \infty}\mathbb{P}\big(\cdot \mid \bm{0}\xleftrightarrow{\ge 0} \partial B(N) \big),
    \end{equation}
    and established the equivalence between them. See also \cite{ganguly2024ant, ganguly2024critical} for properties of the random walk and the chemical distance on the IIC in the high-dimensional case (i.e., $d\ge 7$). Recently, a powerful switching identity was introduced in \cite{werner2025switching}, which, among many applications, provides an elegant and precise description of the IIC. Furthermore, building on the analysis of connecting probabilities, the volume growth of clusters in the critical level-set $\widetilde{E}^{\ge 0}$ has also been extensively studied. To be precise, for any $v\in \widetilde{\mathbb{Z}}^d$, we denote by $\mathcal{C}^{+}_v:=\{w\in \widetilde{\mathbb{Z}}^d: w\xleftrightarrow{\ge 0 }v \}$ the cluster of $\widetilde{E}^{\ge 0}$ containing $v$. The decay rate of the cluster volume is defined as $\nu_d(M):=\mathbb{P}\big(\mathrm{vol}(\mathcal{C}^{+}_{\bm{0}})\ge M \big)$ for $M\ge 1$, where $\mathrm{vol}(A)$ (for $A\subset \widetilde{\mathbb{Z}}^d$) denotes the cardinality of $A\cap \mathbb{Z}^d$. Through the collective efforts in \cite{cai2024high, cai2024quasi, drewitz2024cluster}, it has been established that 
       \begin{align}
	&\text{when}\ 3\le d<6,\  \nu_d(M) \asymp M^{-\frac{d-2}{d+2}};\label{volume_low} \\
	&\text{when}\ d>6, \ \ \ \ \   \ \    \nu_d(M) \asymp  M^{-\frac{1}{2}}. \label{volume_high}
\end{align}
In addition, the volume growth rate in large clusters is also well understood. Formally, for any $x\in \widetilde{\mathbb{Z}}^d$ and $M\ge 1$, let $\mathcal{V}^{+}_x(M):=\mathrm{vol}(\mathcal{C}^{+}_{x}\cap B_x(M))$ denote the volume of the cluster $\mathcal{C}^{+}_x$ inside the box $B_x(M):=x+B(M)$. It was proved in \cite[Theorem 1.2]{cai2024incipient} that for some constant $C(d)>0$, the infimum 
\begin{equation}\label{result_typical_volume}
	 \inf_{M\ge 1,N\ge CM} \mathbb{P}\big( \lambda^{-1}M^{(\frac{d}{2}+1)\boxdot 4} \le  \mathcal{V}^{+}_{\bm{0}}(M) \le \lambda M^{(\frac{d}{2}+1)\boxdot 4}  \mid \bm{0} \xleftrightarrow{\ge 0} \partial B(N) \big)
\end{equation} 
converges to $1$ as $\lambda \to \infty$. Here we employ ``$\boxdot$'' to denote the operation $f\boxdot g(d):= f(d)\cdot \mathbbm{1}_{d\le 6}+ g(d)\cdot \mathbbm{1}_{d>6}$ for any functions $f$ and $g$ that depend on $d$. Note that the property in (\ref{result_typical_volume}) automatically applies to the IIC. Similar results in terms of the largest cluster in a given box were obtained in \cite{drewitz2024cluster}.

  As mentioned above, all existing results on the sign clusters of the GFF $\widetilde{\phi}_{\cdot}$ (i.e., maximal subgraphs where the GFF values have the same sign) focus on properties of a single cluster. As a complement, in this paper we aim to study the coexistence of two distinct large sign clusters originating from two given points. To this end, the following two questions arise naturally: \begin{enumerate}

 	\item[\textbf{Q1}:] How likely is this coexistence?

 	\item[\textbf{Q2}:] How does this coexistence alter the geometric properties of the two involved clusters? Do they still behave similarly to a single cluster?

 \end{enumerate} 
Next, we explore these two questions in Sections \ref{subsetion_two_arm} and \ref{subsection_volume} respectively.

\subsection{Heterochromatic connecting probabilities}\label{subsetion_two_arm}

 We formulate \textbf{Q1} by introducing the notion of heterochromatic two-arm probability. Precisely, for any subsets $\{A_i\}_{1\le i\le 4}$ of $\widetilde{\mathbb{Z}}^d$, we define the event 
 \begin{equation}\label{113}
 	\mathsf{H}^{A_1,A_2}_{A_3,A_4}:= \big\{A_1\xleftrightarrow{\ge 0} A_2,A_3\xleftrightarrow{\le 0} A_4  \big\}, 
 \end{equation} 
 where $A_3\xleftrightarrow{\le 0} A_4$ represents the event that a certain path in the negative cluster $\widetilde{E}^{\le 0}:=\{v\in \widetilde{\mathbb{Z}}^d: \widetilde{\phi}_v\le 0 \}$ connects $A_3$ and $A_4$. Here heterochromaticity is imposed to capture the distinction between the two involved sign clusters. In particular, for any $N\ge 1$ and $v,v'\in \widetilde{B}(N)$ (where $\widetilde{B}(N):=\cup_{e\in \mathbb{L}^d:I_e \cap (-N,N)^d\neq \emptyset}I_e$ denotes the box in $\widetilde{\mathbb{Z}}^d$ of radius $N$ centered at $\bm{0}$), we define the heterochromatic two-arm event 
\begin{equation}
	\mathsf{H}_{v'}^{v}(N):= \mathsf{H}^{v,\partial B(N)}_{v',\partial B(N)}.
\end{equation}


  Our first result establishes the decay rate of $\mathbb{P}\big(\mathsf{H}_{v'}^{v}(N) \big)$ as $N\to \infty$, and its exact orders in different regimes, including the case when $|v-v'|$ is large, so that the macroscopic graph structure of $\widetilde{\mathbb{Z}}^d$ plays a role, and the case when $|v-v'|$ is small, with stronger influence from the local, one-dimensional geometry of $\widetilde{\mathbb{Z}}^d$.

  
 \begin{theorem}\label{thm1}
	For any $d\ge 3$ with $d\neq 6$, there exist constants $\Cl\label{const_thm1_1}(d), \cl\label{const_thm1_2}(d)>0$ such that the following hold for all $N\ge \Cref{const_thm1_1}$ and $v,v'\in \widetilde{B}(\cref{const_thm1_2}N)$: 
	 	 \begin{align}
	 	 &\text{when}\ \chi=:|v-v'|\le 1, \  \mathbb{P}\big(\mathsf{H}_{v'}^{v}(N) \big)\asymp \chi^{\frac{3}{2}}N^{-[(\frac{d}{2}+1)\boxdot 4]};   \label{thm1_small_n}\\
	&\text{when}\ \chi\ge 1,\   \ \ \ \  \  \ \ \ \ \ \ \   \   	\mathbb{P}\big(\mathsf{H}_{v'}^{v}(N) \big)\asymp \chi^{(3-\frac{d}{2})\boxdot 0}N^{-[(\frac{d}{2}+1)\boxdot 4]}.   \label{thm1_large_n} 
	\end{align}
\end{theorem}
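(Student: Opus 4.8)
The plan is to treat the regimes $\chi\le 1$ and $\chi\ge 1$ separately; write $\alpha:=(\frac{d}{2}+1)\boxdot 4$ for the target exponent. For $\chi\le 1$ I would localize to the one-dimensional structure of $\widetilde{\mathbb Z}^d$ along the edge carrying $v$ and $v'$: condition on the restriction of $\widetilde\phi$ to $\widetilde{\mathbb Z}^d\setminus I_e^{\circ}$, where $I_e$ is that edge (or on the complement of the two edges if $v,v'$ straddle a vertex), so that by the Markov property the conditional law of $\widetilde\phi$ on $I_e$ is a Brownian bridge between the now-fixed endpoint values $\widetilde\phi_x,\widetilde\phi_y$. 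On $\mathsf H^v_{v'}(N)$ the positive excursion through $v$ and the negative excursion through $v'$ must leave $I_e$ through its two \emph{distinct} endpoints, which forces the whole zero set of the bridge on $I_e$ to lie in the length-$\chi$ sub-interval between $v$ and $v'$; a direct one-dimensional computation shows this has probability of order $\chi^{3/2}$ --- a ``ballot'' factor of order $\chi^{1/2}$ for the bridge being of order $\chi^{1/2}$ at each of $v,v'$, against a Gaussian transit cost of order $\chi^{-1/2}$ for crossing the gap --- with a prefactor continuous and strictly positive in $(\widetilde\phi_x,\widetilde\phi_y)$ on $\{\widetilde\phi_x>0>\widetilde\phi_y\}$. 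Taking expectations over the exterior field, the residual factor is comparable to $\mathbb P(\mathsf H^{x}_{y}(N))$ with $|x-y|=d$ a constant, so (\ref{thm1_small_n}) follows from the $\chi\asymp 1$ case of (\ref{thm1_large_n}).

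For $\chi\ge 1$ the goal is the two-sided bound $\mathbb P(\mathsf H^v_{v'}(N))\asymp\theta_d(\chi)^2\,(\chi/N)^{\alpha}$, which by (\ref{one_arm_low}) and (\ref{one_arm_high}) is exactly the order in (\ref{thm1_large_n}). The heuristic is a decomposition at the separation scale $\chi$: up to scale $\chi$ the positive arm at $v$ and the negative arm at $v'$ reach $\partial B(\chi)$ as two essentially independent one-arm events, contributing $\theta_d(\chi)^2$; from scale $\chi$ to $N$ the two disjoint opposite-coloured clusters must both cross every intermediate dyadic annulus, which costs a constant factor $2^{-\alpha}$ per scale. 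For $d>6$ this per-scale factor equals the independent value $(\theta_d(2r)/\theta_d(r))^2$, so the two arms are asymptotically independent and $\mathbb P(\mathsf H^v_{v'}(N))\asymp\theta_d(N)^2$; here the upper bound $\mathbb P\le\theta_d(N)^2$ is immediate from the FKG (Harris) inequality ($\{v\xleftrightarrow{\ge 0}\partial B(N)\}$ is increasing in $\widetilde\phi$, $\{v'\xleftrightarrow{\le 0}\partial B(N)\}$ decreasing), and the matching lower bound comes from a second-moment construction realizing the two arms in almost disjoint regions, using that the random-walk-like backbones of the two clusters are mutually avoiding for $d>4$. For $d<6$ the per-scale factor $2^{-\alpha}$ is strictly below the independent value $(\theta_d(2r)/\theta_d(r))^2=2^{-(d-2)}$ --- the opposite-coloured clusters repel --- and $\mathbb P(\mathsf H^v_{v'}(N))\asymp\theta_d(N)^2(\chi/N)^{3-d/2}$. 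This last estimate needs genuine work both ways: for the upper bound I would condition on $\mathcal C^{+}_v$ together with the field on it (outside $\mathcal C^{+}_v$, $\widetilde\phi$ is then a GFF with zero boundary values on $\partial\mathcal C^{+}_v$) and estimate, scale by scale, the conditional probability that $v'$ is joined to $\partial B(N)$ in the non-positive level-set of this field, tracking how the revealed cluster reduces the conditional Green's function; for the lower bound I would build the event by an explicit multi-scale construction, at each dyadic scale planting a positive crossing and a disjoint negative crossing of the annulus side by side and gluing them to the configuration of the previous scale, with FKG and second-moment estimates controlling the product over scales. All the gluing across scales would be organised through the quasi-multiplicativity of connecting events from \cite{cai2024quasi}, now applied to two colours simultaneously and to the disjointness constraint between them.

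A crucial auxiliary ingredient --- and the content of the second assertion of the abstract --- is the concentration of the conditional cluster volumes: on $\mathsf H^v_{v'}(N)$, both $\mathrm{vol}(\mathcal C^{+}_v\cap B(M))$ and $\mathrm{vol}(\mathcal C^{-}_{v'}\cap B(M))$ are of order $M^{\alpha}$ (for $M\ge 1$ and $N$ at least a large constant times $M$) with probability bounded away from zero; this is what lets one pass from first- and second-moment computations on these volumes to estimates on $\mathbb P(\mathsf H^v_{v'}(N))$ itself, and I would prove it by the second-moment method in the spirit of \cite{cai2024high,cai2024incipient} using the two-colour quasi-multiplicativity. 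In my view the principal obstacle is precisely this two-colour analysis for $d<6$: with no BK-type inequality available for the GFF, the disjointness of $\mathcal C^{+}_v$ and $\mathcal C^{-}_{v'}$ must be dealt with directly via conditioning and the Markov property, and the subtle point is to show that the per-scale costs compound to exactly $(\chi/N)^{\alpha}$ rather than to the ``independent'' value $(\chi/N)^{d-2}$ --- which is exactly where the interaction between the spatial dimension $d$ and the fractal dimension of the clusters, hence the criticality of $d_c=6$, genuinely enters.
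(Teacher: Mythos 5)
Your target exponents are correct, and the conceptual decomposition $\mathbb{P}(\mathsf{H}^v_{v'}(N))\asymp\theta_d(\chi)^2(\chi/N)^{(\frac{d}{2}+1)\boxdot 4}$ for $\chi\ge 1$ matches what the paper actually proves. The FKG upper bound $\mathbb{P}(\mathsf{H}^v_{v'}(N))\le\theta_d(N)^2$ for $d>6$ is also a legitimate shortcut (the paper reaches the same bound via BKR on the loop-soup side). Two remarks are in order, however.

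For the small-$\chi$ reduction, your one-dimensional Brownian bridge computation is a genuinely different route from the paper's (the paper instead iterates an inequality built from the conditional two-point function and an exponential non-connection bound for nearby points, Lemmas \ref{lemma_conditional_two_points} and \ref{lemma_connect_close_points}, and handles the lower bound by an explicit multi-step construction, Lemma \ref{additional_lemma_two_arm}). Your computation does give $\chi^{3/2}$, but the step ``taking expectations over the exterior field, the residual factor is comparable to $\mathbb{P}(\mathsf{H}^x_y(N))$'' hides a uniformity problem: the $\chi^{3/2}$ prefactor depends on $(\widetilde{\phi}_x,\widetilde{\phi}_y)$, it vanishes as $\widetilde{\phi}_x\downarrow 0$ or $\widetilde{\phi}_y\uparrow 0$, and to close the lower bound you need to know that, conditionally on the two-arm event, $(|\widetilde{\phi}_x|,|\widetilde{\phi}_y|)$ is bounded above and below with uniformly positive probability. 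For a single arm this is \cite[Lemma~3.3]{cai2024one}; for two opposite-coloured arms it is exactly the kind of statement the paper extracts from the rigidity lemma (Lemma \ref{lemma_rigidity}) and proves in Lemma \ref{additional_lemma_two_arm}. So your route, while plausible, does not actually reduce the problem below the level of difficulty of the $\chi\asymp 1$ case: it folds the hard concentration step into the ``residual factor.''

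The genuine gap is in the $\chi\ge 1$ regime for $d<6$. You write that the gluing across scales would be organised through ``the quasi-multiplicativity of connecting events from \cite{cai2024quasi}, now applied to two colours simultaneously and to the disjointness constraint between them,'' but \cite{cai2024quasi} only treats a single colour; the two-colour version (Lemma \ref{lemma_point_to_boundary} in this paper, together with the stability statement Lemma \ref{lemma_roots}) is precisely the new technical content, and you correctly flag it as the principal obstacle without offering a way over it. The tool the paper introduces to get around the absence of a BK-type inequality is the rigidity lemma (Lemma \ref{lemma_rigidity}): it asserts that, conditionally on the coexistence of two clusters, an increasing event for one cluster is at most as likely as when conditioned on that cluster alone (up to a small error). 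Its proof uses Werner's switching identity (Lemma \ref{lemma_switching}) to rewrite the cluster of $v$ conditioned on $\{v\leftrightarrow w\}$ as a union of independent Poissonian components, after which the disjointness constraint becomes a decreasing functional and FKG applies. The separation lemma (Lemma \ref{lemma_separation}) and the two-colour quasi-multiplicativity then follow. None of this appears in your proposal, so the ``genuine work both ways'' for $d<6$ remains open; in particular the scale-by-scale tracking of the conditional Green's function that you sketch for the upper bound would, on its own, give you the ``independent'' exponent $d-2$ per scale rather than $\frac d2+1$ unless you can quantify the repulsion between the two clusters, which again requires the switching identity.
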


  	The isomorphism theorem \cite[Theorem 1.1]{lupu2016loop} shows that the sign clusters of the GFF $\widetilde{\phi}_\cdot$ have the same distribution as the loop clusters of the loop soup $\widetilde{\mathcal{L}}_{1/2}$. Here $\widetilde{\mathcal{L}}_{1/2}$ is a Poisson point process consisting of loops---paths that start and end at the same point (the precise definition of $\widetilde{\mathcal{L}}_{1/2}$ will be provided in Section \ref{section3_notation}), and the term ``loop clusters'' refers to the connected components formed by loops in $\widetilde{\mathcal{L}}_{1/2}$. This isomorphism theorem allows us to switch between the two models, which greatly facilitates the proofs. Moreover, all estimates in Theorem \ref{thm1} also apply to their analogues for the loop clusters of $\widetilde{\mathcal{L}}_{1/2}$.

 \begin{remark}[distance between macroscopic clusters]\label{remark1.2}
 	 Theorem \ref{thm1} implies that the expected number of edges within a box of size $N$, whose endpoints are contained in distinct sign clusters with diameter at least $N$, is of order $N^{(\frac{d}{2}-1)\boxdot (d-4)}$. This suggests that two macroscopic sign clusters may nearly touch at many locations, resulting in a vanishing graph distance between them. In the companion paper \cite{inpreparation_second_monent}, we confirm this property through the second moment method. Notably, this contrasts sharply with the two-dimensional case, where the typical distance between macroscopic clusters is comparable to their diameters (see \cite{lupu2019convergence, sheffield2012conformal}). 
 \end{remark}

 \begin{remark}[pivotal edge]

 The phenomenon noted in Remark \ref{remark1.2} suggests that in a sign cluster connecting two distant sets, there are numerous pivotal edges---those whose removal breaks the connectivity between the sets. From the perspective of the loop cluster, this property indicates that removing all loops of length $1$ might significantly reduce the connecting probability. In a companion paper \cite{inpreparation}, we verify this reduction through a multi-scale analysis, and establish that the dimension of clusters in a Brownian loop soup of intensity $\frac{1}{2}$ on $\mathbb{R}^3$ is strictly less than that of loop clusters on $\widetilde{\mathbb{Z}}^3$. This shows that already in dimension $3$, microscopic randomness (on top of the macroscopic loops) is involved in the construction of loop soup cluster, which corrects the scenario proposed in \cite[Section 3]{werner2021clusters}.

 
  \end{remark}

 As a byproduct of proving Theorem \ref{thm1}, we also obtain the exact order of the heterochromatic four-point function, which is interesting in its own right.

  \begin{theorem}\label{theorem_four_point}
  	For any $d\ge 3$ with $d\neq 6$, there exist constants $\Cl\label{const_four_point_1}(d),\cl\label{const_four_point_2}(d)>0$ such that  the following hold for all $N\ge \Cref{const_four_point_1}$, $v,v'\in \widetilde{B}(\cref{const_four_point_2}N)$, and $w,w'\in \widetilde{B}(10N)\setminus \widetilde{B}(N)$ with $|w-w'|\ge N$: 
  	 	 \begin{align}
	 	 &\text{when}\ \chi=:|v-v'|\le 1, \  \mathbb{P}\big(\mathsf{H}^{v,w}_{v',w'} \big)\asymp \chi^{\frac{3}{2}}N^{-[(\frac{3d}{2}-1)\boxdot (2d-4)]};   \label{thm1_small_n_four_point}\\
	&\text{when}\ \chi \ge 1,\   \ \ \ \  \  \ \ \ \ \ \ \   \   	\mathbb{P}\big(\mathsf{H}^{v,w}_{v',w'} \big)\asymp \chi^{(3-\frac{d}{2})\boxdot 0}N^{-[(\frac{3d}{2}-1)\boxdot (2d-4)]}.   \label{thm1_large_n_four_point} 
	\end{align}
  \end{theorem}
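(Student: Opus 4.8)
I aim to reduce Theorem~\ref{theorem_four_point} to Theorem~\ref{thm1}: pinning each of the two opposite-sign macroscopic clusters to a prescribed point at distance $\asymp N$ should be a purely ``global'' operation, costing independently for the two clusters a factor comparable to the conditional probability that a macroscopic cluster reaches a given such point. Quantitatively, the identity to establish is
\[
\mathbb{P}\big(\mathsf{H}^{v,w}_{v',w'}\big)\;\asymp\;\mathbb{P}\big(\mathsf{H}^{v}_{v'}(N)\big)\cdot\frac{\mathbb{P}\big(\bm{0}\xleftrightarrow{\ge 0}w\big)}{\theta_d(N)}\cdot\frac{\mathbb{P}\big(\bm{0}\xleftrightarrow{\le 0}w'\big)}{\theta_d(N)} .
\]
Since $|w|,|w'|,|w-w'|\asymp N$, the last two fractions are each $\asymp N^{2-d}/\theta_d(N)\asymp N^{(1-\frac d2)\boxdot(4-d)}$ by \eqref{two-point1} and \eqref{one_arm_low}--\eqref{one_arm_high}; substituting the value of $\mathbb{P}(\mathsf{H}^{v}_{v'}(N))$ from Theorem~\ref{thm1}, a direct computation shows the right-hand side equals $\chi^{\frac32}N^{-[(\frac{3d}{2}-1)\boxdot(2d-4)]}$ for $\chi\le 1$ and $\chi^{(3-\frac d2)\boxdot 0}N^{-[(\frac{3d}{2}-1)\boxdot(2d-4)]}$ for $\chi\ge 1$, as claimed. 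In particular the entire $\chi$-dependence is inherited from $\mathbb{P}(\mathsf{H}^{v}_{v'}(N))$, so it remains to prove the displayed $\chi$-free factorization, in both directions.

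\textbf{Upper bound.} Choose $\cref{const_four_point_2}$ small enough that $\widetilde{B}(\cref{const_four_point_2}N)\subseteq\widetilde{B}(N/4)$. As $w,w'\notin\widetilde{B}(N)$ and $\partial B(N/2)$ separates $\widetilde{B}(N/2)$ from its complement in $\widetilde{\mathbb{Z}}^d$, every $\ge 0$-path from $v$ to $w$ and every $\le 0$-path from $v'$ to $w'$ meets $\partial B(N/2)$; hence $\mathsf{H}^{v,w}_{v',w'}\subseteq\mathsf{H}^{v}_{v'}(N/2)$, and $\mathbb{P}(\mathsf{H}^{v}_{v'}(N/2))\asymp\mathbb{P}(\mathsf{H}^{v}_{v'}(N))$ by Theorem~\ref{thm1}. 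It thus suffices to show that, conditionally on $\mathsf{H}^{v}_{v'}(N/2)$, the probability that $\mathcal{C}^{+}_{v}$ moreover reaches $w$ and $\mathcal{C}^{-}_{v'}$ moreover reaches $w'$ is $\lesssim\big(\mathbb{P}(\bm{0}\xleftrightarrow{\ge 0}w)/\theta_d(N)\big)^2$. For this I would condition further on $\mathcal{C}^{+}_{v}\cap B(N/2)$, $\mathcal{C}^{-}_{v'}\cap B(N/2)$, and $\widetilde{\phi}$ restricted to $B(N/2)$: the two remaining connections depend only on $\widetilde{\phi}$ outside $B(N/2)$ given its boundary values, and are respectively increasing and decreasing, hence decouple by the conditional FKG inequality; the connection estimates and quasi-multiplicativity of \cite{cai2024quasi} then bound each of the two conditional probabilities by $\lesssim\mathbb{P}(\bm{0}\xleftrightarrow{\ge 0}w)/\theta_d(N)$. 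The delicate point is controlling the dependence of those conditional probabilities on the random (and possibly large) boundary traces of the two clusters on $\partial B(N/2)$ and on the values of $\widetilde{\phi}$ there --- precisely what the harmonic-measure and capacity estimates of \cite{cai2024quasi} (the inputs behind \eqref{result_typical_volume}) are built for.

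\textbf{Lower bound.} Here I would construct the configuration explicitly in the loop soup $\widetilde{\mathcal{L}}_{1/2}$ furnished by the isomorphism of \cite{lupu2016loop}, gluing via the FKG inequality for loop configurations and quasi-multiplicativity. By the lower-bound construction behind Theorem~\ref{thm1}, with probability $\asymp\mathbb{P}(\mathsf{H}^{v}_{v'}(N))$ there is a $+$-cluster through $v$ and a $-$-cluster through $v'$ that reach $\partial B(N/10)$ \emph{robustly}, i.e.\ each crossing a fixed good annulus just inside $\partial B(N/10)$ in a manner to which a further loop chain can attach. Conditionally on the loops inside $B(N/10)$, one then prescribes a loop chain extending the $+$-cluster from $\partial B(N/10)$ to $w$ and another extending the $-$-cluster to $w'$; since $d\ge 3$, $|w-w'|\ge N$ and $|w|,|w'|\asymp N$, these two chains can be routed through disjoint regions of $\widetilde{\mathbb{Z}}^d\setminus B(N/10)$, so the corresponding events are conditionally independent, each of probability $\asymp\mathbb{P}(\bm{0}\xleftrightarrow{\ge 0}w)/\theta_d(N)$ by single-cluster quasi-multiplicativity. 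Multiplying the three asymptotically independent contributions gives the matching lower bound. A possibly shorter route to the whole statement would be to substitute the relevant one- and two-cluster connectivities into the switching identity of \cite{werner2025switching}.

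\textbf{Main obstacle.} The crux is that the available quasi-multiplicativity and decoupling tools are \emph{monochromatic}, whereas near $v,v'$ the gluing must be performed in the presence of two clusters of \emph{opposite} sign running within distance $\chi$ of each other; controlling this interaction --- keeping the two colours from merging while still letting each cross robustly out to scale $N$, and extracting the local factor $\chi^{3/2}$ from the one-dimensional Brownian-bridge geometry of $\widetilde{\mathbb{Z}}^d$ near $v,v'$ --- is exactly the hard part of Theorem~\ref{thm1}, and the factorization above is arranged precisely so that all of this two-colour bookkeeping stays confined to the factor $\mathbb{P}(\mathsf{H}^{v}_{v'}(N))$. (When $d>6$ and $\chi\le 1$, the $N$-exponent already coincides with the trivial FKG upper bound $\mathbb{P}(\bm{0}\xleftrightarrow{\ge 0}w)\mathbb{P}(\bm{0}\xleftrightarrow{\le 0}w')\asymp N^{4-2d}$, so there it is only the factor $\chi^{3/2}$ that requires work, and it too comes from this Brownian-bridge analysis.)
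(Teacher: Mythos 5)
Your plan hinges on the ``$\chi$-free factorization''
\[
\mathbb{P}\big(\mathsf{H}^{v,w}_{v',w'}\big)\asymp \mathbb{P}\big(\mathsf{H}^{v}_{v'}(N)\big)\cdot N^{(2-d)\boxdot(8-2d)},
\]
from which Theorem~\ref{theorem_four_point} would indeed follow from Theorem~\ref{thm1} by the arithmetic you carried out. This factorization is not a throwaway reduction: it is exactly (the $\mathrm{out}$ case of) Lemma~\ref{lemma_point_to_boundary} in the paper, whose proof occupies a substantial part of Section~\ref{section_rigidity_lemma}. Your sketches fall well short of that proof. For the upper bound, conditional FKG after conditioning on $\widetilde{\phi}|_{B(N/2)}$ does decouple the two outgoing arms, but you then need a bound on each conditional factor that is uniform over the random trace of the cluster on $\partial B(N/2)$ and over the boundary field; this is precisely what the paper obtains via the point-to-boundary comparison (Lemma~\ref{lemma_onecluster_box_point}) together with the endpoint-stability result (Lemma~\ref{lemma_roots}), which itself rests on the separation lemma (Lemma~\ref{lemma_separation}), hence on the rigidity lemma (Lemma~\ref{lemma_rigidity}). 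For the lower bound, ``routing two chains through disjoint regions'' does not come for free either: one needs the separation lemma to guarantee that the two inner clusters stay clear of each other's attachment windows, and one needs a positive-capacity lower bound on the cluster traces (which the paper extracts by a second-moment argument crucially using the rigidity lemma at display~(\ref{add369})); these are not supplied by monochromatic quasi-multiplicativity alone.

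There is also a more fundamental gap: Lemma~\ref{lemma_point_to_boundary} is stated and proved only for $\psi\in\Psi(n,N)$ with $n\ge\Cref{const_point_to_boundary1}$, i.e.\ only when $|v-v'|\gtrsim 1$. Your assertion that ``the entire $\chi$-dependence is inherited from $\mathbb{P}(\mathsf{H}^v_{v'}(N))$'' silently requires the factorization to hold uniformly as $\chi\to 0$, and the paper does not prove this and does not use it: both bounds in~\eqref{thm1_small_n_four_point} are obtained by a completely different mechanism --- the iterative Brownian-bridge estimate of Lemma~\ref{lemma_prob_Cpsi_small_n} for the upper bound, and the explicit local gluing argument of Lemma~\ref{additional_lemma_four_point} for the lower bound. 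In fact the paper's logical order is the opposite of yours: the four-point estimates for fixed and small $\chi$ are proved first, Theorem~\ref{thm1} is derived from them, and only the \emph{upper} bound in~\eqref{thm1_large_n_four_point} is then obtained from Theorem~\ref{thm1} via Lemma~\ref{lemma_point_to_boundary} --- and that one derivation does follow your scheme exactly (see Section~\ref{subsection_theorem_four_point_remaining}). So your reduction is genuinely used, but only for one of the four bounds; for the other three, and in particular for everything involving $\chi\le 1$, the proposal leaves the hard part untouched.
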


 \begin{remark}[Extension to the critical dimension]\label{remark_two_arm_for_6d}
	As noted below (\ref{one_arm_high}), the proofs of Theorems \ref{thm1} and \ref{theorem_four_point} in Sections \ref{section5_four_point} and \ref{section_estimate_coexistence} remain valid at the critical dimension $d_c=6$, and in particular yield the exponents of the heterochromatic two-arm probability and four-point function for $d=6$. Precisely, when $d=6$, there exists a constant $C>0$ such that under the same conditions as in Theorems \ref{thm1} and \ref{theorem_four_point} (recall that $\varsigma(N) = \tfrac{\ln\ln(N)}{[\ln(N)]^{1/2}}$),    
	\begin{equation}
  (\chi \land 1)^{\frac{3}{2}}N^{-4-C \cdot \varsigma(N) }
 	\lesssim   	\mathbb{P}\big(\mathsf{H}_{v'}^{v}(N) \big) \lesssim   (\chi\land 1)^{\frac{3}{2}}N^{-4+C \cdot \varsigma(N) }, 
  	\end{equation}
	\begin{equation}
 	(\chi \land 1)^{\frac{3}{2}}N^{-8-C \cdot \varsigma(N) }
 	\lesssim 	\mathbb{P}\big(\mathsf{H}^{v,w}_{v',w'} \big) \lesssim  (\chi \land 1)^{\frac{3}{2}}N^{-8+C \cdot \varsigma(N) }.
 	\end{equation}
	 
\end{remark}

 	 \subsection{Volume growth under the coexistence}\label{subsection_volume}
On the heterochromatic two-arm event $\mathsf{H}_{v'}^{v}(N)$, the magnitude of the positive cluster $\mathcal{C}^{+}_v$ imposes a constraint on the growth of the negative cluster $\mathcal{C}^{-}_{v'}:=\{w\in \widetilde{\mathbb{Z}}^d: w \xleftrightarrow{\le 0} v'\}$. To study the interaction described in \textbf{Q2}, we aim to analyze how this constraint influences the geometric properties of the two involved clusters, particularly their volume growth. According to (\ref{result_typical_volume}), conditioned on a point being contained in a large cluster, its volume within the $M$-neighborhood of this point is typically of order $M^{(\frac{d}{2}+1)\boxdot 4}$. Naturally, one would ask whether conditioning on $v$ and $v'$ being contained in two distinct large clusters will impose a constraint that alters their inherent volume growth rates. At the first glance, it may seem plausible that the volume growth will be affected when $\mathsf{H}_{v'}^{v}(N)\ll [\theta_d(N)]^2$ (which holds except when $d>6$ or $|v-v'|\asymp N$), as it indicates that the constraint between the two clusters is significant (note that the volume of a macroscopic cluster has fluctuation of the same order as its typical value, as proved by \cite[Theorem 1.3]{cai2024incipient}; this implies that a prior, a significant constraint might change the volume significantly). Interestingly, our next result shows that on the heterochromatic two-arm event, the two involved clusters actually maintain their inherent volume growth rates.

Recall $\mathcal{V}_x^{+}(M)$ above (\ref{result_typical_volume}). For any $v\in \widetilde{\mathbb{Z}}^d$, let $\bar{v}$ denote the lattice point closest to $v$ (with ties broken in a predetermined manner), and define $\mathcal{V}_v^{+}(M):=\mathcal{V}_{\bar{v}}^{+}(M)$. We denote by $\mathcal{V}_v^{-}(M)$ the analogue of $\mathcal{V}_v^{+}(M)$ obtained by replacing $\widetilde{E}^{\ge 0}$ with $\widetilde{E}^{\le 0}$.

 	 \begin{theorem}\label{thm3_volume}
 	 Assume that $N,v,v'$ satisfy the conditions in Theorem \ref{thm1}. There exist constants $\cl\label{const_thm3_volume_1}(d),\cl\label{const_thm3_volume_2}(d)\in (0,1)$ and $\Cl\label{const_thm3_volume_3}(d)>1$ such that for any $1\le M\le \cref{const_thm3_volume_1}N$, 
 	 	\begin{equation}\label{ineq_thm3}
 	 	\mathbb{P}\big(  \mathcal{V}_v^{+}(M), \mathcal{V}_{v'}^{-}(M) \in [\Cref{const_thm3_volume_3}^{-1}M^{(\frac{d}{2}+1)\boxdot 4},\Cref{const_thm3_volume_3}M^{(\frac{d}{2}+1)\boxdot 4}]   \mid \mathsf{H}_{v'}^{v}(N) \big) \ge \cref{const_thm3_volume_2}. 
 	 	\end{equation}  
 	 \end{theorem}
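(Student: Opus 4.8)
The plan is to reduce \eqref{ineq_thm3} to a single \emph{decoupling estimate}, which says that conditioning on the heterochromatic event $\mathsf{H}:=\mathsf{H}_{v'}^{v}(N)$ affects the law of the positive cluster of $\bar v$ inside $B_{\bar v}(2M)$ only through the requirement that this cluster reach $\partial B_{\bar v}(2M)$; granting that, the conditional volume estimate \eqref{result_typical_volume} of \cite{cai2024incipient} finishes the argument. Write $\mathfrak a:=(\tfrac{d}{2}+1)\boxdot 4$, and fix $1\le M\le \cref{const_thm3_volume_1}N$ with $\cref{const_thm3_volume_1}$ to be chosen small; throughout we work with the lattice points $\bar v$ and $\overline{v'}$, noting that $v,\bar v$ lie within one edge and, on $\mathsf H$, lie in a common positive cluster with probability bounded below --- a routine point we suppress. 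By the symmetry between the $\pm$ sign clusters and between the two arms of $\mathsf H$, it suffices to show $\mathbb{P}\big(\mathcal{V}_v^{+}(M)\notin[\Cref{const_thm3_volume_3}^{-1}M^{\mathfrak a},\Cref{const_thm3_volume_3}M^{\mathfrak a}]\mid \mathsf H\big)<\tfrac14$ for $\Cref{const_thm3_volume_3}$ large; a union bound over this event and its analogue for $\mathcal{V}_{v'}^{-}(M)$ then yields \eqref{ineq_thm3} with $\cref{const_thm3_volume_2}=\tfrac12$.

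\emph{Step 1: the decoupling estimate.} First I would prove that, uniformly in $N,v,v',M$, for every event $A$ measurable with respect to $\mathcal{C}_{\bar v}^{+}\cap B_{\bar v}(2M)$,
\begin{equation}\label{eq:decoupling_plan}
 \mathbb{P}\big(A\cap\mathsf H\big)\ \asymp\ \frac{\mathbb{P}\big(A,\ \bar v\xleftrightarrow{\ge 0}\partial B_{\bar v}(2M)\big)}{\theta_d(2M)}\cdot \mathbb{P}(\mathsf H).
\end{equation}
For the upper bound one decomposes $\mathsf H$ as the intersection of an ``inside $B_{\bar v}(2M)$'' event --- containing $A$ together with a connection of the positive cluster of $\bar v$ out to $\partial B_{\bar v}(2M)$ --- with an ``outside $B_{\bar v}(M)$'' event, namely the continuation of this positive arm to $\partial B(N)$ together with the disjoint negative arm from $v'$ to $\partial B(N)$; the two pieces decouple up to a constant factor via the general quasi-multiplicativity machinery of \cite{cai2024quasi}, which applies to non-monotone local events such as $A$, and the ``outside'' probability is $\asymp\mathbb{P}(\mathsf H)/\theta_d(2M)$ because quasi-multiplicativity for the heterochromatic event $\mathsf H$ lets one peel off the local structure of the positive arm near $v$ at cost $\asymp\theta_d(2M)$. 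The matching lower bound is a gluing argument of the kind used to prove Theorem \ref{thm1}: one realizes the inside and outside configurations independently and joins them across $\partial B_{\bar v}(2M)$ with only a constant-factor loss, the existence of a positive density of admissible connection points being supplied by the second-moment inputs behind Theorems \ref{thm1} and \ref{theorem_four_point}.

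\emph{Step 2: transfer and conclusion.} Running the same decomposition with a trivial second arm gives the standard quasi-multiplicativity identity $\mathbb{P}\big(A\cap\{\bar v\xleftrightarrow{\ge 0}\partial B(N)\}\big)\asymp \theta_d(2M)^{-1}\,\mathbb{P}\big(A,\ \bar v\xleftrightarrow{\ge 0}\partial B_{\bar v}(2M)\big)\,\theta_d(N)$ (cf.\ \cite{cai2024one,cai2024quasi}), while $\mathbb{P}\big(\bar v\xleftrightarrow{\ge 0}\partial B(N)\big)\asymp\theta_d(N)$ since $v\in\widetilde{B}(\cref{const_thm3_volume_1}N)$. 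Dividing by \eqref{eq:decoupling_plan} yields $\mathbb{P}(A\mid \mathsf H)\asymp\mathbb{P}\big(A\mid \bar v\xleftrightarrow{\ge 0}\partial B(N)\big)$ for every such $A$. Taking $A=\{\mathcal{V}_{v}^{+}(M)\notin[\Cref{const_thm3_volume_3}^{-1}M^{\mathfrak a},\Cref{const_thm3_volume_3}M^{\mathfrak a}]\}$ and translating (with one further quasi-multiplicativity step replacing $\partial B(N)$ by $\partial B_{\bar v}(cN)$, legitimate once $\cref{const_thm3_volume_1}$ is small enough that the hypothesis ``conditioning radius $\ge C(d)M$'' of \eqref{result_typical_volume} holds), the right-hand side is at most $\varepsilon(\Cref{const_thm3_volume_3})$, where $\varepsilon(\lambda)\to0$ as $\lambda\to\infty$ by \eqref{result_typical_volume}. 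Choosing $\Cref{const_thm3_volume_3}$ large thus makes $\mathbb{P}(A\mid\mathsf H)<\tfrac14$; the same bound holds for $\mathcal{V}_{v'}^{-}(M)$ by symmetry, and the union bound completes the proof.

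The hard part is Step 1, and within it the quasi-multiplicativity of the heterochromatic event $\mathsf H$: unlike a single-colour one-arm event, $\mathsf H$ is non-monotone and forces \emph{two} clusters to remain disjoint, so parts of the framework of \cite{cai2024quasi} must be adapted, and the estimate has to be made uniform over the three regimes already visible in Theorems \ref{thm1}--\ref{theorem_four_point}: $\chi=|v-v'|\le 1$, where both arms leave from essentially one point and the microscopic one-dimensional separation of the two colours near $v\approx v'$ must survive the gluing; $1\le\chi\lesssim M$, where $B_{\bar v}(2M)$ and $B_{\overline{v'}}(2M)$ overlap and disjointness must be maintained inside the overlap; and $\chi\gtrsim M$ with $M$ as large as order $N$. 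In each regime one feeds the corresponding heterochromatic asymptotics of Theorems \ref{thm1} and \ref{theorem_four_point} into the gluing step; once this bookkeeping is done the rest is routine. (As a by-product the computations in Step 1 also give $\mathbb{E}[\mathcal{V}_v^{+}(M)\mid\mathsf H]\asymp M^{\mathfrak a}$ and $\mathbb{E}[(\mathcal{V}_v^{+}(M))^2\mid\mathsf H]\lesssim M^{2\mathfrak a}$, so Paley--Zygmund and Markov already give $\mathbb{P}(\mathcal{V}_v^{+}(M)\ge cM^{\mathfrak a}\mid\mathsf H)\ge c'$ and $\mathbb{P}(\mathcal{V}_v^{+}(M)\le CM^{\mathfrak a}\mid\mathsf H)\ge 1-C_0/C$; only the small-deviation bound $\mathbb{P}(\mathcal{V}_v^{+}(M)<\varepsilon M^{\mathfrak a}\mid\mathsf H)\to0$ genuinely requires the transfer in Step 2, or alternatively a cross-scale near-independence argument as in \cite{cai2024incipient}.)
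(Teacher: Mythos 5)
Your decoupling estimate in Step 1, if valid for decreasing local events $A$, would immediately deliver the small-deviation bound $\mathbb{P}\big(\mathcal{V}_v^{+}(M)<\varepsilon M^{(\frac{d}{2}+1)\boxdot 4}\mid\mathsf H_{v'}^{v}(N)\big)\to 0$ as $\varepsilon\to 0$: take $A=\{\mathcal V_v^{+}(M)<\varepsilon M^{(\frac{d}{2}+1)\boxdot 4}\}$, run your Step 2, and invoke \eqref{result_typical_volume}. But that is precisely the strengthening of \eqref{ineq_thm3} (pushing its right-hand side arbitrarily close to $1$) which the authors state, in the remark directly after Theorem \ref{thm3_volume}, their present techniques are insufficient to achieve — the required multi-scale decomposition of the heterochromatic arm is deferred to a companion paper. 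Your union-bound reduction of \eqref{ineq_thm3} genuinely uses this lower-tail estimate (you need $\mathbb{P}(\mathcal V_v^{+}(M)<\Cref{const_thm3_volume_3}^{-1}M^{(\frac{d}{2}+1)\boxdot 4}\mid\mathsf H_{v'}^{v}(N))<1/4$), and neither Paley--Zygmund applied to each cluster separately nor any other ingredient you list supplies it, so the proposal is not self-contained. (There is also a literal issue with stating the decoupling for \emph{all} local $A$: an extreme increasing event such as $A=\{\mathcal C_{\bar v}^{+}\supset \widetilde B_{\bar v}(M)\}$ is incompatible with $\mathsf H_{v'}^v(N)$ once $v'\in B_{\bar v}(M)$, yet the right-hand side stays positive; the estimate can at best hold for events at the typical volume scale, which is another sign that it is tied to the unproved small-deviation control.)

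The paper's route is structurally different and deliberately avoids the lower tail. It proves only a one-sided version of your decoupling: the rigidity lemma (Lemma \ref{lemma_rigidity}) bounds the conditional probability of an event $\mathsf A$ given the coexistence event by $C\big[\mathbb P(\mathsf A\mid \text{one arm})+\text{err}\big]$ only for \emph{increasing} $\mathsf A$, because the underlying mechanism is an FKG inequality on the product structure supplied by Werner's switching identity, and FKG works in one direction only. Applied to the increasing events $\{\mathcal V_v^{+}(M)\ge T\}$ this controls the conditional first moment and hence the upper tail by Markov (Section \ref{section5.2_lower_volume}). For the other direction the paper does not attempt any small-deviation bound; instead Section \ref{section5.1_upper_volume} constructs, with uniformly positive conditional probability, configurations on which \emph{both} $\mathcal V_v^{+}(M)$ and $\mathcal V_{v'}^{-}(M)$ are simultaneously $\gtrsim M^{(\frac{d}{2}+1)\boxdot 4}$ (gluing via Lemmas \ref{lemma_separation} and \ref{lemma_point_to_boundary} for $3\le d\le 5$; a second moment via Lemma \ref{lemma_highd_volume_two_arm} for $d\ge 7$). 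A constant lower bound on the \emph{joint} event $\{\mathcal V_v^{+}(M)\land\mathcal V_{v'}^{-}(M)\ge cM^{(\frac{d}{2}+1)\boxdot 4}\}$, combined with the Markov upper tail for the maximum, yields \eqref{ineq_thm3} with a fixed $\cref{const_thm3_volume_2}$, no lower-tail control needed.
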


 	  	We expect that the right-hand side of (\ref{ineq_thm3}) can be enhanced to a number arbitrarily close to $1$ (as in (\ref{result_typical_volume})), although the techniques developed in this paper are not yet sufficient to achieve this. More precisely, as shown in \cite{cai2024incipient}, the proof of (\ref{result_typical_volume}) relies on a multi-scale argument. The key idea is that the volume growth of a cluster within an increasing sequence of annuli at scale $M$ is asymptotically independent across annuli, while each annulus potentially adds a term of order $M^{(\frac{d}{2}+1)\boxdot 4}$ to the total volume. In fact, this asymptotic independence is built on the property that  large crossing loops (i.e., loops that traverse an annulus with a big ratio of outer to inner diameters) are rare in a loop cluster. Intuitively, this property should persist under the co-existence of two clusters, although the correlation between clusters will significantly complicate the adaptation of the approach in \cite{cai2024incipient}. Nevertheless, we believe that the decomposition argument of loop clusters tailored for two-arm events in the companion paper \cite{inpreparation_second_monent} should be able to resolve this difficulty, albeit requiring additional technical efforts.  
 	  	 

 	 {\color{blue}

  	 }

\textbf{Statements about constants.} We use the letters $C$ and $c$ to denote constants, which may vary from line to line and depend on the context. Moreover, constants labeled with numerical subscripts (such as $C_1,C_2, c_1, c_2, \cdots$) remain fixed. The letter $C$ (with or without additional indices) is reserved for large constants, while $c$ is used for small constants. Unless stated otherwise, constants are assumed to depend only on the dimension $d$. If a constant depends on other parameters, we will specify these parameters in parentheses.

\textbf{Organization of this paper.} In Section \ref{section_heuristic}, we present a heuristic for Theorem \ref{thm1}. In Section \ref{section3_notation}, we review the relevant notations and preliminary results. In Section \ref{section_regularity_co_exist}, we establish a series of regularity properties of connecting probabilities involving two clusters. Subsequently, in Section \ref{section5_four_point} we derive the upper bound in (\ref{thm1_small_n_four_point}) and the lower bound in (\ref{thm1_large_n_four_point}). In Section \ref{section_estimate_coexistence}, we prove Theorem \ref{thm1} and the remaining bounds in Theorem \ref{theorem_four_point}. Finally, we provide the proof of Theorem \ref{thm3_volume} in Section \ref{section_clusters_volume}.

 \section{Heuristic for the heterochromatic two-arm exponent}\label{section_heuristic}

In this section, we provide some intuitions for the exponents in Theorems \ref{thm1} and \ref{theorem_four_point}. We take a large number $N$, and two points $x_-:= (-N,0,...,0)$ and $x_+:= (N,0,...,0)$ in $\mathbb{Z}^d$. Suppose that $x_-$ is contained in a sign cluster $\mathcal{C}$ with diameter $cN$, which occurs with probability $\theta_d(cN)\asymp N^{-[(\frac{d}{2}-1)\boxdot 2]}$. The strong Markov property of the GFF implies that conditioned on the configuration of $\mathcal{C}$, $\{\widetilde{\phi}_v\}_{v\in \widetilde{\mathbb{Z}}^d\setminus \mathcal{C}}$ is distributed as an independent metric graph GFF with zero boundary condition on $\mathcal{C}$. Therefore, for each edge $\{y,y'\}\in \mathbb{L}^d$ such that $y\in \mathcal{C}$ and $y'\notin \mathcal{C}$ (for convenience, we assume that $y$ is exactly on the boundary on $\mathcal{C}$), according to the two-point function estimate for general graphs (see (\ref{29})), given the cluster $\mathcal{C}$, the conditional probability of $y'$ and $x_+$ being connected by a sign cluster is proportional to the Green's function at $(y',x_+)$ with an absorbing boundary on $\mathcal{C}$, denoted by $\widetilde{G}_{\mathcal{C}}(y',x_+)$ (see (\ref{def_green_function})). Moreover, using the strong Markov property of random walks and the transience of $\mathbb{Z}^d$ for $d\ge 3$, it follows that $\widetilde{G}_{\mathcal{C}}(y',x_+)$ is of the same order as $N^{2-d}$ times the probability that a simple random walk starting from $y$ never returns to the cluster $\mathcal{C}$, denoted by $\mathrm{Es}_{\mathcal{C}}(y')$. In conclusion, the total number $\mathbf{M}$ of edges $\{y,y'\}$ such that $y$ and $y'$ are connected to $x_-$ and $x_+$ respectively by distinct sign clusters typically satisfies 
\begin{equation}
	\begin{split}
		  \mathbf{M} \asymp \theta_d(cN) \cdot N^{2-d}\cdot \sum\nolimits_{\{y,y'\}\in \mathbb{L}^d:y\in \mathcal{C},y'\notin \mathcal{C}} \mathrm{Es}_{\mathcal{C}}(y). 
		  	\end{split} 
\end{equation} 
It is well-known that the sum on the right-hand side equals the capacity of $\mathcal{C}$, denoted by $\mathrm{cap}(\mathcal{C})$ (see e.g., \cite[Section 6.5]{lawler2010random}). Moreover, the decay rate of $\mathrm{cap}(\mathcal{C})$ has been computed in \cite{ding2020percolation, drewitz2023critical}: there exists $C>0$ such that for any $T>0$, 
\begin{equation}\label{asymp_cap_cluster}
	\mathbb{P}\big(\mathrm{cap}(\mathcal{C}) \ge T \big) \sim C T^{-\frac{1}{2}}.
\end{equation}
Here ``$f(T) \sim g(T)$'' means that $\lim\limits_{T\to \infty}f(T)/g(T)=1$. Combined with the estimates (\ref{one_arm_low}) and (\ref{one_arm_high}) on one-arm probabilities, it shows that $\mathrm{cap}(\mathcal{C})$ is typically of order $N^{(d-2)\boxdot 4}$ (note that $\mathbb{P} (\mathrm{cap}(\mathcal{C}) \ge N^{(d-2)\boxdot 4}  )\asymp \theta_d(cN)$). To sum up, 
\begin{equation}\label{final2.2}
	 \mathbf{M} \asymp \theta_d(cN)  \cdot N^{2-d}\cdot N^{(d-2)\boxdot 4}= N^{-[(\frac{d}{2}-1)\boxdot (d-4)]}. 
\end{equation}

{\color{blue}

}


By Fubini's theorem, the expectation of $\mathbf{M}$ equals four times the sum of heterochromatic four-point functions $\mathbb{P}(\mathsf{H}^{y,x_+}_{y',x_-})$ over all edges $\{y,y'\} \in \mathbb{L}^d$ (where the factor of four arises because the sign of each sign cluster is chosen uniformly at random). To relate this sum to the probability $\mathbb{P}(\mathsf{H}^{v_+,x_+}_{v_-,x_-})$ of interest (where $v_+:=(1,0,...,0)$ and $v_-:=\bm{0}$), we establish the following regularity (see Lemma \ref{lemma_roots}): for any edges $\{y_1,y_1'\}$ and $\{y_2,y_2'\}$ (except those are too close to $x_+$ or $x_-$), $\mathbb{P}(\mathsf{H}^{y_1,x_+}_{y'_1,x_-})$ and $\mathbb{P}(\mathsf{H}^{y_2,x_+}_{y_2',x_-})$ are comparable. These two properties together yield  
\begin{equation}\label{final2.3}
	\mathbb{E}[\mathbf{M}]\asymp N^{d}\cdot \mathbb{P}(\mathsf{H}^{v_+,x_+}_{v_-,x_-}). 
\end{equation}
 Combining (\ref{final2.2}) and (\ref{final2.3}), we arrive at 
    \begin{equation}
    	\mathbb{P}(\mathsf{H}^{v_+,x_+}_{v_-,x_-})  \asymp N^{-[(\frac{3d}{2}-1)\boxdot (2d-4)]},
    \end{equation}
 which explains the exponent of $N$ in Theorem \ref{theorem_four_point}. In fact, this estimate indicates that to accommodate two \textit{distinct} sign clusters with diameter $N$ originating from a pair of neighboring points, the energy cost is 
 \begin{equation} 
 \xi_N:=	\frac{\mathbb{P}(\mathsf{H}^{v_+,x_+}_{v_-,x_-})}{\mathbb{P}\big(v_+\xleftrightarrow{\ge 0} x_+ \big)\cdot \mathbb{P}\big(v_-\xleftrightarrow{\le 0} x_-\big)}\asymp  N^{(\frac{d}{2}-3)\boxdot 0}. 
 \end{equation}
 Combined with the estimates for $\theta_d(\cdot)$ in (\ref{one_arm_low}) and (\ref{one_arm_high}), it suggests 
 \begin{equation}\label{final2.6}
 	\mathbb{P}\big(\mathsf{H}^{v_+}_{v_-}(N)\big)\asymp  \xi_N\cdot [\theta_d(N)]^2 \asymp N^{-[(\frac{d}{2}+1)\boxdot 4]},
 \end{equation} 
which matches the exponent of $N$ in Theorem \ref{thm1}.

Next, we discuss the exponent of $\chi$ in the cases $\chi \ll 1$ and $\chi\gg 1$ separately.

\textbf{When $\chi \ll 1$.} Arbitrarily take $v,v'\in I_{\{v_-,v_+\}}$ satisfying $|v-v'|=\chi$. For clarity, we assume that $v$ is closer to $v_+$ than $v'$, and that neither $v$ nor $v'$ is too close to the endpoints $\{v_-,v_+\}$, i.e., $|v-v_{\diamond}|\land |v'-v_{\diamond}| \ge 1$ for $\diamond\in \{+,-\}$. With (\ref{final2.6}) at hand, deriving the exponent of $\chi$ in (\ref{thm1_small_n}) reduces to proving that 
 \begin{equation}
 	\begin{split}
 		\mathbb{P}\big( \mathsf{H}^{v}_{v'}(N)  \mid \mathsf{H}^{v_+}_{v_-}(N)  \big) \asymp  \chi^{\frac{3}{2}}. 
 	\end{split}
 \end{equation}
 This bound follows from the following three observations: 
\begin{enumerate}

	\item[(i)] Conditioned on the event $\mathsf{H}^{v_+}_{v_-}(N)$, the GFF values at $v_+$ and $v_-$	are typically of order $1$.

	\item[(ii)] Given the values of $\widetilde{\phi}_{v_+}$ and $\widetilde{\phi}_{v_-}$, which are of order $1$ and satisfy $\widetilde{\phi}_{v_+}>0$ and $\widetilde{\phi}_{v_-}<0$, the conditional probability of $\{v \xleftrightarrow{\ge 0} v_+\}\cap \{v\xleftrightarrow{\ge 0} v' \}^c$ is proportional to $\chi$.

	\item[(iii)] Under the same conditioning as in Item (ii) and given the positive cluster of $v_+$ (which reaches $v$ and is separated from $v'$ by distance $c\chi $), the conditional probability of $\{v' \xleftrightarrow{\le 0} v_-\}$ is of order $\chi^{\frac{1}{2}}$.

\end{enumerate}
 Next, we explain Observations (i)-(iii) separately. For Observation (i), we recall from \cite[Lemma 3.3]{cai2024one} that conditioned on a point being contained in a large sign cluster, the GFF value of this point is typically of order $1$. We prove a regularity property (see Lemma \ref{lemma_separation}) that conditioned on the existence of two distinct large clusters, one containing $v_+$ and the other containing $v_-$, these clusters are well separated in neighborhoods of $v_+$ and $v_-$. This property allows us to extend \cite[Lemma 3.3]{cai2024one} to the two-arm event, and hence yields Observation (i). For Observation (ii), note that given $\{\widetilde{\phi}_v=a\}$ (where $a>0$ is sufficiently small), the diameter of the positive cluster containing $v$ is typically of order $a^2$ (here the square arises from the Brownian scaling relation, since the GFF restricted to an interval behaves as a Brownian bridge). Hence, the occurrence of $\{v \xleftrightarrow{\ge 0} v' \}^c$ imposes the constraint $0<\widetilde{\phi}_v\lesssim |v-v'|^{\frac{1}{2}}= \chi^{\frac{1}{2}}$, which occurs with probability $\asymp \chi^{\frac{1}{2}}$. Moreover, given that $0<\widetilde{\phi}_v\lesssim  \chi^{\frac{1}{2}}$ and $\widetilde{\phi}_{v_+}\asymp 1$, the probability of $\{v \xleftrightarrow{\ge 0} v_+ \}$ is proportional to $\chi^{\frac{1}{2}}$ (see (\ref{formula_two_point})). Combining these two estimates, we obtain Observation (ii). For Observation (iii), under zero boundary condition imposed by the cluster $\mathcal{C}^+_{v_+}$ (whose distance to $v'$ is $c\chi$), the variance of $\widetilde{\phi}_{v'}$ is of order $\chi$ (see (\ref{23})). In other words, $|\widetilde{\phi}_{v'}|$ typically takes values of order $\chi^{\frac{1}{2}}$. This together with $|\widetilde{\phi}_{v_-}|\asymp 1$ (as ensured by Observation (i)) and the formula (\ref{formula_two_point}) yields that the conditional probability of $\{v' \xleftrightarrow{\le 0} v_-\}$ is of order $\chi^{\frac{1}{2}}$.

     The exponent of $\chi$ in (\ref{thm1_small_n_four_point}) arises from the same mechanism, so we skip it.

\textbf{When $\chi \gg 1$.} As noted below \cite[Theorem 1.2]{cai2024incipient}, the sign cluster is expected to exhibit self-similarity, meaning that its geometric properties remain invariant under scaling transformations. This indicates that the energy cost $\hat{\xi}_{\chi,N}$ of accommodating two clusters that cross the annulus $B(N)\setminus B(\chi)$ (where $N\gg \chi$) is a polynomial of $N/\chi$, and has the same exponent as $\xi_N$. I.e., 
\begin{equation}
	\hat{\xi}_{\chi,N} \asymp \big(N/\chi\big)^{(\frac{d}{2}-3)\boxdot 0}. 
\end{equation}
 For the same reasoning as in (\ref{final2.6}), it yields that for $v,v'\in \widetilde{B}(cN)$ with $|v-v'|=\chi$,
\begin{equation}
	\mathbb{P}\big(\mathsf{H}_{v'}^{v}(N) \big)\asymp \hat{\xi}_{\chi,N}\cdot   [\theta_d(N)]^2 \asymp \chi^{(3-\frac{d}{2})\boxdot 0} N^{-[(\frac{d}{2}+1)\boxdot 4]}, 
\end{equation}
which coincides with (\ref{thm1_large_n}) in Theorem \ref{thm1}. For Theorem \ref{theorem_four_point}, the bound (\ref{thm1_large_n_four_point}) can be derived similarly as follows: for any $w,w'\in \widetilde{B}(10N)\setminus \widetilde{B}(N)$ with $|w-w'|\ge N$,
 \begin{equation}
 	\mathbb{P}\big(\mathsf{H}_{v',w'}^{v,w}  \big) \asymp \hat{\xi}_{\chi,N}\cdot \mathbb{P}\big(v \xleftrightarrow{\ge 0} w  \big)\cdot \mathbb{P}\big(v'\xleftrightarrow{\le 0} w'\big) \asymp  \chi^{(3-\frac{d}{2})\boxdot 0}N^{-[(\frac{3d}{2}-1)\boxdot (2d-4)]}.  
 \end{equation}

 \section{Preliminaries}\label{section3_notation}

In this section, we review some basic definitions and useful results.

 \subsection{Brownian motion on $\widetilde{\mathbb{Z}}^d$}

We recall the definition of the  canonical Brownian motion on $\widetilde{\mathbb{Z}}^d$ (denoted by $\{\widetilde{S}_t\}_{t\ge 0}$) as follows. Precisely, within each edge interval $I_e$ ($e\in \mathbb{L}^d$), the process $\widetilde{S}_\cdot$ behaves as a standard one-dimensional Brownian motion. Upon reaching a lattice point $x\in \mathbb{Z}^d$, it uniformly chooses an edge $e=\{x,y\}$ incident to $x$ and performs a Brownian excursion along the interval $I_e$ away from $x$. When the excursion hits a lattice point (either returning to $x$ or reaching $y$), the process resumes from that lattice point, and continues in the same manner. We denote the transition density of this process by $\widetilde{q}_t(v,w)$ for $t>0$ and $v,w\in \widetilde{\mathbb{Z}}^d$. When $\widetilde{S}_\cdot$ starts from $v\in \widetilde{\mathbb{Z}}^d$, we denote its law by $\widetilde{\mathbb{P}}_v$. Let $\widetilde{\mathbb{E}}_v$ represent the expectation under $\widetilde{\mathbb{P}}_v$.

   For any $D\subset \widetilde{\mathbb{Z}}^d$, we denote the hitting time $\tau_D:=\inf\{t\ge 0:\widetilde{S}_t\in D \}$. For completeness, we set $\tau_\emptyset=\inf \emptyset 
   := +\infty$. When $D=\{v\}$ for some $v\in \widetilde{\mathbb{Z}}^d$, we omit the braces and write $\tau_{v}:=\tau_{\{v\}}$. Moreover, we may also use $\tau_D(\widetilde{\eta})$ to denote the hitting time for a given path $\widetilde{\eta}$ on $\widetilde{\mathbb{Z}}^d$.

 (\textbf{P.S.} Unless otherwise specified, when we use the letter $D$ to denote a subset of $\widetilde{\mathbb{Z}}^d$, we assume that it consists of finitely many compact connected components. Since $\widetilde{\mathbb{Z}}^d$ is locally one-dimensional, the boundary of any $D\subset \widetilde{\mathbb{Z}}^d$, defined by $\widetilde{\partial}D:=\{v\in D: \inf_{w\in \widetilde{\mathbb{Z}}^d\setminus D}|v-w|=0 \}$, is a countable set.)

   Similar to \cite[Lemma 2.1]{cai2024quasi}, when $v,v',w\in \widetilde{\mathbb{Z}}^d$ satisfy $|v-v'|\ge |v-w|$, for the Brownian motion starting from $v$, imposing an absorbing point at $v'$ reduces the hitting probability of $w$ by a factor depending only on $d$. I.e., 
    \begin{equation}\label{new2.1}
 	\widetilde{\mathbb{P}}_v\big(\tau_{w} < \tau_{v'} \big)\asymp \widetilde{\mathbb{P}}_v\big(\tau_{w} < \infty \big) \asymp  (|v-w|+1)^{2-d}. 
 \end{equation}


 \textbf{Green's function with absorbing boundary.} For any $D\subset \widetilde{\mathbb{Z}}^d$, the Green's function for $D$ is defined by 
 \begin{equation}\label{def_green_function}
 	\widetilde{G}_D(v,w):= \int_0^{\infty} \Big\{\widetilde{q}_t(v,w)-\widetilde{\mathbb{E}}_v\big[ \widetilde{q}_{t-\tau_D}(\widetilde{S}_{\tau_D},w)\cdot \mathbbm{1}_{\tau_D<t} \big]  \Big\} dt, \ \ \forall v,w\in  \widetilde{\mathbb{Z}}^d. 
 \end{equation}
Note that $\widetilde{G}_D(v,w)$ is finite, symmetric, continuous, and is decreasing in $D$. Moreover, it equals zero when $v$ or $w$ belongs to $D$. When $D=\emptyset$, $\widetilde{G}_{\emptyset}(\cdot, \cdot )$ is identical to $\widetilde{G}(\cdot, \cdot)$ defined in (\ref{def_Green}). The strong Markov property of $\widetilde{S}_\cdot$ implies that 
\begin{equation}\label{app_green}
	\widetilde{G}_D(v,w)= \widetilde{\mathbb{P}}_v(\tau_w<\tau_D)\cdot \widetilde{G}_D(w,w). 
\end{equation}
 Moreover, for any $D_1\subset D_2\subset \widetilde{\mathbb{Z}}^d$ and $v,w\in \widetilde{\mathbb{Z}}^d\setminus D_2$, one has 
 \begin{equation}\label{formula_two_green}
 	\widetilde{G}_{D_1}(v,w) - \widetilde{G}_{D_2}(v,w)= \sum\nolimits_{z\in (\widetilde{\partial} D_2)\setminus D_1 } \widetilde{\mathbb{P}}_v\big( \tau_{D_2}=\tau_{z}<\infty\big)\cdot  \widetilde{G}_{D_1}(z,w). 
 \end{equation}
 For any $D,D'\subset \widetilde{\mathbb{Z}}^d$, let $\mathrm{dist}(D,D'):=\inf_{v\in D,v'\in D'}|v-v'|$ denote the graph distance between $D$ and $D'$ (where we may omit the braces when $D$ or $D'$ is a singleton $\{v\}$ for some $v\in \widetilde{\mathbb{Z}}^d$). It can be derived using the potential theory that 
  \begin{equation}\label{23}
 	\widetilde{G}_D(v,v)\asymp \mathrm{dist}(v,D)\land 1.
  \end{equation}


 \textbf{Boundary excursion kernel.} For any $D\subset \widetilde{\mathbb{Z}}^d$, the boundary excursion kernel for $D$ is defined by 
 \begin{equation}\label{24}
 	\mathbb{K}_D(v,w):=\lim\limits_{\epsilon \downarrow 0} (2\epsilon)^{-1}\sum\nolimits_{v'\in \widetilde{\mathbb{Z}}^d:|v-v'|=\epsilon}\widetilde{\mathbb{P}}_{v'}\big(\tau_{D}=\tau_{w}<\infty \big),\ \forall v\neq w\in \widetilde{\partial}D.
 \end{equation}
It follows from the time-reversal symmetry of Brownian motion that $\mathbb{K}_D(\cdot,\cdot )$ is symmetric, i.e., $\mathbb{K}_D(v,w)=\mathbb{K}_D(w,v)$. In addition, $\mathbb{K}_D(\cdot,\cdot)$ is decreasing in $D$, i.e., for any $D\subset  D'\subset \widetilde{\mathbb{Z}}^d$ and any $v,w\in (\widetilde{\partial}D)\cap (\widetilde{\partial}D')$, one has $\mathbb{K}_D(v,w)\ge \mathbb{K}_{D'}(w,v)$. As pointed out in \cite[Section 2]{lupu2018random}, $\mathbb{K}_D(\cdot,\cdot)$ can be viewed as the equivalent effective conductance of the graph $\widetilde{\mathbb{Z}}^d\setminus D$. An excellent introduction to effective conductance on graphs can be found in \cite[Section 2]{lyons2017probability}.

 For any $v,w\in I_e$, let $I_{[v,w]}$ denote the subinterval of $I_e$ with endpoints $v$ and $w$. For any $z\in I_{[v,w]}$, the optional stopping theorem (see e.g., \cite[(2.19)]{cai2024one}) implies that 
\begin{equation}\label{ost}
	\widetilde{\mathbb{P}}_z\big( \tau_v<\tau_w \big)=|z-w| / |v-w|. 
\end{equation}
This immediately yields the equivalent effective conductance of a line segment: 
\begin{equation}\label{conduct_line}
	\mathbb{K}_{\widetilde{\mathbb{Z}}^d\setminus I_{[v,w]}}(v,w)= \lim\limits_{\epsilon \downarrow 0} (2\epsilon)^{-1} \cdot \tfrac{\epsilon}{|v-w|}= (2|v-w|)^{-1}. 
\end{equation}
Consequently, for any disjoint subintervals $\{I_{[x,w_i]}\}_{1\le i\le k}$ (all incident to $x$),   
\begin{equation}\label{ostnew}
	\widetilde{\mathbb{P}}_x\big( \tau_{\{w_i\}_{1\le i\le k}} =\tau_{w_j} \big)= \tfrac{|x-w_j|^{-1}}{\sum_{1\le i\le k}|x-w_i|^{-1}},  \ \forall 1\le j\le k. 
\end{equation}

\begin{lemma}\label{lemma21}


  For any $d\ge 3$ and $v\neq w\in \widetilde{\mathbb{Z}}^d$ with $|v-w|\ge 10d$, 
	\begin{equation}\label{newadd28}
		\mathbb{K}_{\{v,w\}}(v,w)\asymp |v-w|^{2-d}.
	\end{equation}
\end{lemma}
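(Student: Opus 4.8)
The quantity $\mathbb{K}_{\{v,w\}}(v,w)$ is the equivalent effective conductance of the network $\widetilde{\mathbb{Z}}^d\setminus\{v,w\}$ between the two boundary points $v$ and $w$, so the plan is to identify it with a Brownian-motion hitting quantity and then estimate that quantity using the Green's function bounds already recorded. Concretely, by the very definition (\ref{24}) of the boundary excursion kernel, $\mathbb{K}_{\{v,w\}}(v,w)$ is (up to the $d$-dependent normalization absorbed in the limit) comparable to the rate at which Brownian excursions from a small sphere around $v$ reach $w$ before returning to $v$; equivalently, using (\ref{app_green}) and (\ref{def_green_function}), one has the exact identity $\mathbb{K}_{\{v,w\}}(v,w)=c_d\,\widetilde{\mathbb{P}}_{v^{\epsilon}}(\tau_w<\tau_v)\cdot(\text{local factor})$ in the $\epsilon\downarrow 0$ limit. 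The cleanest route is: first relate $\mathbb{K}_{\{v,w\}}(v,w)$ to $\widetilde{G}_{\{v\}}(v',w)$ for $v'$ near $v$ (a Green's function with a single absorbing point), via the excursion decomposition at $v$; then use (\ref{app_green}), namely $\widetilde{G}_{\{v\}}(v',w)=\widetilde{\mathbb{P}}_{v'}(\tau_w<\tau_v)\cdot\widetilde{G}_{\{v\}}(w,w)$; finally invoke (\ref{new2.1}) to get $\widetilde{\mathbb{P}}_{v'}(\tau_w<\tau_v)\asymp(|v-w|+1)^{2-d}\asymp |v-w|^{2-d}$ (using $|v-w|\ge 10d$) and (\ref{23}) to get $\widetilde{G}_{\{v\}}(w,w)\asymp\mathrm{dist}(w,\{v\})\land 1=1$ (again since $|v-w|$ is large).

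In more detail, I would argue as follows. Write $v\in I_e$ for some edge $e$, and for small $\epsilon>0$ let $v^{\pm}_\epsilon$ be the two points on $\widetilde{\mathbb{Z}}^d$ at distance $\epsilon$ from $v$ along $I_e$ (if $v$ is a lattice point there are $2d$ such points, but the argument is identical with an extra bounded combinatorial factor). For the Brownian motion started at such a point $v'$, decompose its trajectory at the hitting time $\tau_v$: either it hits $w$ first, contributing $\widetilde{\mathbb{P}}_{v'}(\tau_w<\tau_v)$, or it returns to $v$. Since $\sum_{v'\in\widetilde{\mathbb{Z}}^d:|v-v'|=\epsilon}\widetilde{\mathbb{P}}_{v'}(\tau_{\{v,w\}}=\tau_w<\infty)$ is exactly the numerator in (\ref{24}), and each summand equals $\widetilde{\mathbb{P}}_{v'}(\tau_w<\tau_v)$, we get $\mathbb{K}_{\{v,w\}}(v,w)=\lim_{\epsilon\downarrow 0}(2\epsilon)^{-1}\sum_{v'}\widetilde{\mathbb{P}}_{v'}(\tau_w<\tau_v)$. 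Now by (\ref{app_green}) applied with $D=\{v\}$, $\widetilde{\mathbb{P}}_{v'}(\tau_w<\tau_v)=\widetilde{G}_{\{v\}}(v',w)/\widetilde{G}_{\{v\}}(w,w)$, and $\widetilde{G}_{\{v\}}(v',w)\to 0$ linearly in $\epsilon$ as $v'\to v$ because $\widetilde{G}_{\{v\}}(\cdot,w)$ vanishes at $v$ and is smooth (harmonic away from $w$) on the edge, with a nonzero one-sided derivative; thus the limit exists and equals (a $d$-dependent multiple of) $\partial_\nu\widetilde{G}_{\{v\}}(v,w)/\widetilde{G}_{\{v\}}(w,w)$, where $\partial_\nu$ is the inward normal derivative at $v$. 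It then suffices to bound numerator and denominator: $\widetilde{G}_{\{v\}}(w,w)\asymp 1$ by (\ref{23}) since $\mathrm{dist}(w,\{v\})=|v-w|\ge 10d>1$, and the normal derivative of $\widetilde{G}_{\{v\}}(\cdot,w)$ at $v$ is $\asymp\widetilde{G}_{\{v\}}(v'_\epsilon,w)/\epsilon$ for fixed small $\epsilon$ (say $\epsilon$ a fixed fraction of the edge length), which by (\ref{app_green}) and (\ref{new2.1}) is $\asymp\widetilde{\mathbb{P}}_{v'_\epsilon}(\tau_w<\tau_v)\asymp(|v-w|+1)^{2-d}\asymp|v-w|^{2-d}$.

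A slicker packaging, which avoids talking about derivatives, is to use the reciprocity/monotonicity of effective conductance directly: $\mathbb{K}_{\{v,w\}}(v,w)$ is the effective conductance between $v$ and $w$ in $\widetilde{\mathbb{Z}}^d$, and one can sandwich it. For the upper bound, delete edges to reduce the network, e.g. cut $\widetilde{\mathbb{Z}}^d$ down to a finite piece plus the contribution through the bulk; comparison with (\ref{new2.1}) gives $\mathbb{K}_{\{v,w\}}(v,w)\lesssim|v-w|^{2-d}$. For the lower bound, exhibit a unit flow from $v$ to $w$ of energy $\lesssim|v-w|^{d-2}$ — for instance, the flow induced by the Green's function $\widetilde{G}_{\{v\}}(\cdot,w)$ itself, whose energy is controlled by Green's function values, or simply the harmonic flow, whose energy is exactly $1/\mathbb{K}$; then bound that energy using (\ref{23}) and (\ref{new2.1}). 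Either way the two-sided bound $\mathbb{K}_{\{v,w\}}(v,w)\asymp|v-w|^{2-d}$ follows.

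I expect the main obstacle to be purely bookkeeping at the local scale near $v$ (and $w$): making rigorous the claim that the $\epsilon\downarrow0$ limit in (\ref{24}) exists and is comparable to a fixed-$\epsilon$ hitting probability divided by $\epsilon$, i.e. that the normal derivative is genuinely of the same order as the difference quotient. This is where the one-dimensional structure of $\widetilde{\mathbb{Z}}^d$ near $v$ is used: on the edge $I_e$, $\widetilde{G}_{\{v\}}(\cdot,w)$ restricted to a neighbourhood of $v$ is (after conditioning on the first exit from that neighbourhood) an affine function vanishing at $v$, by the optional stopping identity (\ref{ost}), so the difference quotient is literally constant in $\epsilon$ on a small enough scale — this removes the analytic subtlety and reduces everything to (\ref{new2.1}) and (\ref{23}), which are quoted. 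Aside from this, the global estimate is immediate from the transience bounds already in hand, so the proof is short.
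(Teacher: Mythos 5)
Your first route is, at bottom, the same as the paper's proof. Both resolve the $\epsilon\downarrow 0$ limit in (\ref{24}) by using the fact that on each segment incident to $v$ the Brownian motion is one‑dimensional, so the relevant probabilities/Green's functions are affine there (this is exactly the optional‑stopping identity (\ref{ost})); this converts $\mathbb{K}_{\{v,w\}}(v,w)$ into a finite sum $\sum (2L)^{-1}\widetilde{\mathbb{P}}_y(\tau_w<\tau_v)$ over the endpoints $y$ of the incident segments, with $L$ the segment length, and one then estimates each term. Your detour through $\widetilde{G}_{\{v\}}$ and (\ref{app_green})--(\ref{23}) is a change of packaging only. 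Your electrical‑network alternative (cut for an upper bound, exhibit a flow for a lower bound) is a legitimate second route in principle, but as written it is only gestured at and would need the Thomson‑principle flow construction to be carried out.

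The one genuine gap is in the case $v\notin\mathbb{Z}^d$ with $v$ close to a lattice endpoint $x_2$, say $|v-x_2|=\delta\ll 1$. That segment contributes $(2\delta)^{-1}\widetilde{\mathbb{P}}_{x_2}(\tau_w<\tau_v)$, and your assertion that this ``reduces everything to (\ref{new2.1}) and (\ref{23})'' is not quite right: $\widetilde{\mathbb{P}}_{x_2}(\tau_w<\tau_v)$ is \emph{not} of order $|v-w|^{2-d}$ here, but of order $\delta\, |v-w|^{2-d}$, because the absorbing point $v$ sits only distance $\delta$ from the start $x_2$ and swallows all but an $O(\delta)$ fraction of excursions; the middle link in your chain ``$\widetilde{G}_{\{v\}}(v'_\epsilon,w)/\epsilon\asymp\widetilde{\mathbb{P}}_{v'_\epsilon}(\tau_w<\tau_v)\asymp |v-w|^{2-d}$'' fails for the same reason when $\epsilon\asymp\delta$. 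The cancellation $(2\delta)^{-1}\cdot\delta\,|v-w|^{2-d}\asymp |v-w|^{2-d}$ is there, but to see it one has to push the exit decomposition one more lattice step --- expand $\widetilde{\mathbb{P}}_{x_2}(\tau_w<\tau_v)$ over the neighbours of $x_2$ other than $x_1$, where the distances to $v$ become $\gtrsim 1$ and (\ref{new2.1}) genuinely applies. This is exactly what the paper supplies via (\ref{ostnew}). Your final paragraph senses that the one‑dimensional structure near $v$ must be used, but the additional lattice step is the piece you are missing.
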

 \begin{proof}

 We divide the proof into two cases, according to whether $v$ is a lattice point.

 	\textbf{When $v\in \mathbb{Z}^d$.} Using the identity (\ref{ost}), we have  
 	\begin{equation}\label{new214}
 		\mathbb{K}_{\{v,w\}}(v,w)=  (2d)^{-1} \sum\nolimits_{y\in \mathbb{Z}^d: \{y,v\}\in \mathbb{L}^d} \widetilde{\mathbb{P}}_y\big( \tau_{w}<\tau_{v} \big)\overset{(\ref{new2.1})}{\asymp }|v-w|^{2-d}. 
  	\end{equation}

 		\textbf{When $v\notin \mathbb{Z}^d$.} Let $I_{\{x_1,x_2\}}$ denote the interval that contains $v$. Without loss of generality, we assume that $|v-x_1|\ge |v-x_2|$ (which implies $|v-x_1|\ge \frac{d}{2}$). Applying the identity (\ref{ost}), we have 
 		\begin{equation}\label{newadd217}
 			\begin{split}
 				0\le & \mathbb{K}_{\{v,w\}}(v,w) - (2|v-x_1|)^{-1}\cdot \widetilde{\mathbb{P}}_{x_1}\big( \tau_{w}<\tau_{v} \big) \\
 				= &(2|v-x_2|)^{-1}\cdot \widetilde{\mathbb{P}}_{x_2}\big( \tau_{w}<\tau_{v} \big)\\
 				\overset{(\ref{ostnew})}{=} & (2|v-x_2|)^{-1}\cdot  \frac{\sum_{z\in\mathbb{Z}^d: \{z,x_2\}\in \mathbb{L}^d,z\neq x_{1}}d^{-1}\widetilde{\mathbb{P}}_z\big( \tau_{w}<\tau_{v} \big)}{(2d-1)\cdot d^{-1}+|v-x_2|^{-1}}\overset{(\ref{new2.1})}{\asymp } |v-w|^{2-d}.
 			\end{split}
 		\end{equation}
 		Moreover, by $|v-x_1|\ge \frac{d}{2}$ and (\ref{new2.1}), one has 
 		\begin{equation}\label{newadd218}
 			(2|v-x_1|)^{-1}\cdot \widetilde{\mathbb{P}}_{x_1}\big( \tau_{w}<\tau_{v} \big) \overset{(\ref{new2.1})}{\asymp } |v-w|^{2-d}. 
 		\end{equation}
 		Combining (\ref{newadd217}) and (\ref{newadd218}), we get (\ref{newadd28}). 
 \end{proof}
 		
 		 \begin{lemma}\label{lemma_Kappa}
	For any $d\ge 3$, $D\subset \widetilde{\mathbb{Z}}^d$ and $v,w\in \widetilde{\mathbb{Z}}^d\setminus D$ with $|v-w|\ge d$, 
	\begin{equation}\label{newnew2.5}
		\mathbb{K}_{D\cup \{v,w\}}(v,w)\asymp \big[ \widetilde{G}_D(w,w)\big]^{-1}\cdot \widetilde{\mathbb{P}}_w\big( \tau_v<\tau_D \big). 
	\end{equation}
\end{lemma}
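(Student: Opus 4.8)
The plan is to reduce the boundary excursion kernel $\mathbb{K}_{D\cup\{v,w\}}(v,w)$ to a hitting probability via a last-exit (or first-entrance) decomposition of the Brownian motion, and then to recognize the normalizing factor as $\widetilde G_D(w,w)$. Concretely, I would start from the definition \eqref{24}: for $v'$ at distance $\epsilon$ from $v$ in $\widetilde{\mathbb{Z}}^d$,
\[
\widetilde{\mathbb{P}}_{v'}\big(\tau_{D\cup\{v,w\}}=\tau_w<\infty\big)
=\widetilde{\mathbb{P}}_{v'}\big(\tau_w<\tau_{D\cup\{v\}}\big).
\]
By the strong Markov property applied at the hitting time of $w$ and then time-reversing (using the time-reversal symmetry of Brownian motion already invoked for the symmetry of $\mathbb{K}_D$), this event can be rephrased in terms of excursions from $w$ that reach the $\epsilon$-sphere around $v$ before hitting $D\cup\{v\}$; dividing by $2\epsilon$ and letting $\epsilon\downarrow 0$ produces a quantity of the form $\big[\widetilde G_{D\cup\{v\}}(w,w)\big]^{-1}\cdot \widetilde{\mathbb{P}}_w(\tau_v<\tau_D)$ up to a $d$-dependent constant — this is essentially the standard identification of the excursion kernel with a normalized hitting density, and the same computation underlies \eqref{conduct_line} and \eqref{ostnew}. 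So the first step gives \eqref{newnew2.5} but with $\widetilde G_{D\cup\{v\}}(w,w)$ in place of $\widetilde G_D(w,w)$.

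The second step is to show $\widetilde G_{D\cup\{v\}}(w,w)\asymp \widetilde G_D(w,w)$, i.e. that adding the single absorbing point $v$ (which is at distance $\ge d$ from $w$) changes the Green's function at $w$ only by a bounded factor. The upper bound $\widetilde G_{D\cup\{v\}}(w,w)\le \widetilde G_D(w,w)$ is just monotonicity of the Green's function in the absorbing set. For the lower bound, I would use \eqref{formula_two_green} (or directly a last-exit decomposition):
\[
\widetilde G_D(w,w)-\widetilde G_{D\cup\{v\}}(w,w)
=\widetilde{\mathbb{P}}_w(\tau_v<\tau_D)\cdot \widetilde G_D(v,w)
=\widetilde{\mathbb{P}}_w(\tau_v<\tau_D)\cdot \widetilde{\mathbb{P}}_v(\tau_w<\tau_D)\cdot \widetilde G_D(w,w),
\]
using \eqref{app_green}. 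Since $\widetilde{\mathbb{P}}_w(\tau_v<\tau_D)\le \widetilde{\mathbb{P}}_w(\tau_v<\infty)\lesssim (|v-w|+1)^{2-d}\le (d+1)^{2-d}$ by \eqref{new2.1}, and similarly for the other factor, the subtracted term is at most a constant $\kappa(d)<1$ times $\widetilde G_D(w,w)$ provided $|v-w|$ is large enough; hence $\widetilde G_{D\cup\{v\}}(w,w)\ge (1-\kappa)\widetilde G_D(w,w)$. (If $|v-w|\ge d$ is not quite enough to make $\kappa<1$, one enlarges the threshold to $|v-w|\ge C_0 d$ and handles $d\le |v-w|\le C_0 d$ separately by the crude bound $\widetilde G_{D\cup\{v\}}(w,w)\asymp \widetilde G_D(w,w)\asymp \mathrm{dist}(w,D)\wedge 1$ coming from \eqref{23} together with the observation that $\mathrm{dist}(w,D\cup\{v\})\ge \tfrac12 \mathrm{dist}(w,D)$ when $\mathrm{dist}(w,D)\le$ const, or $\mathrm{dist}(w,D\cup\{v\})\wedge 1 = 1 = \mathrm{dist}(w,D)\wedge 1$ when both are $\ge 1$.) Combining the two steps yields \eqref{newnew2.5}.

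The main obstacle I anticipate is making the first step fully rigorous: the passage from the $\epsilon\downarrow 0$ limit in the definition of $\mathbb{K}$ to a clean expression $\big[\widetilde G_{D\cup\{v\}}(w,w)\big]^{-1}\widetilde{\mathbb{P}}_w(\tau_v<\tau_D)$ requires a careful excursion-theoretic (or last-exit) decomposition on the metric graph, keeping track of the local-time normalization at $w$ and the fact that $v$ may or may not be a lattice point. The cleanest route is probably to write $\mathbb{K}_{D'}(v,w)$ for $D'=D\cup\{v,w\}$ directly as the last-exit expansion of $\widetilde G_{D\cup\{v\}}(v,w)$ with respect to its value at $w$, i.e. to establish the general identity $\mathbb{K}_{D\cup\{v,w\}}(v,w)\asymp \widetilde G_{D\cup\{v\}}(v,w)/\big[\widetilde G_{D\cup\{v\}}(w,w)\big]$ (a metric-graph analogue of the relation between excursion kernels and Green's functions on $\mathbb{Z}^d$), and then apply \eqref{app_green} to rewrite the numerator as $\widetilde{\mathbb{P}}_w(\tau_v<\tau_D)\cdot \widetilde G_{D\cup\{v\}}(w,w)$ — at which point the $\widetilde G_{D\cup\{v\}}(w,w)$ factors cancel up to constants and Step 2 finishes the job. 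The remaining estimates are all routine applications of \eqref{new2.1}, \eqref{app_green}, \eqref{formula_two_green} and \eqref{23}.
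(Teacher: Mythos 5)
Your proposal takes a genuinely different route from the paper, but it has a real gap and one outright error, and the gap is precisely where the work is.

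First, the ``clean route'' identity at the end is false as stated: $\widetilde G_{D\cup\{v\}}(v,w)=0$, since $v$ lies in the absorbing set $D\cup\{v\}$, so the proposed numerator vanishes identically and there is nothing to rewrite via \eqref{app_green}. (Correspondingly, $\widetilde{\mathbb{P}}_v(\tau_w<\tau_{D\cup\{v\}})=0$, so \eqref{app_green} does not produce $\widetilde{\mathbb{P}}_w(\tau_v<\tau_D)\cdot \widetilde G_{D\cup\{v\}}(w,w)$ from it.) Any fix of the numerator — replacing it by $\widetilde G_D(v,w)$, say — introduces a spurious factor $\widetilde G_D(v,v)$ which is \emph{not} $\asymp 1$ under the lemma's hypotheses (no assumption is made on $\mathrm{dist}(v,D)$), so that route also does not deliver \eqref{newnew2.5}.

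Second, and more fundamentally: your Step 1, the claim
$\mathbb{K}_{D\cup\{v,w\}}(v,w)\asymp [\widetilde G_{D\cup\{v\}}(w,w)]^{-1}\widetilde{\mathbb{P}}_w(\tau_v<\tau_D)$,
is — by your own Step 2 — equivalent to the lemma itself, up to constants. So separating it off does not reduce the problem: the ``careful excursion-theoretic/last-exit decomposition'' that you acknowledge as ``the main obstacle'' is in fact the entire content of the proof, and the proposal never carries it out. The $\epsilon\downarrow 0$ limit, the normalization at $w$, and the Markov-property bookkeeping that you defer are exactly what the paper does explicitly: it writes $\mathbb{K}_{D\cup\{v,w\}}(v,w)=\sum_{y\in\widehat F}(2|y-w|)^{-1}\widetilde{\mathbb{P}}_y(\tau_v<\tau_{D\cup\{w\}})$ using \eqref{ost}, introduces the discretized ``escape rate'' $\mathbb{J}\asymp[\widetilde G_D(w,w)]^{-1}$ (via \eqref{23}), and applies a one-step Markov decomposition at $w$ to produce the factor $\widetilde{\mathbb{P}}_w(\tau_v<\tau_D)$, finishing with a matching lower bound on the prefactor. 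Your Step 2, by contrast, is sound and correctly identified (the identity $\widetilde G_{D\cup\{v\}}(w,w)=\widetilde G_D(w,w)\big[1-\widetilde{\mathbb{P}}_w(\tau_v<\tau_D)\widetilde{\mathbb{P}}_v(\tau_w<\tau_D)\big]$ is a valid consequence of \eqref{formula_two_green} and \eqref{app_green}, and transience of the walk for $d\ge3$ gives $\widetilde{\mathbb{P}}_w(\tau_v<\infty)\le 1-c_d$ uniformly over $|v-w|\ge d$), but it corresponds to only the easy part of the paper's analysis; the substantive estimate \eqref{newto3.30} has no counterpart in your write-up.
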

\begin{proof}
	We denote by $F$ the collection of lattice points $y\in \mathbb{Z}^d$ such that one of the intervals incident to $y$ contains $w$. Let $\widehat{F}:= \{y\in F: D\cap I_{[y,w]}=\emptyset\}$. Applying the identity (\ref{ost}), we have 
	\begin{equation}\label{newto323}
			\mathbb{K}_{D\cup \{v,w\}}(v,w)  = \sum\nolimits_{y\in \widehat{F}}\big(2|y-w|\big)^{-1}\cdot   \widetilde{\mathbb{P}}_y\big( \tau_{v}<\tau_{D\cup \{w\}} \big). 
	\end{equation}
	Meanwhile, using the strong Markov property of Brownian motion, one has 
	\begin{equation}\label{newto324}
	\begin{split}
			\widetilde{\mathbb{P}}_y\big( \tau_{v}<\tau_{D} \big)= 	&\widetilde{\mathbb{P}}_y\big( \tau_{v}<\tau_{D\cup \{w\}} \big) + \widetilde{\mathbb{P}}_y\big(\tau_{w}< \tau_{v}<\tau_{D} \big) \\
			=& \widetilde{\mathbb{P}}_y\big( \tau_{v}<\tau_{D\cup \{w\}} \big) + \widetilde{\mathbb{P}}_y\big(\tau_{w}<\tau_{D\cup \{v\}} \big) \cdot \widetilde{\mathbb{P}}_w\big( \tau_{v}<\tau_{D} \big),  
	\end{split}
	\end{equation}
	\begin{equation}\label{newto325}
		\begin{split}
		\widetilde{\mathbb{P}}_w\big( \tau_{v}<\tau_{D} \big) = \mathbb{J}^{-1}\sum\nolimits_{y\in \widehat{F}}    \big(2|y-w|\big)^{-1}\cdot \widetilde{\mathbb{P}}_y\big(\tau_{v}<\tau_{D} \big), 
		\end{split}
	\end{equation}
	where the quantity $\mathbb{J}$ is defined by 
	\begin{equation}
		\mathbb{J}:= \sum\nolimits_{y\in \widehat{F}}  \big(2|y-w|\big)^{-1}+ \sum\nolimits_{y\in F\setminus \widehat{F}} [2\cdot \mathrm{dist}(w,D\cap I_{[y,w]})]^{-1}.
	\end{equation}
	Note that (\ref{23}) implies 
	\begin{equation}\label{newtoJ}
		\mathbb{J}\asymp \big[\mathrm{dist}(w,D)\land 1\big]^{-1}\asymp \big[\widetilde{G}_D(w,w)\big]^{-1}. 
	\end{equation}
	Combining (\ref{newto323}), (\ref{newto324}) and (\ref{newto325}), we have 
	\begin{equation}\label{newto326}
		\begin{split}
			\mathbb{K}_{D\cup \{v,w\}}(v,w) \overset{(\ref{newto323}),(\ref{newto324})}{ =} & \sum\nolimits_{y\in \widehat{F}}\big(2|y-w|\big)^{-1}\cdot   \widetilde{\mathbb{P}}_y\big( \tau_{v}<\tau_{D} \big) \\
			& -\widetilde{\mathbb{P}}_w\big( \tau_{v}<\tau_{D} \big) \sum\nolimits_{y\in \widehat{F}} \tfrac{ \widetilde{\mathbb{P}}_y(\tau_{w}<\tau_{D\cup \{v\}}  ) }{ 2|y-w|  }    \\
 \overset{(\ref{newto325})}{=}&   \Big[ \mathbb{J}- \sum\nolimits_{y\in \widehat{F}}\tfrac{ \widetilde{\mathbb{P}}_y(\tau_{w}<\tau_{D\cup \{v\}}  ) }{ 2|y-w|  }   \Big] \cdot \widetilde{\mathbb{P}}_w\big( \tau_{v}<\tau_{D} \big).
		\end{split}
	\end{equation}

	For the upper bound in (\ref{newnew2.5}), using (\ref{newtoJ}) and (\ref{newto326}), we obtain  
	\begin{equation}\label{newto3.29}
		\begin{split}
			\mathbb{K}_{D\cup \{v,w\}}(v,w) \overset{(\ref{newto326})}{\le}    \mathbb{J}\cdot \widetilde{\mathbb{P}}_w\big( \tau_{v}<\tau_{D} \big) \overset{(\ref{newtoJ})}{\asymp }\big[\widetilde{G}_D(w,w)\big]^{-1} \cdot \widetilde{\mathbb{P}}_w\big( \tau_{v}<\tau_{D} \big).
		\end{split}
	\end{equation}
	For the lower bound, since $1-\widetilde{\mathbb{P}}_y\big(\tau_{w}<\tau_{D\cup \{v\}} \big)\ge \widetilde{\mathbb{P}}_y\big(\tau_{w}=\infty\big)\asymp |y-w|$, it follows from (\ref{newtoJ}) and (\ref{newto326}) that 
	\begin{equation}\label{newto3.30}
		\begin{split}
			\mathbb{K}_{D\cup \{v,w\}}(v,w) \overset{(\ref{newto326})}{=} & \Big( \sum\nolimits_{y\in   \widehat{F}}  \big(2|y-w|\big)^{-1}\cdot \big[1-\widetilde{\mathbb{P}}_y\big(\tau_{w}<\tau_{D\cup \{v\}} \big)\big] \\
			&+  \sum\nolimits_{y\in F\setminus \widehat{F}} [2\cdot \mathrm{dist}(w,D\cap I_{[y,w]})]^{-1} \Big) \cdot  \widetilde{\mathbb{P}}_w\big( \tau_{v}<\tau_{D} \big) \\ 
			\ge &\Big( c|\widehat{F}|+ \sum_{y\in F\setminus \widehat{F}} [2\cdot \mathrm{dist}(w,D\cap I_{[y,w]})]^{-1}  \Big)\cdot \widetilde{\mathbb{P}}_w\big( \tau_{v}<\tau_{D} \big) \\
			\gtrsim & \big[\mathrm{dist}(w,D)\land 1\big]^{-1}\cdot \widetilde{\mathbb{P}}_w\big( \tau_{v}<\tau_{D} \big)\\
			\overset{(\ref{23})}{\asymp } & \big[\widetilde{G}_D(w,w)\big]^{-1}\cdot  \widetilde{\mathbb{P}}_w\big( \tau_{v}<\tau_{D} \big). 
		\end{split}
	\end{equation}
	By (\ref{newto3.29}) and (\ref{newto3.30}), we complete the proof of this lemma.
\end{proof}

  \textbf{Capacity.} The equilibrium measure for $D$ is given by  
 \begin{equation}
 	\mathbb{Q}_D(v):=\lim\limits_{N\to \infty} \sum\nolimits_{w\in \partial B(N)} \mathbb{K}_{D\cup \partial B(N)}(v,w), \ \forall v\in \widetilde{\partial}D. 
 \end{equation} 
 The capacity of $D$ is defined as the total mass of the equilibrium measure, i.e.,
 \begin{equation}
 	\mathrm{cap}(D):= \sum\nolimits_{v\in \widetilde{\partial}D} \mathbb{Q}_D(v). 
 \end{equation}
 It is well-known that $\mathrm{cap}(D)$ is increasing in $D$ (i.e., $\mathrm{cap}(D_1)\le \mathrm{cap}(D_2)$ if $D_1\subset D_2$), sub-additive (i.e., $\mathrm{cap}(D_1\cup D_2)\le\mathrm{cap}(D_1)+\mathrm{cap}(D_2) $ for all $D_1,D_2\subset \widetilde{\mathbb{Z}}^d$), and satisfies $\mathrm{cap}\big(\widetilde{B}(N)\big)\asymp N^{d-2}$ for $N\ge 1$. Another key property is its relation to the hitting probability: for any $D\subset \widetilde{B}(N)$, $x\in \partial B(2N)$ and $D'\subset [\widetilde{B}(3N)]^c$,  
 \begin{equation}\label{310}
 	\widetilde{\mathbb{P}}_x\big(\tau_{D}<\tau_{D'} \big)\asymp N^{2-d}\cdot \mathrm{cap}(D). 
 \end{equation}

 \subsection{Properties of GFFs}

   For any $D\subset \widetilde{\mathbb{Z}}^d$, we denote by $\mathbb{P}^D$ the law of $\{\widetilde{\phi}_v\}_{v\in \widetilde{\mathbb{Z}}^d}$ with zero boundary condition on $D$, i.e., conditioned on $\widetilde{\phi}_v=0$ for all $v\in D$. We denote by $\mathbb{E}^D$ the expectation under $\mathbb{P}^D$. Clearly, the GFF $\widetilde{\phi}_\cdot\sim \mathbb{P}^D$ is mean-zero. In addition, its covariance is given by 
  \begin{equation}
 	\mathbb{E}^{D}\big[\widetilde{\phi}_v\widetilde{\phi}_w \big]=\widetilde{G}_D(v,w),\ \ \forall v,w\in \widetilde{\mathbb{Z}}^d.
 \end{equation}

The formula in (\ref{two-point1}) for the two-point function also works under zero boundary condition. To be precise, for any $D\subset \widetilde{\mathbb{Z}}^d$ and any distinct $v,w\in \widetilde{\mathbb{Z}}^d\setminus D$,  
\begin{equation}\label{29}
\begin{split}
		\mathbb{P}^D\big( v\xleftrightarrow{\ge 0} w\big)=  &\pi^{-1}\arcsin\Big( \tfrac{\widetilde{G}_D(v,w)}{\sqrt{\widetilde{G}_D(v,v)\widetilde{G}_D(w,w)}} \Big) \\
		\asymp &   \tfrac{\widetilde{G}_D(v,w)}{\sqrt{\widetilde{G}_D(v,v)\widetilde{G}_D(w,w)}}= \widetilde{\mathbb{P}}_v\big( \tau_w <\tau_D\big)  \sqrt{\tfrac{\widetilde{G}_D(w,w)}{\widetilde{G}_D(v,v)}}.  
\end{split}
\end{equation}
 The powerful formula of the conditional two-point function  in \cite[(18)]{lupu2018random} states that
\begin{equation}\label{formula_two_point}
	\mathbb{P}^D\big( v\xleftrightarrow{\ge 0} w\mid \widetilde{\phi}_v=a,\widetilde{\phi}_w=b \big)=1-e^{-2ab\cdot \mathbb{K}_{D\cup \{v,w\}}(v,w)}, \ \forall a,b\ge 0. 
\end{equation}

As shown in the following lemma, the difference between the two-point functions under different zero boundary conditions can be approximated by the corresponding difference of Green's functions.

\begin{lemma}\label{lemma_different_two_point}
	For $d\ge 3$, $n\ge 1$, $D\subset \widetilde{\mathbb{Z}}^d$ and $v,w\in \widetilde{\mathbb{Z}}^d\setminus D$,
	\begin{equation}\label{newadd233}
  \big|  \mathbb{P} (v\xleftrightarrow{\ge 0} w  ) -\mathbb{P}^{D}(v\xleftrightarrow{\ge 0} w ) \big|  \lesssim  \widetilde{G}(v,w)-\widetilde{G}_D(v,w). 
\end{equation}
\end{lemma}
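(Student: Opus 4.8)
The plan is to use the formula \eqref{formula_two_point} for the conditional two-point function to write both $\mathbb{P}(v\xleftrightarrow{\ge 0} w)$ and $\mathbb{P}^D(v\xleftrightarrow{\ge 0} w)$ as expectations, over the Gaussian values $(\widetilde{\phi}_v,\widetilde{\phi}_w)$, of $1-e^{-2\widetilde{\phi}_v\widetilde{\phi}_w\cdot \mathbb{K}_{\cdot\cup\{v,w\}}(v,w)}\mathbbm{1}_{\widetilde{\phi}_v,\widetilde{\phi}_w\ge 0}$ (plus the analogous negative-sign term). Since both expressions involve the \emph{same} kind of functional but with two differences — the covariance structure of $(\widetilde{\phi}_v,\widetilde{\phi}_w)$ changes from $\widetilde{G}$ to $\widetilde{G}_D$, and the excursion kernel changes from $\mathbb{K}_{\{v,w\}}$ to $\mathbb{K}_{D\cup\{v,w\}}$ — I would split the difference into these two contributions and bound each. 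First I would control the change in the kernel: by \eqref{conduct_line}--\eqref{newadd28} (or more directly Lemma \ref{lemma_Kappa}) together with \eqref{addnew25}, the quantity $\mathbb{K}_{\{v,w\}}(v,w)-\mathbb{K}_{D\cup\{v,w\}}(v,w)$ is of order $\widetilde{G}(v,w)-\widetilde{G}_D(v,w)$ up to the $\widetilde{G}(w,w)^{-1}\asymp 1$ normalization, using that $\mathrm{dist}(D,\{v,w\})\ge n\ge 1$ so $\widetilde{G}_D(v,v),\widetilde{G}_D(w,w)\asymp 1$; since $1-e^{-x}$ is $1$-Lipschitz in $x\ge 0$ and $\widetilde{\phi}_v\widetilde{\phi}_w$ is integrable, this contributes at most $O(\widetilde{G}(v,w)-\widetilde{G}_D(v,w))$.

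The second, more delicate contribution comes from changing the law of the Gaussian pair. I would couple $\widetilde{\phi}\sim\mathbb{P}$ and $\widetilde{\phi}^D\sim\mathbb{P}^D$ by writing $\widetilde{\phi}=\widetilde{\phi}^D+h$, where $h$ is the harmonic extension (the conditional mean given the values on $D$), independent of $\widetilde{\phi}^D$; the variances satisfy $\mathrm{Var}(h_v)=\widetilde{G}(v,v)-\widetilde{G}_D(v,v)\lesssim n^{2-d}$ and similarly at $w$, and $\mathrm{Cov}(h_v,h_w)=\widetilde{G}(v,w)-\widetilde{G}_D(v,w)$. The idea is that with probability $1-O(n^{2-d})$ the perturbation $h$ is negligible relative to typical values of $\widetilde{\phi}^D_v,\widetilde{\phi}^D_w$ — but one must be careful near the event $\{\widetilde{\phi}_v\approx 0\}$ or $\{\widetilde{\phi}_w\approx 0\}$, where flipping the sign of $h$ changes the indicator. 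Here the key smallness gain is that $\mathbb{P}(v\xleftrightarrow{\ge 0} w)\asymp \widetilde{G}(v,w)$ is itself small, and the event where the sign-flip matters and additionally connection occurs contributes only $O(n^{2-d})\cdot\mathbb{P}(v\xleftrightarrow{\ge 0}w)$: on the bulk event $\{|\widetilde{\phi}^D_v|,|\widetilde{\phi}^D_w|\ge \delta\}$ (for a small constant $\delta$) the two connection probabilities, obtained by integrating $1-e^{-2ab\mathbb{K}}$ against the respective densities and using $\mathbb{K}\asymp\widetilde{G}(v,w)$, agree up to a multiplicative $(1+O(n^{2-d}))$, hence differ by $O(n^{2-d})\mathbb{P}(v\xleftrightarrow{\ge 0}w)$; while on the complement $\{|\widetilde{\phi}^D_v|<\delta\}\cup\{|\widetilde{\phi}^D_w|<\delta\}$ the connection probability under \emph{either} law is itself $O(\delta)\cdot\mathbb{K}(v,w)\lesssim \widetilde{G}(v,w)$, and taking $\delta$ absorbed into constants (or rather, integrating the small-value region exactly) shows this part contributes $O(\widetilde{G}(v,w)-\widetilde{G}_D(v,w))+O(n^{2-d}\mathbb{P}(v\xleftrightarrow{\ge 0}w))$.

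Assembling: the kernel-change term is $O(\widetilde{G}(v,w)-\widetilde{G}_D(v,w))$, the density-change bulk term is $O(n^{2-d}\,\mathbb{P}(v\xleftrightarrow{\ge 0}w))$, and the density-change boundary term splits into a piece controlled by $\mathrm{Cov}(h_v,h_w)=\widetilde{G}(v,w)-\widetilde{G}_D(v,w)$ and a piece controlled by the variances $\lesssim n^{2-d}$ times $\mathbb{P}(v\xleftrightarrow{\ge 0}w)$; summing gives exactly \eqref{newadd233}. The main obstacle I expect is the careful handling of the region where one of the fields is near zero: one needs the explicit density of a centered Gaussian near the origin combined with the bound $1-e^{-2ab\mathbb{K}}\le 2ab\,\mathbb{K}(v,w)$ to see that this region never produces a term larger than $O(\widetilde{G}(v,w)-\widetilde{G}_D(v,w)+n^{2-d}\mathbb{P}(v\xleftrightarrow{\ge 0}w))$ rather than something of order $\sqrt{n^{2-d}}\,\mathbb{P}(v\xleftrightarrow{\ge 0}w)$, which would be too weak; the gain must come from integrating the Gaussian density against the \emph{linear} bound on $1-e^{-x}$, exploiting that $ab$ is small precisely on this region. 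An alternative that avoids the coupling is to differentiate along the family of boundary conditions (or along $\widetilde{G}_t:=(1-t)\widetilde{G}_D+t\widetilde{G}$) and bound the derivative, but the direct coupling seems cleaner given the tools already in the paper.
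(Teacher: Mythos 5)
Your approach is genuinely different from the paper's, and considerably heavier. The paper works directly with the arcsine formula \eqref{29}: the two connection probabilities are $\pi^{-1}\arcsin$ of the correlation ratios $\widetilde{G}_D(v,w)/\sqrt{\widetilde{G}_D(v,v)\widetilde{G}_D(w,w)}$ and $\widetilde{G}(v,w)/\sqrt{\widetilde{G}(v,v)\widetilde{G}(w,w)}$, so their difference is $\asymp$ the difference of the ratios, which a standard add-and-subtract splits into a piece $\asymp\widetilde{G}(v,w)-\widetilde{G}_D(v,w)$ and two pieces of the form $\widetilde{G}_D(v,w)\cdot\bigl(\widetilde{G}_D(v,v)^{-1/2}-\widetilde{G}(v,v)^{-1/2}\bigr)$, each bounded by $n^{2-d}\,\mathbb{P}(v\xleftrightarrow{\ge 0}w)$ using \eqref{addnew25} and $\widetilde{G}_D(\cdot,\cdot)\asymp1$ on the diagonal. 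This is a few lines of algebra that never touch $\mathbb{K}$ or condition on the field values. By starting from \eqref{formula_two_point} and coupling $\widetilde\phi=\widetilde\phi^D+h$, you are essentially re-deriving the arcsine identity and then perturbing it term by term.

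There are two places where your argument, as written, has genuine gaps. First, you assert that $\mathbb{K}_{\{v,w\}}(v,w)-\mathbb{K}_{D\cup\{v,w\}}(v,w)\lesssim\widetilde{G}(v,w)-\widetilde{G}_D(v,w)$ by pointing to Lemma \ref{lemma_Kappa}; but that lemma gives two-sided $\asymp$ bounds for each kernel \emph{separately}, and the difference of two quantities each known only up to multiplicative constants is not controlled — the constants could conspire so that the difference is much larger. A bound on the kernel difference can in fact be extracted from the exact identity \eqref{newto326} in the proof of Lemma \ref{lemma_Kappa} (note that $\mathrm{dist}(D,w)\ge1$ forces $\widehat{F}=F$ and makes $\mathbb{J}$ independent of $D$, so the difference reduces to $\sum_{y\in F}(2|y-w|)^{-1}\widetilde{\mathbb{P}}_y(\tau_D<\tau_v<\tau_w)$, which one then relates to $\widetilde{G}(v,w)-\widetilde{G}_D(v,w)$ via the strong Markov property and Harnack), but this needs its own paragraph and is not a consequence of the statement you cite. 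Second, your bulk estimate — that on $\{|\widetilde\phi^D_v|,|\widetilde\phi^D_w|\ge\delta\}$ the two connection probabilities agree to within a multiplicative $1+O(n^{2-d})$ — hides a cancellation that you do not carry out. A naive Lipschitz bound in the perturbation $h$, whose typical size is $n^{(2-d)/2}$, only gives a multiplicative error $O(n^{(2-d)/2})$, which you correctly flag elsewhere as too weak. What rescues the bound is that the first-order terms $\widetilde\phi^D_v h_w$ and $h_v\widetilde\phi^D_w$ vanish in expectation (since $h$ is independent of $\widetilde\phi^D$ with mean zero, and the bulk event is $\widetilde\phi^D$-measurable), so only second-order contributions of size $\mathrm{Var}(h_v)\vee\mathrm{Var}(h_w)\vee|\mathrm{Cov}(h_v,h_w)|\lesssim n^{2-d}\vee\bigl(\widetilde{G}(v,w)-\widetilde{G}_D(v,w)\bigr)$ survive; this Taylor-plus-cancellation step must be made explicit. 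Your treatment of the near-zero strip is the one place where you identify the right mechanism (quadratic smallness of $1-e^{-2ab\mathbb{K}}$ integrated against the Gaussian density near the axes gives $\epsilon^2$ rather than $\epsilon$) at the right level of care. The route is viable, but the arcsine-formula computation is the shorter path and avoids all of these subtleties.
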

\begin{proof}
Without loss of generality, we assume $\widetilde{G}_D(w,w)\le \widetilde{G}_D(v,v)$. By (\ref{29}),  
	\begin{equation}\label{addnew234}
		\begin{split}
	& \big| 	\mathbb{P}(v\xleftrightarrow{\ge 0} w )-\mathbb{P}^{D}(v\xleftrightarrow{\ge 0} w ) \big|  \asymp  \big|\tfrac{\widetilde{G}(v,w)}{\sqrt{\widetilde{G}(v,v)\widetilde{G}(w,w)}}- \tfrac{\widetilde{G}_D(v,w)}{\sqrt{\widetilde{G}_D(v,v)\widetilde{G}_D(w,w)}} \big|\\
	\le &\mathbb{I}_1+\mathbb{I}_2+\mathbb{I}_3:=\tfrac{\widetilde{G}(v,w)-\widetilde{G}_D(v,w)}{\sqrt{\widetilde{G}(v,v)\widetilde{G}(w,w)}} +\tfrac{\widetilde{G}_D(v,w)}{\sqrt{\widetilde{G}(w,w)}} \cdot \big( \tfrac{1}{\sqrt{\widetilde{G}_D(v,v)}}- \tfrac{1}{\sqrt{\widetilde{G}(v,v)}} \big) \\
	&\ \ \ \ \  \ \ \ \ \ \ \ \ \ \ \ \ \ \ + \tfrac{\widetilde{G}_D(v,w)}{\sqrt{\widetilde{G}_D(v,v)}} \cdot \big(  \tfrac{1}{\sqrt{\widetilde{G}_D(w,w)}} - \tfrac{1}{\sqrt{\widetilde{G}(w,w)}} \big). 
		\end{split}
	\end{equation}
	Since $\widetilde{G}(v,v)\asymp  \widetilde{G}(w,w)\asymp 1$, one has 
	\begin{equation}
		\mathbb{I}_1 \asymp \widetilde{G}(v,w)-\widetilde{G}_D(v,w). 
	\end{equation}
	 Meanwhile, by $\widetilde{G}_D(v,v)\le \widetilde{G}(v,v)\asymp 1$ and (\ref{app_green}), we have 
	 \begin{equation}
	 	\begin{split}
	 			\mathbb{I}_2= &  \tfrac{\widetilde{G}_D(v,w)\cdot [ \widetilde{G}(v,v) - \widetilde{G}_D(v,v) ] }{\sqrt{\widetilde{G}(w,w) \widetilde{G}_D(v,v) \widetilde{G}(v,v)}\cdot \big[\sqrt{\widetilde{G}_D(v,v)} + \sqrt{ \widetilde{G}(v,v)} \big]}  \\
	 			\lesssim &   \widetilde{\mathbb{P}}_w(\tau_{v}<\tau_{D})\big[ \widetilde{G}(v,v) - \widetilde{G}_D(v,v) \big]  \le  \widetilde{G}(v,w)-\widetilde{G}_D(v,w). 
	 	\end{split}
	 \end{equation}
	 Similarly, using $\widetilde{G}_D(w,w)\le \widetilde{G}_D(v,v)$, we have 
	 \begin{equation}
	 	\begin{split}
	 		\mathbb{I}_3=& \tfrac{\widetilde{G}_D(v,w)\cdot [ \widetilde{G}(w,w) - \widetilde{G}_D(w,w) ] }{\sqrt{\widetilde{G}_D(v,v) \widetilde{G}_D(w,w) \widetilde{G}(w,w)}\cdot \big[\sqrt{\widetilde{G}_D(w,w)} + \sqrt{ \widetilde{G}(w,w)} \big]} \\
	 		\lesssim  &  \widetilde{\mathbb{P}}_v(\tau_{w}<\tau_{D})\big[ \widetilde{G}(w,w) - \widetilde{G}_D(w,w) \big] \le   \widetilde{G}(v,w)-\widetilde{G}_D(v,w). 
	 	\end{split}
	 \end{equation}
	 Plugging these estimates on $\mathbb{I}_1$, $\mathbb{I}_2$ and $\mathbb{I}_3$ into (\ref{addnew234}), we complete the proof. 
 \end{proof}

\textbf{Sign clusters.} For any $A\subset \widetilde{\mathbb{Z}}^d$, let $\mathcal{C}_A^{+}:=\{v\in \widetilde{\mathbb{Z}}^d: v\xleftrightarrow{\ge 0} A\}$ (resp. $\mathcal{C}_A^{-}:=\{v\in \widetilde{\mathbb{Z}}^d: v\xleftrightarrow{\le 0} A\}$) denote the positive (resp. negative) cluster containing $A$. We then define the sign cluster $\mathcal{C}_A^{\pm}:=\mathcal{C}_A^{+}\cup \mathcal{C}_A^{-}$. Note that $\mathcal{C}_A^{+}\cap \mathcal{C}_A^{-}=A$. For $\diamond\in \{+,-,\pm\}$, we denote by $\mathcal{F}_{\mathcal{C}_A^{\diamond}}$ the $\sigma$-field generated by $\mathcal{C}_A^{\diamond}$ and all GFF values on $\mathcal{C}_A^{\diamond}$.

 We record the following corollary of the strong Markov property of $\widetilde{\phi}_\cdot \sim \mathbb{P}^{D}$ (see e.g., \cite[Theorem 8]{ding2020percolation}), which we will use multiple times. Suppose that $\diamond\in \{+,-,\pm\}$ and $A\subset D'\subset \widetilde{\mathbb{Z}}^d\setminus D$. Conditioned on $\mathcal{F}_{\mathcal{C}_A^{\diamond}}$, on the event $\{\mathcal{C}_A^{\diamond}=D'\}$, one has 
\begin{equation}\label{revise_new_334}
	\big\{ \widetilde{\phi}_v\big\}_{v\in \widetilde{\mathbb{Z}}^d\setminus (D\cup D')} \overset{\mathrm{d}}{=} \big\{ \widetilde{\phi}_v' + \mathcal{H}_v(D\cup D') \big\}_{v\in \widetilde{\mathbb{Z}}^d\setminus (D\cup D')} .
\end{equation}
Here $\widetilde{\phi}_{\cdot}'$ is distributed under $ \mathbb{P}^{D\cup D'}$, and $\mathcal{H}_{\cdot}(D\cup D')$ is the harmonic function on $\widetilde{\mathbb{Z}}^d\setminus (D\cup D')$ with boundary conditions $\{\widetilde{\phi}_v\}_{v\in D\cup D'}$. Moreover, by the continuity of $\widetilde{\phi}_\cdot$, one has $\widetilde{\phi}_v=0$ for all $v\in    ( \widetilde{\partial}D')\setminus A$, which yields that  
\begin{equation} 
	\begin{split}
		\mathcal{H}_v(D\cup D'):=&\sum\nolimits_{w\in (\widetilde{\partial}D')\cap A} \widetilde{\mathbb{P}}_v\big(\tau_{D\cup D'}=\tau_w<\infty \big)\cdot \widetilde{\phi}_w.
			\end{split}
\end{equation}
In particular, when $A$ is a singleton or $\diamond=\pm$, the continuity of $\widetilde{\phi}_\cdot$ also implies that $\widetilde{\phi}_\cdot$ vanishes on $\widetilde{\partial}\mathcal{C}_A^{\diamond}$, and thus $\mathcal{H}_v(D\cup D')=0$ for all $v\in \widetilde{\mathbb{Z}}^d\setminus (D\cup D')$.

  \textbf{Regularity of connecting probabilities.} \cite{cai2024quasi} established a series of regularity properties for the connecting probabilities of $\widetilde{E}^{\ge 0}$. In what follows, we review some of these properties that are useful for later proofs. The first property states that Harnack's inequality holds for the point-to-set connecting probabilities.

 \begin{lemma}[{\cite[Proposition 1.6]{cai2024quasi}}]\label{lemma_harnack}
 	For any $d\ge 3$ with $d\neq 6$, there exists $C>0$ such that for any $N\ge 1$, $v,w\in \widetilde{B}(N)$ and $A,D\subset [\widetilde{B}(CN)]^c$,  
 	\begin{equation}
 		\mathbb{P}^{D}\big( v\xleftrightarrow{\ge 0} A \big) \asymp \mathbb{P}^{D}\big( w\xleftrightarrow{\ge 0} A\big). 
 		 	\end{equation}
 \end{lemma}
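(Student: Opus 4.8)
The plan is to reduce, via the domain Markov property of the metric-graph GFF, to a Harnack-type comparison for connection probabilities inside a box of radius $\asymp N$ with prescribed (nonnegative) boundary data, and then to convert that into the classical Harnack inequality for nonnegative harmonic functions on $\widetilde{\mathbb{Z}}^d$ by expressing connection probabilities through Brownian hitting probabilities and capacities. By the symmetry between $v$ and $w$ it suffices to prove $\mathbb{P}^{D}(w\xleftrightarrow{\ge 0}A)\gtrsim\mathbb{P}^{D}(v\xleftrightarrow{\ge 0}A)$, and throughout we fix a large dimensional constant $C$.

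\textbf{Step 1 (Markov reduction).} Run the cluster exploration of $\mathcal{C}_A^{+}$ from $A$ inward, freezing each branch as soon as it reaches $\widetilde{\partial}\widetilde{B}(\tfrac{C}{2}N)$; let $\mathcal{K}$ be the frozen set. Then $\mathcal{K}\supseteq A$, $\mathcal{K}$ is connected in $\widetilde{E}^{\ge 0}$ and lies in $[\widetilde{B}(\tfrac{C}{2}N)]^c$ together with its entry points on $\widetilde{\partial}\widetilde{B}(\tfrac{C}{2}N)$; moreover, since any positive path from $\widetilde{B}(N)$ to $A\subseteq[\widetilde{B}(CN)]^c$ must cross $\widetilde{\partial}\widetilde{B}(\tfrac{C}{2}N)$, on the only event that matters $\mathcal{K}$ actually touches that sphere. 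The exploration is a stopping set, so by the strong Markov property of the GFF (the analogue of \eqref{revise_new_334}) the field off $D\cup\mathcal{K}$ is, conditionally on the revealed data, distributed as $\mathbb{P}^{D\cup\mathcal{K}}$ shifted by the harmonic extension $\mathcal{H}$ of the revealed boundary values, with $\mathcal{H}\ge 0$ because $\mathcal{K}\subseteq\widetilde{E}^{\ge 0}$. On this event $\{v\xleftrightarrow{\ge 0}A\}$ coincides with the event that $v$ lies in a cluster of $\{\widetilde{\phi}'_u+\mathcal{H}_u\ge 0\}$ reaching $\mathcal{K}$, and likewise for $w$; hence it suffices to prove, uniformly over admissible pairs $(\mathcal{K},\mathcal{H})$,
\[
\mathbb{P}^{D\cup\mathcal{K}}\big(w\text{ connects to }\mathcal{K}\text{ in }\widetilde{\phi}'+\mathcal{H}\big)\ \gtrsim\ \mathbb{P}^{D\cup\mathcal{K}}\big(v\text{ connects to }\mathcal{K}\text{ in }\widetilde{\phi}'+\mathcal{H}\big).
\]

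\textbf{Step 2 (from connection probabilities to a hitting probability).} Both $v,w$ lie in $\widetilde{B}(N)$, at distance $\asymp N$ from $D\cup\mathcal{K}$. First, the drift contributes only a bounded factor: on one side the connection event in $\widetilde{\phi}'+\mathcal{H}$ contains the connection event in $\widetilde{\phi}'$ since $\mathcal{H}\ge 0$; on the other side, an arm from $v$ to $\mathcal{K}$ must cross $\widetilde{\partial}\widetilde{B}(3N)$, along which $\mathcal{H}$ is comparable by Harnack's inequality for the nonnegative harmonic function $\mathcal{H}$ on $\widetilde{B}(CN)$, and conditioning on the positive cluster of $v$ reduces the effect of $\mathcal{H}$ to a bounded multiplicative constant via the two-point formulas \eqref{29}, \eqref{formula_two_point} together with the fact that the GFF value of a point inside a macroscopic cluster is $\asymp 1$ (\cite[Lemma 3.3]{cai2024one} and the rigidity estimates of Section \ref{section_rigidity_lemma}). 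It then remains to compare $\mathbb{P}^{D\cup\mathcal{K}}(v\xleftrightarrow{\ge 0}\mathcal{K})$ with $\mathbb{P}^{D\cup\mathcal{K}}(w\xleftrightarrow{\ge 0}\mathcal{K})$. Decomposing over the first crossing point $y$ of $\widetilde{\partial}\widetilde{B}(3N)$ by the arm from $v$ and conditioning on the revealed (macroscopic) cluster up to that sphere — whose boundary value at $y$ is $\asymp 1$ — the two-point estimate \eqref{29}, the excursion-kernel bounds of Lemmas \ref{lemma21} and \ref{lemma_Kappa}, and the capacity identity \eqref{310} reduce the comparison to
\[
\widetilde{\mathbb{P}}_w\big(\tau_{\mathcal{K}}<\tau_D\big)\ \asymp\ \widetilde{\mathbb{P}}_v\big(\tau_{\mathcal{K}}<\tau_D\big).
\]

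\textbf{Step 3 (Harnack for hitting probabilities), and the main obstacle.} The map $u\mapsto\widetilde{\mathbb{P}}_u(\tau_{\mathcal{K}}<\tau_D)$ is nonnegative and harmonic for the Brownian motion on $\widetilde{\mathbb{Z}}^d$ on $\widetilde{\mathbb{Z}}^d\setminus(\mathcal{K}\cup D)$, which contains $\widetilde{B}(\tfrac{C}{2}N)\supseteq\widetilde{B}(2N)$ with room to spare; the Harnack inequality for nonnegative harmonic functions on $\widetilde{\mathbb{Z}}^d$ (inherited from the discrete Harnack inequality on $\mathbb{Z}^d$, since such functions are affine along each edge) then gives $\widetilde{\mathbb{P}}_w(\tau_{\mathcal{K}}<\tau_D)\asymp\widetilde{\mathbb{P}}_v(\tau_{\mathcal{K}}<\tau_D)$ with a constant depending only on $d$, and combining Steps 1--3 and exchanging $v$ and $w$ finishes the proof. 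The heart of the argument is Step 2: reducing a point-to-set connection probability to a pure hitting probability, uniformly over the rough frozen set $\mathcal{K}$, while controlling the nonnegative harmonic drift $\mathcal{H}$. This is exactly where one invokes the one-arm and crossing estimates \eqref{one_arm_low}--\eqref{crossing_high} to pin down the geometry of the macroscopic arm at scale $N$; at the critical dimension $d=6$ the same scheme runs but only up to the error factor $\varsigma$ from \eqref{one_arm_6}, which is why $d=6$ is excluded from the statement.
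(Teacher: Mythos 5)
The paper does not prove Lemma~\ref{lemma_harnack}: it is imported verbatim as \cite[Proposition 1.6]{cai2024quasi}, and the only argument the paper supplies is the remark immediately following, which extends the cited lattice-point statement to interior points of edge intervals via a union bound and the FKG inequality. Your proposal, by contrast, attempts a proof from scratch, so it is automatically a different route; the question is whether it actually closes.

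It does not. The weight of the argument sits in your Step 2, and that step glosses over precisely the difficulty that \cite{cai2024quasi} is built to handle. Reducing a point-to-set connection probability $\mathbb{P}^{D\cup\mathcal{K}}\bigl(v\xleftrightarrow{\ge 0}\mathcal{K}\bigr)$ to a pure Brownian hitting probability $\widetilde{\mathbb{P}}_v(\tau_{\mathcal{K}}<\tau_D)$ amounts to decoupling the inner arm from $v$ to $\widetilde{\partial}\widetilde{B}(3N)$ from the outer arm from $\widetilde{\partial}\widetilde{B}(3N)$ to $\mathcal{K}$, uniformly over the frozen set $\mathcal{K}$ and the nonnegative harmonic drift $\mathcal{H}$. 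That is quasi-multiplicativity, which is not a bookkeeping step but the central theorem of \cite{cai2024quasi}; you cannot obtain it from the Harnack inequality for $\mathcal{H}$ alone, because the inner and outer arms are correlated through the field and the ``cost'' of the inner arm depends on the conditioning supplied by the outer one. The statement ``conditioning on the positive cluster of $v$ reduces the effect of $\mathcal{H}$ to a bounded multiplicative constant'' is asserted, not proved, and the only tools you point to are \eqref{29}, \eqref{formula_two_point}, and the ``rigidity estimates of Section~\ref{section_rigidity_lemma}''. The last of these is problematic: the results of Section~\ref{section_rigidity_lemma} that actually concern perturbing the root of an arm (notably Lemma~\ref{lemma_roots}) use Lemma~\ref{lemma_harnack} in their proofs, so invoking them here is circular, while the rigidity lemma itself (Lemma~\ref{lemma_rigidity}) concerns the coexistence of two clusters and has no bearing on the single-arm comparison you need. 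Finally, a symptom that something is missing: the proof as sketched never actually uses \eqref{one_arm_low}--\eqref{crossing_high} except in a closing aside, yet the restriction $d\neq 6$ in the statement comes precisely from the uncertainty in $\theta_6$ entering the quasi-multiplicativity argument — a genuinely functioning proof would show where that enters. You also do not address the extension from lattice points to interior points of edges, which is the one piece of argument the paper does supply (in the remark) and which your ``discrete Harnack on $\mathbb{Z}^d$'' reduction would require.
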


 \begin{remark}
 In the original statement of \cite[Proposition 1.6]{cai2024quasi}, the points $v,w$ were required to lie in the lattice. In Lemma \ref{lemma_harnack}, however, we allow $v,w$ to be located in the interior of some interval of $\widetilde{\mathbb{Z}}^d$. This extension is valid since for any $v\in I_{\{x,y\}}$, it follows from a union bound that $\mathbb{P}^{D}( v\xleftrightarrow{\ge 0} A)$ is bounded from above by a linear combination of $\mathbb{P}^{D}(x\xleftrightarrow{\ge 0} A)$ and $\mathbb{P}^{D}( y\xleftrightarrow{\ge 0} A)$; meanwhile, a direct application of the FKG inequality yields that $\mathbb{P}^{D}( v\xleftrightarrow{\ge 0} A)$ is at least proportional to $\mathbb{P}^{D}(x\xleftrightarrow{\ge 0} A)$ (see e.g.,  \cite[(2.26)]{cai2024quasi}). Similarly, such an extension also applies to \cite[Lemma 5.3]{cai2024quasi}. 
 \end{remark}


 For any $v\in \widetilde{\mathbb{Z}}^d$ and $R\ge 1$, we denote the Euclidean ball $\mathcal{B}_v(R):=\big\{y\in \mathbb{Z}^d: \|\bar{v}-y\|\le R\big\}$ (recalling that $\bar{v}$ represents the closest lattice point to $v$). Similarly, we denote $B_v(R):=\bar{v}+B(R)$ and $\widetilde{B}_v(R):=\bar{v}+\widetilde{B}(R)$. When $x=\bm{0}$, we may omit the subscript $\bm{0}$. Referring to \cite[(5.2)]{cai2024quasi}, we have the following quantitative relation between the point-to-set and boundary-to-set connection probabilities.


 \begin{lemma}\label{lemma_onecluster_box_point}
We denote $F_1:=[\widetilde{B}(10d^2N)]^c$ and $F_{-1}:=\widetilde{B}(\frac{N}{10d^2} )$. For any $d\ge 3$ with $d\neq 6$, $D\subset \widetilde{\mathbb{Z}}^d$, $N\ge 1$, $i \in \{1,-1\}$, and $v\in (\widetilde{\mathbb{Z}}^d\setminus D)\cap  F_i$,
 	\begin{equation}\label{new.234}
 	 \begin{split}
 	 	\mathbb{P}^D\big( v\xleftrightarrow{\ge 0} \partial B(N) \big) \lesssim & N^{-(\frac{d}{2}\boxdot 3)}\sum\nolimits_{x\in \partial \mathcal{B}(d^{i}N)} \mathbb{P}^D\big( v\xleftrightarrow{\ge 0}x  \big). 	 \end{split}
 	\end{equation} 
 \end{lemma}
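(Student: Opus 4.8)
\textbf{Proof proposal for Lemma \ref{lemma_onecluster_box_point}.}

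The plan is to decompose the connection event $\{v \xleftrightarrow{\ge 0}\partial B(N)\}$ according to the first ``intermediate sphere'' that the cluster of $v$ reaches, and then invoke the known crossing-probability estimates (\ref{crossing_low})--(\ref{crossing_high}) to supply the gain factor $N^{-(\frac{d}{2}\boxdot 3)}$. Concretely, suppose first $i=1$, so $v$ is far outside $B(N)$: $\bar v\notin B(10d^2N)$. Any path in $\widetilde E^{\ge 0}$ from $v$ to $\partial B(N)$ must cross the Euclidean annulus between $\partial\mathcal B(dN)$ and $\partial\mathcal B(d^2N)$ (centered at $\bar v$), and in particular must reach some point $x\in\partial\mathcal B(dN)$ and then continue from $x$ to $\partial B(N)$, which now lies at distance comparable to $N$ from $x$. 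By the FKG / BK-type union bound and the strong Markov property of the GFF (\ref{revise_new_334}),
\begin{equation*}
\mathbb{P}^D\big(v\xleftrightarrow{\ge 0}\partial B(N)\big)\lesssim \sum_{x\in\partial\mathcal B(dN)}\mathbb{P}^D\big(v\xleftrightarrow{\ge 0}x\big)\cdot \sup_{x}\mathbb{P}\big(x\xleftrightarrow{\ge 0}\partial B(N)\ \text{within a box of size}\ \asymp N\big),
\end{equation*}
and the last supremum is of order $\rho_d(\asymp 1,\asymp N)$ up to constants, hence $\lesssim N^{-(\frac d2+1)\boxdot (d-4)}/\theta_d(N)\asymp N^{-(\frac d2\boxdot 3)}$ after dividing through; more directly one bounds the second factor by a one-arm probability $\theta_d(cN)\asymp N^{-(\frac d2-1)\boxdot 2}$, and absorbs the remaining $N^{-1\boxdot 1}$ gain from the geometric series over the dyadic sub-annuli between $\mathcal B(dN)$ and $\mathcal B(d^2N)$. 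For $i=-1$, $v$ lies deep inside $B(N)$ (in $\widetilde B(N/10d^2)$), and symmetrically any connection to $\partial B(N)$ must first exit $\mathcal B(d^{-1}N)$, so one writes the analogous decomposition over $x\in\partial\mathcal B(d^{-1}N)$ and the continuation from $x$ to $\partial B(N)$ again costs a one-arm factor at scale $\asymp N$.

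The technical heart is making the ``continuation factor'' quantitatively correct and uniform in the boundary condition $D$ (which lives in $[\widetilde B(CN)]^c$, hence is far from the relevant region). The clean route is: condition on $\mathcal F_{\mathcal C_v^+}$; on the event that this cluster reaches $\partial\mathcal B(d^iN)$ but the connection to $\partial B(N)$ has not yet occurred, the residual field off $\mathcal C_v^+$ is a GFF with zero boundary on $\mathcal C_v^+\cup D$ by (\ref{revise_new_334}); the conditional probability of completing the crossing to $\partial B(N)$ is then, via the two-point estimate (\ref{29}), controlled by a sum of $\widetilde{\mathbb P}_\cdot(\tau_{\partial B(N)}<\cdot)$-type quantities over the (random) boundary of $\mathcal C_v^+$, and a capacity/last-exit argument using (\ref{310}) bounds this by $\mathrm{cap}(\mathcal C_v^+\cap B(3N))\cdot N^{2-d}$. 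Feeding in the typical capacity order and the one-arm estimate reproduces $N^{-(\frac d2\boxdot 3)}$. In practice, however, it is simpler to avoid capacities altogether: use Lemma \ref{lemma_harnack} to replace $\partial B(N)$ by any single point $x'$ on a sphere of radius $\asymp N$ around $\bar v$ at comparable cost, i.e. $\mathbb P^D(v\xleftrightarrow{\ge 0}\partial B(N))\asymp \theta$-type factor $\times$ ... — no, more carefully: apply Lemma \ref{lemma_harnack} to move the ``source'' point, combined with the crossing estimates (\ref{crossing_low})--(\ref{crossing_high}) which already have the form we want, and \cite[(5.2)]{cai2024quasi} which is cited as the template. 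The main obstacle I anticipate is bookkeeping the uniformity over $D$ and over the exact position of $x\in\partial\mathcal B(d^iN)$ relative to $\partial B(N)$ (the distance from such an $x$ to $\partial B(N)$ ranges over $[\asymp N,\asymp N]$ only after the geometry is set up correctly — one must check the constants $10d^2$ versus $d^i$ leave genuine dyadic room), together with handling the critical dimension-adjacent factor $\frac d2\boxdot 3$ coming from $(\frac d2+1)\boxdot 4$ minus the $1\boxdot 1$ absorbed by the annulus sum.

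Finally, to organize the write-up: (1) reduce to $D=\emptyset$ up to the harmless error, since $\mathrm{dist}(D,\widetilde B(3N))\gtrsim N$ and Lemma \ref{lemma_different_two_point} shows the $D$-conditioned and unconditioned connection probabilities are comparable on the scales in play; (2) for $i=1$, cover the annulus $\mathcal B(d^2N)\setminus\mathcal B(dN)$ by $O(\log d)=O(1)$ dyadic shells, apply a union bound and the strong Markov property shell-by-shell, and on the innermost shell apply the crossing estimate $\rho_d$ at ratio $\asymp 1:N$; (3) for $i=-1$, exit $\mathcal B(d^{-1}N)$ first and argue symmetrically; (4) collect the powers of $N$: $\theta_d(cN)=N^{-[(\frac d2-1)\boxdot 2]}$ times a factor $N^{-1\boxdot 1}$ from refining the shell position equals $N^{-[(\frac d2)\boxdot 3]}$, as claimed. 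Every step uses only results already available in the excerpt (Lemmas \ref{lemma_harnack}, \ref{lemma_different_two_point}, the strong Markov decomposition (\ref{revise_new_334}), and the crossing bounds (\ref{crossing_low})--(\ref{crossing_high}) — plus, for $d\ne6$, the clean one-arm orders), so no new input is needed beyond careful constant-chasing.
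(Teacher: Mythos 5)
There is a genuine gap. Your proposal hinges on a two-stage factorization of the one-arm event through an intermediate sphere which is not valid. The events $\{v\xleftrightarrow{\ge 0}x\}$ and $\{v\xleftrightarrow{\ge 0}\partial B(N)\}$ live on the \emph{same} sign cluster and are positively (not negatively) correlated, so there is no BK-type inequality producing the displayed product; and once you condition on $\mathcal F_{\mathcal C_v^+}$ the event $\{v\xleftrightarrow{\ge 0}\partial B(N)\}$ is already determined, so ``the connection to $\partial B(N)$ has not yet occurred'' is vacuous and there is no ``residual field'' left to complete a cluster-measurable event. You also correctly notice that even if such a factorization held it would only give a factor $\theta_d(cN)\asymp N^{-[(\frac d2-1)\boxdot 2]}$ rather than the required $N^{-(\frac d2\boxdot 3)}$; your plan to recover the missing $N^{-1}$ from a ``geometric series over the dyadic sub-annuli'' cannot work, because the cluster necessarily crosses every intermediate shell and the continuation probabilities from those shells to $\partial B(N)$ are all of the same order, so there is nothing to sum geometrically. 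Finally, the lemma allows arbitrary $D\subset\widetilde{\mathbb Z}^d$ with $v\notin D$, so the assumption $\mathrm{dist}(D,\widetilde B(3N))\gtrsim N$ in your step (1) is not among the hypotheses and the reduction to $D=\emptyset$ via Lemma \ref{lemma_different_two_point} is unavailable.

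The paper's proof is structured quite differently and supplies exactly the piece you are missing: it conditions on $\widetilde{\phi}_v=a$ and invokes the conditional one-arm estimate of \cite[Lemma 2.8]{cai2024quasi}, which yields
$\mathbb P^D\big(v\xleftrightarrow{\ge 0}\partial B(N)\mid\widetilde{\phi}_v=a\big)\lesssim a\cdot N^{d-2}\theta_d(N)\cdot|\partial\mathcal B(d^iN)|^{-1}\sum_{x\in\partial\mathcal B(d^iN)}\widetilde{\mathbb P}_x(\tau_v<\tau_D)$.
The ``extra'' $N^{-1}$ is precisely $N^{d-2}/|\partial\mathcal B(d^iN)|\asymp N^{-1}$ and is built into that cited estimate; it is not an annular gain. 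One then integrates against the Gaussian density of $\widetilde{\phi}_v$, using $\mathbb E[\widetilde{\phi}_v\,\mathbbm{1}_{\widetilde{\phi}_v\ge 0}]\lesssim\sqrt{\widetilde G_D(v,v)}$ (cf.\ (\ref{new2.25})), and applies (\ref{29}) to turn $\widetilde{\mathbb P}_x(\tau_v<\tau_D)\sqrt{\widetilde G_D(v,v)}$ into $\mathbb P^D(v\xleftrightarrow{\ge 0}x)$. Without a conditional estimate of this kind — the step your proposal leaves unaddressed — there is no way to produce the exponent $\frac d2\boxdot 3$.
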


 The loop decomposition argument developed in \cite[Section 5.2]{cai2024quasi} provides a series of upper bounds on the connecting probability between multiple sets. We recall a basic application of these bounds as follows.

  
 \begin{lemma}[{\cite[Lemma 5.3]{cai2024quasi}}]\label{lemma_cite_three_point}
 	For any $3\le d\le 5$, there exists $C>0$ such that for any $M\ge 1$, $v,w\in \widetilde{B}(M)$ and $A,D\subset [\widetilde{B}(CM)]^c$, 
 	\begin{equation}\label{ineq_new_lemma_cite_three_point}
 		\mathbb{P}^{D}\big(A\xleftrightarrow{\ge 0} v,A\xleftrightarrow{\ge 0} w \big)\lesssim  \big(|v-w|+1\big)^{-\frac{d}{2}+1}\mathbb{P}^{D}\big(A\xleftrightarrow{\ge 0} v \big). 
 	\end{equation}
 \end{lemma}

Next, we present two properties of the positive cluster in the high-dimensional case (i.e., $d\ge 7$). The first property will be used several times in our application of the second moment method. Since its proof involves numerous computations and are relatively standard, we defer them to Appendix \ref{appendix2}.

\begin{lemma}\label{lemme_technical_high_dimension}
	For $d \ge 7$, there exists $C>0$ such that for any $M\ge 1$, $N\ge CM$ and $y\in \partial B(N)$, 
	\begin{equation}\label{3.12_3.39}
	\max_{x_1\in B_y(M)}	  \sum\nolimits_{x_2\in B_y(M)}   \mathbb{P}\big(\bm{0}\xleftrightarrow{\ge 0} x_1,\bm{0}\xleftrightarrow{\ge 0} x_2\big)  
  		\lesssim  M^{4} N^{2-d}. 
	\end{equation} 
\end{lemma}

When $d\ge 7$, it follows from (\ref{result_typical_volume}) that with high probability, the volume of a large sign cluster within a box of size $M$ is at least $cM^4$. The following property is an analogue of (\ref{result_typical_volume}) under a different conditioning, where $\{\bm{0} \xleftrightarrow{\ge 0} \partial B(N)\}$ is replaced by the event that two distant points are connected. As the proof closely follows the argument in \cite[Section 5.1]{cai2024incipient}, we defer it to Appendix \ref{appendix3}.

 \begin{lemma}\label{lemma_hit_capacity}
 Assume that $d,M,N,y$ satisfy all conditions in Lemma \ref{lemme_technical_high_dimension}. For any $\epsilon>0$, there exists $c(d,\epsilon)>0$ such that  
 	  	\begin{equation}
 	  	  \mathbb{P}\big( \mathcal{V}^+_y(M)  \le cM^4 \mid  \bm{0}\xleftrightarrow{\ge 0} y \big) \le \epsilon.  \end{equation}  
  \end{lemma}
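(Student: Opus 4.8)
The plan is to run a second moment argument on the volume $\mathcal{V}^+_y(M)=\mathrm{vol}(\mathcal{C}^+_{\bar y}\cap B_y(M))$ under the conditional law $\mathbb{P}(\cdot \mid \bm{0}\xleftrightarrow{\ge 0}y)$, following the structure of \cite[Section 5.1]{cai2024incipient} but with the conditioning $\{\bm{0}\xleftrightarrow{\ge 0}\partial B(N)\}$ replaced by $\{\bm{0}\xleftrightarrow{\ge 0}y\}$. Write $Z:=\sum_{x\in B_y(M)}\mathbbm{1}_{\bm{0}\xleftrightarrow{\ge 0}x}$, so that $\{\mathcal{V}^+_y(M)\le cM^4\}\subset \{Z\le cM^4\}$ up to the at-most-constant-factor discrepancy between $\mathcal{V}^+_y(M)$ and $Z$ coming from counting lattice points in $B_y(M)$. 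The Paley–Zygmund inequality then gives, for the conditional law,
\begin{equation*}
\mathbb{P}\big(Z\ge \tfrac12\,\mathbb{E}[Z\mid \bm{0}\xleftrightarrow{\ge 0}y] \mid \bm{0}\xleftrightarrow{\ge 0}y\big)\ \ge\ \frac{1}{4}\cdot\frac{\big(\mathbb{E}[Z\mid \bm{0}\xleftrightarrow{\ge 0}y]\big)^2}{\mathbb{E}[Z^2\mid \bm{0}\xleftrightarrow{\ge 0}y]},
\end{equation*}
so it suffices to show $\mathbb{E}[Z\mid \bm{0}\xleftrightarrow{\ge 0}y]\gtrsim M^4$ and $\mathbb{E}[Z^2\mid \bm{0}\xleftrightarrow{\ge 0}y]\lesssim M^8$, which after multiplying through by $\mathbb{P}(\bm{0}\xleftrightarrow{\ge 0}y)\asymp N^{2-d}$ become unconditional estimates on $\sum_x \mathbb{P}(\bm{0}\xleftrightarrow{\ge 0}x,\bm{0}\xleftrightarrow{\ge 0}y)$ and $\sum_{x_1,x_2}\mathbb{P}(\bm{0}\xleftrightarrow{\ge 0}x_1,\bm{0}\xleftrightarrow{\ge 0}x_2,\bm{0}\xleftrightarrow{\ge 0}y)$. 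Once Paley–Zygmund gives a constant conditional probability of $Z\gtrsim M^4$, one upgrades it to a high-probability (i.e., $1-\epsilon$) statement by a standard independent-annuli / restart argument: inside $B_y(M)$ one has a growing family of dyadic boxes around $\bar y$, and on each the event $\{Z_{\text{annulus}}\gtrsim M^4\}$ has conditional probability bounded below by a constant while being (asymptotically) conditionally independent across annuli, so the probability that all of them fail is at most $\epsilon$ provided $N\ge CM$ is large enough; this is exactly the mechanism used for \eqref{result_typical_volume}.

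For the first moment lower bound I would argue as follows. Conditionally on $\mathcal{F}_{\mathcal{C}^+_{\bm 0}}$, using the Markov property \eqref{revise_new_334} and the two-point formula \eqref{29}, the conditional probability that $\bm 0$ connects to both $x\in B_y(M)$ and to $y$ is, up to constants, $\widetilde{\mathbb{P}}_{\bm 0}(\tau_x<\tau_D)\,\widetilde{\mathbb{P}}_{\bm 0}(\tau_y<\tau_D)$ where $D=\mathcal{C}^+_{\bm 0}$; decomposing the walk from $\bm 0$ to $x$ through its last visit to $\partial B_y(2M)$ and using that $\widetilde{G}_D(x,x)\asymp 1$ for $x$ far from $D$, one gets that the contribution of $x\in B_y(M)$ is comparable to $N^{2-d}$ times a harmonic-measure weight on $\partial B_y(2M)$ times $\widetilde{\mathbb{P}}_z(\tau_x<\infty)$, and summing $\widetilde{\mathbb{P}}_z(\tau_x<\infty)\asymp (|z-x|+1)^{2-d}$ over $x\in B_y(M)$ produces $\asymp M^4$ in dimensions $d\ge 7$ (the $d=d-4>0$ exponent). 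This gives $\sum_{x\in B_y(M)}\mathbb{P}(\bm 0\xleftrightarrow{\ge 0}x,\bm 0\xleftrightarrow{\ge 0}y)\asymp M^4 N^{2-d}$, hence $\mathbb{E}[Z\mid \bm 0\xleftrightarrow{\ge 0}y]\asymp M^4$. The matching upper bound on the first moment, and the second moment bound $\sum_{x_1,x_2\in B_y(M)}\mathbb{P}(\bm 0\xleftrightarrow{\ge 0}x_1,\bm 0\xleftrightarrow{\ge 0}x_2,\bm 0\xleftrightarrow{\ge 0}y)\lesssim M^8 N^{2-d}$, are where Lemma \ref{lemme_technical_high_dimension} enters: conditioning on the cluster $\mathcal{C}^+_{\bm 0}$ that realizes $\{\bm 0\xleftrightarrow{\ge 0}x_1\}$ (or on whichever of the three connections is "processed first"), the remaining two connections become, up to constants, connections in a GFF with zero boundary on that cluster, and one bounds them by the unconditional two/three-point functions of the GFF on $\widetilde{\mathbb{Z}}^d$; the three-point sum factorizes after a last-exit decomposition into the product of a $\{\bm 0\xleftrightarrow{}y\}$-type factor $\asymp N^{2-d}$ and a $B_y(M)$-local factor to which Lemma \ref{lemme_technical_high_dimension} (and the $d\ge 7$ tree-graph / BK-type bound behind it) applies, yielding the $M^8$.

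The main obstacle, and the place where care is genuinely needed rather than routine, is the second moment estimate: one must control the full joint connection probability $\mathbb{P}(\bm 0\xleftrightarrow{\ge 0}x_1,\bm 0\xleftrightarrow{\ge 0}x_2,\bm 0\xleftrightarrow{\ge 0}y)$ for $x_1,x_2\in B_y(M)$, which is a genuine three-arm quantity, and show it does not exceed $M^8 N^{2-d}$ uniformly. The delicate regime is when $x_1$ and $x_2$ are close to each other (so a single large local cluster explains both connections): there the three-point function is dominated by $\mathbb{P}(\bm 0\xleftrightarrow{\ge 0}x_1,\bm 0\xleftrightarrow{\ge 0}y)\asymp (|x_1-x_2|+1)^{4-d}N^{2-d}$-type corrections rather than the naive product, and summing these corrections over $x_1,x_2\in B_y(M)$ must still come out $\lesssim M^8$ — this is exactly the content that Lemma \ref{lemme_technical_high_dimension} (proved in Appendix \ref{appendix2}) is designed to supply, so the proof reduces to correctly invoking that lemma after the Markovian decomposition and then combining it with the first-moment estimate via Paley–Zygmund and the independent-annuli upgrade. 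I do not expect the $d=6$ case to be needed here since the lemma is stated only for $d\ge 7$, which sidesteps the logarithmic subtleties.
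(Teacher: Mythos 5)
Your plan to run Paley--Zygmund on $Z := \sum_{x\in B_y(M)} \mathbbm{1}_{\bm{0}\xleftrightarrow{\ge 0} x}$ under the conditional law is a reasonable way to obtain a \emph{constant} lower bound on $\mathbb{P}(\mathcal{V}^+_y(M) \ge c M^4 \mid \bm{0}\xleftrightarrow{\ge 0} y)$, but the upgrade to $1-\epsilon$ is a genuine gap, and I do not see how the ``independent-annuli / restart'' step can work as sketched. The obstruction is one of scale: the dyadic annulus $B_y(M/2^k)\setminus B_y(M/2^{k+1})$ inside $B_y(M)$ carries expected cluster volume $\asymp (M/2^k)^4$, which for $k\ge 1$ is exponentially smaller than the target $M^4$. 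Only the outermost annulus has a chance of contributing $\gtrsim M^4$, so one does not get many independent constant-probability shots at the success event --- the quantity $\mathcal{V}^+_y(M)$ is dominated by a single scale. A restart by re-randomizing the loop soup inside $B_y(\hat M)$ for $\hat M<M$ runs into the same problem (expected contribution $\hat M^4 \ll M^4$). And a second-moment inequality, by its nature, delivers only a fixed positive constant; it cannot by itself be pushed to probability $1-\epsilon$ without higher-moment control or a different mechanism. A secondary concern is that your second-moment bound $\sum_{x_1,x_2\in B_y(M)}\mathbb{P}(\bm{0}\xleftrightarrow{\ge 0} x_1,\bm{0}\xleftrightarrow{\ge 0} x_2,\bm{0}\xleftrightarrow{\ge 0} y)\lesssim M^8 N^{2-d}$ is a three-point/tree-graph estimate not contained in Lemma \ref{lemme_technical_high_dimension}, which controls only a two-point sum; the factorization you describe would need to account for the change in Green's function after conditioning on a cluster.

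The paper's proof in Appendix \ref{appendix3} sidesteps both issues by using the exploration-martingale method of \cite{ding2020percolation,cai2024incipient}. Lemma \ref{lemma_C1} first shows that, conditionally on $\{\bm{0}\xleftrightarrow{\ge 0} y\}$, the harmonic average $\widehat{\mathcal{H}}_{m,\bm{0}}^+$ of the partial cluster $\widehat{\mathcal{C}}^+_m$ lies in $[\lambda^{-1}m^2N^{2-d},\lambda m^2N^{2-d}]$ up to probability $O(\lambda^{-1})$; this is where Lemma \ref{lemme_technical_high_dimension} is invoked (to bound the tail of $\widehat{\mathcal{H}}$, not to bound a volume second moment). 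One then considers $\mathcal{M}_t=\mathbb{E}[\widetilde{\phi}_{\bm{0}}\mid\mathcal{F}_{\widetilde{\mathcal{C}}_t}]$ as the exploration proceeds from scale $\hat M = M/(C_\dagger C_\star)$ to scale $M$: on the good events the increment $\mathcal{M}_\infty-\mathcal{M}_0$ is $\asymp M^2N^{2-d}$, while the quadratic variation is $\lesssim N^{4-2d}\mathcal{V}^+_y(M)$. If $\mathcal{V}^+_y(M)\le cM^4$, a time-changed Brownian motion must travel $\asymp M^2N^{2-d}$ with quadratic variation $\lesssim cM^4N^{4-2d}$, which has probability $\lesssim e^{-c^{-1}}$. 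This exponential concentration for Brownian motion --- rather than a moment inequality --- is what makes the conditional probability $\le\epsilon$ for arbitrary $\epsilon$ after choosing $c=c(d,\epsilon)$ small enough.
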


 {\color{blue}

 	}

 	 For any $v\in \widetilde{\mathbb{Z}}^d$ and $r>0$, let $\mathbf{B}_v(r):=\{w\in \widetilde{\mathbb{Z}}^d: |v-w|\le r\}$ denote the ball of radius $r$ centered at $v$ under the graph distance on $\widetilde{\mathbb{Z}}^d$.

The subsequent result shows that the two-point function satisfies the maximum principle, up to certain correction factors.

  \begin{lemma}\label{new_lemma_210}
 	For any $d\ge 3$, $D\subset \widetilde{\mathbb{Z}}^d$, $n\ge 10$ and $v,w\in \widetilde{\mathbb{Z}}^d\setminus D$ with $|v-w|\ge 2n$, 
 	\begin{equation}\label{newadd265}
 	\begin{split}
  		\mathbb{P}^D\big(v \xleftrightarrow{\ge 0} w \big) \lesssim  \big( \mathrm{dist}(v,D)\land 1 \big)^{\frac{1}{2}}  
  		\cdot |\partial\mathcal{B}_{{v}}(n)|^{-1}  \sum\nolimits_{z\in \partial\mathcal{B}_{{v}}(n)}  \mathbb{P}^D\big( z  \xleftrightarrow{\ge 0} w  \big).
  		 	\end{split}
 	\end{equation}		 
 \end{lemma}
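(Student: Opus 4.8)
The plan is to convert the statement into a hitting estimate for the metric‑graph Brownian motion via the two‑point formula, and then run a first‑passage decomposition on the sphere $\partial\mathcal{B}_v(n)$.

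\textbf{Reduction.} Apply \eqref{29} to $\mathbb{P}^D(v\xleftrightarrow{\ge 0}w)$ and to $\mathbb{P}^D(z\xleftrightarrow{\ge 0}w)$ for each $z\in\partial\mathcal{B}_v(n)$. Since $\widetilde G_D(z,z)\le\widetilde G(z,z)\lesssim 1$ we get $\mathbb{P}^D(z\xleftrightarrow{\ge 0}w)\gtrsim\widetilde{\mathbb{P}}_z(\tau_w<\tau_D)\sqrt{\widetilde G_D(w,w)}$, while \eqref{23} gives $\mathbb{P}^D(v\xleftrightarrow{\ge 0}w)\asymp\widetilde{\mathbb{P}}_v(\tau_w<\tau_D)\sqrt{\widetilde G_D(w,w)}\,(\mathrm{dist}(v,D)\land 1)^{-1/2}$. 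The common factor $\sqrt{\widetilde G_D(w,w)}$ cancels, so \eqref{newadd265} follows once we establish the random‑walk inequality
\begin{equation}\label{eq_plan_a}
	\widetilde{\mathbb{P}}_v\big(\tau_w<\tau_D\big)\lesssim\big(\mathrm{dist}(v,D)\land 1\big)\cdot|\partial\mathcal{B}_v(n)|^{-1}\sum\nolimits_{z\in\partial\mathcal{B}_v(n)}\widetilde{\mathbb{P}}_z\big(\tau_w<\tau_D\big).
\end{equation}

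\textbf{First‑passage decomposition.} Because $|v-w|\ge 2n$, the point $w$ lies far outside the Euclidean ball $\mathcal{B}_v(n)$ while $v$ sits deep inside it; hence on $\{\tau_w<\tau_D\}$ the walk visits a lattice point of $\partial\mathcal{B}_v(n)$ before $w$ and before $D$. Decomposing at $\tau_{\partial\mathcal{B}_v(n)}$ and using the strong Markov property yields $\widetilde{\mathbb{P}}_v(\tau_w<\tau_D)=\sum_z q_z\,\widetilde{\mathbb{P}}_z(\tau_w<\tau_D)$, with $q_z:=\widetilde{\mathbb{P}}_v(\tau_{\partial\mathcal{B}_v(n)}=\tau_z<\tau_D)$. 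Thus \eqref{eq_plan_a} reduces to the uniform bound
\begin{equation}\label{eq_plan_b}
	q_z\lesssim\big(\mathrm{dist}(v,D)\land 1\big)\,|\partial\mathcal{B}_v(n)|^{-1}\asymp\big(\mathrm{dist}(v,D)\land 1\big)\,n^{1-d},\qquad z\in\partial\mathcal{B}_v(n),
\end{equation}
since then $\widetilde{\mathbb{P}}_v(\tau_w<\tau_D)\le(\max_z q_z)\sum_z\widetilde{\mathbb{P}}_z(\tau_w<\tau_D)$.

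\textbf{Proof of \eqref{eq_plan_b}.} As $n\ge 10$, the graph ball $\mathbf{B}_v(1)$ does not meet $\partial\mathcal{B}_v(n)$, so a walk realizing $\{\tau_{\partial\mathcal{B}_v(n)}=\tau_z<\tau_D\}$ must first hit $\widetilde\partial\mathbf{B}_v(1)$, and it does so before $\tau_D$. Hence, with $v_0$ the point of $D$ closest to $v$,
\[
	q_z\le\widetilde{\mathbb{P}}_v\big(\tau_{\widetilde\partial\mathbf{B}_v(1)}<\tau_D\big)\cdot\max_{y\in\widetilde\partial\mathbf{B}_v(1)}\widetilde{\mathbb{P}}_y\big(\tau_{\partial\mathcal{B}_v(n)}=\tau_z\big).
\]
Every $y\in\widetilde\partial\mathbf{B}_v(1)$ lies within bounded Euclidean distance of the centre $\bar v$ of $\mathcal{B}_v(n)$, and the exit distribution of the metric‑graph Brownian motion — equivalently, of its embedded simple random walk on $\mathbb{Z}^d$ — from near the centre of a Euclidean ball of radius $n$ is comparable to uniform (a standard fact, e.g. \cite{lawler2010random}); this gives the second factor $\lesssim n^{1-d}$. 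For the first factor: if $\mathrm{dist}(v,D)\ge 1$ it is $\le 1\asymp\mathrm{dist}(v,D)\land 1$; if $\mathrm{dist}(v,D)<1$, then $v\in\mathbf{B}_{v_0}(1/2)^\circ$ while $\widetilde\partial\mathbf{B}_v(1)\cap\mathbf{B}_{v_0}(1/2)=\emptyset$, so $\widetilde{\mathbb{P}}_v(\tau_{\widetilde\partial\mathbf{B}_v(1)}<\tau_D)\le\widetilde{\mathbb{P}}_v(\tau_{\widetilde\partial\mathbf{B}_{v_0}(1/2)}<\tau_{v_0})$, and since $v$ and $v_0$ lie within graph distance $\mathrm{dist}(v,D)<1<d$, an optional‑stopping computation based on \eqref{ost} and \eqref{ostnew} (treating separately whether $v$ is a lattice point and in which interval the geodesic from $v$ to $v_0$ runs, and reading off an effective‑conductance ratio at the relevant lattice point) gives $\widetilde{\mathbb{P}}_v(\tau_{\widetilde\partial\mathbf{B}_{v_0}(1/2)}<\tau_{v_0})\lesssim\mathrm{dist}(v,D)$. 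Multiplying the two factors yields \eqref{eq_plan_b}, hence the lemma.

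\textbf{Main obstacle.} The delicate point is the estimate $\widetilde{\mathbb{P}}_v(\tau_{\widetilde\partial\mathbf{B}_v(1)}<\tau_D)\lesssim\mathrm{dist}(v,D)$ when $v$ is within distance $<1$ of $D$. The naive route through Green's functions fails: both $\widetilde G_{\{v_0\}}(v,v)$ and $\widetilde G_{\{v_0\}\cup\widetilde\partial\mathbf{B}_{v_0}(1/2)}(v,v)$ are of order $\mathrm{dist}(v,D)$ by \eqref{23}, so their difference carries no information, and one really must run the one‑dimensional gambler's‑ruin / effective‑conductance argument in the interval(s) surrounding $v$ and $v_0$ to extract the factor $\mathrm{dist}(v,D)$. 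A secondary, essentially bookkeeping, issue is verifying that $|v-w|\ge 2n$ forces the walk to cross $\partial\mathcal{B}_v(n)$ at a lattice point, which requires some care with the interplay between graph distance and Euclidean distance and the corresponding ball conventions.
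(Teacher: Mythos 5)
Your argument is correct and follows the same route as the paper's proof: reduce via \eqref{29} and \eqref{23} to the hitting estimate $\widetilde{\mathbb{P}}_v(\tau_w<\tau_D)\lesssim(\mathrm{dist}(v,D)\land 1)\,|\partial\mathcal{B}_v(n)|^{-1}\sum_z\widetilde{\mathbb{P}}_z(\tau_w<\tau_D)$, and derive that estimate by decomposing at a small metric ball around $v$ (you use radius $1$, the paper uses $1/2$) to harvest the $\mathrm{dist}(v,D)\land 1$ factor, and then at $\partial\mathcal{B}_v(n)$ where the exit distribution is comparable to uniform. The paper chains these two decompositions in a single upper bound and cites potential theory for the near-$v$ gambler's-ruin estimate, rather than spelling out the effective-conductance computation as you do, but the substance is identical; the two ``obstacles'' you flag are both dispatched exactly as you anticipate.
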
 
 \begin{proof}
 	 By the strong Markov property of $\widetilde{S}_\cdot$, $\widetilde{\mathbb{P}}_v\big( \tau_w<\tau_D \big)$ is upper-bounded by 
 	  \begin{equation}\label{2.14}
 \begin{split}
 	 & \widetilde{\mathbb{P}}_v\big( \tau_{\widetilde{\partial}\mathbf{B}_v(\frac{1}{2})}<\tau_D \big)\max_{y_1\in \widetilde{\partial}\mathbf{B}_v(\frac{1}{2}) } \sum_{y_2  \in \partial\mathcal{B}_{{v}}(n)}\widetilde{\mathbb{P}}_{y_1}\big(\tau_{\partial\mathcal{B}_{{v}}(n)}=\tau_{y_2}\big) \cdot \widetilde{\mathbb{P}}_{y_2}\big( \tau_{w} <\tau_D  \big). 
 \end{split}
 \end{equation}
 	Since $D$ is compact, there exists $v_\dagger\in D$ such that $|v-v_\dagger|=\mathrm{dist}(v,D)$. Thus, by $\tau_{D}\le \tau_{v_\dagger}$ and the potential theory of Brownian motion, we have 
\begin{equation}\label{2.15}
	\widetilde{\mathbb{P}}_v\big( \tau_{\widetilde{\partial}\mathbf{B}_v(\frac{1}{2})}<\tau_D \big)\le \widetilde{\mathbb{P}}_v\big( \tau_{\widetilde{\partial}\mathbf{B}_v(\frac{1}{2})}<\tau_{v_\dagger} \big)\asymp  \mathrm{dist}(v,D)\land 1.
\end{equation}
 Meanwhile, since the hitting probability on $\partial\mathcal{B}_{{v}}(n)$ for a Brownian motion starting from $ \widetilde{\partial}\mathbf{B}_v(\frac{1}{2})$ is proportional to the uniform distribution on $\partial\mathcal{B}_{{v}}(n)$ (see e.g. \cite[Lemma 6.3.7]{lawler2010random}), we have: for any $y_1\in \widetilde{\partial}\mathbf{B}_v(\frac{1}{2})$ and $y_2 \in \partial\mathcal{B}_v(n)$, 
 	\begin{equation}\label{new217}
 		\widetilde{\mathbb{P}}_{y_1}\big(\tau_{\partial\mathcal{B}_{{v}}(n)}=\tau_{y_2}\big)  \asymp |\partial\mathcal{B}_{{v}}(n)|^{-1}. 
 	\end{equation}
 	Plugging (\ref{2.15}) and (\ref{new217}) into (\ref{2.14}), we obtain 
 \begin{equation}\label{2.17}
 	\begin{split}
 		\widetilde{\mathbb{P}}_v\big( \tau_w<\tau_D \big)\lesssim    \big(\mathrm{dist}(v,D)\land 1 \big)\cdot   |\partial\mathcal{B}_{{v}}(n)|^{-1}  \sum\nolimits_{z\in \partial\mathcal{B}_{{v}}(n)}   \widetilde{\mathbb{P}}_{z}\big( \tau_{w} <\tau_D  \big). 
 	\end{split}
 \end{equation}
  	 Combined with (\ref{23}) and (\ref{29}), it implies the desired bound (\ref{newadd265}):   
 \begin{equation}
 	\begin{split}
 	\mathbb{P}^D\big(v \xleftrightarrow{\ge 0} w \big) \overset{(\ref{23}),(\ref{29})}{\asymp}	&\widetilde{\mathbb{P}}_v\big( \tau_w<\tau_D \big)\sqrt{ \widetilde{G}_D(w,w)}\cdot \big(\mathrm{dist}(v,D)\land 1 \big)^{-\frac{1}{2}}\\
 	\overset{(\ref{2.17}),\widetilde{G}_D(z,z)\lesssim 1}{\lesssim } & \big(\mathrm{dist}(v,D)\land 1 \big)^{\frac{1}{2}}\cdot |\partial\mathcal{B}_{{v}}(n)|^{-1}  \\
 	&\cdot   \sum\nolimits_{z\in \partial\mathcal{B}_{{v}}(n)}  \widetilde{\mathbb{P}}_{z}\big( \tau_{w} <\tau_D  \big) \sqrt{ \tfrac{\widetilde{G}_D(w,w)}{\widetilde{G}_D(z,z)}}\\
 	\overset{(\ref{29})}{\asymp} & \big( \mathrm{dist}(v,D)\land 1 \big)^{\frac{1}{2}} \cdot   |\partial\mathcal{B}_{{v}}(n)|^{-1}  \sum_{z\in \partial\mathcal{B}_{{v}}(n)}  \mathbb{P}^D\big( z \xleftrightarrow{\ge 0} w  \big). \qedhere
 	\end{split}
 \end{equation}	
 
  \end{proof}

Subsequently, we provide an upper bound on the conditional connecting probability between two points, given the GFF value at one of them. The proof needs the following basic facts: 
\begin{itemize}
	\item  For any random variable $\mathbf{X}$, by Jensen's inequality one has 
 \begin{equation}\label{new2.24}
		\mathbb{E}[\mathbf{X}\land 1]\le  \mathbb{E}[\mathbf{X}]\land 1. 
			\end{equation}

	\item For any $u_1>0$ and $0<u_2\le C$, we have 
	\begin{equation}\label{new226}
		\big(\tfrac{u_1}{u_2}\cdot(u_1+u_2) \big)\land 1 \lesssim \tfrac{u_1}{\sqrt{u_2}}.  
	\end{equation}
	For (\ref{new226}), we consider the following two cases separately. When $u_1\le u_2$, 
	\begin{equation}
		\big(\tfrac{u_1}{u_2}\cdot(u_1+u_2) \big)\land 1 \overset{(u_1+u_2\le 2u_2)}{\le }  2u_1  \overset{(u_2\le C)}{\lesssim} \tfrac{u_1}{\sqrt{u_2}}. 
	\end{equation} 
	When $u_1\ge u_2$, since $a\land 1\le \sqrt{a}$ for $a>0$, one has 
	\begin{equation}
		\big(\tfrac{u_1}{u_2}\cdot(u_1+u_2) \big)\land 1 \overset{(u_1+u_2\le 2u_1)}{\le } \tfrac{2u_1^2}{u_2}\land 1\lesssim \tfrac{u_1}{\sqrt{u_2}}. 
	\end{equation}
	To sum up, we obtain (\ref{new226}).

	\item  For any $\mu,\sigma\ge 0$ and $\mathbf{Y}\sim N(\mu,\sigma^2)$, one has (see e.g., \cite[(3)]{leone1961folded})  
	 	\begin{equation}\label{new2.25}
	\begin{split}
		\mathbb{E}\big[\mathbf{Y}\cdot \mathbbm{1}_{\mathbf{Y}\ge 0} \big]=\tfrac{\sigma}{\sqrt{2\pi}} e^{-\frac{\mu^2}{2\sigma^2}}+ \tfrac{\mu}{2}\cdot \big(1-2\Phi(-\tfrac{\mu}{\sigma}) \big)\le \sigma+ \mu,
		\end{split}	
	\end{equation} 
where $\Phi(t)=\mathbb{P}(\mathbf{Z}\le t)$ for $\mathbf{Z}\sim N(0,1)$.

\end{itemize}

 \begin{lemma}\label{lemma_conditional_two_points}
 	 For any $a>0$, $d\ge 3$, $D\subset \widetilde{\mathbb{Z}}^d$ and $v,w\in \widetilde{\mathbb{Z}}^d\setminus D$ with $|v-w|\ge d$, 
 	 \begin{equation}\label{232}
	 \mathbb{P}^D\big(  v\xleftrightarrow{\ge 0}w \mid  \widetilde{\phi}_v =a \big) \lesssim  \tfrac{a\cdot \mathbb{P}^{D}(v\xleftrightarrow{\ge 0} w )}{\sqrt{\widetilde{G}_{D}(v,v)}}. 
\end{equation} 
 \end{lemma}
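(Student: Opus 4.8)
The plan is to integrate out the value of $\widetilde{\phi}_w$ in the exact conditional two-point formula (\ref{formula_two_point}). Under $\mathbb{P}^D$, conditionally on $\widetilde{\phi}_v=a$ the variable $\widetilde{\phi}_w$ is Gaussian with mean $\mu:=\frac{\widetilde{G}_D(v,w)}{\widetilde{G}_D(v,v)}\,a\ge 0$ and variance $\sigma^2:=\widetilde{G}_D(w,w)-\frac{\widetilde{G}_D(v,w)^2}{\widetilde{G}_D(v,v)}\le \widetilde{G}_D(w,w)$. Writing $\mathbb{K}:=\mathbb{K}_{D\cup\{v,w\}}(v,w)$ and using that $\{v\xleftrightarrow{\ge 0}w\}$ cannot occur when $\widetilde{\phi}_w<0$, formula (\ref{formula_two_point}) (with $\widetilde{\phi}_v=a$ held fixed and $\widetilde{\phi}_w$ integrated) gives
\[
\mathbb{P}^D\big(v\xleftrightarrow{\ge 0}w\mid\widetilde{\phi}_v=a\big)=\mathbb{E}^D\big[\big(1-e^{-2a\widetilde{\phi}_w\mathbb{K}}\big)\mathbbm{1}_{\widetilde{\phi}_w\ge 0}\,\big|\,\widetilde{\phi}_v=a\big].
\]

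Next I would apply the elementary bound $1-e^{-x}\le x\wedge1$ for $x\ge0$, then Jensen's inequality in the form (\ref{new2.24}) applied to the concave increasing map $t\mapsto t\wedge1$ and the nonnegative variable $2a\mathbb{K}(\widetilde{\phi}_w\vee0)$, and finally (\ref{new2.25}) together with $\sigma\le\sqrt{\widetilde{G}_D(w,w)}$ to control the truncated first moment of the conditional Gaussian. This yields
\[
\mathbb{P}^D\big(v\xleftrightarrow{\ge 0}w\mid\widetilde{\phi}_v=a\big)\le\big(2a\mathbb{K}\cdot\mathbb{E}^D[\widetilde{\phi}_w\mathbbm{1}_{\widetilde{\phi}_w\ge0}\mid\widetilde{\phi}_v=a]\big)\wedge1\le\big(2a\mathbb{K}\,(\sqrt{\widetilde{G}_D(w,w)}+\mu)\big)\wedge1.
\]
For $\mathbb{K}$ itself I would invoke Lemma \ref{lemma_Kappa} (whose hypothesis $|v-w|\ge d$ is precisely our standing assumption) together with the identity $\widetilde{\mathbb{P}}_w(\tau_v<\tau_D)=\widetilde{G}_D(v,w)/\widetilde{G}_D(v,v)$ from (\ref{app_green}), which gives $\mathbb{K}\asymp\widetilde{G}_D(v,w)\big/\big(\widetilde{G}_D(v,v)\widetilde{G}_D(w,w)\big)$.

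Inserting this estimate and abbreviating $u:=\frac{a\widetilde{G}_D(v,w)}{\widetilde{G}_D(v,v)\sqrt{\widetilde{G}_D(w,w)}}$ (so that $\mu=u\sqrt{\widetilde{G}_D(w,w)}$ and $2a\mathbb{K}\asymp u/\sqrt{\widetilde{G}_D(w,w)}$), the right-hand side above becomes $\asymp(u+u^2)\wedge1$, which is $\lesssim u$ by (\ref{new226}) (applied with second variable $1$; equivalently just note $(u+u^2)\wedge1\le2u$). Finally the two-point estimate (\ref{29}) gives $\mathbb{P}^D(v\xleftrightarrow{\ge0}w)\asymp\widetilde{G}_D(v,w)/\sqrt{\widetilde{G}_D(v,v)\widetilde{G}_D(w,w)}$, hence $u\asymp a\,\mathbb{P}^D(v\xleftrightarrow{\ge0}w)/\sqrt{\widetilde{G}_D(v,v)}$, which is exactly the claimed bound (\ref{232}). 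There is no genuine obstacle here — the proof just concatenates estimates already recorded in the preliminaries — and the only places needing care are that conditioning can only decrease the variance (this is what lets us drop the $\frac{\widetilde{G}_D(v,w)^2}{\widetilde{G}_D(v,v)}$ term and bound $\sigma\le\sqrt{\widetilde{G}_D(w,w)}$), and that the Jensen step $\mathbb{E}[\mathbf{X}\wedge1]\le\mathbb{E}[\mathbf{X}]\wedge1$ must be applied to the nonnegative part $2a\mathbb{K}(\widetilde{\phi}_w\vee0)$ before truncation, not after.
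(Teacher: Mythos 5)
Your proof is correct and follows essentially the same route as the paper's: apply (\ref{formula_two_point}), pass to the truncated mean via the elementary inequality $1-e^{-x}\le x\wedge 1$ and Jensen (\ref{new2.24}), bound the conditional truncated first moment of $\widetilde{\phi}_w$ by (\ref{new2.25}) together with the monotonicity of the Green's function in $D$, invoke Lemma \ref{lemma_Kappa} for $\mathbb{K}_{D\cup\{v,w\}}(v,w)$, and convert back via (\ref{29}). The only minor point worth noting is that your version writes the standard deviation bound as $\sigma\le\sqrt{\widetilde{G}_D(w,w)}$, which is the literal conclusion of (\ref{new2.25}) with $\sigma^2=\widetilde{G}_{D\cup\{v\}}(w,w)\le\widetilde{G}_D(w,w)$, whereas the paper's display (\ref{new231}) records $\widetilde{G}_D(w,w)$ in place of $\sqrt{\widetilde{G}_D(w,w)}$; the downstream estimate $(u_1/u_2)(u_1+u_2)\wedge 1\lesssim u_1/\sqrt{u_2}$ absorbs this either way, so both versions lead to the same $(u+u^2)\wedge 1\lesssim u$ and hence the claimed bound.
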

 \begin{proof}
 	Using (\ref{formula_two_point}), (\ref{new2.24}) and Lemma \ref{lemma_Kappa}, we have
\begin{equation}\label{225}
\begin{split}
	 & \mathbb{P}^D\big(  v\xleftrightarrow{\ge 0} w \mid  \widetilde{\phi}_v =a \big)\\\overset{(\ref{formula_two_point})
}{\asymp} & \mathbb{E}^{D}\big[ \big( a\cdot \widetilde{\phi}_w\cdot \mathbb{K}_{D\cup \{v,w\}}(v,w)\cdot \mathbbm{1}_{\widetilde{\phi}_w \ge 0} \big) \land 1  \mid  \widetilde{\phi}_v =a \big]\\
	 \overset{(\ref{new2.24}),\mathrm{Lemma}\ \ref{lemma_Kappa}}{\lesssim } &  \Big( \tfrac{a\cdot \widetilde{\mathbb{P}}_w( \tau_v<\tau_D  )}{\widetilde{G}_D(w,w)} \cdot \mathbb{E}^{D}\big[  \widetilde{\phi}_w\cdot   \mathbbm{1}_{\widetilde{\phi}_w\ge 0} \mid   \widetilde{\phi}_v =a  \big]\Big)\land 1.  
\end{split}
\end{equation}
By the strong Markov property of $\widetilde{\phi}_\cdot$, the distribution of $\widetilde{\phi}_w$ under $ \mathbb{P}^{D}(\cdot \mid \widetilde{\phi}_v =a)$ is given by $N\big(a\cdot \widetilde{\mathbb{P}}_w(\tau_{v}<\tau_D ),\widetilde{G}_{D\cup \{v\}}(w,w)\big)$. Thus, by (\ref{new2.25}) and $\widetilde{G}_{D\cup \{v\}}(w,w)\le \widetilde{G}_{D}(w,w)$, we have 
\begin{equation}\label{new231}
	\begin{split}
		\mathbb{E}^{D}\big[  \widetilde{\phi}_w\cdot   \mathbbm{1}_{\widetilde{\phi}_w\ge 0} \mid   \widetilde{\phi}_v =a  \big] \le  a\cdot \widetilde{\mathbb{P}}_w(\tau_{v}<\tau_D ) + \widetilde{G}_{D}(w,w). 
	\end{split}
\end{equation}
By plugging (\ref{new231}) into (\ref{225}) and then applying (\ref{new226}), we get 
\begin{equation}
	\begin{split}
		\mathbb{P}^D\big(  v\xleftrightarrow{\ge 0} w \mid  \widetilde{\phi}_v =a \big) \lesssim \tfrac{a\cdot \widetilde{\mathbb{P}}_w(\tau_{v}<\tau_D )}{\sqrt{\widetilde{G}_{D}(w,w)}} \overset{(\ref{29})}{ \asymp }  \tfrac{a\cdot \mathbb{P}^{D}(v\xleftrightarrow{\ge 0} w )}{\sqrt{\widetilde{G}_{D}(v,v)}}.  \qedhere
	\end{split}
\end{equation}
 \end{proof}

 \subsection{Loop soup}

 A rooted loop on $\widetilde{\mathbb{Z}}^d$ is a path $\widetilde{\varrho}:[0,T)\to \widetilde{\mathbb{Z}}^d$ (where $T\in \mathbb{R}^+$) satisfying $\widetilde{\varrho}(0)=\widetilde{\varrho}(T)$. For any $\alpha>0$, the loop soup of intensity $\alpha$ is the Poisson point process with intensity measure $\alpha\cdot \widetilde{\mu}$. Here the measure $\widetilde{\mu}(\cdot)$ is supported on the space of rooted loops and is defined by 
   \begin{equation}\label{3.10_3.54}
   	\widetilde{\mu}(\cdot) = \int_{v\in \widetilde{\mathbb{Z}}^d} \mathrm{d}\mathrm{m}(v)\int_{0<t<\infty} t^{-1}\widetilde{q}_t(v,v)\widetilde{\mathbb{P}}^t_{v,v}(\cdot) \mathrm{d}t,  
   \end{equation} 
   where $\mathrm{m}(\cdot)$ denotes the Lebesgue measure on $\widetilde{\mathbb{Z}}^d$, and $\widetilde{\mathbb{P}}^t_{x,y}(\cdot)$ denotes the law of the Brownian bridge on $\widetilde{\mathbb{Z}}^d$ with duration $t$, starting from $x$ and ending at $y$, i.e., the Markov process with transition density $\frac{\widetilde{q}_s(x,\cdot)\widetilde{q}_{t-s}(\cdot,y)}{\widetilde{q}_t(x,y)}$ for $0\le s\le t$.


    We say that two rooted loops are equivalent if one can be transformed into the other by a certain time shift. We refer each equivalence class of rooted loops under this relation as a loop (usually denoted by $\widetilde{\ell}$). Since the densities of two equivalent rooted loops in $	\widetilde{\mu}$ are identical, $\widetilde{\mu}$ can be viewed as a measure on the space of loops by forgetting the root of each rooted loop. We use $\mathrm{ran}(\cdot)$ to denote the range of a path, i.e., the collection of points visited by the path. With a slight abuse of notation, we define $\mathrm{ran}(\widetilde{\ell})$ as $\mathrm{ran}(\widetilde{\varrho})$ for any $\widetilde{\varrho}\in \widetilde{\ell}$ (the choice of $\widetilde{\varrho}$ is irrelevant). For a point measure $\mathcal{L}$ consisting of paths, we denote by $\cup \mathcal{L}$ the union of the ranges of all paths in $\mathcal{L}$.

     For any $D\subset \widetilde{\mathbb{Z}}^d$, let $\widetilde{\mu}^D$ denote the restriction of $\widetilde{\mu}$ to the set of loops that are disjoint from $D$. It follows that the truncated point measure $\widetilde{\mathcal{L}}_{\alpha}^{D}:=\widetilde{\mathcal{L}}_{\alpha}\cdot \mathbbm{1}_{\mathrm{ran}(\widetilde{\ell})\cap D=\emptyset}$ is a Poisson point process with intensity measure $\alpha\widetilde{\mu}^{D}$. For any $A,A'\subset \widetilde{\mathbb{Z}}^d$, we denote by $A\xleftrightarrow{(D)} A'$ (resp. $A\xleftrightarrow{} A'$) the event that there exists a path within $\cup \widetilde{\mathcal{L}}_{1/2}^{D}$ (resp. $\cup \widetilde{\mathcal{L}}_{1/2}$) connecting $A$ and $A'$. Moreover, we denote by $\mathcal{C}^{D}_{A}:=\{v\in \widetilde{\mathbb{Z}}^d:v\xleftrightarrow{(D)}A\}$ the union of clusters of $\cup \widetilde{\mathcal{L}}_{1/2}^D$ intersecting $A$. By the thinning property of Poisson point processes, conditioned on the event $\{\mathcal{C}^{D}_{A}=F\}$ (for any proper configuration $F$ of $\mathcal{C}^{D}_{A}$), the point measure $\widetilde{\mathcal{L}}_{1/2}^{D}\cdot \mathbbm{1}_{\mathrm{ran}(\widetilde{\ell})\cap \mathcal{C}^{D}_{A}=\emptyset}$, consisting of loops that are not used in the construction of $\mathcal{C}^{D}_{A}$, has the same distribution as $\widetilde{\mathcal{L}}_{1/2}^{D\cup F}$. This is commonly known as the ``restriction property'' of the loop soup. (P.S. When the superscript $D$ is $\emptyset$, it may be omitted. The same convention applies to notations introduced later.)

        \textbf{Isomorphism theorem.}  \cite[Proposition 2.1]{lupu2016loop} shows that for any $D\subset \widetilde{\mathbb{Z}}^d$, there exist a coupling between $\widetilde{\mathcal{L}}_{1/2}^{D}$ and $\widetilde{\phi}_\cdot\sim \mathbb{P}^D$ satisfying the following properties:
   \begin{enumerate}

   	\item[(i)]  For any $v\in \widetilde{\mathbb{Z}}^d\setminus D$, the total local time at $v$ of loops in $\widetilde{\mathcal{L}}_{1/2}^{D}$, denoted by $\widehat{\mathcal{L}}_{1/2}^{D,v}$, equals to $\frac{1}{2}\widetilde{\phi}_v^2$;

   	\item[(ii)]   Each sign cluster of $\widetilde{\phi}_\cdot$ is exactly a loop cluster of $\widetilde{\mathcal{L}}_{1/2}^{D}$.

   \item[(iii)] On each loop cluster of $\widetilde{\mathcal{L}}_{1/2}^{D}$, the sign of $\widetilde{\phi}_\cdot$ is independently and uniformly chosen from $\{+,-\}$.

   \end{enumerate}
   Here we list some useful corollaries of this isomorphism theorem: 
    \begin{enumerate}

    	\item For any $D\subset \widetilde{\mathbb{Z}}^d$ and any $A_i^{(1)},A_i^{(2)}\subset \widetilde{\mathbb{Z}}^d$ for $1\le i\le k$ ($k\in \mathbb{N}^+$), we denote the events $\mathsf{A}:=\cap_{1\le i\le k} \big\{A_i^{(1)}\xleftrightarrow{\ge 0}  A_i^{(2)}\big\}$ and $\overline{\mathsf{A}}^D:=\cap_{1\le i\le k} \big\{A_i^{(1)}\xleftrightarrow{(D)}  A_i^{(2)}\big\}$. The isomorphism theorem then implies that 
    	  \begin{equation}\label{coro_ineq_iso}
    	2^{-k}\cdot  \mathbb{P}\big(\overline{\mathsf{A}}^D \big) 	\le  \mathbb{P}^D\big(\mathsf{A} \big) \le \mathbb{P}\big(\overline{\mathsf{A}}^D\big). 
    	  \end{equation}
    	Here the second bound follows immediately from Property (ii). To see the first bound, observe that on $\overline{\mathsf{A}}^D$, there exists a collection of at most $k$ loop clusters of $\widetilde{\mathcal{L}}_{1/2}^{D}$ that can certify $\overline{\mathsf{A}}^D$. Thus, for any realization of $\widetilde{\mathcal{L}}_{1/2}^{D}$ under which $\overline{\mathsf{A}}^D$ occurs, one can force the signs of $\widetilde{\phi}_\cdot$ on at most $k$ loop cluster to be $+$ (whose probability is at least $2^{-k}$ by Property (iii)) so that $\mathsf{A}$ holds.

       	\item  For any $D_1\subset D_2\subset \widetilde{\mathbb{Z}}^d$ and $A,A'\subset  \widetilde{\mathbb{Z}}^d$, since $\widetilde{\mathcal{L}}_{1/2}^{D}$ is decreasing in $D$,   
       	\begin{equation}
       		\mathbb{P}\big(A\xleftrightarrow{(D_2)} A' \big)\le \mathbb{P}\big(A\xleftrightarrow{(D_1)} A' \big). 
       	 \end{equation}
       	Combined with (\ref{coro_ineq_iso}), it implies that 
       	 \begin{equation}\label{315}
    	 	\mathbb{P}^{D_2}\big(A\xleftrightarrow{\ge 0} A' \big)\le 2\mathbb{P}^{D_1}\big(A\xleftrightarrow{\ge 0} A' \big). 
    	 \end{equation}

    \end{enumerate}

Next, we estimate the decay rate of the local time on a large loop cluster.

\begin{lemma}\label{lemma25}
	For any $d\ge 3$, there exist $C,c>0$ such that for any $T>0$, $D\subset \widetilde{\mathbb{Z}}^d$ and $v,w\in \widetilde{\mathbb{Z}}^d\setminus D$ with $|v-w|\ge d$, 
	\begin{equation}\label{ineq_lemma_25}
		\mathbb{P}\big( \widehat{\mathcal{L}}_{1/2}^{D,v}\ge T \mid  v\xleftrightarrow{(D)} w \big) \le Ce^{-cT}. 
	\end{equation}
\end{lemma}
\begin{proof}
 By the isomorphism theorem, it suffices to prove that for any $K>0$,  
	\begin{equation}\label{vg37}
		\mathbb{P}^D\big( \widetilde{\phi}_v\ge K \mid  v\xleftrightarrow{\ge 0} w \big) \le Ce^{-cK^2}. 
	\end{equation}
In fact, since $\widetilde{\phi}_v\sim N\big(0,\widetilde{G}_D(v,v)\big)$, $\mathbb{P}^D\big( \widetilde{\phi}_v\ge K , v\xleftrightarrow{\ge 0} w \big)$ can be written as  
\begin{equation}
	\begin{split} 
	 &\int_{a\ge K}  \mathbb{P}^D\big(  v\xleftrightarrow{\ge 0} w \mid  \widetilde{\phi}_v =a \big) \cdot \tfrac{1}{\sqrt{2\pi \widetilde{G}_{D}(v,v)}}e^{-\frac{a^2}{2\widetilde{G}_D(v,v)}} \mathrm{d}a \\
	\overset{(\text{Lemma}\ \ref{lemma_conditional_two_points})}{\lesssim} & \mathbb{P}^{D}(v\xleftrightarrow{\ge 0} w )  \int_{a\ge K}  \tfrac{a }{ \widetilde{G}_{D}(v,v)}\cdot  e^{-\frac{a^2}{2\widetilde{G}_D(v,v)}} \mathrm{d}a \\
	\lesssim  &\mathbb{P}^{D}(v\xleftrightarrow{\ge 0} w )  \cdot e^{-\frac{K^2}{2\widetilde{G}_D(v,v)}} 
	\overset{(\widetilde{G}_D(v,v)\lesssim 1)}{\lesssim }   \mathbb{P}^D(v\xleftrightarrow{\ge 0} w)\cdot e^{-cK^2}. 
		\end{split}
\end{equation}
This yields (\ref{vg37}) and thus completes the proof. 
\end{proof}

    \textbf{Excursion decomposition of loops.} For any path $\widetilde{\eta}:[0,T]\to \widetilde{\mathbb{Z}}^d$ ($T\in \mathbb{R}^+$) and any $0\le t_1<t_2\le T$, we denote by $\widetilde{\eta}[t_1,t_2]$ the sub-path satisfying
\begin{equation}
	\widetilde{\eta}[t_1,t_2](s):= \widetilde{\eta}(t_1+s), \ \forall 0\le s\le t_2-t_1.
\end{equation}

 For any $v\in \widetilde{\mathbb{Z}}^d$ and any loop $\widetilde{\ell}$ intersecting $v$, arbitrarily take a rooted loop $\widetilde{\varrho}\in \widetilde{\ell}$ with $\widetilde{\varrho}(0)=\widetilde{\varrho}(T)=v$ (where $T$ is the duration of $\widetilde{\ell}$). For any $t\in (0,T)$, let $\delta_t^{-}=\delta_t^{-}(\widetilde{\varrho},v):=\sup\{s\le t:\widetilde{\varrho}(s)=v\} $ and $\delta_t^{+}=\delta_t^{+}(\widetilde{\varrho},v):=\inf\{s\ge t:\widetilde{\varrho}(s)=v\}$. We define $\mathscr{E}_v(\widetilde{\varrho})$ as the point measure supported on $\{\widetilde{\varrho}[\delta_t^{-},\delta_t^{+}]:0<t<T,\delta_t^{-}<\delta_t^{+}\}$. Since $\mathscr{E}_v(\widetilde{\varrho})$ is invariant under the selection of $\widetilde{\varrho}\in \widetilde{\ell}$, we also denote $\mathscr{E}_v(\widetilde{\varrho})$ by $\mathscr{E}_v(\widetilde{\ell})$. For completeness, we set $\mathscr{E}_v(\widetilde{\ell}):=0$ when $\widetilde{\ell}$ does not intersect $v$.

Referring to \cite[Section 1]{werner2025switching}, for any $D\subset \widetilde{\mathbb{Z}}^d$, $v\in \widetilde{\mathbb{Z}}^d\setminus D$ and $a>0$, conditioned on the event $\big\{\widehat{\mathcal{L}}_{1/2}^{D,v}=a\big\}$, the point measure $\widetilde{\mathcal{E}}_{v}^D:= \sum\nolimits_{\widetilde{\ell}\in \widetilde{\mathcal{L}}^{D}_{1/2}} \mathscr{E}_v(\widetilde{\ell})$ is a Poisson point process consisting of excursions from $v$. (This property of the loop soup is derived from the rewiring property \cite{werner2016spatial}, which is specific to the intensity $\frac{1}{2}$.) Moreover, its intensity measure can be written as $a\cdot \mathbf{e}_{v,v}^{D}$, where the ``Brownian excursion measure'' $\mathbf{e}_{v,v}^{D}$ is an infinite but $\sigma$-finite measure. Notably, similar to the well-known ``It\^o's excursion measure'', it is not convenient to construct $\mathbf{e}_{v,v}^{D}$ directly. For any distinct $v, w\in \widetilde{\mathbb{Z}}^d$, the measure $\mathbf{e}_{v,w}^{D}$ on the space of excursions from $v$ to $w$ can be obtained by first restricting $\mathbf{e}_{v,v}^{D}$ to the paths that hits $w$, and then keeping only the part of each path until the first time it hits $w$. Note that $\mathbf{e}_{v,w}^{D}$ is a finite measure when $v\neq w$. In addition, $\mathbf{e}_{v,w}^{D}$ (for both the cases $v=w$ and $v\neq w$) satisfies the strong Markov property. I.e., for an excursion $\widetilde{\eta}$ sampled from $\mathbf{e}_{v,w}^{D}$ and for any $A\subset \widetilde{\mathbb{Z}}^d\setminus (D\cup \{v,w\})$, given $\{\widetilde{\eta}(t)\}_{0\le t\le \tau_{A}(\widetilde{\eta})}$ (suppose that $\widetilde{\eta}(\tau_{A}(\widetilde{\eta}))=z$), the law of the remaining part of $\widetilde{\eta}$ is given by
\begin{equation}
 	\widetilde{\mathbb{P}}_z\big( \big\{\widetilde{S}_t \big\}_{0 \le t\le \tau_{w}} \in \cdot  \mid \tau_{D\cup \{v,w\}}=\tau_{w}<\infty \big). 
 \end{equation}
 Moreover, according to the restriction property, $\mathbf{e}_{v,w}^{D}$ can be obtained by restricting $\mathbf{e}_{v,w}:=\mathbf{e}_{v,w}^{\emptyset}$ to excursions that stay disjoint from $D$.

For more details on excursion measures, readers may refer to \cite[Chapter XII]{revuz2013continuous} or \cite[Chapters III and IV]{blumenthal2012excursions}. Subsequently, we present a relation between the total mass of the Brownian excursion measure and the boundary excursion kernel. Although its analogue for one-dimensional Brownian motions is a classical result, we present a proof in the context of metric graphs for the sake of completeness.

\begin{lemma}\label{lemma_total_measure_K}
	For any $d\ge 3$, $D\subset \widetilde{\mathbb{Z}}^d$ and $v\neq w\in  \widetilde{\mathbb{Z}}^d\setminus D$, the total mass of the measure $\mathbf{e}_{v,w}^{D}$ equals to $\mathbb{K}_{D\cup \{v,w\}}(v,w)$. Consequently,
		\begin{equation}
		\bar{\mathbf{e}}_{v,w}^{D}(\cdot):= [\mathbb{K}_{D\cup \{v,w\}}(v,w)]^{-1}\cdot \mathbf{e}_{v,w}^{D}(\cdot)
	\end{equation}
	defines a probability measure.  
\end{lemma}
\begin{proof}
By the strong Markov property of $\mathbf{e}_{v,v}^{D}$, the measure $\mathbf{e}^{D}_{v,v}\big(\big\{\widetilde{\eta}:w\in \mathrm{ran} (\widetilde{\eta})\big\}\big)$ equals the product of the total mass of $\mathbf{e}^{D}_{v,w}$ and the probability $\widetilde{\mathbb{P}}_w(\tau_v<\tau_D)$. Thus, recalling (\ref{24}), it suffices to show that for all sufficiently small $\epsilon>0$, 
	\begin{equation}\label{221}
	\begin{split}
			&\mathbf{e}^{D}_{v,v}\big(\big\{\widetilde{\eta}:w\in \mathrm{ran} (\widetilde{\eta})\big\}\big)\\
			=& \widetilde{\mathbb{P}}_w(\tau_v<\tau_D)\cdot  \big(\tfrac{1}{2\epsilon}+o(\epsilon^{-1}) \big) \sum\nolimits_{z\in \widetilde{\partial}\mathbf{B}_v(\epsilon)}\widetilde{\mathbb{P}}_z\big( \tau_{D\cup \{v,w\}} = \tau_w<\infty\big).
	\end{split}
		\end{equation}
	To see this, for any sufficiently small $\epsilon>0$ satisfying $\mathbf{B}_v(\epsilon)\cap D=\emptyset$, and any $z\in \widetilde{\partial}\mathbf{B}_v(\epsilon)$, it follows from the strong Markov property of $\mathbf{e}_{v,v}^D$ that  
	\begin{equation}\label{222}
	\begin{split}
		 	\frac{	\mathbf{e}^{D}_{v,v}\big(\big\{ \widetilde{\eta}:w\in \mathrm{ran} (\widetilde{\eta}),  \tau_{\widetilde{\partial}\mathbf{B}_v(\epsilon)}(\widetilde{\eta}) =\tau_{z} (\widetilde{\eta}) \big\}\big) }{ 	\mathbf{e}^{D}_{v,v}\big(\big\{ \widetilde{\eta}:  \tau_{\widetilde{\partial}\mathbf{B}_v(\epsilon)}(\widetilde{\eta}) =\tau_{z} (\widetilde{\eta}) \big\}\big)  }  
			= \frac{\widetilde{\mathbb{P}}_z\big( \tau_w<\tau_v<\tau_D \big)}{\widetilde{\mathbb{P}}_z\big(\tau_v<\tau_D \big)}.  
	\end{split}
	\end{equation}
	Note that $\widetilde{\mathbb{P}}_z\big(\tau_v<\tau_D \big)$ converges to $1$ as $\epsilon\to 0$. Meanwhile, by the strong Markov property of Brownian motion, one has $\widetilde{\mathbb{P}}_z\big( \tau_w<\tau_v<\tau_D \big)=\widetilde{\mathbb{P}}_z\big(\tau_{D\cup \{v,w\}}=\tau_w<\infty \big)\cdot \widetilde{\mathbb{P}}_w\big(\tau_{v} < \tau_{D} \big)$. Plugging these two facts into (\ref{222}), we obtain that
	\begin{equation}\label{223}
		\begin{split}
			&\mathbf{e}^{D}_{v,v}\big(\big\{ \widetilde{\eta}:w\in \mathrm{ran} (\widetilde{\eta}) \big)= \widetilde{\mathbb{P}}_w(\tau_v<\tau_D)\cdot   (1+o(1)) \\
			 &\ \ \ \ \ \ \ \ \cdot \sum\nolimits_{z\in \widetilde{\partial}\mathbf{B}_v(\epsilon)} \widetilde{\mathbb{P}}_z\big(\tau_{D\cup \{v,w\}}=\tau_w<\infty \big)\cdot \mathbf{e}^{D}_{v,v}\big(\big\{ \widetilde{\eta}:  \tau_{\widetilde{\partial}\mathbf{B}_v(\epsilon)}(\widetilde{\eta}) =\tau_{z} (\widetilde{\eta}) \big\}\big), 
		\end{split}
	\end{equation}
where the $o(1)$ term depends only on $\epsilon$. Moreover, \cite[Theorem (1.10) in Chapter VI]{revuz2013continuous} shows that $\epsilon$ times the number of Brownian excursions from $v$ to $z$ converges to half of the total local time at $v$ in the $L_p$ norm for all $p\ge 1$, as $\epsilon \to 0$. Meanwhile, recall that conditioned on $\big\{ \widehat{\mathcal{L}}_{1/2}^{D,v}=1\big\}$, the number of excursions in $\widetilde{\mathcal{E}}_{v}^{D}$ hitting $z$ is a Poisson random variable with parameter $\mathbf{e}^{D}_{v,v}\big(\big\{ \widetilde{\eta}:  \tau_{\widetilde{\partial}\mathbf{B}_v(\epsilon)}(\widetilde{\eta}) =\tau_{z} (\widetilde{\eta}) \big\}\big)$. Putting these two observations together, we conclude that 
\begin{equation}\label{224}
	\mathbf{e}^{D}_{v,v}\big(\big\{ \widetilde{\eta}:  \tau_{\widetilde{\partial}\mathbf{B}_v(\epsilon)}(\widetilde{\eta}) =\tau_{z} (\widetilde{\eta}) \big\}\big) = \tfrac{1}{2\epsilon}+ o(\epsilon^{-1}). 
\end{equation} 
Plugging (\ref{224}) into (\ref{223}), we obtain (\ref{221}) and thus complete the proof.
\end{proof}

  Based on Lemmas \ref{lemma21} and \ref{lemma_total_measure_K}, we derive the following bound on the conditional probability that two nearby points are not connected by the loop cluster.

 \begin{lemma}\label{lemma_connect_close_points}
	For any $d\ge 3$, there exists $c>0$ such that for any $a>0$, $v\neq w\in \widetilde{\mathbb{Z}}^d$ with $|v-w|\le 1$, and $D\subset \widetilde{\mathbb{Z}}^d$ with $\mathrm{dist}(D,w)\ge |v-w|$,
	\begin{equation}\label{3.12_3.69}
	   	\mathbb{P}^{D}\big( \{ v\xleftrightarrow{\ge 0} w \}^c \mid \widetilde{\phi}_v=a \big) \le  e^{-ca^2|v-w|^{-1}}. 
	\end{equation}
\end{lemma}
\begin{proof}
	By the isomorphism theorem, it suffices to show that for any $T>0$
	\begin{equation}
		   	\mathbb{P}\big( \{ v\xleftrightarrow{(D)} w \}^c \mid \widetilde{\mathcal{L}}_{1/2}^{D,v}= T  \big) \le Ce^{-cT|v-w|^{-1}}.
	\end{equation}
	To see this, note that on the event $\{v\xleftrightarrow{(D)} w \}^c$, no excursion in $\widetilde{\mathcal{E}}^{D}_v$ reaches $w$. In addition, conditioned on $\{ \widetilde{\mathcal{L}}_{1/2}^{D,v}= T\}$, the number of excursions in $\widetilde{\mathcal{E}}^{D}_v$ intersecting $w$ is a Poisson random variable with parameter 
	\begin{equation}\label{3.11_3.71}
		\begin{split}
			&	T\cdot \mathbf{e}^{D}_{v,v}(\{\widetilde{\eta}:w\in \mathrm{ran}(\widetilde{\eta})\}) \\
				 \overset{(\text{Lemma}\ \ref{lemma_total_measure_K})}{=}&  T\cdot \mathbb{K}_{D\cup \{v,w\}}(v,w) \cdot \widetilde{\mathbb{P}}_w\big( \tau_{v}<\tau_{D} \big)  \overset{(\ref{new2.1}),(\ref{conduct_line})}{\gtrsim } T\cdot |v-w|^{-1}. 
		\end{split}
	\end{equation}
	Consequently, we obtain 
	\begin{equation}
		\begin{split}
				&\mathbb{P}\big( \{ v\xleftrightarrow{(D)} w \}^c \mid \widetilde{\mathcal{L}}_{1/2}^{D,v}= T  \big)\\
				 \le & \mathbb{P}\big( \{ \widetilde{\eta}\in \widetilde{\mathcal{E}}^{D}_v: w\in \mathrm{ran}(\widetilde{\eta})   \} =\emptyset \mid \widetilde{\mathcal{L}}_{1/2}^{D,v}= T  \big) 
				\le  e^{-cT\cdot |v-w|^{-1}}.  \qedhere
		\end{split}
	\end{equation}

\end{proof}

Next, we present two estimates on the total mass of certain excursions.

 \begin{lemma}\label{lemma_excursion_hitting_point}
	For any $d\ge 3$ and $v_1,v_2,v_3\in \widetilde{\mathbb{Z}}^d$ with $|v_i-v_j|\ge d$ for all $i\neq j$, 
	\begin{equation}
		\bar{\mathbf{e}}_{v_1,v_2}(\{\widetilde{\eta}: v_3\in \mathrm{ran}(\{\widetilde{\eta}) \}) \lesssim \tfrac{|v_1-v_3|^{2-d}|v_3-v_2|^{2-d}}{|v_1-v_2|^{2-d}}. 
	\end{equation} 
\end{lemma} 
\begin{proof}
Similar to (\ref{222}), we have: for any $z\in \partial \mathbf{B}_{v_1}(1)$, 
\begin{equation}
		 	\frac{	\mathbf{e}_{v_1,v_2}\big(\big\{ \widetilde{\eta}:v_3\in \mathrm{ran} (\widetilde{\eta}),  \tau_{\widetilde{\partial}\mathbf{B}_{v_1}(1)}(\widetilde{\eta}) =\tau_{z} (\widetilde{\eta}) \big\}\big) }{ 	\mathbf{e}_{v_1,v_2}\big(\big\{ \widetilde{\eta}:  \tau_{\widetilde{\partial}\mathbf{B}_{v_1}(1)}(\widetilde{\eta}) =\tau_{z} (\widetilde{\eta}) \big\}\big)  }  
			= \frac{\widetilde{\mathbb{P}}_{z}\big( \tau_{v_3}<\tau_{v_2}<\tau_{v_1} \big)}{\widetilde{\mathbb{P}}_{z}\big(\tau_{v_2}<\tau_{v_1} \big)}. 
\end{equation}
Note that $	\mathbf{e}_{v_1,v_2}\big(\big\{ \widetilde{\eta}:  \tau_{\widetilde{\partial}\mathbf{B}_{v_1}(1)}(\widetilde{\eta}) =\tau_{z} (\widetilde{\eta}) \big\}\big)\asymp 1$ and $\widetilde{\mathbb{P}}_{z}\big(\tau_{v_2}<\tau_{v_1} \big)\asymp |v_1-v_2|^{2-d}$. In addition, by the strong Markov property of $\widetilde{S}_\cdot\sim \widetilde{\mathbb{P}}_{z}$, one has 
\begin{equation*}
	\widetilde{\mathbb{P}}_{z}\big( \tau_{v_3}<\tau_{v_2}<\tau_{v_1} \big)\le  \widetilde{\mathbb{P}}_{z}\big( \tau_{v_3}<\infty  \big)\cdot \widetilde{\mathbb{P}}_{v_3}\big(  \tau_{v_2}<\infty \big) \overset{}{\asymp} |v_1-v_3|^{2-d}|v_3-v_2|^{2-d}. 
\end{equation*}
 Combining these estimates, we obtain this lemma.   
\end{proof}

\begin{lemma}\label{lemma_new216}
	For any $d\ge 3$, $v\in \widetilde{\mathbb{Z}}^d$ and $N\ge 1$, 
	\begin{equation}\label{newadd..279}
		\mathbf{e}_{v,v}\big(\big\{\widetilde{\eta}: \mathrm{ran}(\widetilde{\eta})\cap \partial B(N)\neq \emptyset \big\} \big)\lesssim N^{2-d}. 
	\end{equation}
\end{lemma}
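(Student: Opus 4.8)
I would compute $\mathbf{e}_{v,v}(\{\mathrm{ran}(\widetilde\eta)\cap\partial B(N)\neq\emptyset\})$ by decomposing an excursion that reaches $\partial B(N)$ at its first hitting time of $\partial B(N)$, exactly in the spirit of the proofs of Lemma~\ref{lemma_total_measure_K} and Lemma~\ref{lemma_excursion_hitting_point}. First I would record that the estimate requires $v$ to be far from $\partial B(N)$, say $\mathrm{dist}(v,\partial B(N))\ge N/2$, which I shall assume (it holds in every application, e.g.\ when $v$ is near the origin); indeed for $v$ at bounded graph distance from some $z\in\partial B(N)$ one already has $\mathbf{e}_{v,v}(\{\mathrm{ran}(\widetilde\eta)\ni z\})\asymp1$ by (\ref{conduct_line}) and Lemma~\ref{lemma_total_measure_K}, so the bound genuinely needs this hypothesis. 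I may also assume $N\ge C_0(d)$, the bounded‑$N$ case being absorbed into the implied constant; in particular $v\notin\partial B(N)$, and for excursions from $v$ the event $\{\mathrm{ran}(\widetilde\eta)\cap\partial B(N)\neq\emptyset\}$ coincides with $\{\tau_{\partial B(N)}(\widetilde\eta)<\infty\}$. Writing such an excursion as the concatenation of its segment up to the first hit $z\in\partial B(N)$ and the remainder from $z$ back to $v$, the strong Markov property of $\mathbf{e}_{v,v}$ shows that the initial segment ending at $z$ is governed by $\mathbf{e}^{\partial B(N)\setminus\{z\}}_{v,z}$, of total mass $\mathbb{K}_{\partial B(N)\cup\{v\}}(v,z)$ by Lemma~\ref{lemma_total_measure_K}, while the remainder contributes the factor $\widetilde{\mathbb{P}}_z(\tau_v<\infty)$; summing over $z$,
\begin{equation*}
\mathbf{e}_{v,v}\big(\{\mathrm{ran}(\widetilde\eta)\cap\partial B(N)\neq\emptyset\}\big)=\sum_{z\in\partial B(N)}\mathbb{K}_{\partial B(N)\cup\{v\}}(v,z)\,\widetilde{\mathbb{P}}_z(\tau_v<\infty).
\end{equation*}

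Next I would estimate the two factors. Since $|v-z|\ge N/2$ for every $z\in\partial B(N)$, (\ref{new2.1}) gives $\widetilde{\mathbb{P}}_z(\tau_v<\infty)\lesssim N^{2-d}$, so it remains to prove $\sum_{z\in\partial B(N)}\mathbb{K}_{\partial B(N)\cup\{v\}}(v,z)\lesssim1$. For this I would apply Lemma~\ref{lemma_Kappa} with $D=\partial B(N)\setminus\{z\}$ and $w=z$ (legitimate because $|v-z|\ge N/2\ge d$), together with the observation that $\widetilde{G}_{\partial B(N)\setminus\{z\}}(z,z)\asymp1$ — by (\ref{23}) and the fact that any two distinct lattice points are at graph distance $\ge d$ on $\widetilde{\mathbb{Z}}^d$ — to obtain $\mathbb{K}_{\partial B(N)\cup\{v\}}(v,z)\asymp\widetilde{\mathbb{P}}_z(\tau_v<\tau_{\partial B(N)\setminus\{z\}})$. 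Then (\ref{app_green}) and the symmetry of $\widetilde{G}$ give
\begin{equation*}
\widetilde{\mathbb{P}}_z\big(\tau_v<\tau_{\partial B(N)\setminus\{z\}}\big)=\widetilde{\mathbb{P}}_v\big(\tau_z<\tau_{\partial B(N)\setminus\{z\}}\big)\cdot\frac{\widetilde{G}_{\partial B(N)\setminus\{z\}}(z,z)}{\widetilde{G}_{\partial B(N)\setminus\{z\}}(v,v)}\asymp\widetilde{\mathbb{P}}_v\big(\tau_z<\tau_{\partial B(N)\setminus\{z\}}\big),
\end{equation*}
where $\widetilde{G}_{\partial B(N)\setminus\{z\}}(v,v)\asymp1$ again by (\ref{23}). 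Since the events $\{\tau_z<\tau_{\partial B(N)\setminus\{z\}}\}$, $z\in\partial B(N)$, are pairwise disjoint (they record the first point of $\partial B(N)$ visited by the walk from $v$), their probabilities sum to $\widetilde{\mathbb{P}}_v(\tau_{\partial B(N)}<\infty)\le1$; hence $\sum_z\mathbb{K}_{\partial B(N)\cup\{v\}}(v,z)\lesssim1$, and combining with the previous display finishes the proof.

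The step I expect to be the crux is the last summation. A naive union bound $\sum_z\mathbf{e}_{v,v}(\{z\in\mathrm{ran}(\widetilde\eta)\})\asymp\sum_z|v-z|^{4-2d}\asymp N^{3-d}$ (using Lemma~\ref{lemma21} and (\ref{new2.1})) is off by a factor $N$, since a typical excursion reaching $\partial B(N)$ meets it at order $N$ points. The point is to index the contributions by the \emph{first} point of $\partial B(N)$ hit — which is what inserting the absorbing set $\partial B(N)\setminus\{z\}$ does — and then to transfer the resulting disjointness from excursions started near $v$ to walks started at $v$ via the Green's function symmetry; that second move is exactly where the hypothesis $\mathrm{dist}(v,\partial B(N))\gtrsim N$ enters, guaranteeing that the relevant Green's functions at $v$ and at $z$ are all comparable to $1$. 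An alternative to the Lemma~\ref{lemma_Kappa}/symmetry step is to use the definition (\ref{24}) of $\mathbb{K}$ directly: $\sum_z\mathbb{K}_{\partial B(N)\cup\{v\}}(v,z)=\lim_{\epsilon\downarrow0}(2\epsilon)^{-1}\sum_{v'\in\widetilde\partial\mathbf{B}_v(\epsilon)}\widetilde{\mathbb{P}}_{v'}(\tau_{\partial B(N)}<\tau_v)$, and $\widetilde{\mathbb{P}}_{v'}(\tau_{\partial B(N)}<\tau_v)\le\widetilde{\mathbb{P}}_{v'}(\tau_{\widetilde\partial\mathbf{B}_v(r)}<\tau_v)\lesssim\epsilon$ for a fixed small $r$ by the optional stopping identity (\ref{ost}), which yields the same $O(1)$ bound. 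Some minor care is also needed when $v$ lies in the interior of an edge near a (non‑boundary) lattice point, but this only affects the elementary verification that $\widetilde{G}_{\partial B(N)\setminus\{z\}}(v,v)\asymp1$ and is routine.
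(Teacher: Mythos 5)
Your proof is correct, but it takes a longer route than the paper, and the two differ chiefly in \emph{where} the excursion is decomposed. You stop the excursion at its first hit of $\partial B(N)$ (a set of $\asymp N^{d-1}$ points), identify the first-segment measure with the boundary excursion kernel $\mathbb{K}_{\partial B(N)\cup\{v\}}(v,\cdot)$, and then must argue that $\sum_{z\in\partial B(N)}\mathbb{K}_{\partial B(N)\cup\{v\}}(v,z)\lesssim 1$ via Lemma~\ref{lemma_Kappa}, Green's function symmetry, and disjointness of the first-hitting events. The paper instead decomposes at the first exit of the \emph{unit} ball $\mathbf{B}_v(1)$: as in (\ref{222}), for each $z\in\widetilde{\partial}\mathbf{B}_v(1)$ the normalizing factors $\mathbf{e}_{v,v}\big(\{\tau_{\widetilde{\partial}\mathbf{B}_v(1)}=\tau_z\}\big)$ and $\widetilde{\mathbb{P}}_z(\tau_v<\infty)$ are both $\asymp 1$ at that fixed scale, so the excursion mass reduces directly to the Brownian estimate $\widetilde{\mathbb{P}}_z(\tau_{\partial B(N)}<\tau_v<\infty)\le \max_{x\in\partial B(N)}\widetilde{\mathbb{P}}_x(\tau_v<\infty)\lesssim N^{2-d}$; the sum over $\partial B(N)$ that you handle explicitly is absorbed into a single strong-Markov inequality. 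Your ``alternative'' route via (\ref{24}) is essentially this computation in disguise, so your detour through Lemma~\ref{lemma_Kappa} buys nothing here. On the other hand, your observation that the statement implicitly requires $\mathrm{dist}(v,\partial B(N))\gtrsim N$ is on point: the paper's final step also uses it (the bound $\max_{x\in\partial B(N)}\widetilde{\mathbb{P}}_x(\tau_v<\infty)\lesssim N^{2-d}$ fails if $v$ sits near $\partial B(N)$), but the hypothesis is left implicit and is only guaranteed in the applications, where $v$ lies at distance of order $N$ from the boundary under consideration.
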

 \begin{proof}
As in (\ref{222}), for any $z\in \partial \mathbf{B}_v(1)$, we have   
 	\begin{equation*}
 			\frac{	\mathbf{e}_{v,v}\big(\big\{ \widetilde{\eta}:\mathrm{ran}(\widetilde{\eta})\cap \partial B(N)\neq \emptyset,  \tau_{\widetilde{\partial}\mathbf{B}_{v}(1)}(\widetilde{\eta}) =\tau_{z} (\widetilde{\eta}) \big\}\big) }{ 	\mathbf{e}_{v ,v}\big(\big\{ \widetilde{\eta}:  \tau_{\widetilde{\partial}\mathbf{B}_{v}(1)}(\widetilde{\eta}) =\tau_{z} (\widetilde{\eta}) \big\}\big)  }  
			= \frac{\widetilde{\mathbb{P}}_{z}\big( \tau_{\partial B(N)}<\tau_{v}<\infty \big)}{\widetilde{\mathbb{P}}_{z}\big(\tau_{v}<\infty \big)}. 
 	\end{equation*}
 	Here $	\mathbf{e}_{v ,v}\big(\big\{ \widetilde{\eta}:  \tau_{\widetilde{\partial}\mathbf{B}_{v}(1)}(\widetilde{\eta}) =\tau_{z} (\widetilde{\eta}) \big\}\big) $ and $ \widetilde{\mathbb{P}}_{z}\big(\tau_{v}<\infty \big)$ are both of order $1$. Moreover, by the strong Markov property of $\widetilde{S}_\cdot\sim \widetilde{\mathbb{P}}_{z}$, one has 
 	\begin{equation}
 		\widetilde{\mathbb{P}}_{z}\big( \tau_{\partial B(N)}<\tau_{v}<\infty \big)\le \max_{x\in \partial B(N)} \widetilde{\mathbb{P}}_x\big(\tau_v<\infty \big) \lesssim N^{2-d}. 
 	\end{equation}
 Putting these estimates together, we obtain the desired bound (\ref{newadd..279}).  \end{proof}

\subsection{Werner's switching identity}

In this subsection, we review a recent major progress in the study of the loop soup clusters. Precisely, the main result in \cite{werner2025switching} (referred to as the ``switching identity'') provides a workable description of the loop cluster that connects two given points. When restricted to $\widetilde{\mathbb{Z}}^d$, it yields: 

  \begin{lemma}[switching identity]\label{lemma_switching}
 	 For any $d\ge 3$, $D\subset \widetilde{\mathbb{Z}}^d$, $v\neq w\in \widetilde{\mathbb{Z}}^d\setminus D$, and $a,b>0$, conditioned on $\big\{ v\xleftrightarrow{} w,  \widehat{\mathcal{L}}_{1/2}^{D,v}=a, \widehat{\mathcal{L}}_{1/2}^{D,w}=b  \big\}$, the occupation field $\big\{ \widehat{\mathcal{L}}_{1/2}^{D,z} \big\}_{z\in \widetilde{\mathbb{Z}}^d}$ has the same distribution as the total local time of the following four independent components:
 	 \begin{itemize}
 	 	\item[(1)] loops in the loop soup $\widetilde{\mathcal{L}}_{1/2}^{D\cup \{v,w\}}$;

 	 	\item[(2)] a Poisson point process with intensity measure $a\cdot \mathbf{e}_{v,v}^{ D\cup  \{w\}}$;

 	 	\item[(3)] a Poisson point process with intensity measure $b \cdot \mathbf{e}_{w,w}^{ D\cup  \{v\}}$;

 	 	\item[(4)] a Poisson point process with intensity measure $\sqrt{ab} \cdot \mathbf{e}_{v,w}^D$, where the number of excursions is conditioned to be odd.

 	 \end{itemize}

 \end{lemma}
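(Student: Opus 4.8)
\emph{Strategy.} The statement is the restriction to $\widetilde{\mathbb{Z}}^d$ of the main theorem of \cite{werner2025switching}, so the cleanest route is to check that its hypotheses hold here and to translate the notation; nonetheless let me outline the mechanism. The canonical Brownian motion on $\widetilde{\mathbb{Z}}^d$ is transient (since $d\ge 3$), so all Green's functions and excursion measures of Section \ref{section3_notation} are finite, and the three measures in items (2)--(4) are exactly the (restricted) Brownian excursion measures defined there; in particular $\mathbf{e}_{v,w}$ is finite with total mass $\mathbb{K}_{\{v,w\}}(v,w)$ by Lemma \ref{lemma_total_measure_K}. The plan is: (a) peel off the loops avoiding $\{v,w\}$; (b) decompose the loops hitting $\{v,w\}$ into a Poisson cloud of sub-excursions, conditionally on the two local times $a,b$; (c) identify the three sub-excursion intensities, the crossing one being delicate; (d) import from \cite{werner2025switching} the fact that conditioning on $\{v\xleftrightarrow{}w\}$ acts on this cloud precisely by forcing the number of crossing sub-excursions to be odd, leaving everything else with its unconditioned law.

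\emph{Peeling and decomposition.} By the restriction property of the loop soup, the loops of $\widetilde{\mathcal{L}}_{1/2}$ whose range avoids $\{v,w\}$ form an independent copy of $\widetilde{\mathcal{L}}_{1/2}^{\{v,w\}}$; these give item (1), and because the occupation field of a collection of loops is the sum of the occupation fields of the individual loops --- and, below, of their sub-excursions --- it suffices to analyze the loops hitting $\{v,w\}$. Conditionally on $\widehat{\mathcal{L}}_{1/2}^{v}=a$ and $\widehat{\mathcal{L}}_{1/2}^{w}=b$, the rewiring property of the intensity-$\tfrac12$ loop soup \cite{werner2016spatial} --- the same input used in Section \ref{section3_notation} to describe $\widetilde{\mathcal{E}}_v^{D}$ --- shows that the family of sub-excursions run by these loops between consecutive visits to $\{v,w\}$ is a Poisson point process, from which the loops are reconstructed by an independent uniform re-pairing of the sub-excursion endpoints at $v$ and at $w$; since re-pairing does not change occupation fields, only this Poisson cloud matters. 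Each sub-excursion is of one of three kinds: from $v$ to $v$ avoiding $w$, from $w$ to $w$ avoiding $v$, or a crossing between $v$ and $w$ truncated at the first hit of the opposite point.

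\emph{Intensities and the geometric mean.} The $v$-to-$v$ sub-excursions avoiding $w$ have intensity $a\cdot\mathbf{e}_{v,v}^{\{w\}}$: compare with the decomposition of the whole soup at the single point $v$, which (as recorded in Section \ref{section3_notation}) has intensity $a\cdot\mathbf{e}_{v,v}$, and restrict the latter to excursions disjoint from $w$, which by the restriction property of $\mathbf{e}_{v,v}$ is $a\cdot\mathbf{e}_{v,v}^{\{w\}}$; this is item (2), and symmetrically the $w$-to-$w$ sub-excursions avoiding $v$ have intensity $b\cdot\mathbf{e}_{w,w}^{\{v\}}$, item (3). The crossing sub-excursions live on the space carrying $\mathbf{e}_{v,w}$ (total mass $\mathbb{K}_{\{v,w\}}(v,w)$); reading the $\{v,w\}$-decomposition as a refinement of the single-point decomposition at $v$ forces a prefactor proportional to $a$, reading it as a refinement at $w$ forces one proportional to $b$, so the only symmetric choice --- and the one that actually closes up, for intensity $\tfrac12$ only --- is the geometric mean $\sqrt{ab}\cdot\mathbf{e}_{v,w}$. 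As a consistency check, combining Lupu's conditional two-point formula (\ref{formula_two_point}) with the uniform per-cluster signs in the isomorphism theorem gives $\mathbb{P}\big(v\xleftrightarrow{}w\mid \widehat{\mathcal{L}}_{1/2}^{v}=a,\widehat{\mathcal{L}}_{1/2}^{w}=b\big)=\tanh\big(c\sqrt{ab}\,\mathbb{K}_{\{v,w\}}(v,w)\big)$ for an explicit constant $c$, which is exactly the ratio $\mathbb{P}(N\text{ odd})/\mathbb{P}(N\text{ even})$ for a Poisson variable $N$ with the mean of the crossing count --- consistent with the claim that conditioning on $\{v\xleftrightarrow{}w\}$ flips the crossing count to odd.

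\emph{Conclusion and the main obstacle.} Assembling the four independent pieces --- item (1), the two single-point excursion clouds (2)--(3), and the crossing cloud conditioned to have an odd number of excursions, which is item (4) --- reconstructs the occupation field conditioned on $\{v\xleftrightarrow{}w,\widehat{\mathcal{L}}_{1/2}^{v}=a,\widehat{\mathcal{L}}_{1/2}^{w}=b\}$, as asserted. The genuinely hard point --- the one I would quote from \cite{werner2025switching} rather than reprove --- is step (d): that the global event $\{v\xleftrightarrow{}w\}$ (which a priori also involves the $\{v,w\}$-avoiding loops) interacts with the Poisson cloud exactly as the parity constraint on the crossing count, leaving items (1)--(3) with their unconditioned laws and mutual independence. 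This is the substance of the switching identity, is specific to the intensity $\tfrac12$, and rests on a rewiring/involution argument rather than on any feature of $\widetilde{\mathbb{Z}}^d$; everything else above is routine given the preliminaries of Section \ref{section3_notation}.
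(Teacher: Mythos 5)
The paper does not prove this lemma: it records that the main theorem of \cite{werner2025switching}, specialized to $\widetilde{\mathbb{Z}}^d$, takes this form, and gives no argument of its own. Your proposal does the same thing for the step you label (d), which you explicitly cite rather than reprove, so at the level of what is actually established your treatment and the paper's coincide. Your surrounding sketch --- peeling off $\widetilde{\mathcal{L}}_{1/2}^{\{v,w\}}$ by thinning, decomposing the remaining loops into a Poisson cloud of sub-excursions via the rewiring property, identifying the single-point intensities $a\,\mathbf{e}_{v,v}^{\{w\}}$ and $b\,\mathbf{e}_{w,w}^{\{v\}}$ by restricting the one-point excursion measure to paths avoiding the other point --- is a faithful outline of the mechanism inside \cite{werner2025switching} and is compatible with everything in Section \ref{section3_notation}.

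One caveat about your ``consistency check,'' which does not quite close up. Using the exact conductance identity $\mathbb{K}_{\{v,w\}}(v,w)=\widetilde{G}(v,w)/[\widetilde{G}(v,v)\widetilde{G}(w,w)-\widetilde{G}(v,w)^2]$ (the two-node contraction of the Gaussian network), the $\mathfrak{g}_{v,w}$ ratio in the sign-matching probability simplifies to $e^{4\sqrt{ab}\,\mathbb{K}}$, and combining with (\ref{250}) gives
\begin{equation*}
\mathbb{P}\big(v\xleftrightarrow{}w\mid \widehat{\mathcal{L}}_{1/2}^{v}=a,\widehat{\mathcal{L}}_{1/2}^{w}=b\big)
=\frac{1-e^{-4\sqrt{ab}\,\mathbb{K}_{\{v,w\}}(v,w)}}{1+e^{-4\sqrt{ab}\,\mathbb{K}_{\{v,w\}}(v,w)}}
=\tanh\big(2\sqrt{ab}\,\mathbb{K}_{\{v,w\}}(v,w)\big),
\end{equation*}
so your ``explicit constant $c$'' is $2$. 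But the unconditional mean of the crossing count in item (4) is $\sqrt{ab}\,\|\mathbf{e}_{v,w}\|=\sqrt{ab}\,\mathbb{K}_{\{v,w\}}(v,w)$ by Lemma \ref{lemma_total_measure_K}, so the odd/even ratio you invoke is $\tanh\big(\sqrt{ab}\,\mathbb{K}_{\{v,w\}}(v,w)\big)$: the two disagree by a factor of $2$ inside the $\tanh$. More fundamentally, there is no unconditional decomposition for which $\{v\xleftrightarrow{}w\}$ literally coincides with $\{N\text{ odd}\}$ --- a single crossing (so $N=1$) already connects $v$ to $w$, and item (1) loops can also mediate connections --- so the switching identity is a distributional identity between two constructions, not a parity characterization of the connection event, which is exactly why step (d) cannot be replaced by a Poisson parity computation. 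Since you defer (d) to \cite{werner2025switching} anyway, this does not affect the overall soundness, but the heuristic should be dropped or reworked.
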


%
%
%
%
%
%
%
%
%
%

 To facilitate the use of this switching identity, we record the following notations.  \begin{itemize}

 	\item  We denote the conditional probability measure 
 	\begin{equation}\label{density_p}
 	\widehat{\mathbb{P}}_{v\leftrightarrow w,a,b}^D(\cdot)	:=\mathbb{P}\big(\cdot \mid v\xleftrightarrow{(D)} w,  \widehat{\mathcal{L}}_{1/2}^{D,v}=a, \widehat{\mathcal{L}}_{1/2}^{D,w}=b  \big). 
 	\end{equation}
 	Let $\mathfrak{p}_{v\leftrightarrow w,a,b}^D$ denote the density (at the point $(a, b)$) of the event $\big\{v\xleftrightarrow{(D)} w,  \widehat{\mathcal{L}}_{1/2}^{D,v}=a, \widehat{\mathcal{L}}_{1/2}^{D,w}=b\big\}$. By the isomorphism theorem and (\ref{formula_two_point}), one has 
 	 	\begin{equation}\label{250}
 		\mathfrak{p}_{v\leftrightarrow w,a,b}=(ab)^{-\frac{1}{2}}\cdot \mathfrak{g}_{v,w}(\sqrt{2a},\sqrt{2b} )\cdot  \big(1-e^{-4\sqrt{ab}\cdot \mathbb{K}_{ \{v,w\}}(v,w)}\big),
 	\end{equation}
 	where $\mathfrak{g}_{v,w}(\cdot,\cdot)$ is the density of $(\widetilde{\phi}_{v},\widetilde{\phi}_{w})$ (see e.g., \cite[Page 329]{whittaker1924calculus}), i.e., 
 	\begin{equation}\label{251}
 		\mathfrak{g}_{v,w}(s,t)= \tfrac{\mathrm{exp}\big( -\frac{1}{2(1-\delta^2)} \cdot \big[\frac{s^2}{\widetilde{G}(v,v)}-\frac{2\delta st}{\sqrt{\widetilde{G}(v,v)\widetilde{G}(w,w)}}+\frac{t^2}{\widetilde{G}(w,w)} \big] \big)}{2\pi \sqrt{\widetilde{G}(v,v)\widetilde{G}(w,w)(1- \delta^2)}},\ \forall s,t\ge 0. 
 	\end{equation}
 		Here we denote $\delta:=\frac{\widetilde{G}(v,w)}{\sqrt{\widetilde{G}(v,v)\widetilde{G}(w,w)}}$.


 	\item We define $\widecheck{\mathbb{P}}_{v\leftrightarrow w,a,b}^D(\cdot)$ as the probability measure generated by Components (1)-(4) in Lemma \ref{lemma_switching} (we also denote its expectation by $\widecheck{\mathbb{E}}_{v\leftrightarrow w,a,b}^D[\cdot]$). Under the measure $\widecheck{\mathbb{P}}^D_{v\leftrightarrow w,a,b}(\cdot)$, for each $i\in \{1,2,3,4\}$, we denote the Poisson point process in Component $(i)$ by $\mathcal{P}^{(i)}$. We define 
    \begin{equation}
    	\widecheck{\mathcal{C}}:=\big\{ z\in \widetilde{\mathbb{Z}}^d: z\xleftrightarrow{\cup  (\sum_{1\le i\le 4 }\mathcal{P}^{(i)})} v \big\}     \end{equation}
 	 as the cluster containing $v$, formed by all loops and excursions in $\sum_{1\le i\le 4} \mathcal{P}^{(i)}$.

 \end{itemize}

When $D=\emptyset$, we drop the superscript $\emptyset$ in these notations and write $\widehat{\mathcal{L}}_{1/2}^{v}$, $\widehat{\mathbb{P}}_{v\leftrightarrow w,a,b}$, $\mathfrak{p}_{v\leftrightarrow w,a,b}$, $\widecheck{\mathbb{P}}_{v\leftrightarrow w,a,b}$ and $\widecheck{\mathbb{E}}_{v\leftrightarrow w,a,b}$.

 The following fact will be useful in the analysis of $\mathcal{P}^{(4)}$. Let $\mathbf{X}$ be a Poisson random variable with parameter $0<\lambda \lesssim 1$, and let $\{\mathbf{q}_j\}_{j\in \mathbb{N}}$ be a family of i.i.d. Bernoulli random variables with expectation $q$. Define $\mathbf{Y}:=\sum_{1\le j\le \mathbf{X}}\mathbf{q}_j$. Then  
 \begin{equation}\label{property_poisson_odd_number}
 \begin{split}
 	 \mathbb{P}\big( \mathbf{Y}>0 \mid \mathbf{X}\ \text{is odd} \big)=&  1-  \frac{\sum_{k\ge 1: k\ \text{odd}}  e^{-\lambda}\cdot \frac{\lambda^k}{k!} \cdot (1-q)^k  }{\sum_{k\ge 1: k\ \text{odd}}  e^{-\lambda}\cdot \frac{\lambda^k}{k!} }\\
 	 =&\frac{\sinh(\lambda)- \sinh(\lambda(1-q))}{\sinh(\lambda)}\asymp q.
 \end{split}
 \end{equation}

%
%

 \section{Regularity of coexistence}\label{section_regularity_co_exist}

 This section is devoted to establishing a series of regularity properties of  connecting probabilities involving two clusters in the low-dimensional case (i.e., $3\le d\le 5$), which are not only useful for deriving the main results but also interesting in their own right. For convenience, we formulate them in terms of the loop soup to avoid dealing with signs. We first introduce some notations as follows:  
 \begin{itemize}

 	\item For any events $\mathsf{A}_1$ and $\mathsf{A}_2$ measurable with respect to $\widetilde{\mathcal{L}}_{1/2}$, we define $\mathsf{A}_1 \Vert  \mathsf{A}_2$ as the event that $\mathsf{A}_1$ and $\mathsf{A}_2$ are certified by different clusters of $\cup \widetilde{\mathcal{L}}_{1/2}$.

 	\item For any $n\ge 1$ and $N\ge 10n$, let $\Psi(n,N)$ denote the collection of quadruples $(v_1,v_2,w_1,w_2)$ such that $v_1,v_2\in \widetilde{B}(10 n)\setminus \widetilde{B}(n)$ with $|v_1-v_2|\ge  n$, and $w_1,w_2\in \widetilde{B}(10 N)\setminus \widetilde{B}(N)$ with $|w_1-w_2|\ge  N$.

 	\item For any $\psi=(v_1,v_2,w_1,w_2)$, we define the event 
  \begin{equation}\label{newadd328}
  	\mathsf{C}[\psi]:= \big\{v_1 \xleftrightarrow{} w_1 \Vert v_2 \xleftrightarrow{} w_2 \big\}. 
  \end{equation}

%
%

 \end{itemize}

 The first property shows that the probability of $\mathsf{C}[(v_1,v_2,w_1,w_2)]$ is stable under perturbations of  $v_1,v_2,w_1$ and $w_2$.

 \begin{lemma}\label{lemma_roots}
 	 For any $3\le d\le 5$, there exists a constant $\Cl\label{const_lemma_roots1}>0$ such that for any $n\ge \Cref{const_lemma_roots1}$, $N\ge \Cref{const_lemma_roots1}n$ and $\psi,\psi'\in \Psi(n,N)$,  	   	
 	  \begin{equation}\label{3.18_ineq_lemma4.1}
 	 	\mathbb{P}\big( \mathsf{C}[\psi]   \big)  \asymp \mathbb{P}\big( \mathsf{C}[\psi']  \big). 
 	 \end{equation} 
 \end{lemma}

For clarity of exposition, we state the remaining properties first and provide their proofs afterwards. The second property shows that the two coexisting clusters are unlikely to enter a specific small region or to reach a faraway area. We first fix some notations as follows. 
\begin{itemize}

	\item   For simplicity, when referring to $\psi=(v_1,v_2,w_1,w_2)\in \Psi(n,N)$, we denote 
	\begin{equation}
		z^{\mathrm{in}}_i:=v_i,\  z^{\mathrm{out}}_i:=w_i, \ m_{\mathrm{in}}:=n \  \text{and} \ m_{\mathrm{out}}:=N. 
	\end{equation}
	For example, we may denote the event $\mathsf{C}[\psi]$ by $\big\{    z_1^{\mathrm{in}} \xleftrightarrow{} z_1^{\mathrm{out}}   \Vert    z_2^{\mathrm{in}} \xleftrightarrow{} z_2^{\mathrm{out}}   \big\}$. Moreover, we also use ``$-\mathrm{in}$'' to represent ``$\mathrm{out}$'', and vice versa.

	\item For any $R\ge 1$ and $\delta>0$, let $\mathfrak{B}_{R,\delta}^{\mathrm{in}}:=\widetilde{B}(\delta R)$ and $\mathfrak{B}_{R,\delta}^{\mathrm{out}}:=[\widetilde{B}(\delta^{-1}R)]^{c}$.

	\item For any $\psi\in \Psi(n,N)$, $\delta \in (0,1)$, $\diamond\in \{\mathrm{in},\mathrm{out}\}$, $i\in \{ 1,2\}$ and $A\subset \widetilde{\mathbb{Z}}^d$, let 	\begin{equation}
		\begin{split}
			\mathsf{R}_i^{\diamond}[\psi, \delta, A]:= \big\{ z_i^{\diamond}\xleftrightarrow{} \cup_{x\in A} \widetilde{B}_x(\delta m_{\diamond}) \cup \mathfrak{B}_{m_{\diamond},\delta}^{\diamond} \big\}. 
		\end{split}
	\end{equation}
	 We then define $\mathsf{S}_i[\psi,\delta]:=\cup_{\diamond\in \{\mathrm{in},\mathrm{out}\}} \mathsf{R}_i^{\diamond}[\psi, \delta, \{z_{3-i}^{\diamond}\}]$, $\mathsf{S}[\psi,\delta]:=\cup_{i\in \{1,2\}}\mathsf{S}_i[\psi,\delta]$ and $\widehat{\mathsf{C}}[\psi,\delta]:=\mathsf{C}[\psi]\cap (\mathsf{S}[\psi,\delta])^c$.


\end{itemize}



 \begin{lemma}\label{lemma_separation}
	For any $3\le d\le 5$ and $\epsilon>0$, there exist $\Cl\label{const_lemma_separation1}(d,\epsilon),\cl\label{const_lemma_separation2}(d,\epsilon)>0$ such that for any $n\ge \Cref{const_lemma_separation1}$, $N\ge \Cref{const_lemma_separation1}n$, $\psi\in \Psi(n,N)$, $\diamond\in \{\mathrm{in},\mathrm{out}\}$, $i\in \{ 1,2\}$, and any line segment $\widetilde{\eta}\subset \mathfrak{B}_{m_{\diamond},100}^{\diamond}$ satisfying $\mathrm{vol}(\widetilde{\eta})\le 100m_\diamond$ and $\mathrm{dist}(z_i^{\diamond},\widetilde{\eta})\ge \frac{m_\diamond}{100}$, 
	 \begin{equation}\label{3.5_ineq_lemma_separation_1}
	 	\mathbb{P}\big( \mathsf{R}_i^{\diamond}[\psi, \cref{const_lemma_separation2}, \widetilde{\eta}] \mid    \mathsf{C}[\psi] \big) \le \tfrac{1}{4} \epsilon. 
	 \end{equation}
	Particularly, by taking $\widetilde{\eta}=\{z_{3-i}^\diamond\}$, this implies 
\begin{equation}
		\mathbb{P}\big( \mathsf{S}[\psi,\cref{const_lemma_separation2}] \mid    \mathsf{C}[\psi] \big)\le   \epsilon.
	\end{equation}
	Consequently, there exists $\cl\label{const_lemma_separation3}>0$ such that 
	\begin{equation}\label{3.5_ineq_lemma_separation_3}
		\mathbb{P}\big(     \widehat{\mathsf{C}}[\psi, \cref{const_lemma_separation3}] \big)\asymp  \mathbb{P}\big(  \mathsf{C}[\psi] \big), \ \forall \psi\in \Psi(n,N). 
	\end{equation}
\end{lemma}

 It has been proved in \cite[Lemma 5.3]{cai2024quasi} that for any $3\le d\le 5$ and $v,w,x\in \widetilde{\mathbb{Z}}^d$,
 \begin{equation}
	\mathbb{P}\big( v\xleftrightarrow{} x \mid v \xleftrightarrow{} w \big) \lesssim    [\mathrm{dist}(x,\{v ,w \})+1]^{-\frac{d}{2}+1}. 
\end{equation}
The following lemma provides an analogue of this estimate, with the conditioning on $\{v \xleftrightarrow{} w\}$ replaced by conditioning on an event of the form $\mathsf{C}[\psi]$.


\begin{lemma}\label{lemma_five_point}
	For any $3\le d\le 5$, there exists $\Cl\label{const_lemma_five_point1}>0$ such that for any $n\ge \Cref{const_lemma_five_point1}$, $N\ge \Cref{const_lemma_five_point1}n$, $\psi=(v_1,v_2,w_1,w_2)\in \Psi(n,N)$, $x\in \widetilde{\mathbb{Z}}^d$ and $i\in \{1,2\}$, 
	\begin{equation}\label{ineq_lemma_five_point_new}
			\mathbb{P}\big( v_i \xleftrightarrow{} x \mid \mathsf{C}[\psi] \big) \lesssim   [\mathrm{dist}(x,\{v_i,w_i\} )+1]^{-\frac{d}{2}+1}.  
	\end{equation}
\end{lemma}

The proofs of Lemmas \ref{lemma_roots}-\ref{lemma_five_point} mainly rely on the so-called ``separation lemma'', whose statement is quite plausible (and may even appear obvious at first glance)---conditioned on two random sets being disjoint, their extremities are significantly separated. In our application, the relevant sets are the ranges of two Brownian motions stopped upon reaching a fixed distance, under the conditioning that they are not connected by the loop soup (the switching identity allows us to relate two disjoint loop clusters to two such Brownian motions). The separation lemma has been extensively applied, primarily in the study of Brownian motion \cite{Lawler1998, lawler2002sharp, lawler2012fast}, random walk \cite{lawler1996hausdorff, masson2009growth, lawler2021convergence, 4966635c-2c6a-31bf-9599-a4e32c6a97f5} and Bernoulli percolation \cite{kesten1987scaling, smirnov2001critical, nolin2008near, schramm2010quantitative, garban2013pivotal, du2024sharp}. See also a recent application to percolation in two-dimensional loop soups \cite{gao2024percolation,gao2024percolation2}. In the existing percolation literature, proofs of the separation lemma typically require multi-scale analysis and, to some extent, rely on the planarity of the underlying graphs (which allows a single curve to separate one set from another). However, since $\widetilde{\mathbb{Z}}^d$ with $d\ge 3$ is no longer planar, such a property is absent in the context of this paper. Instead, we show that if two Brownian motions come sufficiently close, they are very likely to be connected by the loop soup. The key ingredients are to locate regions where the loop soup is dense, and to establish suitable independence between the loop soup within these regions and the conditioning event.

   We fix some notations before showing the statement of the separation lemma. 
   \begin{itemize}
   

   	\item  For any $D\subset \widetilde{\mathbb{Z}}^d$, $x_1\in \mathfrak{B}_{N,1/d}^{\mathrm{in}}\setminus D$, $x_{-1}\in \mathfrak{B}_{N,1/d}^{\mathrm{out}}\setminus D$ and $i\in \{1,-1\}$, let
   	\begin{equation}
   	\hat{\mathbf{e}}_{x_i,x_{-i}}^{D,N}(\cdot) := 	\bar{\mathbf{e}}_{x_i,x_{-i}}^D( \{\widetilde{\eta}(t)\}_{0\le t\le \tau_{\partial \mathcal{B}(N)}} \in \cdot ). 
   	\end{equation}
   	In particular, when $D=\emptyset$, we suppress the $\emptyset$ in the superscript and write $\hat{\mathbf{e}}_{x_i,x_{-i}}^{N}$ and $\hat{\mathbf{e}}_{x_i,x_{-i}'}^{N}$.


   	\item   For any possible configurations $\eta,\eta'$ under the law $\hat{\mathbf{e}}_{x_i,x_{-i}}^{D,N}$ (with endpoints $z$ and $z'$ respectively), we define  
   	\begin{equation}\label{3.9_4.17}
   		\mathcal{Q}(\eta,\eta'):= \sup \big\{ \delta>0:  \big[ \mathrm{ran}(\eta)\cup \widetilde{B}_z(\delta N)\big]\cap \big[\mathrm{ran}(\eta')\cup \widetilde{B}_{z'}(\delta N)\big] =\emptyset \big\}. 
   	\end{equation}

   	\item  For any $v,v',w,w'\in \widetilde{\mathbb{Z}}^d$ and $a,a',b,b'>0$, let $\mathcal{P}_{v,v',w,w'}^{a,a',b,b'}$ be the point process consisting of $\widetilde{\mathcal{L}}_{1/2}^{\{v,v',w,w'\}}$ and four independent Poisson point processes with intensity measures $a\cdot \mathbf{e}_{v,v}^{ \{w\}}$, $b \cdot \mathbf{e}_{w,w}^{  \{v\}}$, $a'\cdot  \mathbf{e}_{v',v'}^{ \{v,w,w'\}}$ and $b'\cdot  \mathbf{e}_{w',w'}^{ \{v,w,v'\}}$ respectively. When $a=a'=b=b'=0$, we simply write $\mathcal{P}_{v,v',w,w'}$ for $\mathcal{P}_{v,v',w,w'}^{a,a',b,b'}$.

%
%
%
%
%
%
%
%
%
%
%

   \end{itemize}

\begin{lemma}[separation lemma]\label{new_lemma_separation}
For any $3\le d \le 5$, there exist $\Cl\label{const_ls1},\Cl\label{const_ls4},\cl\label{const_ls2},\cl\label{const_ls3}>0$ such that for any $i\in \{\mathrm{in},\mathrm{out}\}$, $N\ge \Cref{const_ls1}$, $x_i,x_i'\in \mathfrak{B}_{N,\cref{const_ls2}}^{i}$, $x_{-i},x_{-i}',x_{-i}'',x_{-i}'''\in \mathfrak{B}_{N,\cref{const_ls2}}^{-i}$ and $a,a',b,b'>0$,
 \begin{equation}\label{2_27_4.15}
 \begin{split}
 	 	&\mathbb{P} \big(  \mathcal{Q}(\hat{\eta}, \hat{\eta}') \ge \cref{const_ls3} , \{ \mathrm{ran}(\hat{\eta}) \xleftrightarrow{\cup \mathcal{P}_{x_i,x_i',x_{-i}'',x_{-i}'''}^{a,a',b,b'}} \mathrm{ran}(\hat{\eta}')\}^c \big)  \\
 	 	\gtrsim & e^{-\Cref{const_ls4}(a+a'+b+b')(|x_i-x_i'|+1)^{2-d}}\cdot \mathbb{P}\big(   \{ \mathrm{ran}(\hat{\eta}) \xleftrightarrow{\cup \mathcal{P}_{x_i,x_i',x_{-i}'',x_{-i}'''}^{a,a',b,b'}} \mathrm{ran}(\hat{\eta}')\}^c \big), 
 \end{split}
 	\end{equation} 
 where on each side, $\hat{\eta}\sim 	\hat{\mathbf{e}}_{x_i,x_{-i}}^{N}$, $\hat{\eta}'\sim \hat{\mathbf{e}}_{x_i',x_{-i}'}^{\{x_i,x_{-i}\},N}$ and $ \mathcal{P}_{x_i,x_i',x_{-i}'',x_{-i}'''}^{a,a',b,b'}$ are independent. 
 \end{lemma}



  The remainder of this section is organized as follows. In Sections \ref{subsection_new_4.1}-\ref{lemma_five_point}, we establish Lemmas \ref{lemma_roots}-\ref{lemma_five_point} assuming Lemma \ref{new_lemma_separation}. The proof of Lemma \ref{new_lemma_separation} is then presented in Section \ref{subsection4.5}.


{\color{violet}

}

 {\color{red}
 
%
%
%
%
%
%

 }

\subsection{Proof of Lemma \ref{lemma_roots}}\label{subsection_new_4.1}

%
%
%
%
%
%
%
%
%
%

We start with the following preparatory lemma. 
 
 \begin{lemma}\label{lemma_four_point_local_time_bound}
 	For any $d\ge 3$, there exist $C,c>0$ such for any $T>0$, any distinct $v_1,v_2,w_1,w_2\in \widetilde{\mathbb{Z}}^d$ with $|v_1-w_1|\land |v_2-w_2|\ge d$, and any $z\in \{v_1,v_2,w_1,w_2\}$, 
 	 	\begin{equation}
 		\begin{split}
 			\mathbb{P}\big(  \widehat{\mathcal{L}}_{1/2}^{z} \ge T  \mid \{v_1\xleftrightarrow{} w_1\Vert v_2\xleftrightarrow{} w_2 \} \big)\le Ce^{-cT}. 
 		\end{split}
 	\end{equation}
 \end{lemma}
 \begin{proof}
 	 This is a direct corollary of the restriction property and Lemma \ref{lemma25}.
 \end{proof}

We now turn to the proof of Lemma \ref{lemma_roots}. Arbitrarily fix $\psi=(v_1,v_2,w_1,w_2)\in \Psi(n,N)$. For any $T>0$, we define 
\begin{equation}\label{3.15_4.14}
	\overline{\mathsf{C}}[\psi,T]:= \cap_{z\in \{v_1,v_2,w_1,w_2\}} \{0< \widehat{\mathcal{L}}_{1/2}^{z} \le T \} \cap  \mathsf{C}[\psi]. 
\end{equation}
By Lemma \ref{lemma_four_point_local_time_bound}, there exists $C_\dagger>0$ such that  
\begin{equation}\label{3.15_4.15}
	  \mathbb{P}\big(\overline{\mathsf{C}}[\psi,C_\dagger] \big) \asymp \mathbb{P}\big(  \mathsf{C}[\psi] \big). 
\end{equation}
According to Lemma \ref{lemma_switching}, given that $\{v_1 \xleftrightarrow{} w_1,  \widehat{\mathcal{L}}_{1/2}^{v_1}=a_1, \widehat{\mathcal{L}}_{1/2}^{w_1}=b_1 \}$ occurs, the conditional probability of $\overline{\mathsf{C}}[\psi,C_\dagger]$ is 
    \begin{equation}\label{2_26_4.16}
    	\begin{split}
    		\widecheck{\mathbb{P}}_{v_1\leftrightarrow w_1,a_1,b_1}\big( v_2\xleftrightarrow{\cup\mathcal{P}^{(1)}} w_2,   \big\{ v_2\xleftrightarrow{\cup \mathcal{P}^{(1)}} \cup \big(  \sum_{2\le i\le 4}\mathcal{P}^{(i)} \big)   \big\}^c , \widehat{\mathcal{L}}_{1/2}^{v_2}\land \widehat{\mathcal{L}}_{1/2}^{w_2}\le  C_\dagger \big).
    	\end{split}
    \end{equation}
   Applying Lemma \ref{lemma_switching} once again, we can rewrite the probability in (\ref{2_26_4.16}) as  
    \begin{equation}\label{3.1_4.19}
    	\begin{split}
    				\int_{0<a_2,b_2\le C_\dagger}   \mathbb{P}\big(   \{ \cup \mathfrak{L}_1   \xleftrightarrow{\cup \mathcal{P}} \cup \mathfrak{L}_2   \}^c  \big)\mathfrak{p}_{v_2\leftrightarrow w_2,a_2,b_2}^{\{v_1,w_1\}} \mathrm{d}a_2\mathrm{d}b_2.
    	\end{split}
    \end{equation}
  Here $\mathfrak{L}_1$ (resp. $\mathfrak{L}_2$) has the same law as $\mathcal{P}^{(4)}$ under $\widecheck{\mathbb{P}}_{v_1\leftrightarrow w_1,a_1,b_1}$ (resp. $\widecheck{\mathbb{P}}^{\{v_1,w_1\}}_{v_2\leftrightarrow w_2,a_2,b_2}$); moreover, $\mathfrak{L}_1$, $\mathfrak{L}_2$ and $\mathcal{P}:=\mathcal{P}_{v_1,v_2,w_1,w_2}^{a_1,a_2,b_1,b_2}$ are independent. Denote $\hat{N}:=\cref{const_ls2}N$. Since $\cup \mathfrak{L}_1$ (resp. $\cup \mathfrak{L}_2$) stochastically dominates $\hat{\eta}_1 \sim \hat{\mathbf{e}}_{v_1,w_1}^{\hat{N}}$ (resp. $\hat{\eta}_2 \sim \hat{\mathbf{e}}_{v_2,w_2}^{\{v_1,w_1\},\hat{N}}$),
  \begin{equation}\label{3.1_4.20}
  	\begin{split}
  		   \mathbb{P}\big(   \{ \cup \mathfrak{L}_1   \xleftrightarrow{\cup \mathcal{P}} \cup \mathfrak{L}_2   \}^c  \big)  
  		 \le  \mathbb{I}:=  \mathbb{P}\big(   \{ \mathrm{ran}(\hat{\eta}_1)     \xleftrightarrow{\cup \mathcal{P}} \mathrm{ran}(\hat{\eta}_2)  \}^c  \big).
  	\end{split}
  \end{equation}
 Let $\bar{\mathfrak{p}}:= \mathfrak{p}_{v_1\leftrightarrow w_1,a_1,b_1}\cdot \mathfrak{p}_{v_2\leftrightarrow w_2,a_2,b_2}^{\{v_1,w_1\}}$. Combining (\ref{3.15_4.15})-(\ref{3.1_4.20}), we have 
  \begin{equation}\label{new_3.2_4.21}
  	 \begin{split}
\mathbb{P}\big(  \mathsf{C}[\psi] \big)   \lesssim  \int_{0<a_1,b_1,a_2,b_2\le C_\dagger}   \mathbb{I}\cdot \bar{\mathfrak{p}}\ \mathrm{d}a_1\mathrm{d}b_1\mathrm{d}a_2\mathrm{d}b_2. 
  	 \end{split}
  \end{equation}

   Next, we show that for any $\psi'=(v_1,v_2,w_1',w_2')\in \Psi(n,N)$, 
   \begin{equation}\label{new3.2_4.22}
   	\begin{split}
   		\mathbb{P}\big(  \mathsf{C}[\psi'] \big) \gtrsim  \int_{0<a_1,b_1,a_2,b_2\le  C_\dagger}   \mathbb{I}\cdot \bar{\mathfrak{p}}'\ \mathrm{d}a_1\mathrm{d}b_1\mathrm{d}a_2\mathrm{d}b_2, 
   	\end{split}
   \end{equation}
   where $\bar{\mathfrak{p}}':=\mathfrak{p}_{v_1\leftrightarrow w_1',a_1,b_1}\cdot \mathfrak{p}_{v_2\leftrightarrow w_2',a_2,b_2}^{\{v_1,w_1'\}}$. 
 To see this, for the same reason as in (\ref{2_26_4.16}) and (\ref{3.1_4.19}), $\mathbb{P}( \overline{\mathsf{C}}[\psi',C_\dagger])$ can be written as 
    \begin{equation}\label{3.2_4.23}
    	\begin{split}
    		\int_{0<a_1,b_1,a_2,b_2\le  C_\dagger}  \mathbb{P} \big(   \{ \cup \mathfrak{L}_1'   \xleftrightarrow{\cup \mathcal{P}'} \cup \mathfrak{L}_2'   \}^c  \big) \cdot \bar{\mathfrak{p}}' \  \mathrm{d}a_1\mathrm{d}b_1\mathrm{d}a_2\mathrm{d}b_2,
    	\end{split}
    \end{equation}
   where $\mathcal{P}':=\mathcal{P}_{v_1,v_2,w_1',w_2'}^{a_1,a_2,b_1,b_2}$, and $\mathfrak{L}_j'$ is the analogue of $\mathfrak{L}_j$ with $w_1,w_2$ replaced by $w_1',w_2'$. Lemma \ref{lemma_total_measure_K} implies that the total masses of $\mathbf{e}_{v_1,w_1'}$ and $\mathbf{e}_{v_2,w_2'}^{\{v_1,w_1'\}}$ equal to $\mathbb{K}_{v_1,w_1'}(v_1,w_1')$ and $\mathbb{K}_{v_2,w_2'}^{\{v_1,w_1'\}}(v_2,w_2')$ respectively, and hence are both $O(N^{2-d})$ (by Lemma \ref{lemma21}). Also recall that $|\mathfrak{L}_1'|$ and $|\mathfrak{L}_2'|$ are both conditioned to be odd. Thus,  
    \begin{equation}\label{3.2_4.24}
   	\begin{split}
   		\mathbb{P}\big(   |\mathfrak{L}_1'| =  |\mathfrak{L}_2'| =1   \big)  \asymp  1, \ \forall 0<a_1,b_1,a_2,b_2\le  N^{d-2}. 
   	\end{split}
   \end{equation}
    On $\{|\mathfrak{L}_j'|=1\}$, let $\eta_j'$ denote the unique excursion in $\mathfrak{L}_j'$ (from $v_j$ to $w_j'$). We define $\hat{\eta}_j'$ (resp. $\check{\eta}_j'$) as the sub-path of $\eta_j'$ up to (resp. after) time $\tau_{\partial \mathcal{B}(\hat{N})}(\eta_j')$. Let $\mathbf{z}_j$ denote the endpoint of $\hat{\eta}_j'$. We take two paths $\ell_1$ and $\ell_2$ satisfying the following:
   \begin{itemize}

   	\item[-]   Each $\ell_j$ starts from $\mathbf{z}_j$ and ends at $w_j'$;

   	\item[-]  Each $\ell_j$ consists of at most $2d$ line segments and has length at most $dN$;

  	 \item[-]   $\mathrm{ran}(\ell_j) \subset \widetilde{B}_{\mathbf{z}_j}( \cref{const_ls3}^3\hat{N})\cup \big[\mathcal{B}\big((1+\cref{const_ls3}^4)\hat{N}\big)\big]^c$;

   \item[-]  $\mathrm{dist}\big( \mathrm{ran}(\ell_1), \mathrm{ran}(\ell_2)  \big) \ge \cref{const_ls3}^2 \hat{N}$.

   \end{itemize}
For $j\in \{1,2\}$, we define $D_j:=\cup_{z\in \mathrm{ran}(\ell_j)}\widetilde{B}_z(\cref{const_ls3}^5\hat{N})$ and $D_j^+:=\cup_{z\in \mathrm{ran}(\ell_j)}\widetilde{B}_z(\cref{const_ls3}^4\hat{N})$. Note that $D_1^+\cup D_2^+ \subset \widetilde{B}_{\mathbf{z}_j}(2\cref{const_ls3}^3\hat{N})\cup [\mathcal{B}(\hat{N})]^c$. The event $\{ \cup \mathfrak{L}_1'   \xleftrightarrow{\cup \mathcal{P}'} \cup \mathfrak{L}_2'   \}^c$ occurs if all of the following events occur:
   \begin{itemize}

   	\item  $\mathsf{A}_1:=\{  |\mathfrak{L}_1'| =  |\mathfrak{L}_2'| =1\}$;

   	\item  $\mathsf{A}_2:= \{ \mathcal{Q}(\hat{\eta}_1',\hat{\eta}_2')\ge \cref{const_ls3} \}\cap   \{ \mathrm{ran}(\hat{\eta}_1')   \xleftrightarrow{\cup \mathcal{P}'} \mathrm{ran}(\hat{\eta}_2')  \}^c  $;

   	\item  $\mathsf{A}_3:= \cap_{j\in \{1,2\}} \{ \mathrm{ran}(\check{\eta}_j') \subset D_j \}$;

   	\item  $\mathsf{A}_4:= \cap_{j\in \{1,2\}} \{D_j\xleftrightarrow{\cup \mathcal{P}'} \widetilde{\partial }D_j^+\}^c$.

   \end{itemize}
    Given that $\mathsf{A}_1\cap \mathsf{A}_2$ occurs, by the strong Markov property of Brownian excursions and the invariance principle, the conditional probability of $\mathsf{A}_3$ is uniformly bounded away from $0$. Meanwhile, $\mathsf{A}_2$ and $\mathsf{A}_4$ are both decreasing with respect to the Poisson point process $\mathcal{P}'$. To sum up, for any $0<a_1,b_1,a_2,b_2\le  N^{d-2}$, 
    \begin{equation}
    	 \mathbb{P}\big(\{ \cup \mathfrak{L}_1'   \xleftrightarrow{\cup \mathcal{P}'} \cup \mathfrak{L}_2'   \}^c  \big) \gtrsim  \mathbb{P}(\mathsf{A}_2)\cdot \mathbb{P}(\mathsf{A}_4). 
    \end{equation}
    In fact, $\mathbb{P}(\mathsf{A}_4)$ is of order $1$. First, it follows from (\ref{crossing_low}) and the FKG inequality that $\mathbb{P}(\cap_{j\in \{1,2\}} \{D_j\xleftrightarrow{\{v_1,v_2,w_1',w_2'\}} \widetilde{\partial }D_j^+\}^c)\asymp 1$. Second, if an excursion in $\mathcal{P}'$ starting from $\{v_1,v_2,w_1',w_2'\}$ contributes to the event $\{D_j\xleftrightarrow{\cup \mathcal{P}'} \widetilde{\partial }D_j^+\}$, then it must intersect $\widetilde{\partial }D_j\cup \widetilde{\partial }D_j^+$, whose probability is at most $1-\mathrm{exp}(-C(a_1+a_2+b_1+b_2)N^{2-d})$ (by Lemma \ref{lemma_new216}). These two observations imply that $\mathbb{P}(\mathsf{A}_4)\asymp 1$. As a result, for any $0<a_1,b_1,a_2,b_2\le  N^{d-2}$,
   \begin{equation}\label{3_2_4.25}
   	\begin{split}
   	  \mathbb{P}\big(\{ \cup \mathfrak{L}_1'   \xleftrightarrow{\cup \mathcal{P}'} \cup \mathfrak{L}_2'   \}^c  \big)  
   		\gtrsim    \mathbb{P}(\mathsf{A}_2),
   	\end{split}
   \end{equation}
   where $\hat{\eta}_1'\sim \hat{\mathbf{e}}_{v_1,w_1'}^{\hat{N}}$ and $\hat{\eta}_2'\sim \hat{\mathbf{e}}_{v_2,w_2'}^{\{v_1,w_1'\},\hat{N}}$. Note that $\hat{\mathbf{e}}_{v_1,w_1'}^{\hat{N}}$ (resp. $\hat{\mathbf{e}}_{v_2,w_2'}^{\{v_1,w_1'\},\hat{N}}$) is equivalent to $\hat{\mathbf{e}}_{v_1,w_1}^{\hat{N}}$ (resp. $\hat{\mathbf{e}}_{v_2,w_2}^{\{v_1,w_1\},\hat{N}}$), i.e., their Radon-Nikodym derivative is uniformly bounded away from $0$ and $\infty$.  This together with Lemma \ref{new_lemma_separation} implies 
\begin{equation}\label{3.2_4.26}
	 \begin{split}
	 \mathbb{P}(\mathsf{A}_2) \asymp & \mathbb{P} \big(  \mathcal{Q}(\hat{\eta}_1,\hat{\eta}_2)\ge \cref{const_ls3},  \{ \mathrm{ran}(\hat{\eta}_1)   \xleftrightarrow{\cup \mathcal{P}'} \mathrm{ran}(\hat{\eta}_2)  \}^c  \big)  \asymp \mathbb{I}  
	 \end{split}
\end{equation}
for all $0<a_1,b_1,a_2,b_2\le  C_\dagger$. By (\ref{3.2_4.23}), (\ref{3_2_4.25}) and (\ref{3.2_4.26}), we obtain (\ref{new3.2_4.22}).

 Note that whenever $a_1,b_1,a_2,b_2\in (0, C_\dagger]$, the densities $\bar{\mathfrak{p}}$ and $\bar{\mathfrak{p}}'$ are both of order $N^{4-2d}$ (see (\ref{250})). Combined with (\ref{new_3.2_4.21}) and (\ref{new3.2_4.22}), it implies that for any $\psi=(v_1,v_2,w_1,w_2),\psi'=(v_1,v_2,w_1',w_2')\in \Psi(n,N)$, 
 \begin{equation}\label{3.2_4.30}
 	\mathbb{P}( \mathsf{C}[\psi'] ) \gtrsim \mathbb{P}( \mathsf{C}[\psi] ). 
 \end{equation}
  By the same argument, we also have that for any $\psi''=(v_1',v_2',w_1',w_2')\in \Psi(n,N)$, 
 \begin{equation}\label{3.2_4.31}
 	\mathbb{P}( \mathsf{C}[\psi''] ) \gtrsim \mathbb{P}( \mathsf{C}[\psi'] ). 
 \end{equation} 
 Combining (\ref{3.2_4.30}) and (\ref{3.2_4.31}), and then swapping the roles of $\psi$ and $\psi''$, we complete the proof of this lemma.  \qed

 \begin{remark}\label{remark_order_escape_prob}
 We record the following consequence for later use: the disconnecting probability $\mathbb{I}(a_1,b_1,a_2,b_2)$, defined in (\ref{3.1_4.20}), is of the same order for all $a_1,b_1,a_2,b_2\in (0,C_\dagger]$. Indeed, since $\mathbb{I}$ is decreasing in $a_1,b_1,a_2$ and $b_2$, one has 
 \begin{equation}\label{3.18_good_4.28}
 	\mathbb{I}(C_\dagger,C_\dagger,C_\dagger,C_\dagger)\le \mathbb{I}(0,0,0,0). 
 \end{equation}
 On the other hand, by the separation lemma, conditioned on $\{ \mathrm{ran}(\hat{\eta}_1)     \xleftrightarrow{\cup \mathcal{P}_{v_1,v_2,w_1,w_2} } \mathrm{ran}(\hat{\eta}_2)  \}^c $, with a uniformly positive probability each $ \hat{\eta}_i$ is disjoint from $\widetilde{B}_{v_{3-i}}(C)$. (Precisely, we first reverse $\hat{\eta}_1$ and $\hat{\eta}_2$, thereby obtaining two paths from $\partial \mathcal{B}(\cref{const_ls2}N)$ to $v_1$ and $v_2$ respectively. We then apply to these reversed paths the analogue of Lemma \ref{new_lemma_separation} with $i=\mathrm{out}$, involving two Brownian motions conditioned to hit $v_1$ and $v_2$ respectively before exiting $\mathcal{B}(\cref{const_ls2}N)$. The same argument as in Section \ref{subsection4.5} is valid for this analogue.) Moreover, for each $z\in \{v_1,v_2,w_1,w_2 \}$, the probability that all excursions in $\mathcal{P}_{v_1,v_2,w_1,w_2}^{C_\dagger,C_\dagger,C_\dagger,C_\dagger}$ starting from $z$ are contained in $\widetilde{B}_z(C)$ is of order $1$. Combining these observations with the FKG inequality, we have  
 \begin{equation}\label{3.18_good_4.29}
 	\mathbb{I}(C_\dagger,C_\dagger,C_\dagger,C_\dagger) \gtrsim \mathbb{I}(0,0,0,0), 
 \end{equation}
 which together with (\ref{3.18_good_4.28}) and the monotonicity of $\mathbb{I}$ yields 
  \begin{equation}\label{3.19_4.30_new}
  	\mathbb{I}(a_1,b_1,a_2,b_2) \asymp \mathbb{I}(0,0,0,0), \ \forall  a_1,b_1,a_2,b_2\in (0,C_\dagger].
  \end{equation} 
   Combining (\ref{new_3.2_4.21}), (\ref{new3.2_4.22}), (\ref{3.19_4.30_new}) and the fact that $\bar{\mathfrak{p}}\asymp N^{4-2d}$ for $a_1,b_1,a_2,b_2\in (0,C_\dagger]$, we obtain that 
   \begin{equation}
   	\mathbb{I}(a_1,b_1,a_2,b_2) \asymp N^{2d-4}\cdot \mathbb{P}(\mathsf{C}[\psi])  
   \end{equation}
  uniformly over all $a_1,b_1,a_2,b_2\in (0,C_\dagger]$. This estimate will be used in Section \ref{section5.2_lower_volume}.

 \end{remark}

\subsection{Proof of Lemma \ref{lemma_separation}}

We restrict to the case $\diamond=\mathrm{out}$ and $i=1$, since the other cases are analogous. Recall the event $\overline{\mathsf{C}}[\psi,T]$ in (\ref{3.15_4.14}). By Lemma \ref{lemma_four_point_local_time_bound}, there exists $C_{\star}=C_{\star}(d,\epsilon)>0$ such that 
\begin{equation}\label{3.15_4.27}
	\mathbb{P}\big(\mathsf{C}[\psi] \setminus \overline{\mathsf{C}}[\psi,C_{\star}] \big) \le \tfrac{1}{8}\epsilon\cdot \mathbb{P}\big(\mathsf{C}[\psi]  \big).  
\end{equation}
Denote $\widetilde{D}=\widetilde{D}(\delta):=\cup_{x\in \widetilde{\eta}} \widetilde{B}_x(\delta N) $ and $\mathcal{P}:=\mathcal{P}_{v_1,v_2,w_1,w_2}^{a_1,a_2,b_1,b_2}$. As in (\ref{2_26_4.16}) and (\ref{3.1_4.19}), $\mathbb{P}(\overline{\mathsf{C}}[\psi,C_{\star}],w_1\xleftrightarrow{} \widetilde{D}\cup \partial B(\delta^{-1}N))$ can be written as 
\begin{equation}\label{3.4_4.33}
	\begin{split}
			\int_{0<a_1,b_1,a_2,b_2\le C_{\star}}    \mathbb{P} \big(   \{ \cup \mathfrak{L}_1   \xleftrightarrow{\cup \mathcal{P}} \cup \mathfrak{L}_2   \}^c, \cup \mathfrak{L}_1 \xleftrightarrow{\cup \mathcal{P}}  \widetilde{D} \cup \partial B(\tfrac{N}{\delta}) \big) 
		 \bar{\mathfrak{p}}   \mathrm{d}a_1\mathrm{d}b_1\mathrm{d}a_2\mathrm{d}b_2.
	\end{split}
\end{equation}
 Here $\mathfrak{L}_1$ and $\mathfrak{L}_2$ are defined as in (\ref{3.1_4.19}), and  $\bar{\mathfrak{p}}:=\mathfrak{p}_{v_1\leftrightarrow w_1,a_1,b_1}\cdot \mathfrak{p}_{v_2\leftrightarrow w_2,a_2,b_2}^{\{v_1,w_1\}}$. We employ the notation $\mathbb{I}$ from (\ref{3.1_4.20}). By (\ref{3.1_4.20}) and (\ref{3.2_4.24}), we have 
  \begin{equation}\label{3.4_4.34}
 	\begin{split}
 		\mathbb{P} \big(   \{ \cup \mathfrak{L}_1   \xleftrightarrow{\cup \mathcal{P}} \cup \mathfrak{L}_2   \}^c, \{ |\mathfrak{L}_1|=|\mathfrak{L}_2|=1 \}^c  \big) \lesssim (a_1b_1+a_2b_2)N^{4-2d}\cdot \mathbb{I}.  
 	\end{split}
 \end{equation} 
 Denote $\hat{N}:=\cref{const_ls2}N$. On the event $\{|\mathfrak{L}_j|=1\}$, let $\eta_j$ be the unique excursion in $\mathfrak{L}_j$, and write $\hat{\eta}_j$ for its sub-path up to time $\tau_{\partial \mathcal{B}(\hat{N})}$. We denote by $\mathbf{z}_j$ the endpoint of $\hat{\eta}_j$. By the Markov property of Brownian excursions, conditional on $\hat{\eta}_j$, the remaining sub-path of $\eta_j$ is distributed as $\widetilde{\mathbb{P}}_{\mathbf{z}_j}( \{\widetilde{S}_t\}_{0\le t\le \tau_{w_j}} \in \cdot  \mid \tau_{w_j}<\infty)$. Therefore, applying the FKG inequality, we have  
 \begin{equation}\label{3.4_4.35}
 	 \begin{split}
 	 	& \mathbb{P} \big(   \{ \cup \mathfrak{L}_1   \xleftrightarrow{\cup \mathcal{P}} \cup \mathfrak{L}_2   \}^c,  |\mathfrak{L}_1|=|\mathfrak{L}_2|=1 ,  
 	 	   \cup \mathfrak{L}_1 \xleftrightarrow{\cup \mathcal{P}}  \widetilde{D} \cup \partial B(\tfrac{N}{\delta})  \big)  \\
 	 	   \le &  \mathbb{I} \cdot  \max_{z\in \partial \mathcal{B}(\hat{N})} \mathbb{J}_z:=\mathbb{I} \cdot  \max_{z\in \partial \mathcal{B}(\hat{N})} \mathbb{P}\big( \mathrm{ran}( \check{\eta}^z)\xleftrightarrow{\cup \mathcal{P}}  \widetilde{D} \cup \partial B(\tfrac{N}{\delta})   \big), 
 	 \end{split}
 \end{equation}
where $\check{\eta}^z\sim \widetilde{\mathbb{P}}_{z}( \{\widetilde{S}_t\}_{0\le t\le \tau_{w_1}} \in \cdot  \mid \tau_{w_1}<\infty)$ is independent of $\mathcal{P}$. We denote by $\mathcal{C}_\delta$ the union of all clusters in $\cup \mathcal{L}_{1/2}^{\{v_1,v_2,w_1,w_2\}}$ intersecting $\widetilde{D} \cup \partial B(\delta^{-1}N)$. For each $x\in \{v_1,v_2,w_1,w_2 \}$, let $\mathcal{P}_x$ denote the point process consisting of excursions in $\mathcal{P}$ that start and end at $x$. We also denote $\mathcal{P}^0:=\mathcal{P}_{v_1,v_2,w_1,w_2}$. Note that 
\begin{equation}\label{inclusion_3.9_4.36}
\begin{split}
		& \big\{\mathrm{ran}( \check{\eta}^z)\xleftrightarrow{\cup \mathcal{P}}  \widetilde{D} \cup \partial B(\tfrac{N}{\delta})  \big\}  \\ 
		=  & \cup_{x\in \{v_1,v_2,w_1,w_2 \}} \big\{ (\cup \mathcal{P}_x) \cap  \mathcal{C}_\delta \neq \emptyset\big\} \cup \big\{\mathrm{ran}( \check{\eta}^z)  \xleftrightarrow{\cup \mathcal{P}^0}  \widetilde{D} \cup \partial B(\tfrac{N}{\delta}) \big\}. 
\end{split}
\end{equation}
On $\{(\cup \mathcal{P}_x) \cap  \mathcal{C}_\delta \neq \emptyset\}$, either $\cup \mathcal{P}_x$ hits $\partial B_x(N^{1/3})$, or $B_x(N^{1/3})$ is connected to $\partial B_x(cN)$ by $\cup \mathcal{L}_{1/2}$. Moreover, the probability of the first event is at most proportional to the local time of $\mathcal{P}_x$ at $x$ times $N^{-1/3}$ (by Lemma \ref{lemma_new216}), and the probability of the second event is $O(N^{-1/3})$ (by (\ref{crossing_low})). Thus, 
\begin{equation}\label{3.5_4.37}
	\begin{split}
		\mathbb{J}_z\lesssim (a_1+a_2+b_1+b_2+1)N^{-1/3}+ \hat{\mathbb{J}}_z + \tilde{\mathbb{J}}_z,
	\end{split}
\end{equation}
where $\hat{\mathbb{J}}_z:=\mathbb{P}\big( \mathrm{ran}( \check{\eta}^z)\xleftrightarrow{\cup \mathcal{P}^0}  \partial B(\delta^{-1}N)   \big)$ and $\tilde{\mathbb{J}}_z:=\mathbb{P}\big( \mathrm{ran}( \check{\eta}^z)\xleftrightarrow{\cup \mathcal{P}^0}  \widetilde{D} \big)$.

We claim the following estimates: for all sufficiently large $N$, and $z\in \partial \mathcal{B}(\hat{N})$, 
\begin{equation}\label{3.18_claim4.34}
	(1)\ \hat{\mathbb{J}}_z \lesssim \delta^{\frac{1}{3}}; \ \ \  (2)\ \tilde{\mathbb{J}}_z\lesssim [\ln(1/\delta)]^{-\frac{1}{150}}. 
\end{equation}
In fact, combining these two claims with (\ref{3.4_4.33}), (\ref{3.4_4.34}), (\ref{3.4_4.35}) and (\ref{3.5_4.37}), we have
\begin{equation}\label{3.15_4.34}
	\begin{split}
			& \mathbb{P}\big(\overline{\mathsf{C}}[\psi,C_{\star}],w_1\xleftrightarrow{} \widetilde{D}\cup \partial B(\tfrac{N}{\delta})  \big) \\
			\lesssim &   \int_{0<a_1,b_1,a_2,b_2\le C_\star } \big[ (a_1b_1+a_2b_2)N^{4-2d} + (a_1+a_2+b_1+b_2+1)N^{-\frac{1}{3}}  \\
		&\ \ \ \ \ \ \ \ \ \ \ \ \ \ \ \ \ \ \ \ \ +\delta^{\frac{1}{3}} + [\ln(1/\delta)]^{-\frac{1}{150}}   \big] \cdot  \mathbb{I}\cdot \bar{\mathfrak{p}}\    \mathrm{d}a_1\mathrm{d}b_1\mathrm{d}a_2\mathrm{d}b_2   \\ 
		\lesssim & \big(  C_\star^2N^{4-2d}+ C_\star N^{-\frac{1}{3}} +  [\ln(1/\delta)]^{-\frac{1}{150}} \big) \int_{0<a_1,b_1,a_2,b_2\le C_\star }  \mathbb{I}\cdot \bar{\mathfrak{p}}\    \mathrm{d}a_1\mathrm{d}b_1\mathrm{d}a_2\mathrm{d}b_2.
	\end{split}
\end{equation}
Similar to (\ref{new3.2_4.22}), the integral on the right-hand side is at most a constant multiple of $\mathbb{P}(\mathsf{C}[\psi])$, where the constant may depend on $C_\star$. Combined with (\ref{3.15_4.27}) and (\ref{3.15_4.34}), it yields the desired bound (\ref{3.5_ineq_lemma_separation_1}). 

In what follows, we provide the proofs of Claims (1) and (2) in (\ref{3.18_claim4.34}).

\textbf{Proof of Claim (1).} On $\{\mathrm{ran}( \check{\eta}^z)\xleftrightarrow{\cup \mathcal{P}^0}  \partial B(\delta^{-1}N)\}$, either $\check{\eta}^z$ hits $\partial B(\delta^{-1/3}N)$, or $B(\delta^{-1/3}N)$ is connected to $\partial B(\delta^{-1}N)$ by $\cup \widetilde{\mathcal{L}}_{1/2}$. It follows from (\ref{310}) and (\ref{crossing_low}) that the probabilities of these two events are both $O(\delta^{\frac{1}{3}(d-2)})$. Thus, 
\begin{equation}
	\hat{\mathbb{J}}_z \lesssim \delta^{\frac{1}{3}(d-2)} \lesssim \delta^{\frac{1}{3}}.
\end{equation}


\textbf{Proof of Claim (2).} Let $\lambda:=[\ln(1/\delta)]^{\frac{1}{2d(3d-2)}}$, and define $\widetilde{\mathcal{C}}$ as the union of all clusters in $\cup \widetilde{\mathcal{L}}_{1/2}$ intersecting $\widetilde{D}$. We need the following estimate:
   \begin{equation}\label{3.5_4.41}
     \overline{\mathbb{P}}_z\big( \tau_{\widetilde{\mathcal{C}}}<\infty \big)\lesssim  \lambda^{-3d+2}, 
   \end{equation}
 where $\overline{\mathbb{P}}_z$ is the product measure of $\widetilde{\mathbb{P}}_z$ and the probability measure of $\widetilde{\mathcal{L}}_{1/2}$. We first prove Claim (2) using (\ref{3.5_4.41}). By $\mathrm{dist}(\widetilde{D},w_1)\gtrsim N$ and (\ref{crossing_low}), we have 
 	 \begin{equation}\label{fixnew465}
 	\mathbb{P}(\mathsf{F}_{1}):=	\mathbb{P}\big( \widetilde{\mathcal{C}}   \cap   B_{w_1}(\lambda^{-3}N)  \neq \emptyset \big)  \lesssim   \lambda^{-\frac{3d}{2}+3}.
 	\end{equation}
 In addition, combining (\ref{3.5_4.41}) and Markov's inequality, one has 
  	\begin{equation}\label{fixnew467}
 				 \mathbb{P}(\mathsf{F}_{2}):=  \mathbb{P}\big(  \widetilde{\mathbb{P}}_{z}(\tau_{\widetilde{\mathcal{C}}}< \infty) \ge   \lambda^{-3d+4}  \big)\lesssim  \lambda^{-2}. 
 	\end{equation}
 	On the event $\mathsf{F}_{1}^c\cap \mathsf{F}_{2}^c$, by the strong Markov property of Brownian motion, 
 \begin{equation}
 	\begin{split}
 		  \widetilde{\mathbb{P}}_z\big( \tau_{\widetilde{\mathcal{C}}}< \tau_{w_1}<\infty  \big)  
 		\le  & \sum\nolimits_{x\in \widetilde{\partial}\widetilde{\mathcal{C}}  } \widetilde{\mathbb{P}}_z\big( \tau_{\widetilde{\mathcal{C}}}= \tau_{x}<\infty  \big) \cdot \widetilde{\mathbb{P}}_x\big(   \tau_{w_1}<\infty  \big)\\
 		\overset{(\widetilde{\mathcal{C}}   \cap  B_{w_1}(\lambda^{-3}N)  \neq \emptyset)}{\lesssim }&  (\lambda^{-3}N)^{2-d}\cdot \widetilde{\mathbb{P}}_z\big( \tau_{\widetilde{\mathcal{C}}} <\infty  \big) 
 		\lesssim   \lambda^{-4}N^{2-d}. 
 	\end{split}
 \end{equation}
 Combined with $\widetilde{\mathbb{P}}_z (\tau_{w_1}<\infty)\asymp N^{2-d}$, it yields 
 \begin{equation}\label{3.6_4.46}
 	\mathbb{P}\big( \mathsf{F}_{1}^c\cap \mathsf{F}_{2}^c, \check{\eta}^z \ \text{hits}\ \widetilde{\mathcal{C}} \big) \lesssim \lambda^{-4}. 
 \end{equation}
 Putting (\ref{fixnew465}), (\ref{fixnew467}) and (\ref{3.6_4.46}) together, we obtain the desired bound: 
 \begin{equation}
\tilde{\mathbb{J}}_z \le   \mathbb{P}\big(  \check{\eta}^z \ \text{hits}\ \widetilde{\mathcal{C}} \big)  \lesssim \lambda^{-\frac{3d}{2}+3} + \lambda^{-2} + \lambda^{-4} \lesssim [\ln(1/\delta)]^{-\frac{1}{150}}. 
 \end{equation}

Next, we prove the bound (\ref{3.5_4.41}). Let $\bar{\lambda}:=\lambda^{3d-2}=[\ln(1/\delta)]^{\frac{1}{2d}} $ and $\widetilde{\mathcal{C}}^+:=\mathcal{C}^+_{\widetilde{D}}$. Define $\mathring{\mathbb{P}}_z$ as the product measure of $\widetilde{\mathbb{P}}_z$ and the probability measure of the GFF on $\widetilde{\mathbb{Z}}^d$. Using the isomorphism theorem, we have 
 \begin{equation}
 	\begin{split}
 			& \overline{\mathbb{P}}_z\big( \tau_{\widetilde{\mathcal{C}}}< \infty  \big)\asymp  \mathring{\mathbb{P}}_v\big( \tau_{\widetilde{\mathcal{C}}^{+}}< \infty  \big)\\ 
 			\le & \mathring{\mathbb{P}}_z\big( \tau_{\widetilde{\mathcal{C}}^{+}}< \infty, \widetilde{\mathcal{C}}^{+} \subset  \widetilde{B}(\bar{\lambda}^3N) ,v\notin \widetilde{\mathcal{C}}^{+}\big)\\
 			& +  \mathbb{P}\big(\widetilde{D} \xleftrightarrow{\ge 0}  \partial B( \bar{\lambda}^{3}N) \big)+  \mathbb{P}\big(v\xleftrightarrow{\ge 0} \widetilde{D}  \big)\\
 			\overset{(\ref{one_arm_low}),(\ref{crossing_low})}{\lesssim  } &  \mathring{\mathbb{P}}_z\big( \tau_{\widetilde{\mathcal{C}}^{+}}< \infty, \widetilde{\mathcal{C}}^{+} \subset  \widetilde{B}(\bar{\lambda}^{3}N)  ,v\notin \widetilde{\mathcal{C}}^{+}\big)   +   \bar{\lambda}^{-\frac{3d}{2}+3}   +N^{-\frac{d}{2}+1}. 
 	\end{split}
 \end{equation}
Thus, it suffices to show that 
  	\begin{equation}\label{fix430}
 		\mathring{\mathbb{P}}_z\big( \tau_{\widetilde{\mathcal{C}}^{+}}< \infty, \widetilde{\mathcal{C}}^{+} \subset  \widetilde{B}(\bar{\lambda}^{3}N)  ,v\notin \widetilde{\mathcal{C}}^{+}\big)   \lesssim  \bar{\lambda}^{-1}. 
 	\end{equation}
 	In what follows, we establish (\ref{fix430}) using the exploration martingale argument, which was originally introduced in \cite{lupu2018random} and later developed in \cite{ding2020percolation, cai2024one}. Precisely, for any $t\ge 0$, we define $\mathcal{C}_t$ as the collection of points $z\in \widetilde{\mathbb{Z}}^d$ such that there exists a path of length at most $t$ in $\widetilde{E}^{\ge 0}$ connecting $z$ and $\widetilde{D}$. Note that $\mathcal{C}_0=\widetilde{D}$ and $\mathcal{C}_\infty= \widetilde{\mathcal{C}}^{+}$. In addition, since $\mathcal{C}_t$ is increasing in $t$, we have that 
 	\begin{equation}
 		\mathcal{M}_t:= \mathbb{E}\big[ \widetilde{\phi}_v \mid \mathcal{F}_{\mathcal{C}_t} \big]
 	\end{equation}
 	for $t\ge 0$ is a martingale. In particular,  
 	\begin{equation}
 		\mathcal{M}_0=\sum\nolimits_{x\in \widetilde{\partial}\widetilde{D}} \widetilde{\mathbb{P}}_{z}\big(\tau_{\widetilde{D}}=\tau_{x}<\infty \big)\cdot \widetilde{\phi}_x.
 	\end{equation}
 	On the event $\{z \notin \widetilde{\mathcal{C}}^{+}\}$, one has $\mathcal{M}_\infty=0$. Moreover, referring to \cite[Corollary 10]{ding2020percolation}, the quadratic variation of $\mathcal{M}_t$ can be written as  
\begin{equation}
	\langle \mathcal{M} \rangle_{t}= \sum\nolimits_{x \in \widetilde{\partial}\mathcal{C}_t} \widetilde{\mathbb{P}}_{z}\big(\tau_{\mathcal{C}_t}=\tau_{x}<\infty \big)\cdot \widetilde{G}(x,z). 
\end{equation}
In addition, when $\{\widetilde{\mathcal{C}}^{+} \subset  \widetilde{B}(\bar{\lambda}^{3}N) \}$ occurs, one has $\widetilde{G}(z,v)\gtrsim (\bar{\lambda}^{3}N)^{2-d}$ and thus, 
\begin{equation}
		\langle \mathcal{M} \rangle_{t} \gtrsim (\bar{\lambda}^{3}N)^{2-d}\cdot \widetilde{\mathbb{P}}_{z}\big(\tau_{\mathcal{C}_t}<\infty \big), \ \ \forall t\ge 0.
\end{equation}
As a result, there exists $c_{\dagger}>0$ such that for any $a\ge \bar{\lambda}^{-1}$, 
 \begin{equation}\label{fix4.35}
 \begin{split}
 	 	&\mathring{\mathbb{P}}_z \big( \widetilde{\mathcal{C}}^{+} \subset  \widetilde{B}(\bar{\lambda}^{3}N),z\notin \widetilde{\mathcal{C}}^{+} ,  \widetilde{\mathbb{P}}_{v}\big(\tau_{\mathcal{C}^{+}_{\widetilde{D}}}<\infty \big) \ge 
a \big) \\
 \le  & \mathbb{P}\big(  \widetilde{\mathcal{C}}^{+} \subset  \widetilde{B}(\bar{\lambda}^{3}N),z\notin \widetilde{\mathcal{C}}^{+} ,  \langle \mathcal{M} \rangle_{\infty}  \ge 
 c_{\dagger}a  (\bar{\lambda}^{3}N)^{2-d} \big)\\
 \le &  \mathbb{P}\big( |  \mathcal{M}_0 | \ge \bar{\lambda}^{-2d+1} N^{-\frac{d}{2}+1} \big) \\
 & +   \mathbb{P}\big( | \mathcal{M}_0 |\le \bar{\lambda}^{-2d+1} N^{-\frac{d}{2}+1}  ,  \langle \mathcal{M} \rangle_{\infty}  \ge 
 c_{\dagger}a\bar{\lambda}^{-3d+6}N^{2-d} \big). 
 \end{split}
 \end{equation}
 	 	Note that $\mathcal{M}_0$ is a mean-zero Gaussian random variable with variance 
 	\begin{equation}\label{fix433}
\mathrm{Var}(\mathcal{M}_0)  =	\sum\nolimits_{x_1,x_2\in \widetilde{\partial}\widetilde{D}} \widetilde{\mathbb{P}}_{z}\big(\tau_{\widetilde{D}}=\tau_{x_1}<\infty \big)\cdot \widetilde{\mathbb{P}}_{z}\big(\tau_{\widetilde{D}}=\tau_{x_2}<\infty \big)\cdot \widetilde{G}(x_1,x_2).
 	\end{equation}

  	We claim that 
 	\begin{equation}\label{fix4.37}
 		\mathrm{Var}(\mathcal{M}_0)\lesssim \bar{\lambda}^{-4d}  N^{2-d}. 
 	\end{equation}
  	Before proving this claim, we first derive the bound (\ref{fix430}) using it. For the first term on the right-hand side of (\ref{fix4.35}), by (\ref{fix4.37}) and the tail estimate for the normal distribution (see e.g., \cite[Lemma 2.11]{cai2024one}), we have 
 	\begin{equation}\label{fixnew438}
 		 \mathbb{P}\big( |  \mathcal{M}_0 | \ge \bar{\lambda}^{-2d+1}  N^{-\frac{d}{2}+1} \big)  \lesssim e^{-c\bar{\lambda}^2}. 
 	\end{equation}
 	For the second term, by the martingale representation theorem, the following process is a standard Brownian motion stopped at time $\langle \mathcal{M} \rangle_{\infty} $ (see \cite[Theorem 11]{ding2020percolation}): 
 \begin{equation}
 	W_t :=\left\{
\begin{aligned}
&\mathcal{M}_{T_t}-    \mathcal{M}_0  & \forall 0\le t\le \langle \mathcal{M} \rangle_{\infty}; \\
&\mathcal{M}_\infty -    \mathcal{M}_0  & \forall t\ge  \langle \mathcal{M} \rangle_{\infty}, 
\end{aligned}\right.
 \end{equation} 
 	where $T_t:= \inf\{s>0: \langle \mathcal{M} \rangle_{s}\ge t\}$. Therefore,   	\begin{equation}\label{fix440}
 		\begin{split}
 			&  \mathbb{P}\big(  \langle \mathcal{M} \rangle_{\infty}  \ge 
 c_{\dagger}a\bar{\lambda}^{-3d+6}N^{2-d} \mid \mathcal{F}_{ \mathcal{C}_0 } \big)\cdot \mathbbm{1}_{| \mathcal{M}_0 |\le \bar{\lambda}^{-2d+1} N^{-\frac{d}{2}+1}}\\
\overset{(\text{symmetry})}{ \le} &  \mathbb{P}\big( \sup\nolimits_{ 0\le t\le c_{\dagger}a\bar{\lambda}^{-3d+6}N^{2-d}} W_t\le  \bar{\lambda}^{-2d+1} N^{-\frac{d}{2}+1} \big) \\ 
 \overset{(\text{reflection principle})}{= }&1-2\mathbb{P}\big(W_{c_{\dagger}a\bar{\lambda}^{-3d+6}N^{2-d}} \ge \bar{\lambda}^{-2d+1} N^{-\frac{d}{2}+1} \big)  
 \lesssim  a^{-1} \bar{\lambda}^{-d-4}. 
 		\end{split}
 	\end{equation}
 		Putting (\ref{fix4.35}), (\ref{fixnew438}) and (\ref{fix440}) together, we get: for any $a \ge \bar{\lambda}^{-1}$,  	
 		\begin{equation}
 	\begin{split}
 		 	\mathbb{P}\big( \widetilde{\mathcal{C}}^{+} \subset  \widetilde{B}(\bar{\lambda}^{3}N),z\notin \widetilde{\mathcal{C}}^{+} ,  \widetilde{\mathbb{P}}_{z}\big(\tau_{\widetilde{\mathcal{C}}^{+}}<\infty \big) 
 \ge a \big) 
 \lesssim   e^{-c\bar{\lambda}^2} +  \bar{\lambda}^{-d-3}. 
 	\end{split}	
 	\end{equation}
 	 	This yields the desired bound (\ref{fix430}):
 	\begin{equation}
 		\begin{split}
 		&	\mathring{\mathbb{P}}_z\big( \tau_{\widetilde{\mathcal{C}}^{+}}< \infty, \widetilde{\mathcal{C}}^{+} \subset  \widetilde{B}(\bar{\lambda}^{3}N)   ,z \notin \widetilde{\mathcal{C}}^{+}\big)  \\
 			\le &  \bar{\lambda}^{-1} + \int_{\bar{\lambda}^{-1} \le a \le 1} 	\mathbb{P}\big( \widetilde{\mathcal{C}}^{+} \subset  \widetilde{B}(\bar{\lambda}^{3}N),z\notin \widetilde{\mathcal{C}}^{+} ,  \widetilde{\mathbb{P}}_{z}\big(\tau_{\widetilde{\mathcal{C}}^{+}}<\infty \big) 
 \ge a \big)\mathrm{d}a 	\\
 \lesssim  &  \bar{\lambda}^{-1}   + e^{-c\bar{\lambda}^2} +  \bar{\lambda}^{-d-3}  \lesssim   \bar{\lambda}^{-1}. 
 \end{split}
 	\end{equation}

 		 It remains to verify the claim (\ref{fix4.37}). To achieve this, by reversing the Brownian motion and using the strong Markov property, we have: for any $x \in \widetilde{\partial}\widetilde{D}$,  
 	\begin{equation*}
 		\begin{split}
 			 \widetilde{\mathbb{P}}_{z}\big(\tau_{\widetilde{D}}=\tau_{x}<\infty \big) \lesssim & \sum_{y \in \mathbb{Z}^d: d\le |x-y|\le 2d} \widetilde{\mathbb{P}}_{y}\big(\tau_{z}<\tau_{\widetilde{D}} \big) \\
 			\le &\sum_{y \in \mathbb{Z}^d: d\le |x-y|\le 2d} \widetilde{\mathbb{P}}_{y}\big(\tau_{\partial B_x(\delta N)}<\tau_{\widetilde{D}} \big) \cdot \max_{w\in \partial B_x(\delta N)}   \widetilde{\mathbb{P}}_{w}\big(\tau_{\partial B_{x}(\frac{N}{200})} <\tau_{\widetilde{D}} \big)\\
 			&\ \ \ \ \ \ \ \ \ \ \ \ \ \ \ \ \ \ \  \cdot \max_{w'\in \partial B_{x}(\frac{N}{200})} \widetilde{\mathbb{P}}_{w'}\big(\tau_{z}<\infty \big). 
 		\end{split}
 	\end{equation*}
 	Next, we bound the probabilities on the right-hand side individually. For the first probability, by the potential theory we have 
 	\begin{equation}
 		\widetilde{\mathbb{P}}_{y}\big(\tau_{\partial B_x(\delta N)}<\tau_{\widetilde{D}} \big)\lesssim (\delta N)^{-1}. 
 	\end{equation} 
 	For the second one, recall that $\widetilde{D}$ is an orthotope consisting of $O(\delta^{-1})$ boxes of size $\delta N$. By the invariance principle, $\widetilde{\mathbb{P}}_{w}\big(\tau_{\partial B_{x}(\frac{N}{200})} <\tau_{\widetilde{D}} \big)$ (where $|w-z| \asymp \delta N$) is bounded from above by a constant multiple of the probability that a Brownian motion on $\mathbb{R}^d$, starting at $O(1)$ distance from a cylinder of diameter $1$, reaches distance $\delta^{-1}$ without intersecting this cylinder. By projecting the Brownian motion onto the hyperplane orthogonal to the cylinder's axis, the latter probability corresponds to that of a Brownian motion on $\mathbb{R}^{d-1}$, starting at $O(1)$ distance from a unit ball, reaches distance $\delta^{-1}$ without hitting this ball, which is of order $[ \ln(\delta^{-1})]^{-1}$ for $d=3$, and of order $1$ for $d\ge 4$ (see e.g., \cite[Theorem 3.18]{morters2010brownian}). To sum up, 
 	\begin{equation}
 		\widetilde{\mathbb{P}}_{w}\big(\tau_{\partial B_{x}(\frac{N}{200})} <\tau_{\widetilde{D}} \big) \lesssim \mathbbm{1}_{d=3}\cdot [ \ln(\delta^{-1})]^{-1} + \mathbbm{1}_{d\ge 4}. 	\end{equation} 
	For the third probability, one has 
 	\begin{equation}
 	\max_{w'\in \partial B_{x}(\frac{N}{200})} \widetilde{\mathbb{P}}_{w'}\big(\tau_{z}<\infty \big) \lesssim [\mathrm{dist}(B_{z}(\tfrac{N}{200}), z)]^{2-d}\asymp N^{2-d}. 
 	\end{equation}
 	 Putting these estimates together, we have: for any $x \in \widetilde{\partial}\widetilde{D}$, 
  	\begin{equation}\label{fix434}
 				\widetilde{\mathbb{P}}_{z}\big(\tau_{\widetilde{D}}=\tau_{x}<\infty \big) \lesssim  \delta^{-1}\big( \mathbbm{1}_{d=3}\cdot [ \ln( \delta^{-1})]^{-1} + \mathbbm{1}_{d\ge 4} \big)\cdot N^{1-d}.
 	\end{equation}
 	Plugging (\ref{fix434}) into (\ref{fix433}), we obtain that 
\begin{equation}\label{fix438}
	\mathrm{Var}(\mathcal{M}_0)  \lesssim  \delta^{-2}\big( \mathbbm{1}_{d=3}\cdot [ \ln(\delta^{-1})]^{-1} + \mathbbm{1}_{d\ge 4} \big)^2N^{2-2d} \cdot 	\sum_{x_1,x_2\in \widetilde{\partial}\widetilde{D}}  (|x_1-x_2|+1)^{2-d}. 
\end{equation}
 	Since $\widetilde{\eta}$ is a line segment, there exist $O(\delta^{-1})$ points $y_1,...,y_K$ in $\widetilde{\eta}$ such that $\widetilde{D} =\cup_{1\le j\le K}  \widetilde{B}_{y_j}(\delta N)$ and $|y_j-y_{j'}|\ge \frac{|j-j'|}{10}\cdot \delta N $ for all $j\neq j'$. Therefore,  
\begin{equation}
	\begin{split}
		& \sum\nolimits_{x_1,x_2\in \widetilde{\partial}\widetilde{D}}  (|x_1-x_2|+1)^{2-d} \\
		\le &\sum\nolimits_{1\le j \le K,x\in \partial B_{y_j}(\delta N)} \sum\nolimits_{1\le j' \le K:|j-j'|\le 100}\sum\nolimits_{  x'\in \partial B_{y_{j'}}(\delta N) }  (|x -x'|+1)^{2-d}\\
		&+  \sum\nolimits_{1\le j,j'\le K:|j-j'|> 100} \sum\nolimits_{x\in \partial B_{y_j}(\delta N),x'\in \partial B_{y_{j'}}(\delta N)}(|x -x'|+1)^{2-d}.
	\end{split}
\end{equation}
 	In addition, the first sum on the right-hand side is at most 
\begin{equation}
\begin{split}
	& 	C K\cdot |\partial B_{y_1}(\delta N)|\cdot \max_{x \in \partial B_{y_j}(\delta N),|j-j'|\le 100}\sum_{x'\in \partial B_{y_{j'}}(\delta N)} (|x -x'|+1)^{2-d} \\
		\lesssim &\delta^{-1} \cdot (\delta N)^{d-1}\cdot \delta N= \delta^{d-1}N^{d},
\end{split}
\end{equation}
where the inequality in the second line follows from a direct calculation (see e.g., \cite[(4.4)]{cai2024high}). Meanwhile, since $|x -x'|\gtrsim  |j -j'|\cdot \delta N$ for all $|j-j'|\ge 100$, $x\in \partial B_{y_j}(\delta N)$ and $x'\in \partial B_{y_{j'}}(\delta N)$, the second sum is bounded from above by 
\begin{equation}\label{fix441}
	\begin{split}
		C|\partial B_{y_1}(\delta N)|^2\cdot (\delta N)^{2-d}\cdot  \sum_{1\le j <j'\le K} |j-j'|^{2-d}\lesssim  \delta^d\ln(\delta^{-1})N^d\lesssim \delta^{d-1}N^{d}. 
	\end{split}
\end{equation}
 	Recall that $\bar{\lambda}=[\ln(1/\delta)]^{\frac{1}{2d}} $. Combining (\ref{fix438})-(\ref{fix441}), we derive (\ref{fix4.37}):
  \begin{equation}
 	\begin{split}
 		\mathrm{Var}(\mathcal{M}_0) \lesssim  \delta^{d-3}\big( \mathbbm{1}_{d=3}\cdot [ \ln(\delta^{-1})]^{-1} + \mathbbm{1}_{d\ge 4} \big)^2N^{2-d}\lesssim \bar{\lambda}^{-4d} N^{2-d}. 
 	\end{split}
 \end{equation}
In conclusion, we have established Claim (2), and thus the proof of Lemma \ref{lemma_separation} is complete.   \qed

{\color{blue}

 }


The proof above yields the following byproduct: Lemma \ref{lemma_separation} remains valid when the conditioning on $\mathsf{C}[\psi]$ is replaced by the two-point event.

 \begin{lemma}\label{lemma_avoid_path_two_point}
  Under the same assumptions as in Lemma \ref{lemma_separation}, one has 
 	 \begin{equation}\label{ineq_lemma_avoid_path_two_point}
 	 	\mathbb{P}\big( z_i^{\diamond} \xleftrightarrow{} \cup_{x\in \widetilde{\eta}} \widetilde{B}_x(\cref{const_lemma_separation2} m_\diamond ) \cup \mathfrak{B}^{\diamond}_{m_\diamond,\cref{const_lemma_separation2}}  \mid v_i \xleftrightarrow{} w_i \big) \le  \epsilon. 
 	 \end{equation}
 \end{lemma}
\begin{proof}
	Likewise, we only consider the case $\diamond=\mathrm{out}$ and $i=1$. Let $\widetilde{F}=\widetilde{F}(\delta):=\cup_{x\in \widetilde{\eta}} \widetilde{B}_x(\delta N ) \cup \partial B(\delta^{-1}N)$. By Lemma \ref{lemma_switching}, the conditional probability of $\{v_1\xleftrightarrow{} \widetilde{F},v_1\xleftrightarrow{} w_1\}$ given $\{v_1 \xleftrightarrow{} w_1,  \widehat{\mathcal{L}}_{1/2}^{v_1}=a_1, \widehat{\mathcal{L}}_{1/2}^{w_1}=b_1 \}$ equals 
		\begin{equation}\label{3.8_4.76}
		\begin{split}
		 \widecheck{\mathbb{P}}_{v_1\leftrightarrow w_1,a_1,b_1}\big( \cup_{2\le i\le 4}  \big\{ (\cup \mathcal{P}^{(i)})\cap \mathcal{C}_{\widetilde{F}}^{\{v_1,w_1\}} \neq \emptyset  \big\}     \big).
		\end{split}
	\end{equation}
	By Claims (1) and (2) in (\ref{3.18_claim4.34}), the probability that $\eta\sim \bar{\mathbf{e}}_{v_1,w_1}$ intersects $\mathcal{C}_{\widetilde{F}}^{\{v_1,w_1\}}$ is $O([\ln(1/\delta)]^{-\frac{1}{150}}N^{2-d})$. In addition, by (\ref{3.5_4.41}), the total mass of excursions intersecting $\mathcal{C}_{\widetilde{F}}^{\{v_1,w_1\}}$ under $\mathbf{e}_{v_1,v_1}^{\{w_1\}}$ or $\mathbf{e}_{w_1,w_1}^{\{v_1\}}$ has expectation $O([\ln(1/\delta)]^{-\frac{1}{10}})$. Consequently, we have 
	\begin{equation}
		\begin{split}
		\sum\nolimits_{i\in \{2,3\}}	 \widecheck{\mathbb{P}}_{v_1\leftrightarrow w_1,a_1,b_1}\big(   (\cup \mathcal{P}^{(i)})\cap \mathcal{C}_{\widetilde{F}}^{\{v_1,w_1\}} \neq \emptyset     \big) \lesssim (a_1+b_1) [\ln(1/\delta)]^{-\frac{1}{10}}, 
		\end{split}
	\end{equation}
	  \begin{equation}
	  	\begin{split}
	  		\widecheck{\mathbb{P}}_{v_1\leftrightarrow w_1,a_1,b_1}\big(   (\cup \mathcal{P}^{(4)})\cap \mathcal{C}_{\widetilde{F}}^{\{v_1,w_1\}} \neq \emptyset , |\mathcal{P}^{(4)}| =1  \big) \lesssim \sqrt{a_1b_1}   [\ln(1/\delta)]^{-\frac{1}{150}}. 
	  	\end{split}
	  \end{equation}
	Meanwhile, since $|\mathcal{P}^{(4)}|$ is conditioned to be odd, one has 
	\begin{equation}\label{3.8_4.79}
		\widecheck{\mathbb{P}}_{v_1\leftrightarrow w_1,a_1,b_1}\big(  |\mathcal{P}^{(4)}| \neq 1  \big) \lesssim a_1b_1N^{4-2d}. 
	\end{equation}
 Combining (\ref{3.8_4.76})-(\ref{3.8_4.79}), we obtain  
	\begin{equation*}
		\begin{split}
			&\mathbb{P}\big(v_1\xleftrightarrow{} \widetilde{F},v_1\xleftrightarrow{} w_1  \big) \\
			 \lesssim & \int_{a_1,b_1>0} \big[(a_1+b_1) [\ln(1/\delta)]^{-\frac{1}{10}} + \sqrt{a_1b_1} [\ln(1/\delta)]^{-\frac{1}{150}} + a_1b_1N^{4-2d}  \big] \mathrm{d} \mathfrak{p}_{v_1\leftrightarrow w_1,a_1,b_1} \\
			\overset{(\mathrm{AM-GM})}{ \lesssim} &  \int_{a_1,b_1>0} \big[(a_1+b_1) [\ln(1/\delta)]^{-\frac{1}{150}}  + (a_1^2+b_1^2)N^{4-2d}  \big] \mathrm{d} \mathfrak{p}_{v_1\leftrightarrow w_1,a_1,b_1} \\
			\overset{(\ref{ineq_lemma_25})}{\lesssim } & \big( [\ln(1/\delta)]^{-\frac{1}{150}} + N^{4-2d} \big)\cdot \mathbb{P}\big( v_1\xleftrightarrow{} w_1  \big),
		\end{split}
	\end{equation*}
 which yields the desired bound (\ref{ineq_lemma_avoid_path_two_point}).   
\end{proof}

 We also present the analogue of Lemma \ref{lemma_avoid_path_two_point} for $d\ge 7$, which will be useful in subsequent proofs.
 

 \begin{lemma}\label{lemma_highd_avoid_box_twopoint}
 	 For any $d\ge 7$ and $\epsilon>0$, there exist $\Cl\label{const_highd_avoid_box_twopoint1}(d,\epsilon),\cl\label{const_highd_avoid_box_twopoint2}(d,\epsilon)>0$ such that for any $n\ge  \Cref{const_highd_avoid_box_twopoint1}$, $N\ge \Cref{const_highd_avoid_box_twopoint1}n$, $\psi\in \Psi(n,N)$, $i\in \{ 1,2\}$, and any $x\in \mathfrak{B}_{m_{\diamond},100}^{\diamond}$ satisfying $|z_i^{\diamond}- x | \ge \frac{m_\diamond}{100}$,  
\begin{equation}
		\mathbb{P}\big( z_i^{\diamond} \xleftrightarrow{}  \widetilde{B}_x(\cref{const_highd_avoid_box_twopoint2} m_\diamond ) \cup \mathfrak{B}^{\diamond}_{m_\diamond,\cref{const_highd_avoid_box_twopoint2}}  \mid v_i \xleftrightarrow{} w_i \big) \le  \epsilon. 
\end{equation}
  \end{lemma}

 \begin{proof}
  Using the framework in the proof of Lemma \ref{lemma_avoid_path_two_point}, it suffices to show that  
\begin{enumerate}

	\item The probability that $\eta \sim \bar{\mathbf{e}}_{v_1,w_1}$ intersects $\widetilde{\mathcal{C}}:=\mathcal{C}_{B_x(\delta N)\cup \partial B(\delta^{-1}N)}^{\{v_1,w_1\}}$ is $o_{\delta}(1)$;

	\item The total mass of excursions intersecting $\widetilde{\mathcal{C}}$ under $\mathbf{e}_{v_1,v_1}^{\{w_1\}}$ or $\mathbf{e}_{w_1,w_1}^{\{v_1\}}$ has expectation $o_{\delta}(1)$.

\end{enumerate}

 For Item (1), note that $\widetilde{\mathcal{C}}=\widetilde{\mathcal{C}}_1\cup \widetilde{\mathcal{C}}_2$, where $\mathcal{C}_1:=\mathcal{C}^{\{v_1,w_1\}}_{B_x(\delta N)}$ and $\mathcal{C}_2:=\mathcal{C}^{\{v_1,w_1\}}_{\partial B(\delta^{-1}N)}$. When $\eta$ hits $\mathcal{C}_1$, either $\eta$ hits $B_x(\delta^{\frac{1}{d-2}}N)$, or there exists $y\in [B_x(\delta^{\frac{1}{d-2}}N)]^c$ such that $\eta$ hits $\widetilde{B}_y(1)$ and $y\leftrightarrow B_x(\delta N)$ occurs. By (\ref{310}), the probability of the first event is $O(\delta)$. Applying the union bound and Lemma \ref{lemma_excursion_hitting_point}, the probability of the second event is at most of order 
 	 	\begin{equation} 
 		\begin{split}
 			&   N^{d-2} \sum_{y\in [B_x(\delta^{\frac{1}{d-2}}N)]^c} (|v_1-y|+1)^{2-d}(|y-w_1|+1)^{2-d}\cdot \mathbb{P}\big(y \xleftrightarrow{} B_x(\delta N)\big)\\
 			\lesssim &  N^{d-2}   \sum_{y\in [B_x(\delta^{\frac{1}{d-2}}N)]^c}  (|v_1-y|+1)^{2-d}(|y-w_1|+1)^{2-d}\cdot \big(\tfrac{\delta^{\frac{1}{d-2}}N}{\delta N}\big)^{2-d}(\delta N)^{-2}\\ 
 			\lesssim &  \delta^{d-5}N^{d-4}\cdot (|v_1-w_1|+1)^{4-d}\asymp \delta^{d-5}, 
 		\end{split}
 	\end{equation} 
  	where in the second line we used \cite[Proposition 1.5]{cai2024quasi}, and the last inequality follows from the following fact (see, e.g., \cite[Lemma 4.3]{cai2024high}), which we will use multiple times: for any $v,w\in \widetilde{\mathbb{Z}}^d$,  
  	\begin{equation}\label{computation_2-d_2-d}
	\sum\nolimits_{x\in \mathbb{Z}^d} (|v-x|+1)^{2-d}(|x-w|+1)^{2-d} \lesssim (|v-w|+1)^{4-d}. 
\end{equation} 
  	When $\eta$ hits $\mathcal{C}_2$, either $\eta$ hits $\partial B(\delta^{-1/2}N)$, or there exists $y\in B(\delta^{-1/2}N)$ such that $\eta$ hits $\widetilde{B}_y(1)$ and $\{y\xleftrightarrow{} \partial B(\delta^{-1}N)\}$ occurs. Using (\ref{310}), the probability of the first event is $O(\delta^{\frac{1}{2}(d-2)})$. Applying the union bound, Lemma \ref{lemma_excursion_hitting_point} and (\ref{one_arm_high}), the probability of the second event is at most 
  \begin{equation}
  	\begin{split}
  		  C \sum_{y\in B(\delta^{-1/2}N)} N^{d-2}(|v_1-y|+1)^{2-d} (|y-w_1|+1)^{2-d}\cdot (\delta^{-1}N)^{-2} \overset{(\ref{computation_2-d_2-d})}{\lesssim} \delta^{2}. 
  		  	\end{split}
  \end{equation}
Combining these estimates, we obtain Item (1).

For Item (2), when an excursion starting from $z\in \{v_1,w_1\}$ hits $\widetilde{\mathcal{C}}$, either it reaches $\partial B_z(\delta^{-1})$, or $B_z(\delta^{-1})\xleftrightarrow{} \partial B_z(cN)$ occurs. By Lemma \ref{lemma_new216} we have 
  \begin{equation}
  	\mathbf{e}_{z,z}\big( \{\eta: \eta\ \text{hits}\ \partial B_z(\delta^{-1})\} \big) \lesssim \delta^{d-2}. 
  \end{equation}
Meanwhile, it follows from (\ref{crossing_high}) that for all $N\ge \delta^{3-d}$, 
\begin{equation}
	\mathbb{P}\big(B_z(\delta^{-1})\xleftrightarrow{} \partial B_z(cN)  \big) \lesssim \delta^{4-d}N^{-2} \lesssim  \delta^{d-2}. 
\end{equation}
These two bounds together yield Item (2). 
\end{proof}

 	 {\color{blue}

 }

  {\color{purple}

  }

\subsection{Proof of Lemma \ref{lemma_five_point}}

As an auxiliary result, we present the three-point version of Lemma \ref{lemma25} as follows. 

\begin{lemma}\label{lemme_local_time_three_points}
	For any $d\ge 3$, there exist $C,c>0$ such that for any $T>0$, $D\subset \widetilde{\mathbb{Z}}^d$ and $z_1,z_2,z_3\in \widetilde{\mathbb{Z}}^d\setminus D$ with $\min_{i\neq j}|z_i-z_j|\ge d$, 
	\begin{equation} 
		\mathbb{P}\big( \widehat{\mathcal{L}}_{1/2}^{D,z_1}\ge T \mid  z_1 \xleftrightarrow{(D)} z_2, z_2 \xleftrightarrow{(D)} z_3 \big) \le Ce^{-cT}. 
	\end{equation} 
\end{lemma}
\begin{proof}
	For brevity, we only provide a sketch of the proof. We condition on the event $\{z_2 \xleftrightarrow{(D)} z_3\}$, and split the argument according to whether $z_1$ is in $ \cup(\sum_{2\le i\le 4}\mathcal{P}^{(i)})$.

	\textbf{Case 1: $z_1\notin \cup(\sum_{2\le i\le 4}\mathcal{P}^{(i)})$.} In this case, $\widehat{\mathcal{L}}_{1/2}^{D,z_1}$ relies only on $\mathcal{P}^{(1)}= \widetilde{\mathcal{L}}_{1/2}^{\{z_2,z_3\}}$, and $z_1 \xleftrightarrow{(D)} z_2$ is equivalent to $z_1\xleftrightarrow{\cup \mathcal{P}^{(1)} } \cup(\sum_{2\le i\le 4}\mathcal{P}^{(i)})$. As shown in the proof of \cite[Lemma 3.3]{cai2024one}, conditioned on this connecting event, the local time at $z_1$ of the loop soup $\widetilde{\mathcal{L}}_{1/2}^{\{z_2,z_3\}}$ still has exponential tails.

	\textbf{Case 2: $z_1\in \cup(\sum_{2\le i\le 4}\mathcal{P}^{(i)})$}. Note that $z_1 \xleftrightarrow{(D)} z_2$ must occur, and that $\widehat{\mathcal{L}}_{1/2}^{D,z_1}$ equals the total local time at $z_1$ of $\cup(\sum_{1\le i\le 4}\mathcal{P}^{(i)})$. By the isomorphism theorem, the contribution of $\mathcal{P}^{(1)}$ has exponential tails. Meanwhile, by the transience of $\widetilde{\mathbb{Z}}^d$, the local time contributed by each excursion in $\sum_{2\le i\le 4}\mathcal{P}^{(i)}$ decays exponentially; in addition, the total number of these excursions decays exponentially as well, since both $\widehat{\mathcal{L}}_{1/2}^{D,z_2}$ and $\widehat{\mathcal{L}}_{1/2}^{D,z_3}$ conditioned on $\{z_2 \xleftrightarrow{(D)} z_3\}$ do so (by Lemma \ref{lemma25}). 
	
	To sum up, we derive that the occupation time $\widehat{\mathcal{L}}_{1/2}^{D,z_1}$ given $\{z_1 \xleftrightarrow{(D)} z_2, z_2 \xleftrightarrow{(D)} z_3\}$ has exponential tails.
\end{proof}

We now proceed to the proof of Lemma \ref{lemma_five_point}. Without loss of generality, we only consider $i=1$. By the restriction property and Lemmas \ref{lemma25} and \ref{lemme_local_time_three_points}, one has 
\begin{equation}
	\begin{split}
		\mathbb{P}\big( \widehat{\mathcal{L}}_{1/2}^{D,z}\ge T   \mid v_1 \xleftrightarrow{} x,  \mathsf{C}[\psi] \big) \le Ce^{-cT}  
	\end{split}
\end{equation} 
for all $z\in \{v_1,v_2,w_1,w_2\}$. As a result, there exists $C_\star>0$ such that 
\begin{equation}\label{3.17_newadd_4.79}
	\mathbb{P}\big( v_1 \xleftrightarrow{} x,  \mathsf{C}[\psi]  \big)  \asymp  \mathbb{P}\big(v_1 \xleftrightarrow{} x,  \overline{\mathsf{C}}[\psi, C_\star]   \big). 
\end{equation}
 We divide the remainder of the proof into three cases according to the location of $x$: $x\in [\widetilde{B}(\Cref{const_lemma_five_point1}^{-1/4}N)]^c$, $x\in \widetilde{B}(\Cref{const_lemma_five_point1}^{1/4}n)$ and $x\in \widetilde{B}(\Cref{const_lemma_five_point1}^{-1/4}N)\setminus \widetilde{B}(\Cref{const_lemma_five_point1}^{1/4}n)$.

\textbf{When $x\in [\widetilde{B}(\Cref{const_lemma_five_point1}^{-1/4}N)]^c$.} In this case, one has $\mathrm{dist}(x,\{v_1,w_1\})\asymp R:=|x-w_1|$. As in (\ref{3.4_4.33}), $\mathbb{P}\big( v_1 \xleftrightarrow{} x , \overline{\mathsf{C}}[\psi,C_\star] \big)$ can be written as 
\begin{equation}\label{3.9_4.80}
	\begin{split}
			\int_{0<a_1,b_1,a_2,b_2\le C_\star}    \mathbb{P} \big(   \{ \cup \mathfrak{L}_1   \xleftrightarrow{\cup \mathcal{P}} \cup \mathfrak{L}_2   \}^c, \cup \mathfrak{L}_1 \xleftrightarrow{\cup \mathcal{P}}  x \big) 
		 \bar{\mathfrak{p}} \   \mathrm{d}a_1\mathrm{d}b_1\mathrm{d}a_2\mathrm{d}b_2,
	\end{split}
\end{equation}
where $\mathcal{P}:=\mathcal{P}_{v_1,v_2,w_1,w_2}^{a_1,a_2,b_1,b_2}$ (defined below (\ref{3.9_4.17})), $\mathfrak{L}_1$ and $\mathfrak{L}_2$ are defined as in (\ref{3.1_4.19}), and $\bar{\mathfrak{p}}:=\mathfrak{p}_{v_1\leftrightarrow w_1,a_1,b_1}\cdot \mathfrak{p}_{v_2\leftrightarrow w_2,a_2,b_2}^{\{v_1,w_1\}}$. Let $\hat{N}:=\Cref{const_lemma_five_point1}^{-1/2}N$. Recall the notation $\mathbb{I}$ from (\ref{3.1_4.20}). For the same reason as in (\ref{3.4_4.35}), we have 
 \begin{equation}\label{3.9_4.81}
	\begin{split}
		 & \mathbb{P} \big(   \{ \cup \mathfrak{L}_1   \xleftrightarrow{\cup \mathcal{P}} \cup \mathfrak{L}_2   \}^c,  |\mathfrak{L}_1|= |\mathfrak{L}_2|=1 , \cup \mathfrak{L}_1 \xleftrightarrow{\cup \mathcal{P}}  x \big) \\
		  \le & \mathbb{I} \cdot \max_{z\in \partial \mathcal{B}(\hat{N})} \mathbb{P}\big( \mathrm{ran}( \check{\eta}^z)\xleftrightarrow{\cup \mathcal{P}}  x   \big),
	\end{split}
\end{equation}
where $\check{\eta}^z\sim \widetilde{\mathbb{P}}_{z}( \{\widetilde{S}_t\}_{0\le t\le \tau_{w_1}} \in \cdot  \mid \tau_{w_1}<\infty)$. Denote $\widehat{\mathcal{C}}:=\mathcal{C}_x^{\{v_1,v_2,w_1,w_2\}}$. For each $y\in \{v_1,v_2,w_1,w_2\}$, we denote by $\mathcal{P}_y$ the point process consisting of excursions in $\mathcal{P}$ that start and end at $y$. Let $\mathcal{P}^0:=\mathcal{P}_{v_1,v_2,w_1,w_2}$. As in (\ref{inclusion_3.9_4.36}), one has 
\begin{equation}
	\begin{split}
		  \big\{  \mathrm{ran}( \check{\eta}^z)\xleftrightarrow{\cup \mathcal{P}}  x  \big\}   \subset     \cup_{y \in \{v_1,v_2,w_1,w_2 \}} \big\{ (\cup \mathcal{P}_y) \cap  \widehat{\mathcal{C}} \neq \emptyset\big\} \cup \big\{\mathrm{ran}( \check{\eta}^z)  \xleftrightarrow{\cup \mathcal{P}^0}  x \big\}.
	\end{split}
\end{equation}
On $\{(\cup \mathcal{P}_y) \cap  \widehat{\mathcal{C}}   \neq \emptyset\}$, either $\cup \mathcal{P}_y$ hits $\partial B_y(\frac{R}{\Cref{const_lemma_five_point1}})$, or $\{ x\xleftrightarrow{} \partial B_x(\frac{R}{\Cref{const_lemma_five_point1}})\}$ occurs. By Lemma \ref{lemma_new216}, the probability of the first event is at most proportional to the local time of $\mathcal{P}_y$ at $y$ times $R^{2-d}$. In addition, by (\ref{one_arm_low}), the probability of the second event is $O(R^{-\frac{d}{2}+1})$. Consequently, for any $a_1,b_1,a_2,b_2 \in (0, R^{\frac{d}{2}-1}]$, 
\begin{equation}\label{3.9_4.83}
	\begin{split}
		\mathbb{P}\big( \mathrm{ran}( \check{\eta}^z)\xleftrightarrow{\cup \mathcal{P}}  x   \big) \lesssim  R^{-\frac{d}{2}+1}   +  \mathbb{P}\big( \mathrm{ran}( \check{\eta}^z)\xleftrightarrow{\cup \mathcal{P}^0}  x   \big). 
	\end{split}
\end{equation}
 Combining (\ref{3.4_4.34}), (\ref{3.9_4.80}), (\ref{3.9_4.81}) and (\ref{3.9_4.83}), and then using (\ref{new3.2_4.22}), we have 
\begin{equation}\label{3.9_4.84_new}
	\begin{split}
		\mathbb{P}\big( v_1 \xleftrightarrow{} x , \overline{\mathsf{C}}[\psi,C_\star]\big) \lesssim   \mathbb{P}\big(   \mathsf{C}[\psi] \big)   \big[  R^{-\frac{d}{2}+1}+ \max_{z\in \partial \mathcal{B}(\hat{N})} \mathbb{P}\big( \mathrm{ran}( \check{\eta}^z)\xleftrightarrow{\cup \mathcal{P}^0}  x   \big) \big].
	\end{split}
\end{equation} 
On $\{\mathrm{ran}( \check{\eta}^z)\xleftrightarrow{\cup \mathcal{P}^0}  x\}$, either $\widehat{\mathcal{C}}$ reaches $\partial B_x( \tfrac{R}{\Cref{const_lemma_five_point1}})$, or $\check{\eta}^z$ hits $\widehat{\mathcal{C}}$ inside $\widetilde{B}_x( \tfrac{R}{\Cref{const_lemma_five_point1}})$. Hence, 
\begin{equation}\label{3.9_4.84}
	 \big\{  \mathrm{ran}( \check{\eta}^z)\xleftrightarrow{\cup \mathcal{P}}  x  \big\} \subset  \big\{ x \xleftrightarrow{} \partial B_x( \tfrac{R}{\Cref{const_lemma_five_point1}} )  \big\}\cup \big\{  \mathrm{ran}( \check{\eta}^z) \cap \widetilde{B}_x( \tfrac{R}{\Cref{const_lemma_five_point1}} ) \xleftrightarrow{(\partial B_x( \Cref{const_lemma_five_point1}^{-1}R))}  x  \big\}. 
\end{equation} 
Moreover, the same arguments as in \cite[Lemma 4.2]{cai2024quasi} show that $\mathrm{ran}( \check{\eta}^z) \cap \widetilde{B}_x( \tfrac{R}{\Cref{const_lemma_five_point1}} )$ is stochastically dominated by $(\cup \widetilde{\mathcal{L}}_{1/2}\cdot \mathbbm{1}_{\mathrm{ran}(\widetilde{\ell})\cap \partial B_x(R)\neq \emptyset})\cap \widetilde{B}_x( \tfrac{R}{\Cref{const_lemma_five_point1}} )$. As a result, 
\begin{equation}
	\begin{split}
		\mathbb{P}\big(  \mathrm{ran}( \check{\eta}^z) \cap \widetilde{B}_x( \tfrac{R}{\Cref{const_lemma_five_point1}} ) \xleftrightarrow{(\partial B_x( \Cref{const_lemma_five_point1}^{-1}R ))}  x  \big) \le \mathbb{P}\big(  x \xleftrightarrow{} \partial B_x( \tfrac{R}{\Cref{const_lemma_five_point1}} ) \big). 
	\end{split}
\end{equation}
This together with (\ref{3.9_4.84}) yields that 
\begin{equation}\label{3.9_4.87}
	\begin{split}
		\mathbb{P}\big( \mathrm{ran}( \check{\eta}^z)\xleftrightarrow{\cup \mathcal{P}^0}  x   \big) \lesssim \theta_d(\tfrac{R}{\Cref{const_lemma_five_point1}}  ) \lesssim R^{-\frac{d}{2}+1}. 
	\end{split}
\end{equation}
Plugging (\ref{3.9_4.87}) into (\ref{3.9_4.84_new}) and then applying (\ref{3.17_newadd_4.79}), we obtain (\ref{ineq_lemma_five_point_new}).

\textbf{When $x\in \widetilde{B}(\Cref{const_lemma_five_point1}^{1/4}n)$.} The proof in this case is similar to the one above, so we omit the details.

 \textbf{When $x\in \widetilde{B}(\Cref{const_lemma_five_point1}^{-1/4}N)\setminus \widetilde{B}(\Cref{const_lemma_five_point1}^{1/4}n)$.} In this case, $\mathrm{dist}(x,\{v_1,w_1\})\asymp |x|$. We define $\mathsf{G}$ as the event that either $\{x \xleftrightarrow{\cup \mathcal{P}^0}  \partial B_x(\Cref{const_lemma_five_point1}^{-1/8}|x|)\}$ occurs, or there exists $y\in \{v_1,v_2,w_1,w_2\}$ such that $\mathcal{P}_y$ hits $B_y(\Cref{const_lemma_five_point1}^{-1/8}|x|)$. By (\ref{one_arm_low}) and Lemma \ref{lemma_new216}, 
  \begin{equation}\label{3.10_4.88_new}
  	\mathbb{P}( \mathsf{G} )  \lesssim |x|^{-\frac{d}{2}+1} + (a_1+a_2+b_1+b_2)|x|^{2-d}. 
  \end{equation}
 Note that $\mathsf{G}$ is increasing with respect to $\mathcal{P}$. Hence, by the FKG inequality, 
 \begin{equation}\label{3.10_4.88}
 \begin{split}
 		& \int_{0<a_1,b_1,a_2,b_2\le C_\star}    \mathbb{P} \big(   \{ \cup \mathfrak{L}_1   \xleftrightarrow{\cup \mathcal{P}} \cup \mathfrak{L}_2   \}^c, \mathsf{G} \big) 
		 \cdot \bar{\mathfrak{p}} \   \mathrm{d}a_1\mathrm{d}b_1\mathrm{d}a_2\mathrm{d}b_2\\
		 \lesssim & |x|^{-\frac{d}{2}+1}\cdot  \int_{0<a_1,b_1,a_2,b_2\le C_\star}    \mathbb{P} \big(   \{ \cup \mathfrak{L}_1   \xleftrightarrow{\cup \mathcal{P}} \cup \mathfrak{L}_2   \}^c  \big) 
		 \cdot \bar{\mathfrak{p}} \   \mathrm{d}a_1\mathrm{d}b_1\mathrm{d}a_2\mathrm{d}b_2 \\
		\le & |x|^{-\frac{d}{2}+1}\cdot \mathbb{P}\big(\mathsf{C}[\psi]\big).
 \end{split}
 \end{equation}
 Meanwhile, (\ref{new3.2_4.22}) and (\ref{3.4_4.34}) together yield  
 \begin{equation}\label{3.15_4.87}
 	\begin{split}
 		&\int_{0<a_1,b_1,a_2,b_2\le C_\star}    \mathbb{P} \big(   \{ \cup \mathfrak{L}_1   \xleftrightarrow{\cup \mathcal{P}} \cup \mathfrak{L}_2   \}^c, \{ |\mathfrak{L}_1|= |\mathfrak{L}_2|=1\}^c \big)\cdot \bar{\mathfrak{p}} \   \mathrm{d}a_1\mathrm{d}b_1\mathrm{d}a_2\mathrm{d}b_2 \\
 		 \lesssim & N^{4-2d}\cdot \mathbb{P}\big(\mathsf{C}[\psi]\big)\lesssim  |x|^{-\frac{d}{2}+1}\cdot \mathbb{P}\big(\mathsf{C}[\psi]\big).
 	\end{split}
 \end{equation}

  Next, we bound the probability of the event 
  \begin{equation}
  	\mathsf{H}:= \{ \cup \mathfrak{L}_1   \xleftrightarrow{\cup \mathcal{P}} \cup \mathfrak{L}_2   \}^c\cap \{ |\mathfrak{L}_1|= |\mathfrak{L}_2|=1\}\cap \mathsf{G}^c \cap  \{\mathfrak{L}_1 \xleftrightarrow{\cup \mathcal{P}}  x\}. 
  \end{equation}
   Denote $L^-:=\Cref{const_lemma_five_point1}^{-1/8}|x|$ and $L^+:=\Cref{const_lemma_five_point1}^{1/8}|x|$. For each $j\in \{1,2\}$, when $|\mathfrak{L}_j|=1$, let $\eta_j$ denote the unique excursion in $\mathfrak{L}_j$ (from $v_j$ to $w_j$). We define $\varsigma_j^-$ (resp. $\varsigma_j^+$) as the first (resp. last) time $\eta_j$ visits $\partial \mathcal{B}(L^-)$ (resp. $\partial \mathcal{B}(L^+)$). Let $T_j$ denote the duration of $\eta_j$. Consider the sub-paths $\hat{\eta}_j:= \eta_j[0,\varsigma_j^-]$, $\bar{\eta}_j:=\eta_j[\varsigma_j^-,\varsigma_j^+]$ and $\check{\eta}_j:=[\varsigma_j^+,T_j]$. On the event $\mathsf{H}$, $\cup \mathfrak{L}_1$ must intersect $\widehat{\mathcal{C}}$ inside $\widetilde{B}_x(\Cref{const_lemma_five_point1}^{-1/8}|x|)$; hence, since $(\cup \mathfrak{L}_1)\cap \widetilde{B}_x(\Cref{const_lemma_five_point1}^{-1/8}|x|)\subset \mathrm{ran}(\bar{\eta}_1)$, $\mathbb{P} (\mathsf{H})$ is upper-bounded by 
     \begin{equation}\label{3.10_4.90}
 	\begin{split}
 		 \mathbb{P} \big( \{ \mathrm{ran}(\hat{\eta}_1)\xleftrightarrow{\cup \mathcal{P}}  \mathrm{ran}(\hat{\eta}_2) \}^c,\{ \mathrm{ran}(\check{\eta}_1)\xleftrightarrow{\cup \mathcal{P}}  \mathrm{ran}(\check{\eta}_2) \}^c, \mathsf{G}^c ,\mathrm{ran}(\bar{\eta}_1)  \xleftrightarrow{\cup \mathcal{P}^0}  x  \big). 
 	\end{split}
 \end{equation} 
 Note that $\hat{\eta}_1\sim \hat{\mathbf{e}}_{v_1,w_1}^{L^-}$ and $\hat{\eta}_2\sim \hat{\mathbf{e}}_{v_2,w_2}^{\{v_1,w_1\},L^-}$. Moreover, given $\hat{\eta}_1$ and $\hat{\eta}_2$, the law of the reversed path of $\check{\eta}_1$ (resp. $\check{\eta}_2$) is equivalent to $\hat{\mathbf{e}}_{w_1,v_1}^{L^+}$ (resp. $\hat{\mathbf{e}}_{w_2,v_2}^{\{v_1,w_1\},L^+}$). For each $j\in \{1,2\}$, let $\mathbf{y}_j$ (resp. $\mathbf{z}_j$) denote the endpoint of $\hat{\eta}_j$ (resp. $ \check{\eta}_j$). Recall $\widetilde{\mathbb{P}}^t_{\cdot,\cdot }$ below (\ref{3.10_3.54}). Conditioned on $\hat{\eta}_1, \hat{\eta}_2,\check{\eta}_1$ and $\check{\eta}_2$, the law of $\bar{\eta}_1$ is given by  
 \begin{equation}
 	\begin{split}
\bar{\eta}_{\mathbf{y}_1,\mathbf{z}_1} \sim  \big[ \widetilde{G}(\mathbf{y}_1,\mathbf{z}_1) \big]^{-1} 	\int \widetilde{q}_t(\mathbf{y}_1,\mathbf{z}_1) 	\widetilde{\mathbb{P}}^t_{\mathbf{y}_1,\mathbf{z}_1}(\cdot) \mathrm{d}t. 
 	\end{split}
 \end{equation}
 Thus, it follows from (\ref{3.10_4.90}) and the FKG inequality that 
 \begin{equation}\label{3.10_4.92}
 	\begin{split}
 		\mathbb{P} (\mathsf{H}) \le &  \mathbb{P} (\mathsf{A} )  
 		  \cdot \max_{y\in \partial \mathcal{B}(L^-),z\in \partial \mathcal{B}(L^+)} \mathbb{P}\big( \mathrm{ran}(\bar{\eta}_{y,z} ) \xleftrightarrow{\cup \mathcal{P}^0} x \big), 
 	\end{split}
 \end{equation}
 where $\mathsf{A}:=\{ \mathrm{ran}(\hat{\eta}_1)\xleftrightarrow{\cup \mathcal{P}}  \mathrm{ran}(\hat{\eta}_2) \}^c\cap \{ \mathrm{ran}(\check{\eta}_1)\xleftrightarrow{\cup \mathcal{P}}  \mathrm{ran}(\check{\eta}_2) \}^c \cap  \mathsf{G}^c$.

 Define $\mathsf{I}:=\{  \partial  \mathcal{B}(L^-) \cup   \partial  \mathcal{B}(L^+)\xleftrightarrow{\cup \mathcal{P}^0 } \partial B(|x|)  \}^c$. It follows from (\ref{crossing_low}) that $\mathbb{P}(\mathsf{I})\asymp 1$. Thus, since $\mathsf{A}$ and $\mathsf{I}$ are both decreasing with respect to $\mathcal{P}$, we have 
 \begin{equation}
  \mathbb{P}(\mathsf{A} )\overset{(\text{FKG})}{\asymp}  	\mathbb{P}(\mathsf{A}\cap \mathsf{I}).
 \end{equation}
 Moreover, on $\mathsf{G}^c\cap \mathsf{I}$, the events $\{ \mathrm{ran}(\hat{\eta}_1)\xleftrightarrow{\cup \mathcal{P}}  \mathrm{ran}(\hat{\eta}_2) \}^c$ and $\{ \mathrm{ran}(\check{\eta}_1)\xleftrightarrow{\cup \mathcal{P}}  \mathrm{ran}(\check{\eta}_2) \}^c$ rely on the excursions and loops contained in $\widetilde{B}(|x|)$ and $[\widetilde{B}(|x|)]^c$ respectively. Therefore, by the BKR inequality (see \cite[Lemma 3.3]{cai2024high}), we have  
 \begin{equation}
 	\begin{split}
 		\mathbb{P}(\mathsf{A}\cap \mathsf{I}) \le & \mathbb{P}\big(\{ \mathrm{ran}(\hat{\eta}_1)\xleftrightarrow{\cup \mathcal{P}}  \mathrm{ran}(\hat{\eta}_2) \}^c \big) \cdot \mathbb{P}\big(\{ \mathrm{ran}(\check{\eta}_1)\xleftrightarrow{\cup \mathcal{P}}  \mathrm{ran}(\check{\eta}_2) \}^c  \big)  \\ 
 		\overset{(\text{Lemma}\ \ref{new_lemma_separation})}{\asymp }    &\mathbb{P}(\hat{\mathsf{A}}) \cdot  \mathbb{P}(\check{\mathsf{A}})   
 		  \overset{(\text{FKG})}{\lesssim }  \mathbb{P}(\hat{\mathsf{A}}\cap \check{\mathsf{A}})  
 	\end{split}
 \end{equation}
 for all $a_1,a_2,b_1,b_2\in (0,C_\star]$, where $\hat{\mathsf{A}}:=\{ \mathrm{ran}(\hat{\eta}_1)\xleftrightarrow{\cup \mathcal{P}}  \mathrm{ran}(\hat{\eta}_2) \}^c \cap \{ \mathcal{Q}(\hat{\eta}_1, \hat{\eta}_2) \ge \cref{const_ls3}\}$ and $\check{\mathsf{A}}:=\{ \mathrm{ran}(\check{\eta}_1)\xleftrightarrow{\cup \mathcal{P}}  \mathrm{ran}(\check{\eta}_2) \}^c \cap \{  \mathcal{Q}(\check{\eta}_1, \check{\eta}_2) \ge \cref{const_ls3}\}$. Meanwhile, by the same argument as in (\ref{3.9_4.87}), one has 
  \begin{equation}\label{3.10_4.95}
\max_{y\in \partial \mathcal{B}(L^-),z\in \partial \mathcal{B}(L^+)} \mathbb{P}\big( \mathrm{ran}(\bar{\eta}_{y,z} ) \xleftrightarrow{\cup \mathcal{P}^0} x \big) \lesssim |x|^{-\frac{d}{2}+1}. 
  \end{equation}
  Combining (\ref{3.10_4.92})-(\ref{3.10_4.95}), we derive 
  \begin{equation}\label{3.10_4.96}
  	\begin{split}
  		&\int_{0<a_1,b_1,a_2,b_2\le C_\star}    \mathbb{P}  (\mathsf{H}  ) 
		 \cdot \bar{\mathfrak{p}} \   \mathrm{d}a_1\mathrm{d}b_1\mathrm{d}a_2\mathrm{d}b_2 \\
		 \lesssim &|x|^{-\frac{d}{2}+1} \cdot \int_{0<a_1,b_1,a_2,b_2\le C_\star}    \mathbb{P}(\hat{\mathsf{A}}\cap \check{\mathsf{A}})
		 \cdot \bar{\mathfrak{p}} \   \mathrm{d}a_1\mathrm{d}b_1\mathrm{d}a_2\mathrm{d}b_2. 
  	\end{split}
  \end{equation}


  Similar to (\ref{new3.2_4.22}), we claim that 
   \begin{equation}\label{3.10_4.97}
   	\mathbb{P}(\mathsf{C}[\psi])\gtrsim  \int_{0<a_1,b_1,a_2,b_2\le C_\star}    \mathbb{P}(\hat{\mathsf{A}}\cap \check{\mathsf{A}})
		 \cdot \bar{\mathfrak{p}} \   \mathrm{d}a_1\mathrm{d}b_1\mathrm{d}a_2\mathrm{d}b_2. 
   \end{equation} 
   Before proving this claim, we first complete the proof using it. Combining (\ref{3.10_4.88}), (\ref{3.15_4.87}), (\ref{3.10_4.96}) and (\ref{3.10_4.97}), we obtain 
   \begin{equation}
   	\begin{split}
   		\mathbb{P}\big( \overline{\mathsf{C}}[\psi,C_\star], v_1\xleftrightarrow{}x  \big) \lesssim |x|^{-\frac{d}{2}+1}\cdot \mathbb{P}\big(\mathsf{C}[\psi]\big),
   	\end{split}
   \end{equation}
   which together with (\ref{3.17_newadd_4.79}) implies the desired bound (\ref{ineq_lemma_five_point_new}).


It remains to show (\ref{3.10_4.97}). We take two paths $\ell_1$ and $\ell_2$ satisfying the following:
   \begin{itemize}

   	\item[-]   Each $\ell_j$ starts from $\mathbf{y}_j$ and ends at $\mathbf{z}_j$;

   	\item[-]  Each $\ell_j$ consists of at most $2d$ line segments and has length at most $dN$;

  	 \item[-]   $\mathrm{ran}(\ell_j) \subset \widetilde{B}_{\mathbf{y}_j}( \cref{const_ls3}^3L^-)\cup \widetilde{B}_{\mathbf{z}_j}( \cref{const_ls3}^3L^-)\cup  \big(\mathcal{B} ((1+\cref{const_ls3}^4)L^- )\cup [\mathcal{B} (L^+-\cref{const_ls3}^4L^- )]^c  \big)^c$;

   \item[-]  $\mathrm{dist}\big( \mathrm{ran}(\ell_1), \mathrm{ran}(\ell_2)  \big) \ge \cref{const_ls3}^2 L^-$.

   \end{itemize}
   For $j\in \{1,2\}$, we define $D_j:=\cup_{z\in \mathrm{ran}(\ell_j)}\widetilde{B}_z(\cref{const_ls3}^5L^-)$ and $D_j^+:=\cup_{z\in \mathrm{ran}(\ell_j)}\widetilde{B}_z(\cref{const_ls3}^4L^-)$. On $\{|\mathfrak{L}_1|=|\mathfrak{L}_2|=1\}$, the event $\{ \cup \mathfrak{L}_1   \xleftrightarrow{\cup \mathcal{P}} \cup \mathfrak{L}_2   \}^c $ occurs if $\hat{\mathsf{A}}\cap \check{\mathsf{A}}$ and the following events occur: 
   \begin{equation}
   	\mathsf{D}_1:= \cap_{j\in \{1,2\}} \{ \mathrm{ran}(\bar{\eta}_j) \subset D_j \}, \ \ \mathsf{D}_2:= \cap_{j\in \{1,2\}} \{D_j\xleftrightarrow{\cup \mathcal{P}} \widetilde{\partial}D_j^+\}^c. 
   \end{equation}
    As noted before (\ref{3_2_4.25}), the probabilities of $\mathsf{D}_1$ and $	\mathsf{D}_2$ are both uniformly bounded away from zero. Combined with (\ref{3.2_4.24}), this yields 
     \begin{equation}\label{3.10_4.100}
	\begin{split}
		  \mathbb{P}\big(\{ \cup \mathfrak{L}_1   \xleftrightarrow{\cup \mathcal{P}} \cup \mathfrak{L}_2   \}^c \big)  
   		\gtrsim    \mathbb{P}(\hat{\mathsf{A}}\cap \check{\mathsf{A}}) 
	\end{split}
\end{equation}
for all $a_1,b_1,a_2,b_2\in (0, C_\star]$. Substituting (\ref{3.10_4.100}) into (\ref{3.2_4.23}) (with $\psi'=\psi$), we obtain the claim (\ref{3.10_4.97}), thereby completing the proof of Lemma \ref{lemma_five_point}. \qed

\begin{remark}\label{3.12_remark_4.7}
In fact, the assumption $|v_1-v_2|\ge  \Cref{const_lemma_five_point1}$ in Lemma \ref{lemma_five_point} can be dropped. The reason is that the separation lemma (i.e., Lemma \ref{new_lemma_separation}) does not require the starting points of the excursions to be well separated. This extension will be used in Section \ref{section5.2_lower_volume}.
\end{remark}

\subsection{Proof of Lemma \ref{new_lemma_separation}}\label{subsection4.5}

We only provide the proof for $i=\mathrm{in}$, since the case $i=\mathrm{out}$ is analogous. For brevity, we write $v:=x_{\mathrm{in}}$, $v':=x_{\mathrm{in}}'$, $w:=x_{\mathrm{out}}$, $w':=x_{\mathrm{out}}'$, $w'':=x_{\mathrm{out}}''$ and $w''':=x_{\mathrm{out}}'''$. Let $r_0:= |v-v'|\vee C_\dagger$, where $C_\dagger>0$ is a sufficiently large constant. Without loss of generality, we assume that $v,v'\in \widetilde{B}(r_0)$. We begin by introducing some notation.

\begin{itemize}

	 \item For each $j\in \mathbb{N}$, let $r_j:=C_\dagger^j  r_0$ and $r_j^+:=C_\dagger^{1/2}r_j$, where $C_\dagger>0$ is a large constant. We assume that $N/r_0=C_\dagger^{K}$ for some odd number $K\in \mathbb{N}^+$.

	\item For any $R\ge r_1$, let $\hat{\eta}_{R}$ (resp. $\hat{\eta}_{R}'$) be the sub-path of $\hat{\eta}$ (resp. $\hat{\eta}'$) up to the first time it hits $\partial \mathcal{B}(R)$.

	\item  For each $1\le j\le K$, we define the event $\mathsf{Q}_j(\delta):=\{ \mathcal{Q}(\hat{\eta}_{r_j}, \hat{\eta}_{r_j}') \ge \delta\}$. 
	  

	\item For $y\in \{v,v',w'',w'''\}$, let $\mathcal{P}_y$ denote the point measure consisting of excursions in $\mathcal{P}:=\mathcal{P}_{v,v',w'',w'''}^{a,a',b,b'}$ starting from $y$. Let $\mathcal{P}^0:=\mathcal{P}_{v,v',w'',w'''}$.

	\item For $1\le j\le K$, we define $\mathfrak{U}_j$ as the point measure consisting of the loops in $\mathcal{P}^0$ that are contained in $\mathcal{B}(r_j^+)$, and the excursions in $\mathcal{P}_{v}+\mathcal{P}_{v'}$ that are contained in $\mathcal{B}(r_{0}^+)$. Define $\mathsf{A}_j:= \big\{  \mathrm{ran}(\hat{\eta}_{r_j}) \xleftrightarrow{\cup \mathfrak{U}_j} \mathrm{ran}(\hat{\eta}_{r_j}') \big\}^c$. Note that $\mathbb{P}(\mathsf{Q}_1(\cref{const_ls3})\mid \mathsf{A}_1)\asymp 1$, and that $\mathsf{A}_{j_1}\subset \mathsf{A}_{j_2}$ for all $j_1\ge j_2$.

\end{itemize}

 The key is to establish the following bound: 
 \begin{equation}\label{3.13_key_3.134}
	\begin{split}
	\mathbb{P}\big( \mathsf{Q}_K(\cref{const_ls3}), \mathsf{A}_K  \big)  \gtrsim \mathbb{P}\big( \mathsf{A}_K  \big). 
	\end{split}
\end{equation}
Before presenting its proof, we first prove Lemma \ref{new_lemma_separation} using it. Consider the event 
\begin{equation}
\begin{split}
	\mathsf{F}:= &  \big\{ \mathcal{B}(r_{K})  \xleftrightarrow{ \cup \mathcal{P}^0} \partial   \mathcal{B}(r_{K}^+)   \big\}^c \cap \big\{ \big[ \cup (\mathcal{P}_{v}+\mathcal{P}_{v'} ) \big] \cap \partial \mathcal{B}(r_{0}^+)=\emptyset \big\}\\
&	\cap    \big\{ \big[ \cup (\mathcal{P}_{w''}+\mathcal{P}_{w'''}  )\big]  \cap \partial \mathcal{B}(r_{K}^+) =\emptyset  \big\}. 
\end{split} 
\end{equation}  
The three events defining $\mathsf{F}$ are independent and decreasing. In addition, on the event $\mathsf{F}$, the union of clusters in $\cup \mathcal{P}$ intersecting $\hat{\eta}$ (or $\hat{\eta}'$) coincides with the counterpart of $\mathfrak{U}_K$. Consequently, 
 \begin{equation}\label{3.15_4.102}
 	\begin{split}
 		& \mathbb{P} \big(  \mathcal{Q}(\hat{\eta}, \hat{\eta}') \ge \cref{const_ls3} , \{ \mathrm{ran}(\hat{\eta}) \xleftrightarrow{\cup \mathcal{P}} \mathrm{ran}(\hat{\eta}')\}^c \big)  \\ 
 		\ge &  \mathbb{P} \big( \mathsf{Q}_K(\cref{const_ls3}), \mathsf{A}_K,  \mathsf{F}\big)  \overset{(\text{FKG}),(\ref{3.13_key_3.134})}{\gtrsim }  \mathbb{P}(\mathsf{F})\cdot \mathbb{P}(\mathsf{A}_K) \\
 		\overset{( \mathfrak{U}_K\le  \mathcal{P})}{\ge} &  \mathbb{P}(\mathsf{F})\cdot \mathbb{P}(\{ \mathrm{ran}(\hat{\eta}) \xleftrightarrow{\cup \mathcal{P}} \mathrm{ran}(\hat{\eta}')\}^c). 
 	\end{split}
 \end{equation}
 By (\ref{crossing_low}), the probability of $\{\mathcal{B}(r_{K})  \xleftrightarrow{ \cup \mathcal{P}^0} \partial   \mathcal{B}(r_{K}^+)\}$ is uniformly bounded away from zero. By Lemma \ref{lemma_new216}, under $\mathbf{e}_{v,v}^{\{v'\}}$ or $\mathbf{e}_{w',w'}^{\{w\}}$ (resp. $\mathbf{e}_{w''}^{\{v,v',w'''\}}$ or $\mathbf{e}_{w'''}^{\{v,v',w''\}}$), the total mass of excursions intersecting $\partial \mathcal{B}(r_{0}^+)$ (resp. $\partial \mathcal{B}(r_{K}^+)$) is $O(r_0^{2-d})$. Thus, 
 \begin{equation}\label{3.15_4.103}
 	\begin{split}
 		\mathbb{P}(\mathsf{F}) \gtrsim e^{-C(a+a'+b+b')r_0^{2-d}}. 
 	\end{split}
 \end{equation}
Plugging (\ref{3.15_4.103}) into (\ref{3.15_4.102}), we derive this lemma.

In what follows, we present the proof of the key estimate (\ref{3.13_key_3.134}). We first prove a technical lemma.  

 \begin{lemma}\label{lemma_lowd_cluster_capacity}
 	For any $3\le d\le 5$, there exist $\cl\label{const_lemma_lowd_cluster_capacity1},\cl\label{const_lemma_lowd_cluster_capacity2}>0$ such that for any $r\ge 1$, 
 	 \begin{equation}\label{newadd_3.17_4.107}
 	 	\mathbb{P}\big( \exists x \in B(r)\ \text{such that}\ \mathrm{cap}\big( \mathcal{C}_x^{\partial B(r)} \big) \ge \cref{const_lemma_lowd_cluster_capacity1}r^{d-2} \big) \ge \cref{const_lemma_lowd_cluster_capacity2}.
 	 \end{equation}
 \end{lemma}
 \begin{proof}
   For any $x\in B(r/3)$, by (\ref{one_arm_low}), there exists $C_\dagger>0$ such that 
   \begin{equation}\label{newadd_3.17_4.108}
   	\mathbb{P}\big( x\xleftrightarrow{}  \partial B_x(\tfrac{r}{3}) \big) \le C_\dagger r^{-\frac{d}{2}+1}. 
   \end{equation}
By (\ref{asymp_cap_cluster}), there exists $c_{\bigtriangleup}>0$ such that 
\begin{equation}\label{newadd_3.17_4.109}
		\mathbb{P}\big(  \mathrm{cap}( \mathcal{C}_x) \ge  c_{\bigtriangleup} r^{d-2} \big) \ge  2C_\dagger r^{-\frac{d}{2}+1}.
\end{equation}
  Combining (\ref{newadd_3.17_4.108}) and (\ref{newadd_3.17_4.109}), we have 
  \begin{equation}
  \begin{split}
  	  	& \mathbb{P}\big( \mathrm{cap}\big( \mathcal{C}_x^{\partial B(r)} \big) \ge c_{\bigtriangleup}r^{d-2} \big)  \\
  	  	\ge &  \mathbb{P}\big( \mathrm{cap} ( \mathcal{C}_x  ) \ge c_{\bigtriangleup}r^{d-2}, \{x\xleftrightarrow{}  \partial B_x(\tfrac{r}{3}) \}^c \big)  \ge C_\dagger r^{-\frac{d}{2}+1}.
  \end{split}
  \end{equation}
 As a result, the first moment of $\mathbf{X}:=\sum_{x\in B(cr)}\mathbbm{1}_{\mathrm{cap} ( \mathcal{C}_x^{\partial B(r)}  ) \ge c_{\bigtriangleup}r^{d-2}}$	satisfies 
 \begin{equation}\label{newadd_3.17_4.111}
 	\mathbf{E}[\mathbf{X}] \ge |B(cr)|\cdot C_\dagger r^{-\frac{d}{2}+1} \asymp r^{\frac{d}{2}+1}. 
 \end{equation}
 	For the second moment, it follows from Lemma \ref{lemma_cite_three_point} and (\ref{one_arm_low}) that 
 	\begin{equation}\label{newadd_3.17_4.112}
 		\begin{split}
 			\mathbf{E}\big[\mathbf{X}^2\big] \le &  \sum\nolimits_{x_1,x_2\in B(cr) } \mathbb{P}\big( x_1\xleftrightarrow{} \partial B(\tfrac{r}{10}),x_2\xleftrightarrow{} \partial B(\tfrac{r}{10})  \big) \\
 			\lesssim & r^{-\frac{d}{2}+1} \sum\nolimits_{x_1,x_2\in B(cr) }   (|x_1-x_2|+1)^{-\frac{d}{2}+1} \asymp r^{d+2}. 
 		\end{split}
 	\end{equation}
 Using the Paley-Zygmund inequality, (\ref{newadd_3.17_4.111}) and (\ref{newadd_3.17_4.112}), we obtain (\ref{newadd_3.17_4.107}).  
 \end{proof}

 For any $x\in \mathbb{Z}^d$ and $R\ge 1$, we define the event 
 \begin{equation}
 	\mathsf{D}_{x,R}:= \big\{ \exists y\in B_x(2R)\setminus B_x(R)\ \text{such that}\ \mathrm{cap}\big( \widehat{\mathcal{C}}_y \big)\ge \cref{const_lemma_lowd_cluster_capacity1}R^{d-2}  \big\},
 \end{equation}
 where $\widehat{\mathcal{C}}_y:=\mathcal{C}_y^{\partial [B_x(2R)\setminus B_x(R)]}$. Note that Lemma \ref{lemma_lowd_cluster_capacity} implies $\mathbb{P}(\mathsf{D}_{x,R})\ge \cref{const_lemma_lowd_cluster_capacity2}$, and that for any $R_1\ge 2R_2$, $\mathsf{D}_{x,R_1}$ and $\mathsf{D}_{x,R_2}$ are independent. For any $1\le j\le K-1$ and $\rho\in (0,2^{-100})$, we say a point $x\in \partial \mathcal{B}(r_{j})$ is $(\rho,j)$-bad if 
 \begin{equation}
 	\big|\big\{1\le k\le \log_2(1/\rho)-10 : \mathsf{D}_{x,2^k\rho r_{j} }\ \text{occurs}  \big\} \big| \le \tfrac{1}{2}\cref{const_lemma_lowd_cluster_capacity2}\log_2(1/\rho). 
 \end{equation}
 Applying the Hoeffding's inequality, we have 
 \begin{equation}\label{3.16_4.107}
 	\begin{split}
 		\mathbb{P}\big( x\ \text{is}\  (\rho,j)\text{-bad}  \big) \le  \rho^{2c_{\ddagger}}.  	\end{split}
 \end{equation} 
for some constant $c_{\ddagger}>0$. We define the event 
 \begin{equation}
 	\mathsf{G}_j(\rho) := \big\{  |x\in \partial \mathcal{B}(r_{j}): x \ \text{is}\ (\rho,j)\text{-bad}  | \le \rho^{c_{\ddagger}}  |\partial \mathcal{B}(r_{j})| \big\}. 
 \end{equation}
 By Markov's inequality and (\ref{3.16_4.107}), we obtain  
 \begin{equation}
 	\mathbb{P}\big([\mathsf{G}_j(\rho)]^c\big)\le \big(\rho^{c_{\ddagger}}  |\partial \mathcal{B}(r_{j})|\big)^{-1}\sum\nolimits_{x\in \partial \mathcal{B}(r_{j})} \mathbb{P}\big( x\ \text{is}\  (\rho,j)\text{-bad}  \big) \le \rho^{c_{\ddagger}}. 
 \end{equation}
 For each $1\le j\le K-1$, since the events $\mathsf{A}_{j}$ and $\mathsf{G}_{j+1}$ depend on the loops within $\mathcal{B}(r_j^+)$ and $\mathcal{B}(2r_{j+1})\setminus \mathcal{B}(\frac{1}{2}r_{j+1})$ respectively, we have 
  \begin{equation}\label{3.16_4.110}
	\mathbb{P}\big(  \mathsf{A}_{j},[\mathsf{G}_{j+1}(\rho)]^c   \big) =	\mathbb{P}\big(  \mathsf{A}_{j}    \big) \cdot 	\mathbb{P}\big(  [\mathsf{G}_{j+1}(\rho)]^c   \big)   \le \rho^{c_{\ddagger}}\cdot \mathbb{P}\big( \mathsf{A}_{j}    \big), 
\end{equation}

 We claim the following two inequalities: 
 \begin{itemize}

 	\item   There exists $c_{\bigtriangleup}>0$ such that for any $1\le j \le K-2$, 
 	  \begin{equation}\label{3.16_4.111}
	\mathbb{P}\big(  \mathsf{A}_{j+2}, [\mathsf{Q}_{j+1}(\rho)]^c, \mathsf{G}_{j+1}(\rho) \big)\le  \rho^{c_{\bigtriangleup}} \cdot \mathbb{P}\big(  \mathsf{A}_{j}  \big).  
\end{equation}

    \item   There exists $C_{\star}(\rho)>0$ such that for any $1\le j \le K-1$, 
    \begin{equation}\label{3.16_4.112}
    	\mathbb{P}\big(\mathsf{Q}_{j}(\rho ), \mathsf{A}_{j} \big)  \le  C_{\star}(\rho) \cdot \mathbb{P}\big(\mathsf{Q}_{j+1}(\cref{const_ls3}), \mathsf{A}_{j+1} \big).  
    \end{equation}

 	
 \end{itemize}
 Before proving these claims, we first establish (\ref{3.13_key_3.134}) using them. For $1 \le j \le K-2$, by the union bound, (\ref{3.16_4.110}) and (\ref{3.16_4.111}), $\mathbb{P}\big(\mathsf{A}_{j+2}\big)$ is at most
 \begin{equation}
	\begin{split}
		 & \mathbb{P}\big(\mathsf{A}_{j+2}, \mathsf{Q}_{j+1}(\rho)\big) +\mathbb{P}\big(\mathsf{A}_{j+2}, [\mathsf{G}_{j+1}(\rho)]^c\big) +  \mathbb{P}\big(\mathsf{A}_{j+2},[\mathsf{Q}_{j+1}(\rho)]^c, \mathsf{G}_{j+1}(\rho) \big)  \\
		\le & \mathbb{P}\big(\mathsf{A}_{j+1}, \mathsf{Q}_{j+1}(\rho)\big) + (\rho^{c_{\ddagger}} +\rho^{c_{\bigtriangleup}}  )\cdot \mathbb{P}\big( \mathsf{A}_{j}  \big).  
	\end{split}
\end{equation}
 Iterating this inequality, we have  
 \begin{equation}
 	\begin{split}
 	\mathbb{P}\big(\mathsf{A}_{K}\big) \le & \sum\nolimits_{1\le j\le \frac{K-1}{2}} (\rho^{c_{\ddagger}} +\rho^{c_{\bigtriangleup}}  )^{j-1}\mathbb{P}\big(\mathsf{A}_{K-2j+1}, \mathsf{Q}_{K-2j+1}(\rho)\big)    \\
 	&+ (\rho^{c_{\ddagger}} +\rho^{c_{\bigtriangleup}}  )^{\frac{K-1}{2}}  \mathbb{P}\big(\mathsf{A}_{1} \big)   \\
 	\overset{(\ref{3.16_4.112})}{\lesssim  } &\mathbb{P}\big(\mathsf{Q}_K(\cref{const_ls3}), \mathsf{A}_K \big) \cdot   C_{\star}(\rho) \sum\nolimits_{1\le j\le \frac{K+1}{2}} (\rho^{c_{\ddagger}} +\rho^{c_{\bigtriangleup}}  )^{j-1} [C_{\star}(\cref{const_ls3})]^{2j-1}, 
 	\end{split}
 \end{equation}
 where in the last inequality we used $\mathbb{P}(\mathsf{Q}_1(\cref{const_ls3})\mid \mathsf{A}_1)\asymp 1$. Choosing $\rho >0$ sufficiently small so that $\rho ^{c_{\ddagger}} +\rho ^{c_{\bigtriangleup}}<[2C_{\star}(\cref{const_ls3})]^{-2}$, we obtain (\ref{3.13_key_3.134}).

 It remains to prove the claims (\ref{3.16_4.111}) and (\ref{3.16_4.112}).

\textbf{Proof of (\ref{3.16_4.111}).} We denote by $\mathbf{z}_m$ (resp. $\mathbf{z}_m'$) the endpoint of $\hat{\eta}_{r_m}$ (resp. $\hat{\eta}_{r_m}'$). According to \cite[Lemma 6.3.7]{lawler2010random}, the distributions of $\mathbf{z}_{j+1}$ and $\mathbf{z}_{j+1}'$ given $\hat{\eta}_{r_j}$ and $\hat{\eta}_{r_j}'$ are proportional to the uniform distribution on $\partial\mathcal{B}(r_{j+1})$. Thus, on $\mathsf{G}_{j+1}(\rho)$, 
\begin{equation}
	\begin{split}
	 \mathbb{P}(\mathsf{B}):= \mathbb{P}\big(\mathbf{z}_{j+1}\ \text{or}\ \mathbf{z}_{j+1}'\ \text{is}\  (\rho,j+1)\text{-bad}\mid \hat{\eta}_{r_j}, \hat{\eta}_{r_j}'\big) \lesssim \rho^{c_\ddagger}, 
	\end{split}
\end{equation}
which further implies that 
\begin{equation}\label{3.16_4.116new}
	\mathbb{P}\big( \mathsf{A}_{j+2}, [\mathsf{Q}_{j+1}(\rho)]^c,  \mathsf{B}  \big)  \lesssim  \rho^{c_\ddagger}\cdot \mathbb{P}\big( \mathsf{A}_{j}  \big). 
\end{equation}
On the event $\mathsf{A}_{j+1}\cap [\mathsf{Q}_{j+1}(\rho)]^c\cap \mathsf{B}^c$, either $\hat{\eta}_{r_{j+1}}'$ intersects $\mathbf{B}_{\mathbf{z}_{j+1}}(\rho r_{j+1})$, or $\hat{\eta}_{r_{j+1}}$ intersects $\mathbf{B}_{\mathbf{z}_{j+1}'}(\rho r_{j+1})$ (we denote these two events by $\mathsf{R}$ and $\mathsf{R}'$ respectively). Suppose that $\mathsf{R}$ occurs. Since $\mathbf{z}_{j+1}$ is $(\rho,j+1)$-good, with probability at least $1-\rho^c$, two independent Brownian motions starting from $\mathbf{B}_{\mathbf{z}_{j+1}}(\rho r_{j+1})$ are connected through the loop soup before exiting $B_{\mathbf{z}_{j+1}}(\frac{r_{j+1}}{100})$. Indeed, whenever two such Brownian motions cross an annulus $B_{\mathbf{z}_{j+1}}(2^{k+1})\setminus B_{\mathbf{z}_{j+1}}(2^{k})$ for which $\mathsf{D}_{\mathbf{z}_{j+1},2^k}$ occurs, with a uniformly positive probability they intersect the same loop cluster with capacity at least $\cref{const_lemma_lowd_cluster_capacity1}2^{k(d-2)}$. Since the number of such annuli is of order $\log_2(1/\rho)$, it follows that the probability that no such connection occurs is at most $\rho^c$. The same argument is valid for $\mathsf{R}'$. Consequently,  
\begin{equation}\label{3.16_4.117new}
 	\begin{split}
 		\mathbb{P}\big( \mathsf{A}_{j+2}, [\mathsf{Q}_{j+1}(\rho)]^c,  \mathsf{B} \big)  \le \rho^c\cdot \mathbb{P}\big( \mathsf{A}_{j}  \big). 
 	\end{split}
 \end{equation}
Combining (\ref{3.16_4.116new}) and (\ref{3.16_4.117new}), we obtain the claim (\ref{3.16_4.111}).

 \textbf{Proof of (\ref{3.16_4.112}).} We take two paths $\ell$ and $\ell'$ satisfying the following:
 \begin{itemize}
 	 
 	 \item[-] $\ell$ (resp. $\ell'$) starts from $\mathbf{z}_j$ (resp. $\mathbf{z}_j'$) and is stopped upon hitting $\partial \mathcal{B}(r_{j+1})$;

 	 \item[-] $\ell$ and $\ell'$ both have total length at most $d\cdot r_{j+1}$;

 	\item[-]   $\mathrm{ran}(\ell)\subset \widetilde{B}_{\mathbf{z}_j}(\rho^3r_j)\cup  [\mathcal{B} ((1+\rho^4)r_j ) ]^c$, $\mathrm{ran}(\ell')\subset \widetilde{B}_{\mathbf{z}_j'}(\rho^3r_j)\cup [\mathcal{B} ((1+\rho^4)r_j ) ]^c$;

 	\item[-]   $\mathrm{dist}\big( \mathrm{ran}(\ell), \mathrm{ran}(\ell')  \big) \ge \rho^2r_{j}$.

 	 \item[-]  The distance between the endpoints of $\ell$ and $\ell'$ is at least $10\cref{const_ls3}r_{j+1}$.

 \end{itemize}
We define $D:=\cup_{z\in \mathrm{ran}(\ell)}\widetilde{B}_z(\rho^5r_{j})$ and $D_+:=\cup_{z\in \mathrm{ran}(\ell)}\widetilde{B}_z(\rho^4r_{j})$. Let $D'$ (resp. $D_+'$) denote the analogue of $D$ (resp. $D_+$) for $\ell'$. The event $\mathsf{Q}_{j+1}(\cref{const_ls3})\cap \mathsf{A}_{j+1}$ occurs if $\mathsf{Q}_{j}(\rho)$, $\mathsf{A}_{j}$, $\mathsf{D}_1$ and $\mathsf{D}_2$ all occur, where   
\begin{equation}
 \mathsf{D}_1:=	\big\{  \mathrm{ran}(\hat{\eta}_{r_{j+1}})\setminus  \mathrm{ran}(\hat{\eta}_{r_{j }})  \subset D \big\} \cap \big\{  \mathrm{ran}(\hat{\eta}_{r_{j+1}}')\setminus  \mathrm{ran}(\hat{\eta}_{r_{j }}')  \subset D' \big\}, 
\end{equation}
\begin{equation}
	\begin{split}
		\mathsf{D}_2 :=  \big\{ D \xleftrightarrow{} \widetilde{\partial } D_+ \big\}^c \cap \big\{ D' \xleftrightarrow{} \widetilde{\partial } D_+' \big\}^c. 
	\end{split}
\end{equation}
By the invariance principle, the conditional probability of $\mathsf{D}_1$ given $\hat{\eta}_{r_{j }}$ and $\hat{\eta}_{r_{j }}'$ is bounded from below by some constant depending on $\rho$. As a result, 
 \begin{equation}\label{3.16_new_4.117}
	\begin{split}
		\mathbb{P}\big( \mathsf{Q}_{j+1}(\cref{const_ls3}), \mathsf{A}_{j+1} \big) \ge  &c(\rho) \cdot   \mathbb{P}\big( \mathsf{Q}_{j}(\rho),  \mathsf{A}_{j} , \mathsf{D}_2 \big) \\
		 \overset{(\text{FKG})}{ \ge}&  c(\rho) \cdot   \mathbb{P}\big(   \mathsf{D}_2 \big)  \cdot \mathbb{P}\big( \mathsf{Q}_{j}(\rho),  \mathsf{A}_{j}   \big). 
	\end{split}
\end{equation}
Using (\ref{crossing_low}) and the FKG inequality, we further obtain $\mathbb{P}\big(   \mathsf{D}_2 \big)\ge c'(\rho)>0$. Plugging this into (\ref{3.16_new_4.117}), we derive the claim (\ref{3.16_4.112}).

  In conclusion, we obtain the key estimate (\ref{3.13_key_3.134}), and thus complete the proof of Lemma \ref{new_lemma_separation}.  \qed

 \section{Estimates on heterochromatic four-point functions }\label{section5_four_point}

 The goal of this section is to establish the upper bound in (\ref{thm1_small_n_four_point}) and the lower bound in (\ref{thm1_large_n_four_point}), which is crucial for proving Theorem \ref{thm1}. For the first part, we prove the following estimate. Recall the notation $\mathsf{C}[\cdot]$ in (\ref{newadd328}).

   \begin{lemma}\label{lemma_prob_Cpsi_small_n}
For any $d\ge 3$ with $d\neq 6$, there exist $\Cl\label{const_prob_Cpsi_small_n1},\Cl\label{const_prob_Cpsi_small_n2}>0$ such that for any $v_1 \neq v_2\in \widetilde{B}(\Cref{const_prob_Cpsi_small_n1})$, $N\ge \Cref{const_prob_Cpsi_small_n2}$ and $w_1,w_2\in \widetilde{B}(10N)\setminus \widetilde{B}(N)$ with $|w_1-w_2|\ge N$,  
	\begin{equation}\label{newadd420}
		\mathbb{P}\big( \mathsf{C}[(v_1,v_2,w_1,w_2)] \big)  \lesssim  |v_1-v_2|^{\frac{3}{2}}\cdot  N^{-[(\frac{3d}{2}-1)\boxdot (2d-4)]}.
	\end{equation}
\end{lemma}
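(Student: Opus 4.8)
The plan is to reduce the heterochromatic four-point estimate to the single-cluster quantities already available, using the switching identity together with the rigidity lemma. Write $\chi=|v_1-v_2|$; the main case is $\chi\le 1$ since for $\chi\asymp 1$ the bound follows from Lemma~\ref{lemma_roots} applied to $\psi=(v_1,v_2,w_1,w_2)$ and a reference quadruple. First I would condition on the occupation fields $\widehat{\mathcal{L}}^{v_1}_{1/2}=a$ and $\widehat{\mathcal{L}}^{v_2}_{1/2}=b$, and on the cluster $\mathcal{C}_{w_2}$; by the restriction property, given $\mathcal{F}_{\mathcal{C}_{w_2}}$ the event $\{v_1\xleftrightarrow{}w_1\Vert v_2\xleftrightarrow{}w_2\}$ factors as $\mathbbm{1}_{v_2\xleftrightarrow{}w_2}$ times the probability (under $\widetilde{\mathcal{L}}^{\mathcal{C}_{w_2}}_{1/2}$) that $v_1\xleftrightarrow{(\mathcal{C}_{w_2})}w_1$. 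For the latter I would use Werner's switching identity (Lemma~\ref{lemma_switching}) to express the cluster of $v_1$ as the union of Components (1)--(4), and then the key observation of the rigidity lemma: the event $\{v_1\leftrightarrow w_1\}$ is increasing in $\{\mathcal{P}^{(i)}\}$ while $\{v_2\xleftrightarrow{}w_2\}$ (as a function of the remaining loops) is decreasing, so FKG on the product measure $\widecheck{\mathbb{P}}_{v_1\leftrightarrow w_2,a,b}$ (conditioned suitably) decouples them up to constants. This should give
\begin{equation*}
\mathbb{P}\big(\mathsf{C}[(v_1,v_2,w_1,w_2)]\big) \lesssim \mathbb{P}\big(v_1\xleftrightarrow{}v_2\big)\cdot \sup_D \mathbb{P}^D\big(v_1\xleftrightarrow{\ge 0}w_1\big)\cdot \sup_{D'}\mathbb{P}^{D'}\big(v_2\xleftrightarrow{\ge 0}w_2\big)\cdot(\text{correction}),
\end{equation*}
where the suprema are over admissible zero-boundary sets far from the relevant points, plus an exponentially small error term coming from the $\widehat{\mathcal{L}}^{v}_{1/2}\le |v_1-v_2|^{d/2-1}$ truncation as in the proof of Lemma~\ref{lemma_rigidity}.

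Next I would insert the known estimates. By Lemma~\ref{new_lemma_210} (the maximum-principle bound) together with Lemma~\ref{lemma_onecluster_box_point}, for any admissible $D$, $\mathbb{P}^D(v_i\xleftrightarrow{\ge 0}w_i)\lesssim (\mathrm{dist}(v_i,D)\wedge 1)^{1/2}\cdot N^{-(\frac{d}{2}\boxdot 3)}\cdot$(sum over $\partial\mathcal{B}_{v_i}(n)$ of boundary-to-$w_i$ probabilities), and after averaging over the sphere these reduce to a constant times $N^{2-d}$-type Green's function factors whose product gives exactly the exponent $(\frac{3d}{2}-1)\boxdot(2d-4)$ — this matches the heuristic computation in Section~\ref{section_heuristic}: two copies of $\theta_d(N)\asymp N^{-(\frac{d}{2}-1)\boxdot 2}$ from the ``arms'' and a factor $N^{2-d}$ (in the relevant regime) tying the origin region together, combined with the energy-cost factor. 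The source of the extra $\chi^{3/2}$ is the local one-dimensional geometry near $v_1,v_2$: from Lemma~\ref{lemma_conditional_two_points}, $\mathbb{P}^D(v_1\xleftrightarrow{\ge 0}w_1\mid \widetilde\phi_{v_1}=a)\lesssim a\,\mathbb{P}^D(v_1\xleftrightarrow{\ge 0}w_1)/\sqrt{\widetilde G_D(v_1,v_1)}$, and to have $v_1\not\xleftrightarrow{}v_2$ while both connect out one needs (as in Observations (ii)--(iii) of the heuristic) the GFF value at one of $v_1,v_2$ to be $\lesssim \chi^{1/2}$ while the other, under the zero-boundary condition imposed by the first cluster at distance $c\chi$, has variance $\asymp\chi$; this produces $\chi^{1/2}\cdot\chi^{1/2}\cdot\chi^{1/2}=\chi^{3/2}$ after integrating the Gaussian densities, using $\widetilde G_D(v_i,v_i)\asymp\mathrm{dist}(v_i,D)\wedge 1$ from \eqref{23} and Lemma~\ref{lemma_connect_close_points} to control the non-connection probability between $v_1$ and $v_2$.

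Concretely I would carry this out as: (1) reduce to $v_1,v_2\in I_{\{v_-,v_+\}}$ on a common unit interval with $|v_i-v_\pm|\gtrsim 1$, by a union bound over the $O(1)$ edges incident to $\widetilde B(\Cref{const_prob_Cpsi_small_n1})$ and by absorbing the case $\chi\asymp 1$ into Lemma~\ref{lemma_roots}; (2) condition on $\mathcal{F}_{\mathcal{C}_{w_1}}$ and $\mathcal{F}_{\mathcal{C}_{w_2}}$ and the local times at $v_1,v_2$, reducing to a product of two conditional two-point functions times $\mathfrak{p}_{v_1\leftrightarrow v_2,a,b}$-type densities; (3) apply Lemma~\ref{lemma_compare_p} to replace $\mathfrak{p}$ by the product density $\bar{\mathfrak p}$ and apply FKG to decouple; (4) bound the two conditional two-point functions by $\mathbb{P}^D(v_i\xleftrightarrow{\ge 0}w_i)$ via Lemmas~\ref{lemma_different_two_point} and~\ref{lemma_conditional_two_points}, and bound $\sup_D\mathbb{P}^D(v_i\xleftrightarrow{\ge 0}w_i)$ using Lemmas~\ref{new_lemma_210}, \ref{lemma_onecluster_box_point} and~\ref{lemma_harnack} by $N^{-[(\frac{3d}{2}-1)\boxdot(2d-4)]}/\mathbb{P}(v_1\xleftrightarrow{}v_2)$ up to constants; (5) integrate the Gaussian factors in $a,b$ and over the value $\widetilde\phi_{v_i}$, collecting the $\chi^{3/2}$. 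The main obstacle I anticipate is step (3)--(4): making the FKG decoupling fully rigorous when the ``obstacle'' cluster $\mathcal{C}_{w_2}$ (or $\mathcal{C}_{w_1}$) is itself random and not obviously far from $v_1,v_2$ on the event in question — this is exactly where the separation lemma (Lemma~\ref{lemma_separation}) is needed to guarantee that, up to constant loss, $\mathcal{C}_{w_2}$ stays outside a fixed neighborhood of $v_1$ so that Harnack (Lemma~\ref{lemma_harnack}) and the Green's-function comparisons apply with $d$-dependent constants; handling the complementary ``bad'' event will require the exponential tail bound from Lemma~\ref{lemma25}/Corollary~\ref{coro_local_time} and the $N\ge\Cref{const_prob_Cpsi_small_n2}$ hypothesis to absorb it.
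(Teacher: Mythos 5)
There is a genuine gap in the proposed argument, and it concerns the $N$-exponent for $3\le d\le 5$. Your displayed reduction asserts
\[
\mathbb{P}\big(\mathsf{C}[(v_1,v_2,w_1,w_2)]\big)\lesssim
\mathbb{P}\big(v_1\xleftrightarrow{}v_2\big)\cdot
\sup_D\mathbb{P}^D\big(v_1\xleftrightarrow{\ge 0}w_1\big)\cdot
\sup_{D'}\mathbb{P}^{D'}\big(v_2\xleftrightarrow{\ge 0}w_2\big)\cdot(\text{correction}),
\]
and you then say the suprema give ``$N^{2-d}$-type Green's function factors whose product gives exactly the exponent $(\tfrac{3d}{2}-1)\boxdot(2d-4)$.'' The arithmetic does not close for $d<6$: with $|v_1-v_2|\le 1$ we have $\mathbb{P}(v_1\leftrightarrow v_2)\asymp 1$ and $\sup_D\mathbb{P}^D(v_i\leftrightarrow w_i)\asymp N^{2-d}$, so the right-hand side is $\asymp N^{4-2d}=N^{-(2d-4)}$. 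For $3\le d\le 5$ one has $\tfrac{3d}{2}-1>2d-4$, so $N^{-(2d-4)}\gg N^{-(\frac{3d}{2}-1)}$ and the bound is strictly weaker than (\ref{newadd420}). In other words, the decoupling you describe only reproduces the BKR-type bound, which the paper itself uses exclusively in the case $d\ge 7$ (see (\ref{new.45})). For $d<6$ the needed improvement by the ``energy-cost factor'' $N^{\frac{d}{2}-3}$ is the whole content of the base case; your step (4) simply asserts that $\sup_D\mathbb{P}^D(v_i\leftrightarrow w_i)$ can be bounded by $N^{-[(\frac{3d}{2}-1)\boxdot(2d-4)]}/\mathbb{P}(v_1\leftrightarrow v_2)$, which is false for $d<6$ since $N^{2-d}\gg N^{-(\frac{3d}{2}-1)}$ for all $d\ge 3$. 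The paper derives the low-dimensional base bound $\mathbb{P}(\mathsf{C}[\psi])\lesssim N^{-(\frac{3d}{2}-1)}$ (for $\psi\in\Psi(C_\dagger,N)$) by a different route: it introduces an independent Brownian motion from $w\in\widetilde B(10N)\setminus\widetilde B(5N)$, uses the one-arm estimate $\theta_d(N)\asymp N^{-\frac{d}{2}+1}$ to bound the probability $\mathbb{P}(\mathsf{A})$ that this walk hits the annular part of $\mathcal{C}_{\bm{0}}$, and then lower bounds $\mathbb{P}(\mathsf{A}_x)\gtrsim\mathbb{P}(\mathsf{C}[\psi])$ for $\asymp N^d$ indices $x$, yielding $N^d\cdot\mathbb{P}(\mathsf{C}[\psi])\lesssim N^{-\frac{d}{2}+1}$ — this is precisely the first-moment mechanism of Section~\ref{section_heuristic}, and it is exactly the ingredient missing from your scheme.

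A second, separate issue concerns the $\chi^{\frac32}$ factor. Your plan is to obtain it in one pass, by integrating Gaussian densities for $\widetilde\phi_{v_1}$ and $\widetilde\phi_{v_2}$ and invoking Lemmas~\ref{lemma_conditional_two_points}, \ref{lemma_connect_close_points} and (\ref{23}). This mirrors the heuristic Observations (i)--(iii), but it cannot be made rigorous as stated: the quantity $\mathrm{dist}(\mathcal{C}_{w_2},v_1)$ (which controls $\widetilde G_{\mathcal{C}_{w_2}}(v_1,v_1)$ and hence the key Gaussian variances) is a random variable whose inverse needs to be integrated, and the distribution of this distance is itself controlled only by the two-arm bound one is trying to prove. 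The paper resolves this circularity by a bootstrap: Lemma~\ref{new_lemma_210} first yields the crude bound $\mathbb{P}(\mathsf{C}[\psi])\lesssim\chi^{\frac12}\epsilon_N$ via (\ref{newadd421}); this is then fed into the recursion (\ref{newadd4.22}), which expresses $\mathbb{P}(\mathsf{C}[\psi])$ in terms of $\mathbb{E}[\mathbbm{1}\cdot(\mathrm{dist}(\mathcal{C}_{w_2},v_1))^{-1}]$; iterating (\ref{newadd4.23})--(\ref{newadd424}) improves the exponent from $\chi^{\frac12}$ to $\chi$, then to $\chi^{\frac32}\ln(\chi^{-1})$, and finally to $\chi^{\frac32}$ using the convergence of $\int_1^\infty t^{-\frac32}\ln t\,\mathrm{d}t$. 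Without this iteration, your one-pass integration would need an a priori control of $(\mathrm{dist}(\mathcal{C}_{w_2},v_1))^{-1}$ on the event $\mathsf{C}[\psi]$ that is simply not available; the separation lemma alone (which you invoke to handle the bad event) gives a fixed-probability statement, not the quantitative tail needed to integrate the inverse distance.
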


 \begin{proof}
For simplicity, we denote $\epsilon_N:=N^{-[(\frac{3d}{2}-1)\boxdot (2d-4)]}$ and $\psi=(v_1,v_2,w_1,w_2)$. We begin by showing that for some constant $C_\dagger>0$, 
	\begin{equation}\label{new.44}
		\mathbb{P}\big( \mathsf{C}[\psi] \big)\lesssim \epsilon_N, \ \ \forall \psi\in \Psi(C_\dagger,N).
	\end{equation}
	When $d\ge 7$, this follows directly from the BKR inequality:
	  \begin{equation}\label{new.45}
  	 \mathbb{P}\big(\mathsf{C}[\psi] \big) \le  \mathbb{P}\big(v_1\xleftrightarrow{} w_1 \big)\cdot  \mathbb{P}\big(v_2\xleftrightarrow{} w_2 \big)\lesssim N^{4-2d}. 
  \end{equation}
 In the low-dimensional case $3\le d\le 5$, we arbitrarily take $w\in \widetilde{B}(10N)\setminus \widetilde{B}(5N)$ and let $\widetilde{S}^w_{\cdot}\sim \widetilde{\mathbb{P}}_w$ be a Brownian motion starting from $w$. By (\ref{one_arm_low}), one has 
	  \begin{equation}\label{46}
  	\begin{split}
  \mathbb{P}(\mathsf{A}):=&\mathbb{P}\big( \widetilde{S}^w_{\cdot}\ \text{hits}\ \mathcal{C}_{\bm{0}}\cap [\widetilde{B}(2N)\setminus \widetilde{B}(N)] \big)\\
  \le &\mathbb{P}\big(  \mathcal{C}_{\bm{0}}\cap [\widetilde{B}(2N)\setminus \widetilde{B}(N)]\neq \emptyset \big)   \le \theta_d(N)\lesssim N^{- \frac{d}{2}+1}. 
  	\end{split}
  \end{equation}
	 Meanwhile, for any $x\in \mathbb{Z}^d$, we define the event 
 \begin{equation}\label{47}
 	\mathsf{A}_x:=\big\{\widetilde{S}^w_{\cdot}\ \text{first hits}\ \mathcal{C}_{\bm{0}}\cap [\widetilde{B}(2N)\setminus \widetilde{B}(N)]\ \text{within}\ \widetilde{B}_x(C_\dagger)  \big\}.   
 \end{equation} 
	Note that $\cup_{x\in B(2N-2C_\dagger)\setminus B(N+C_\dagger)}\mathsf{A}_x \subset \mathsf{A}$, and that $\mathsf{A}_{x_1}$ and $\mathsf{A}_{x_2}$ are disjoint whenever $B_{x_1}(C_\dagger)\cap B_{x_2}(C_\dagger)=\emptyset$. Therefore, 
 \begin{equation}\label{48}
 	\mathbb{P}(\mathsf{A}) \gtrsim \sum\nolimits_{x\in B(2N-2C_\dagger )\setminus B(N+C_\dagger)} \mathbb{P}(\mathsf{A}_x). 
 \end{equation}
 For each $x\in B(2N-2C_\dagger)\setminus B(N+C_\dagger)$, we arbitrarily take $x'\in \partial B_x(C_\dagger-1)$. In order for $\mathsf{A}_x$ to occur, it suffices to ensure that $x\in \mathcal{C}_{\bm{0}}$, and that $\widetilde{S}^w_{\cdot}$ reaches $x'$ before $\mathcal{C}_{\bm{0}}$ and then hit $\mathcal{C}_{\bm{0}}$ before leaving $\widetilde{B}_x(C_\dagger)$. As a result, 
 \begin{equation}\label{49}
 	\begin{split}
 	\mathbb{P}(\mathsf{A}_x) \ge 	\mathbb{E}\big[ \mathbbm{1}_{x\in \mathcal{C}_{\bm{0}}}\cdot \widetilde{\mathbb{P}}_w\big( \tau_{x'}<\tau_{\mathcal{C}_{\bm{0}}} \big)\cdot \widetilde{\mathbb{P}}_{x'}\big(\tau_{\mathcal{C}_{\bm{0}}}<\tau_{\partial B_x(C_\dagger)} \big) \big]. 
 	\end{split}
 \end{equation}
 When $\mathcal{C}_{\bm{0}}$ is disjoint from $\widetilde{B}_w(1)$, one has $\widetilde{G}_{\mathcal{C}_{\bm{0}}}(w,w)\asymp 1$ and hence, 
\begin{equation}\label{410}
	\begin{split}
		\widetilde{\mathbb{P}}_w\big( \tau_{x'}<\tau_{\mathcal{C}_{\bm{0}}} \big)  \gtrsim   	\widetilde{\mathbb{P}}_w\big( \tau_{x'}<\tau_{\mathcal{C}_{\bm{0}}} \big)  \cdot \sqrt{\tfrac{\widetilde{G}_{\mathcal{C}_{\bm{0}}}(x',x') }{\widetilde{G}_{\mathcal{C}_{\bm{0}}}(w,w) }} \overset{(\ref{29})}{\asymp} \mathbb{P}\big( x'\xleftrightarrow{(\mathcal{C}_{\bm{0}})} w \big). 
	\end{split}
\end{equation}
 In addition, when $x\in \mathcal{C}_{\bm{0}}$, one has 
 \begin{equation}\label{411}
 	\widetilde{\mathbb{P}}_{x'}\big(\tau_{\mathcal{C}_{\bm{0}}}<\tau_{\partial B_x(C_\dagger)} \big) \ge 	\widetilde{\mathbb{P}}_{x'}\big(\tau_{x}<\tau_{\partial B_x(C_\dagger)} \big)\gtrsim 1. 
 \end{equation}
Combining (\ref{49}), (\ref{410}) and (\ref{411}), we get 
\begin{equation}
\begin{split}
		\mathbb{P}(\mathsf{A}_x) \ge&  \mathbb{E}\big[ \mathbbm{1}_{x\in \mathcal{C}_{\bm{0}}, \widetilde{B}_w(1)\cap \mathcal{C}_{\bm{0}}=\emptyset}\cdot  \mathbb{P}\big( x'\xleftrightarrow{(\mathcal{C}_{\bm{0}})} w \big) \big]\\
	=&\mathbb{P}\big( \big\{x \xleftrightarrow{}  \bm{0} \Vert x'\xleftrightarrow{} w \big\}, \widetilde{B}_w(1)\cap \mathcal{C}_{\bm{0}}=\emptyset \big)\\
	\overset{(\text{Lemma}\ \ref{lemma_separation})}{\asymp } & \mathbb{P}\big( \big\{x \xleftrightarrow{}  \bm{0} \Vert x'\xleftrightarrow{} w \big\}  \big) \overset{(\text{Lemma}\ \ref{lemma_roots})}{\asymp }  \mathbb{P}\big( \mathsf{C}[\psi] \big), 
\end{split}
\end{equation}
 for all $\psi\in \Psi(C_\dagger,N)$. Putting this together with (\ref{46}) and (\ref{48}), one obtain 
 \begin{equation}\label{413}
 	\mathbb{P}\big( \mathsf{C}[\psi] \big) \lesssim \epsilon_N, \ \ \forall \psi \in \Psi(C_\dagger,N)  
 \end{equation}
 By (\ref{new.45}) and (\ref{413}), we confirm (\ref{new.44}).

  For any $v_1,v_2\in \widetilde{B}(\frac{C_\dagger}{10})$, and any $w_1,w_2\in \widetilde{B}(10N)\setminus \widetilde{B}(N)$ with $|w_1-w_2|\ge N$, on the event $\mathsf{C}[\psi]$, the cluster $\mathcal{C}_{w_1}$ (resp. $\mathcal{C}_{w_2}$) must intersect $\partial B(2C_\dagger)$ (resp. $\partial B(4C_\dagger)$). Combined with (\ref{new.44}), it implies that 
 \begin{equation}
 	\begin{split}
 		\mathbb{P}\big( \mathsf{C}[\psi] \big) \le \sum_{v_1'\in \partial B(2C_\dagger),v_2'\in \partial B(4C_\dagger)} \mathbb{P}\big( \mathsf{C}[(v_1',v_2',w_1,w_2)] \big) \overset{}{\lesssim } \epsilon_N.
 	\end{split}
 \end{equation}
 Thus, it remain to prove (\ref{newadd420}) in the case when $\chi:=|v_1-v_2|$ is sufficiently small.

 Note that one of $\mathbf{B}_{v_1}(\frac{\chi}{3})$ and $\mathbf{B}_{v_2}(\frac{\chi}{3})$ must be contained in some interval $I_e$. Otherwise, there exist $x_1,x_2\in \mathbb{Z}^d$ such that $|x_i-v_i|\le \frac{\chi}{3}$ for $i\in \{1,2\}$. 
 	\begin{itemize}
 		\item  If $x_1=x_2$, then one has $\chi \le |x_1-v_1|+|x_1-v_2|\le \frac{2\chi}{3}$, a contradiction.

 		\item If $x_1\neq x_2$, then one has $\chi \ge |x_1-x_2|- |x_1-v_1|-|x_2-v_2|\ge d-\frac{2\chi}{3}>\chi$, again a contradiction.

 	\end{itemize}
 	Without loss of generality, we assume $\mathbf{B}_{v_1}(\frac{\chi}{3})\subset I_{\{x,y\}}$. By the restriction property,   
 	 	\begin{equation}\label{newadd421}
 		\begin{split}
 				\mathbb{P}\big( \mathsf{C}[\psi] \big)= & \mathbb{E}\big[ \mathbbm{1}_{\{v_1\xleftrightarrow{} w_1\}\cap \{v_1\xleftrightarrow{}v_2\}^c}\cdot \mathbb{P}\big(  v_2\xleftrightarrow{(\mathcal{C}_{v_1})} w_2\mid \mathcal{F}_{\mathcal{C}_{v_1}}\big)   \big]\\
 				\overset{(\text{Lemma}\ \ref{new_lemma_210})}{\lesssim} & \chi^{\frac{1}{2}}\sum_{z\in \partial \mathcal{B}_{{v}_1}(C_\dagger ) } \mathbb{E}\big[ \mathbbm{1}_{\{v_1\xleftrightarrow{} w_1\}\cap \{v_1\xleftrightarrow{}v_2\}^c}\cdot \mathbb{P}\big(  z\xleftrightarrow{(\mathcal{C}_{v_1})} w_2 \mid \mathcal{F}_{\mathcal{C}_{v_1}} \big)   \big]\\
 				\le  & \chi^{\frac{1}{2}} \sum_{z\in \partial \mathcal{B}_{{v}_1}(C_\dagger ) } \mathbb{P}\big(v_1\xleftrightarrow{} w_1\Vert z\xleftrightarrow{} w_2 \big) 
 		\overset{(\ref{new.44})}{\lesssim}   \chi^{\frac{1}{2}} \epsilon_N.
 		\end{split}
 	\end{equation}
 	This estimate does not yet yield (\ref{newadd420}). We proceed by iterating it.

 	For $z\in \partial \mathcal{B}_{{v}_1}(C_\dagger)$, we denote $\mathsf{C}_z:=\{v_1\xleftrightarrow{} w_1\Vert z \xleftrightarrow{} w_2\}$. We claim that  
 	 	\begin{equation}\label{newadd4.22}
 		\begin{split}
 		\mathbb{P}\big(\mathsf{C}[\psi] \big) \lesssim \chi^{\frac{3}{2}}  \max_{z\in \partial \mathcal{B}_{{v}_1}(C_\dagger )}  \mathbb{E}\big[ \mathbbm{1}_{\mathsf{C}_z\cap \{\mathrm{dist}(\mathcal{C}_{w_2},v_1)\ge \frac{\chi}{3}\}} \cdot \big(\mathrm{dist}(\mathcal{C}_{w_2},v_1) \big)^{-1} \big] +\chi^2 \epsilon_N.
 		\end{split}
 	\end{equation}
 	Before proving (\ref{newadd4.22}), we use it to derive (\ref{newadd420}). Note that for any $z\in \partial \mathcal{B}_{{v}_1}(C_\dagger )$, the event $\{z\xleftrightarrow{} w_2\}$ implies that $\mathrm{dist}(\mathcal{C}_{w_2},v_1)\le |z-v_1|\le  10dC_\dagger$. Thus, assuming that $\mathbb{P}\big(\mathsf{C}[\psi] \big)\lesssim f(\chi)\cdot \epsilon_N$ holds for the function $f(\cdot)$ (which has been confirmed by (\ref{newadd421}) for $f(a)=a^{\frac{1}{2}}$), we have 
    \begin{equation}\label{newadd4.23}
    	\begin{split}
    		 & \mathbb{E}\big[  \mathbbm{1}_{\mathsf{C}_z\cap \{\mathrm{dist}(\mathcal{C}_{w_2},v_1)\ge \frac{\chi}{3}\}} \cdot \big(\mathrm{dist}(\mathcal{C}_{w_2},v_1) \big)^{-1}  \big]\\
    		\le & \mathbb{P}(\mathsf{C}_z )+ \int_{(10dC_\dagger)^{-1}\le t\le 3\chi^{-1}} \mathbb{P}\big( \mathsf{C}_z,0<\mathrm{dist}(\mathcal{C}_{w_2},v_1)\le t^{-1} \big) \mathrm{d}t\\
    		\le & \mathbb{P}(\mathsf{C}_z ) +   \int_{(10dC_\dagger)^{-1} \le t\le 3\chi^{-1}} \sum\nolimits_{v_1'\in \widetilde{\mathbb{Z}}^d:|v_1-v_1'|=t^{-1}} \mathbb{P}\big(\mathsf{C}_{v_1'}  \big) \mathrm{d}t\\
    	\overset{}{\lesssim} & \big(1+  \int_{(10dC_\dagger)^{-1} \le t\le 3\chi^{-1}} f(t^{-1})\mathrm{d}t\big)\cdot \epsilon_N.
    	\end{split}
    \end{equation}
     Plugging (\ref{newadd4.23}) into (\ref{newadd4.22}), we obtain that 
     \begin{equation}\label{newadd424}
     	\begin{split}
     	\mathbb{P}\big(\mathsf{C}[\psi] \big) \lesssim  \chi^{\frac{3}{2}}\cdot \big(1+  \int_{(10dC_\dagger)^{-1}\le t\le 3\chi^{-1}} f(t^{-1})\mathrm{d}t\big) \cdot \epsilon_N.
     	\end{split}
     \end{equation}
Combined with (\ref{newadd421}), it implies $\mathbb{P}\big(\mathsf{C}[\psi] \big)\lesssim \chi \cdot \epsilon_N$. Inserting this stronger estimate into (\ref{newadd424}), we further get $\mathbb{P}\big(\mathsf{C}[\psi] \big)\lesssim  \chi^{\frac{3}{2}} \ln(\chi^{-1}) \cdot \epsilon_N$. Iterating this argument once more, we finally arrive at (\ref{newadd420}), since $\int_{1\le t<\infty} t^{-\frac{3}{2}} \ln(t)\mathrm{d}t<\infty$.

We divide the proof of (\ref{newadd4.22}) into two cases, depending on whether $v_2 \in I_{\{x,y\}}$.

 	\textbf{Case (1): $v_2\in I_{\{x,y\}}$.} Recall that the first two lines of (\ref{newadd421}) yield
 	\begin{equation}\label{newadd_425}
 	\begin{split}
 				\mathbb{P}\big(\mathsf{C}[\psi] & \big)\lesssim  \chi^{\frac{1}{2}}\sum_{z\in \partial \mathcal{B}_{{v}_1}(C_\dagger ) } \mathbb{E}\big[ \mathbbm{1}_{\{v_1\xleftrightarrow{} w_1\}\cap \{v_1\xleftrightarrow{}v_2\}^c}\cdot \mathbb{P}\big(  z\xleftrightarrow{(\mathcal{C}_{v_1})} w_2 \mid \mathcal{F}_{\mathcal{C}_{v_1}} \big)   \big] \\ 
 		= & \chi^{\frac{1}{2}}\sum_{z\in \partial \mathcal{B}_{{v}_1}(C_\dagger ) }  \mathbb{E}\big[ \mathbbm{1}_{z\xleftrightarrow{} w_2} \cdot \mathbb{P}\big( \{v_1\xleftrightarrow{(\mathcal{C}_{w_2})} w_1\}\cap \{v_1\xleftrightarrow{(\mathcal{C}_{w_2})}v_2\}^c  \mid \mathcal{F}_{\mathcal{C}_{w_2}} \big)   \big].
 	\end{split}
 	\end{equation}
Without loss of generality, assume that $x$ is the endpoint of $I_{\{x,y\}}$ such that $v_2\in I_{[v_1,x]}$. On $\{v_1\xleftrightarrow{(\mathcal{C}_{w_2})} w_1\}\cap \{v_1\xleftrightarrow{(\mathcal{C}_{w_2})}v_2\}^c$, one of the following occurs:
\begin{enumerate}
	\item[(a)] $\mathcal{C}_{w_2}$ intersects $v_2$, and hence $\mathsf{C}[\psi]$ occurs;

	\item[(b)] $\mathcal{C}_{w_2}$ intersects $y$ but not $v_2$. As a result, $v_1\xleftrightarrow{(\mathcal{C}_{w_2})} w_1$ and $\{v_1\xleftrightarrow{(\mathcal{C}_{w_2})}v_2\}^c $ are incompatible.

	\item[(c)] $\mathcal{C}_{w_2}$ is disjoint from both $y$ and $v_2$, and thus $\mathrm{dist}(\mathcal{C}_{w_2},v_1)\ge \frac{\chi}{3}$.

\end{enumerate}
 	In conclusion, the expectation on the right-hand side of (\ref{newadd_425}) is at most
 	 \begin{equation}\label{newadd_426}
 	\begin{split}
 			\mathbb{P}\big(\mathsf{C}[\psi]  \big)+  \mathbb{E}\big[ \mathbbm{1}_{\{z\xleftrightarrow{} w_2,\mathrm{dist}(\mathcal{C}_{w_2},v_1)\ge \frac{\chi}{3}\}} \mathbb{P}\big( v_1\xleftrightarrow{(\mathcal{C}_{w_2})} w_1, \{v_1\xleftrightarrow{(\mathcal{C}_{w_2})} v_2 \}
  				^c   \mid \mathcal{F}_{\mathcal{C}_{w_2}} \big)  \big].
 	\end{split}
 \end{equation} 	
 	Moreover, by the isomorphism theorem we have 
 	 	 	\begin{equation}\label{ineq423}
 		\begin{split}
 		&	\mathbb{P}\big( v_1\xleftrightarrow{(\mathcal{C}_{w_2})} w_1, \{v_1\xleftrightarrow{(\mathcal{C}_{w_2})} v_2 \}
  				^c   \mid \mathcal{F}_{\mathcal{C}_{w_2}} \big)\\
  				\asymp & \int_{a>0} \tfrac{\mathrm{exp}\big( 	-\frac{a^2}{2\widetilde{G}_{\mathcal{C}_{w_2}}(v_1,v_1)}\big)  }{\sqrt{2\pi \widetilde{G}_{\mathcal{C}_{w_2}}(v_1,v_1)}} \cdot  \mathbb{P}^{\mathcal{C}_{w_2}}\big(v_1\xleftrightarrow{\ge 0} w_1, \{v_1\xleftrightarrow{\ge 0} v_2 \}
  				^c   \mid \widetilde{\phi}_{v_1}=a\big) \mathrm{d}a,
 		\end{split}
 	\end{equation}
 	where the conditional probability, by the FKG inequality, is upper-bounded by
 	\begin{equation}\label{ineq424}
 		\begin{split}
 			& \mathbb{P}^{\mathcal{C}_{w_2}}\big(v_1\xleftrightarrow{\ge 0} w_1  \mid \widetilde{\phi}_{v_1}=a\big) \cdot  \mathbb{P}^{\mathcal{C}_{w_2}}\big(  \{v_1\xleftrightarrow{\ge 0} v_2 \}
  				^c   \mid \widetilde{\phi}_{v_1}=a\big)\\
  				\overset{(\text{Lemmas}\ \ref{lemma_conditional_two_points}\ \text{and}\ \ref{lemma_connect_close_points})}{\lesssim } & \mathbb{P}^{\mathcal{C}_{w_2}}\big(v_1\xleftrightarrow{\ge 0} w_1 \big)\cdot \tfrac{a\cdot e^{-ca^2\chi^{-1}}}{\sqrt{\widetilde{G}_{\mathcal{C}_{w_2}}(v_1,v_1)}}. 
 		\end{split}
 	\end{equation} 	
 	By (\ref{ineq423}), (\ref{ineq424}) and $\widetilde{G}_{\mathcal{C}_{w_2}}(v_1,v_1)\asymp \mathrm{dist}(\mathcal{C}_{w_2},v_1)\land 1$ (by (\ref{23})), we obtain that on the event $\{ z\xleftrightarrow{} w_2 ,\mathrm{dist}(\mathcal{C}_{w_2},v_1) \gtrsim \chi\}$, 
 	\begin{equation}\label{newadd_429}
 		\begin{split}
 		&	\mathbb{P}\big( v_1\xleftrightarrow{(\mathcal{C}_{w_2})} w_1, \{v_1\xleftrightarrow{(\mathcal{C}_{w_2})} v_2 \}
  				^c   \mid \mathcal{F}_{\mathcal{C}_{w_2}} \big)\\
  	 \lesssim &\big( \int_{a>0} a \cdot  e^{-ca^2\chi^{-1}} \mathrm{d}a \big)\cdot \big(\mathrm{dist}(\mathcal{C}_{w_2},v_1)\land 1 \big)^{-1} \cdot  \mathbb{P}^{\mathcal{C}_{w_2}}\big(v_1\xleftrightarrow{\ge 0} w_1 \big) \\
  	 \lesssim & \chi \cdot \big(\mathrm{dist}(\mathcal{C}_{w_2},v_1) \big)^{-1}  \cdot  \mathbb{P}^{\mathcal{C}_{w_2}}\big(v_1\xleftrightarrow{\ge 0} w_1 \big),
 		\end{split}
 	\end{equation}
 	where in the last line we used the inclusion $\{z\xleftrightarrow{} w_2\}\subset \{\mathrm{dist}(\mathcal{C}_{w_2},v_1)\le  10dC_\dagger\}$. Plugging (\ref{newadd_429}) into (\ref{newadd_426}), we get
\begin{equation}\label{newadd431}
	\begin{split}
	&	\mathbb{E}\big[ \mathbbm{1}_{z\xleftrightarrow{} w_2} \cdot \mathbb{P}\big( \{v_1\xleftrightarrow{(\mathcal{C}_{w_2})} w_1\}\cap \{v_1\xleftrightarrow{(\mathcal{C}_{w_2})}v_2\}^c  \mid \mathcal{F}_{\mathcal{C}_{w_2}} \big)   \big]\\
		\lesssim  &\mathbb{P}\big(\mathsf{C}[\psi]  \big)+ \mathbb{E}\big[ \mathbbm{1}_{\mathsf{C}_z\cap \{\mathrm{dist}(\mathcal{C}_{w_2},v_1)\ge \frac{\chi}{3}\}} \cdot \big(\mathrm{dist}(\mathcal{C}_{w_2},v_1) \big)^{-1} \big]. 
	\end{split}
\end{equation}
Inserting (\ref{newadd431}) into (\ref{newadd_425}), and recalling that $\chi$ is small, we obtain (\ref{newadd4.22}).

 	\textbf{Case (2): $v_2\notin I_{\{x,y\}}$.} Without loss of generality, assume that $x$ is the endpoint of $I_{\{x,y\}}$ that is closer to $v_2$, and that $m:=|v_1-x|\ge |v_2-x|$. Since $v_2\in \mathbf{B}_x(m)$, 
 	\begin{equation}\label{newadd432}
 		\begin{split}
 			\mathbb{P}\big(\mathsf{C}[\psi]  \big) \le \mathbb{I}:=  \sum\nolimits_{v_2'\in \widetilde{\partial }\mathbf{B}_x(m): v_2'\notin I_{\{x,y\}}} \mathbb{P}\big(\mathsf{C}[(v_1,v_2',w_1,w_2)]  \big). 		\end{split}
 	\end{equation}
In addition, it follows from (\ref{newadd_425}) that 
\begin{equation}\label{newadd433}
	\mathbb{I} \lesssim \chi^{\frac{1}{2}}\sum\nolimits_{z\in \partial \mathcal{B}_{{v}_1}(C_\dagger )} \sum\nolimits_{v_2'\in \widetilde{\partial }\mathbf{B}_x(m): v_2'\notin I_{\{x,y\}}}  \mathbb{J}_{z,v_2'},   
\end{equation}
 	where $\mathbb{J}_{z,v_2'}:=\mathbb{E}\big[ \mathbbm{1}_{z\xleftrightarrow{} w_2} \cdot \mathbb{P}\big( \{v_1\xleftrightarrow{(\mathcal{C}_{w_2})} w_1\}\cap \{v_1\xleftrightarrow{(\mathcal{C}_{w_2})}v_2'\}^c  \mid \mathcal{F}_{\mathcal{C}_{w_2}} \big)   \big]$. Moreover, on the event $\{v_1\xleftrightarrow{(\mathcal{C}_{w_2})} w_1\}\cap \{v_1\xleftrightarrow{(\mathcal{C}_{w_2})}v_2'\}^c$, one of the following occurs:
 \begin{enumerate}
 
   \item[(i)]  $\mathcal{C}_{w_2}$ intersects $y$. Therefore, since $v_1\xleftrightarrow{(\mathcal{C}_{w_2})} w_1$, the cluster $\mathcal{C}_{w_1}$ must intersect $x$, and thus $\mathsf{C}[(x,v_2',w_1,w_2)]$ occurs.

 	\item[(ii)] $\mathcal{C}_{w_2}$ intersects $\mathbf{B}_x(m)$ but not $y$. Consequently, $\mathsf{C}[(v_1,v_2',w_1,w_2)]$ occurs for some $v_2'\in \widetilde{\partial }\mathbf{B}_x(m)$ with $ v_2'\notin I_{\{x,y\}}$.

 	 \item[(iii)] $\mathcal{C}_{w_2}$ is disjoint from both $y$ and $\mathbf{B}_x(m)$. Hence, $\mathrm{dist}(\mathcal{C}_{w_2},v_1)\ge \frac{\chi}{3}$.

 \end{enumerate}
 	 	As a result, for any $v_2'\in \widetilde{\partial }\mathbf{B}_x(m)$ with $v_2'\notin I_{\{x,y\}}$, we have 
 	\begin{equation}\label{newadd_434}
 		\begin{split}
 		 \mathbb{J}_{z,v_2'}  \lesssim & \mathbb{P}\big(\mathsf{C}[(x,v_2',z,w_2)] \big) + \mathbb{I} \\
 		 &+\mathbb{E}\big[ \mathbbm{1}_{\{z\xleftrightarrow{} w_2,\mathrm{dist}(\mathcal{C}_{w_2},v_1)\ge \frac{\chi}{3}\}} \mathbb{P}\big( v_1\xleftrightarrow{(\mathcal{C}_{w_2})} w_1, \{v_1\xleftrightarrow{(\mathcal{C}_{w_2})} v_2' \}
  				^c   \mid \mathcal{F}_{\mathcal{C}_{w_2}} \big)  \big].
 		\end{split}
 	\end{equation}
 	Recall that it has been proved in Case (1) that 
 	\begin{equation}\label{newadd_435}
 		\mathbb{P}\big(\mathsf{C}[(x,v_2',z,w_2)] \big) \lesssim \chi^{\frac{3}{2}} \epsilon_N. 
 	\end{equation}
 	Meanwhile, by the same argument as in proving (\ref{newadd_429}), we have 
 	\begin{equation}\label{newadd_436}
 		\begin{split}
 			& \mathbb{E}\big[ \mathbbm{1}_{\{z\xleftrightarrow{} w_2,\mathrm{dist}(\mathcal{C}_{w_2},v_1)\ge \frac{\chi}{3}\}} \mathbb{P}\big( v_1\xleftrightarrow{(\mathcal{C}_{w_2})} w_1, \{v_1\xleftrightarrow{(\mathcal{C}_{w_2})} v_2' \}
  				^c   \mid \mathcal{F}_{\mathcal{C}_{w_2}} \big)  \big]\\
  				\lesssim & \mathbb{E}\big[ \mathbbm{1}_{\mathsf{C}_z\cap \{\mathrm{dist}(\mathcal{C}_{w_2},v_1)\ge \frac{\chi}{3}\}} \cdot \big(\mathrm{dist}(\mathcal{C}_{w_2},v_1) \big)^{-1} \big].
 		\end{split}
 	\end{equation}
 	Plugging (\ref{newadd_435}) and (\ref{newadd_436}) into (\ref{newadd_434}), we get
 	\begin{equation}\label{newadd437}
 			 \mathbb{J}_{z,v_2'}  \lesssim \mathbb{E}\big[ \mathbbm{1}_{\mathsf{C}_z\cap \{\mathrm{dist}(\mathcal{C}_{w_2},v_1)\ge \frac{\chi}{3}\}} \cdot \big(\mathrm{dist}(\mathcal{C}_{w_2},v_1) \big)^{-1} \big] + \chi^{\frac{3}{2}}\cdot \epsilon_N+ \mathbb{I} .
 	\end{equation}
 	Inserting (\ref{newadd437}) into (\ref{newadd433}), and using the fact that $\chi$ is small, we obtain 
 	\begin{equation}
 		\mathbb{I}\lesssim \chi^{\frac{3}{2}}  \max_{z\in \partial \mathcal{B}_{{v}_1}(C_\dagger )}  \mathbb{E}\big[ \mathbbm{1}_{\mathsf{C}_z\cap \{\mathrm{dist}(\mathcal{C}_{w_2},v_1)\ge \frac{\chi}{3}\}} \cdot \big(\mathrm{dist}(\mathcal{C}_{w_2},v_1) \big)^{-1} \big] +\chi^2 \epsilon_N.
 	\end{equation}
 	This together with (\ref{newadd432}) implies the claim (\ref{newadd4.22}) and thus completes the proof.
\end{proof}

  For the lower bound in (\ref{thm1_large_n_four_point}), we first address the low-dimensional case (i.e., $3\le d\le 5$) by proving the following lemma.

     \begin{lemma} \label{lemma_prob_Cpsi}
 	For any $3\le d\le 5$, there exist $\Cl\label{const_lemma_prob_Cpsi1},\Cl\label{const_lemma_prob_Cpsi2}>0$ such that for any $n\ge \Cref{const_lemma_prob_Cpsi1}$, $N\ge \Cref{const_lemma_prob_Cpsi2}n$ and $\psi\in \Psi(n,N)$, 
 	\begin{equation}\label{new44}
 		 \mathbb{P}\big(\mathsf{C}[\psi] \big) \gtrsim n^{ 3-\frac{d}{2} }  N^{- \frac{3d}{2}+1}.
 	\end{equation} 
 \end{lemma}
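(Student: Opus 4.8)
The plan is a multi-scale (``quasi-multiplicativity'') argument anchored at a base case with bounded inner scale. By Lemma \ref{lemma_roots} it suffices to prove (\ref{new44}) for one convenient representative $\psi=\psi_{n,N}=(v_1,v_2,w_1,w_2)\in\Psi(n,N)$, and we may enlarge $\Cref{const_lemma_prob_Cpsi1},\Cref{const_lemma_prob_Cpsi2}$ so that all cited lemmas apply. The \emph{base case} asserts that, for a large enough absolute constant $C_\dagger$, $\mathbb{P}(\mathsf C[\psi'])\gtrsim M^{1-\frac{3d}2}$ whenever $M$ exceeds an absolute constant and $\psi'\in\Psi(C_\dagger,M)$; combined with Lemma \ref{lemma_prob_Cpsi_small_n} this gives $\mathbb{P}(\mathsf C[\psi'])\asymp M^{1-\frac{3d}2}$ there. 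This lower bound comes from running the argument in the proof of Lemma \ref{lemma_prob_Cpsi_small_n} in reverse: letting $\mathsf A$ be the event that an independent Brownian motion started at distance $\asymp M$ from the origin hits $\mathcal C_{\bm 0}\cap(\widetilde B(2M)\setminus\widetilde B(M))$, one has $\mathsf A=\bigcup_x\mathsf A_x$ over an $O(1)$-net of that annulus with $\mathbb{P}(\mathsf A_x)\asymp\mathbb{P}(\mathsf C[\psi'])$ (via Lemmas \ref{lemma_separation} and \ref{lemma_roots}), so $\mathbb{P}(\mathsf A)\lesssim M^{d}\,\mathbb{P}(\mathsf C[\psi'])$; and $\mathbb{P}(\mathsf A)\gtrsim\theta_d(M)\asymp M^{1-\frac d2}$ since, by the capacity estimates of \cite{ding2020percolation,drewitz2023critical}, with probability $\gtrsim\theta_d(M)$ the cluster $\mathcal C_{\bm 0}$ carries capacity $\gtrsim M^{d-2}$ localized inside that annulus, on which the Brownian motion hits $\mathcal C_{\bm 0}$ there with probability $\gtrsim M^{2-d}M^{d-2}\asymp 1$ by (\ref{310}).

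The heart of the argument is a gluing step at the genuinely intermediate scale $n$. Fix a representative $\psi_{1,N}=(v_1',v_2',w_1,w_2)\in\Psi(C_\dagger,N)$ sharing the outer points $w_1,w_2$, and a $\psi_{1,n}=(v_1',v_2',u_1,u_2)\in\Psi(C_\dagger,n)$. The claim is the decomposition estimate
\begin{equation}\label{plan_qm}
	\mathbb{P}\big(\mathsf C[\psi_{1,N}]\big)\ \lesssim\ \mathbb{P}\big(\widehat{\mathsf C}^{\mathrm{out}}[\psi_{1,n},\cref{const_point_to_boundary2},\cref{const_point_to_boundary3},\cref{const_point_to_boundary4}]\big)\cdot \mathbb{P}\big(\widehat{\mathsf C}^{\mathrm{in}}[\psi_{n,N},\cref{const_point_to_boundary2},\cref{const_point_to_boundary3},\cref{const_point_to_boundary4}]\big).
\end{equation}

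To prove (\ref{plan_qm}), on $\mathsf C[\psi_{1,N}]$ split each cluster $\mathcal C_{v_i'}$ into its part inside $\widetilde B(\cref{const_point_to_boundary3}^{-1}n)$ and the rest. A separation lemma at the scale $n$ -- of the same type as Lemma \ref{lemma_separation}, obtained from Lemma \ref{lemma_rigidity} together with the single-cluster separation estimate of Lemma \ref{newlemma36} -- shows that at constant cost the two inner parts are separated from one another and from the forbidden sets defining $(\mathsf S^{\mathrm{out}})^c$; and adapting the second-moment argument from the proof of Lemma \ref{lemma_point_to_boundary} to the scale $n$ shows that, again at constant cost, each inner part has volume $\gtrsim n^{\frac d2+1}$ and capacity $\gtrsim n^{d-2}$ inside $\widetilde B(\cref{const_point_to_boundary3}^{-1}n)$, so the inner configuration $(D_1,D_2)$ realizes, up to constants, $\widehat{\mathsf C}^{\mathrm{out}}[\psi_{1,n},\dots]$. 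Conditioning on $(D_1,D_2)$ and using the restriction property, the remaining loop soup is $\widetilde{\mathcal L}_{1/2}^{D_1\cup D_2}$, and by Harnack's inequality (Lemma \ref{lemma_harnack}) together with the separation, the conditional probability that $D_1,D_2$ are joined to $w_1,w_2$ by two separated clusters is, uniformly over admissible $(D_1,D_2)$, bounded by a constant multiple of $\mathbb{P}(\widehat{\mathsf C}^{\mathrm{in}}[\psi_{n,N},\dots])$ (replacing each $D_i$ by the canonical box $B_{v_i}(\cref{const_point_to_boundary2}n)$). Summing over $(D_1,D_2)$ gives (\ref{plan_qm}).

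It then remains to assemble. By Lemma \ref{lemma_point_to_boundary}, $\mathbb{P}(\widehat{\mathsf C}^{\mathrm{out}}[\psi_{1,n},\dots])\asymp n^{d-2}\mathbb{P}(\mathsf C[\psi_{1,n}])$, which by Lemma \ref{lemma_prob_Cpsi_small_n} is $\lesssim n^{d-2}n^{1-\frac{3d}2}=n^{-1-\frac d2}$, while $\mathbb{P}(\mathsf C[\psi_{1,N}])\gtrsim N^{1-\frac{3d}2}$ by the base case; substituting into (\ref{plan_qm}) yields $\mathbb{P}(\widehat{\mathsf C}^{\mathrm{in}}[\psi_{n,N},\dots])\gtrsim n^{1+\frac d2}N^{1-\frac{3d}2}$, and a final application of Lemma \ref{lemma_point_to_boundary}, namely $\mathbb{P}(\mathsf C[\psi_{n,N}])\asymp n^{2-d}\mathbb{P}(\widehat{\mathsf C}^{\mathrm{in}}[\psi_{n,N},\dots])$, gives $\mathbb{P}(\mathsf C[\psi_{n,N}])\gtrsim n^{3-\frac d2}N^{1-\frac{3d}2}$, which is (\ref{new44}). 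The step I expect to be the main obstacle is (\ref{plan_qm}): it requires a separation lemma at the intermediate scale $n$ (not only at the inner and outer scales of the quadruple) and a uniform comparison, over all admissible inner configurations $(D_1,D_2)$, between the conditional two-arm connection probability to $w_1,w_2$ and the clean box-based quantity $\mathbb{P}(\widehat{\mathsf C}^{\mathrm{in}}[\psi_{n,N},\dots])$; both are in the spirit of Section \ref{section_rigidity_lemma} and of the companion paper \cite{inpreparation_second_monent}, but the correlation between the two coexisting clusters prevents a direct appeal to the single-cluster quasi-multiplicativity of \cite{cai2024quasi}.
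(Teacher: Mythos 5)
Your route is genuinely different from the paper's. The paper proves (\ref{new44}) in a single shot: Lemma \ref{newlemma43} gives, by the second moment method, that an independent Brownian motion started on $\partial B(4N)$ hits $\mathcal{C}_{\bm 0}$ for the first time inside the annulus $\widetilde B(2N)\setminus\widetilde B(N)$ with probability $\gtrsim N^{1-\frac d2}$; the annulus is covered by $\asymp (N/n)^d$ boxes of side $n$, so a pigeonhole box $B_{x_\dagger}(n)$ carries a share $\gtrsim n^d N^{1-\frac{3d}{2}}$; and the last-exit decomposition combined with Lemma \ref{lemma_onecluster_box_point} and Lemma \ref{lemma_roots} converts the box's share into $\lesssim n^{\frac{3d}{2}-1}\mathbb P(\mathsf C[\psi])$, which rearranges to the claim. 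No intermediate-scale decomposition of the two-arm event is needed. You instead anchor at a base case (inner scale $O(1)$) and propagate through the intermediate scale $n$ via the reverse quasi-multiplicativity inequality (\ref{plan_qm}), then close by Lemma \ref{lemma_point_to_boundary} and Lemma \ref{lemma_prob_Cpsi_small_n}. The algebra of your final assembly is correct given (\ref{plan_qm}), and (\ref{plan_qm}) is indeed consistent with the a posteriori estimate $\mathbb P(\mathsf C[\psi_{n,N}])\asymp n^{3-\frac d2}N^{1-\frac{3d}{2}}$.

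The genuine gap is (\ref{plan_qm}) itself, and I think you underestimate it. Your sketch asks for two things that the tools in Section \ref{section_rigidity_lemma} do not furnish. First, it requires that \emph{conditioned on} $\mathsf C[\psi_{1,N}]$, at constant cost each cluster has volume $\gtrsim n^{\frac d2+1}$ and capacity $\gtrsim n^{d-2}$ inside $\widetilde B(Cn)$: these are \emph{decreasing} events for the relevant cluster, so Lemma \ref{lemma_rigidity} (which only transfers increasing events) cannot be applied, and the second-moment argument in Lemma \ref{lemma_point_to_boundary} produces such lower bounds only at the quadruple's own inner/outer scales, not at a general intermediate scale inserted into a $\psi_{1,N}$ with inner scale $1$. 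Second, the claimed uniform comparison of the conditional connection probability given an arbitrary admissible inner configuration $(D_1,D_2)$ with the box-based quantity $\mathbb P(\widehat{\mathsf C}^{\mathrm{in}}[\psi_{n,N},\dots])$ is not a monotonicity argument: enlarging $D_1$ increases the chance that $D_1$ reaches $w_1$ while decreasing the chance that $D_2$'s cluster reaches $w_2$ and stays disjoint, so the two-arm event is not monotone in $(D_1,D_2)$ and one cannot simply dominate by the full-box case. This is exactly the non-monotonicity that the paper circumvents via the switching identity in Lemma \ref{lemma_rigidity}, and it is also why the paper defers a genuine decomposition of two-arm events to the companion work \cite{inpreparation_second_monent} rather than establishing quasi-multiplicativity here. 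In short: your plan is structurally sound and the algebra closes, but (\ref{plan_qm}) as sketched needs new ideas, whereas the paper's direct hitting-probability-plus-pigeonhole argument avoids needing (\ref{plan_qm}) at all.
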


 To derive this, we need an auxiliary lemma as follows.

 \begin{lemma}\label{newlemma43}
 	 For any $3\le d\le 5$, $N\ge 1$ and $z\in \partial B(4N)$,
 	 \begin{equation}\label{new.433}
 	\widetilde{\mathbb{P}}_z\times 	 \mathbb{P}\big(  \tau_{\mathcal{C}_{\bm{0}}  } = \tau_{\mathcal{C}_{\bm{0}}\cap [\widetilde{B}(2N)\setminus \widetilde{B}(N)] }  <\infty \big) \gtrsim N^{-\frac{d}{2}+1}. 	 \end{equation}
 \end{lemma}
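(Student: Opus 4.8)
The plan is to produce a lower bound of order $N^{-\frac{d}{2}+1}$ for the probability that an independent Brownian motion from $z\in\partial B(4N)$ hits the cluster $\mathcal{C}_{\bm 0}$ for the first time inside the annulus $\widetilde B(2N)\setminus\widetilde B(N)$. The natural strategy is a first-moment computation over lattice points $x$ in the annulus together with a Paley--Zygmund-type second-moment bound, exactly in the spirit of the quantity $\mathbf X$ used in the proof of Lemma \ref{lemma_point_to_boundary}. Concretely, I would fix a small constant $C_\dagger$, pick for each $x\in B(2N-C_\dagger)\setminus B(N+C_\dagger)$ a nearby point $x'\in\partial B_x(C_\dagger-1)$, and consider
\begin{equation*}
	\mathbf{Z}:= \sum_{x\in B(2N-C_\dagger)\setminus B(N+C_\dagger)} \mathbbm{1}_{\{x\in\mathcal{C}_{\bm 0}\}}\cdot \mathbbm{1}_{\{\widetilde S^z_\cdot\ \text{first hits}\ \mathcal{C}_{\bm 0}\cap[\widetilde B(2N)\setminus\widetilde B(N)]\ \text{inside}\ \widetilde B_x(C_\dagger)\}}.
\end{equation*}
On the event $\{\mathbf{Z}>0\}$ the first-hitting-in-annulus event in \eqref{new.433} occurs, so it suffices to show $\mathbb{P}(\mathbf{Z}>0)\gtrsim N^{-\frac d2+1}$.

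For the first moment, I would argue as in \eqref{49}--\eqref{411}: to force the $x$-th indicator it is enough that $x\in\mathcal{C}_{\bm 0}$, that $\mathcal{C}_{\bm 0}$ avoids $\widetilde B_z(1)$ (a negligible loss since $x$ is macroscopically far from $z$ and one can absorb it, or simply note $\widetilde G_{\mathcal C_{\bm 0}}(z,z)\asymp 1$ on the relevant event), that $\widetilde S^z_\cdot$ reaches $x'$ before $\mathcal{C}_{\bm 0}$, and that from $x'$ it hits $\mathcal{C}_{\bm 0}$ before leaving $\widetilde B_x(C_\dagger)$ (probability $\gtrsim 1$ by \eqref{411}). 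Using \eqref{410} to rewrite $\widetilde{\mathbb P}_z(\tau_{x'}<\tau_{\mathcal C_{\bm 0}})$ as $\asymp\mathbb{P}(x'\xleftrightarrow{(\mathcal C_{\bm 0})}z)$, and then recombining via the isomorphism theorem and Lemma \ref{lemma_separation}/Lemma \ref{lemma_roots} (to remove the separation constraint and move roots), each term is $\asymp\mathbb{P}(\mathsf{C}[(x,x',\bm 0,z)])$, which by Lemma \ref{lemma_roots} is $\asymp\mathbb{P}(\mathsf{C}[\psi_0])$ for a fixed reference $\psi_0\in\Psi(C_\dagger,N)$; however, the cleanest route is instead to bound $\mathbb{E}[\mathbf{Z}]$ directly below: $\mathbb{E}[\mathbf{Z}]\gtrsim \sum_x \widetilde{\mathbb P}_z(\tau_{x'}<\tau_{\mathcal C_{\bm 0}}, x\in\mathcal C_{\bm 0},\dots)$, and one shows $\mathbb{E}[\mathbf{Z}]\gtrsim N^2\cdot\big(\text{typical term}\big)$ where the typical term is $\asymp N^{2-d}\cdot\mathbb{P}(x\in\mathcal{C}_{\bm 0})\asymp N^{2-d}\cdot N^{2-d}$ giving $\mathbb{E}[\mathbf Z]\asymp N^{2-d}\cdot\mathrm{cap}$-type quantity; more simply, since $\{\mathbf Z>0\}$ is exactly the event in \eqref{new.433}, one can bypass $\mathbf Z$ and instead directly compare to the two-arm event and use Lemma \ref{lemma_point_to_boundary} together with the known one-arm exponent \eqref{one_arm_low} and crossing exponent \eqref{crossing_low}. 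In fact the quickest proof: condition on $\mathcal C_{\bm 0}$, note $\widetilde{\mathbb P}_z(\tau_{\mathcal C_{\bm 0}}=\tau_{\mathcal C_{\bm 0}\cap[\widetilde B(2N)\setminus\widetilde B(N)]}<\infty)\gtrsim N^{2-d}\,\mathrm{cap}(\mathcal C_{\bm 0}\cap \widetilde B(2N))$ on the event that $\mathcal C_{\bm 0}$ reaches the annulus but is confined below scale $2N$ near the origin side, then take expectations and invoke the fact that $\{\bm 0\xleftrightarrow{\ge 0}\partial B(N)\}$ has probability $\asymp N^{-\frac d2+1}$ while on a positive fraction of that event $\mathrm{cap}(\mathcal C_{\bm 0}\cap\widetilde B(2N))\gtrsim N^{d-2}$ (as in \eqref{result_typical_volume} and the capacity estimates recalled in Section \ref{section_heuristic}).

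I would carry out the details via the second-moment route, since it is self-contained and mirrors the already-written proof of Lemma \ref{lemma_point_to_boundary}: bound $\mathbb{E}[\mathbf Z^2]$ using Lemma \ref{lemma_rigidity} (to decouple the two connections $x_1\leftrightarrow\bm 0$, $x_2\leftrightarrow\bm 0$ up to an exponentially small error and a factor $N^{d-2}$), then Lemma \ref{lemme_technical} (with the roles of $\bm 0$ and the annulus adapted) to get $\mathbb{E}[\mathbf Z^2]\lesssim N^{?}\,\mathbb{E}[\mathbf Z]$, and conclude by Paley--Zygmund. The bookkeeping of exponents should reproduce $\mathbb{P}(\mathbf Z>0)\gtrsim N^{-\frac d2+1}$.

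The main obstacle I anticipate is the first-moment lower bound: one needs that with probability $\gtrsim N^{-\frac d2+1}$ the cluster $\mathcal C_{\bm 0}$ not only reaches the annulus $\widetilde B(2N)\setminus\widetilde B(N)$ but does so with enough ``capacity'' or ``boundary'' there for the independent Brownian motion from $z$ to hit it at a macroscopic scale — in other words, that conditionally on $\{\bm 0\xleftrightarrow{\ge 0}\partial B(N)\}$ the trace of $\mathcal C_{\bm 0}$ in the annulus has capacity $\gtrsim N^{d-2}$ with positive probability. This is precisely the kind of volume/capacity lower bound supplied by \eqref{result_typical_volume} and the capacity tail estimates recalled in Section \ref{section_heuristic}, but one must be careful that the relevant portion lies in the annulus rather than spread over all of $B(N)$; handling this localization — e.g. by a further conditioning on the cluster first crossing into the annulus and using Harnack/separation to argue it then develops macroscopic capacity there — is the technically delicate point. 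Everything else (the Brownian hitting estimates, the decoupling via Lemma \ref{lemma_rigidity}, the summations) is routine given the tools already assembled in the excerpt.
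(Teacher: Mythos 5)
Your overall strategy (a first/second-moment argument via hitting estimates, as in the proof of Lemma \ref{lemma_point_to_boundary}) is the same as the paper's, and your use of Lemma \ref{lemme_technical} to control $\mathbb{E}[\mathbf{X}^2]$ matches the paper, which bounds
\[
\mathbb{E}[\mathbf{X}^2]\lesssim \sum_{x_1,x_2}\widetilde{\mathbb{P}}_z(\tau_{x_1}<\infty,\tau_{x_2}<\infty)\,\mathbb{P}(\bm 0\xleftrightarrow{\ge 0}x_1,\ \bm 0\xleftrightarrow{\ge 0}x_2)
\]
directly by (\ref{add226}). One small correction there: you should not invoke Lemma \ref{lemma_rigidity} for the second moment. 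There is only one cluster $\mathcal{C}_{\bm 0}$ in this lemma, so no two-arm decoupling is needed; the three-point bound of Lemma \ref{lemma_cite_three_point} packaged inside Lemma \ref{lemme_technical} is all you need.

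The genuine gap is the one you yourself flag at the end, and your sketch does not close it. You need a first-moment lower bound of the form (\ref{new.434}): for $x$ in a sub-annulus of $\widetilde B(2N)\setminus\widetilde B(N)$,
\[
\widetilde{\mathbb{P}}_z\times\mathbb{P}\big(\bm 0\xleftrightarrow{} x,\ \tau_x<\tau_{\mathcal{C}_{\bm 0}\cap[\widetilde B(2N)\setminus\widetilde B(N)]^c}\big)\gtrsim N^{4-2d},
\]
i.e., that the Brownian motion from $z$ reaches $x$ before it meets the part of the cluster \emph{outside} the annulus. Bounding $\widetilde{\mathbb{P}}_z(\tau_{x'}<\tau_{\mathcal{C}_{\bm 0}})$ from below, or arguing via capacity of $\mathcal{C}_{\bm 0}\cap\widetilde B(2N)$, does not localize where the first hit occurs: your proposed estimate $\widetilde{\mathbb{P}}_z(\tau_{\mathcal{C}_{\bm 0}}=\tau_{\mathcal{C}_{\bm 0}\cap[\widetilde B(2N)\setminus\widetilde B(N)]}<\infty)\gtrsim N^{2-d}\mathrm{cap}(\mathcal{C}_{\bm 0}\cap\widetilde B(2N))$ is simply false in general, since the walk may well enter $\widetilde B(N)$ first. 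The paper's resolution is a clean geometric conditioning, not a capacity argument: pick a fixed path $\widetilde\eta$ of $O(1)$ line segments from $z$ into the annulus and thicken it to a corridor $D_\dagger^\delta=\cup_{v\in\mathrm{ran}(\widetilde\eta)}\mathbf{B}_v(\delta N)$; by the single-cluster separation estimate (\ref{ineq_lemma36_1}) (which is applied with $y_2=x$, not with a second cluster) one has $\mathbb{P}(\bm 0\xleftrightarrow{}x,\{\bm 0\xleftrightarrow{}D_\dagger^\delta\}^c)\asymp\mathbb{P}(\bm 0\xleftrightarrow{}x)\asymp N^{2-d}$ for $\delta$ small; and by Harnack the Brownian motion, started from $z$ and killed on leaving $D_\dagger^\delta\cup[\widetilde B(2N)\setminus\widetilde B(N)]$, hits $x$ with probability $\asymp N^{2-d}$. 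On the intersection of these two independent events the walk necessarily hits $x$ before any part of $\mathcal{C}_{\bm 0}$ outside the annulus, giving the $N^{4-2d}$. This corridor construction is the key idea you were missing; once it is in place, your Paley--Zygmund step goes through exactly as the paper's.
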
 
  \begin{proof}
  	We claim that for any $x\in B(\frac{5N}{3})\setminus B(\frac{4N}{3})$, 
  	\begin{equation}\label{new.434}
 	\widetilde{\mathbb{P}}_z\times  \mathbb{P}\big( \bm{0} \xleftrightarrow{} 
 	x ,\tau_{x}<\tau_{\mathcal{C}_{\bm{0}}\cap [\widetilde{B}(2N)\setminus \widetilde{B}(N)]^c }  \big) \gtrsim N^{4-2d}. 
 	\end{equation}
 	In fact, assuming this claim, the desired bound (\ref{new.433}) can be derived via the second moment method. Precisely, we consider the quantity 
 	\begin{equation}
 		\mathbf{X}:= \sum\nolimits_{x\in B(\frac{5N}{3})\setminus B(\frac{4N}{3})} \mathbbm{1}_{\bm{0} \xleftrightarrow{} 
 	x  ,\tau_{x}<\tau_{\mathcal{C}_{\bm{0}}\cap [\widetilde{B}(2N)\setminus \widetilde{B}(N)]^c }}. 
 	\end{equation}
  	It directly follows from (\ref{new.434}) that $\mathbb{E}[\mathbf{X} ]\gtrsim N^{4-d}$. Moreover, using the isomorphism theorem, we have 
   \begin{equation}
   	\begin{split}
   		\mathbb{E}[\mathbf{X}^2]\lesssim & \sum\nolimits_{x_1,x_2\in B(\frac{5N}{3})\setminus B(\frac{4N}{3})}  	  \widetilde{\mathbb{P}}_z(\tau_{x_1}<\infty,\tau_{x_2}<\infty) \mathbb{P}(\bm{0}\xleftrightarrow{\ge 0} x_1,\bm{0}\xleftrightarrow{\ge 0} x_2)  \\
   		\overset{(\ref{ineq_new_lemma_cite_three_point})(\ref{app1})}{\lesssim } & N^{4-2d} \sum\nolimits_{x_1,x_2\in B(\frac{5N}{3})\setminus B(\frac{4N}{3})}  	 (|x_1-x_2|+1)^{-\frac{3d}{2}+3} \overset{(\ref{computation_d-a})}{\lesssim }  N^{-\frac{3d}{2}+7}.
   	\end{split}
   \end{equation}
   Thus, by the Paley-Zygmund inequality, we obtain (\ref{new.433}) as follows:
  	\begin{equation}
  		\begin{split}
  				 \widetilde{\mathbb{P}}_z\times 	 \mathbb{P}\big(  \tau_{\mathcal{C}_{\bm{0}}  } = \tau_{\mathcal{C}_{\bm{0}}\cap [\widetilde{B}(2N)\setminus \widetilde{B}(N)] }  <\infty \big)
  				\ge  \mathbb{P}(\mathbf{X}>0)\gtrsim \frac{(\mathbb{E}[\mathbf{X} ])^2}{\mathbb{E}[\mathbf{X}^2]} \gtrsim N^{-\frac{d}{2}+1}.
  		\end{split}
  	\end{equation}

  	It remains to prove the claim (\ref{new.434}). We take a path $\widetilde{\eta}$ consisting of at most $2d$ line segments, connecting $z$ and $\partial B(\frac{17N}{9})$, and satisfying $\mathrm{ran}(\widetilde{\eta})\cap B(\frac{16N}{9})=\emptyset$ and $\mathrm{vol}(\mathrm{ran}(\widetilde{\eta}))\le CN$. For $\delta>0$, we denote $D_\dagger^{\delta}:=\cup_{v\in \mathrm{ran}(\widetilde{\eta}_\dagger)}\mathbf{B}_{v}(\delta N)$. It follows from Lemma \ref{lemma_avoid_path_two_point} that for a sufficiently small $\delta>0$,
  	 	\begin{equation}\label{new.437}
  		\mathbb{P}\big( \bm{0} \xleftrightarrow{} 
 	x , \{\bm{0} \xleftrightarrow{} 
 	D_\dagger^{\delta} \}^c  \big) \asymp \mathbb{P}\big( x\in \mathcal{C}_{\bm{0}}\big) \asymp  N^{2-d}. 
  	\end{equation} 
  	Meanwhile, note that $D_\dagger^{\delta}\cup [\widetilde{B}(2N)\setminus \widetilde{B}(N)]$ contains both $x$ and $z$, and the distance between its boundary and $\{x,z\}$ is of order $N$. Therefore, by Harnack's inequality we have (letting $w$ be an arbitrary point in $\partial B_x(\frac{\delta N}{10})$)
  	  	\begin{equation}\label{new.438}
  		\widetilde{\mathbb{P}}_z\big( \tau_{x}< \tau_{ (D_\dagger^{\delta}\cup [\widetilde{B}(2N)\setminus \widetilde{B}(N)] )^c} \big)\asymp \widetilde{\mathbb{P}}_w\big( \tau_{x}< \tau_{ (D_\dagger^{\delta}\cup [\widetilde{B}(2N)\setminus \widetilde{B}(N)] )^c} \big) \asymp N^{2-d}. 
  	\end{equation}
  	In particular, when the two events on the left-hand sides of (\ref{new.437}) and (\ref{new.438}) both occur, the Brownian motion starting from $z$ indeed reaches $x$ before hitting the set $\mathcal{C}_{\bm{0}}\cap [\widetilde{B}(2N)\setminus \widetilde{B}(N)]^c$ (since it is disjoint from $D_\dagger^{\delta}\cup [\widetilde{B}(2N)\setminus \widetilde{B}(N)]$). Thus, by (\ref{new.437}) and (\ref{new.438}), we obtain the claim (\ref{new.434}) as follows:  
  	\begin{equation}
	\begin{split}
		& \widetilde{\mathbb{P}}_z\times  \mathbb{P}\big( \bm{0} \xleftrightarrow{} 
 	x ,\tau_{x}<\tau_{\mathcal{C}_{\bm{0}}\cap [\widetilde{B}(2N)\setminus \widetilde{B}(N)]^c }  \big) \\
 	\ge & 	\mathbb{P}\big( \bm{0} \xleftrightarrow{} 
 	x , \{\bm{0} \xleftrightarrow{} 
 	D_\dagger^{\delta} \}^c  \big) \cdot \widetilde{\mathbb{P}}_z\big( \tau_{x}< \tau_{ (D_\dagger^{\delta}\cup [\widetilde{B}(2N)\setminus \widetilde{B}(N)] )^c} \big)  \asymp N^{4-2d}.  \qedhere
	\end{split}
\end{equation}
  \end{proof}

  With Lemma \ref{newlemma43} in hand, we are ready to prove Lemma \ref{lemma_prob_Cpsi}.

    \begin{proof}[Proof of Lemma \ref{lemma_prob_Cpsi}]

  We arbitrarily choose $z\in \partial B(4N)$, and denote by $\widetilde{S}^z_\cdot \sim \widetilde{\mathbb{P}}_z$ a Brownian motion starting from $z$, independent of $\widetilde{\mathcal{L}}_{1/2}$.

  		  Let $\Omega_{n,N}:=\{x\in n\cdot \mathbb{Z}^d: B_x(n)\cap [ B(2N)\setminus B(N)]\neq \emptyset \}$. For $x\in \Omega_{n,N}$, we define 
  		  	  \begin{equation}
  		\overline{\mathsf{A}}_x:=\big\{\widetilde{S}^z_{\cdot}\ \text{first hits}\ \mathcal{C}_{\bm{0}} \ \text{within}\ \widetilde{B}_x(n)  \big\}.
  \end{equation}
  	 By $\widetilde{B}(2N)\setminus \widetilde{B}(N)\subset \cup_{x\in \Omega_{n,N}}\widetilde{B}_x(n)$ and Lemma \ref{newlemma43}, we have 
 \begin{equation}
 	\begin{split}
 		\sum\nolimits_{x\in \Omega_{n,N}} \widetilde{\mathbb{P}}_z\times \mathbb{P}\big(\overline{\mathsf{A}}_x \big) \ge   \widetilde{\mathbb{P}}_z\times 	 \mathbb{P}\big(  \tau_{\mathcal{C}_{\bm{0}}  } = \tau_{\mathcal{C}_{\bm{0}}\cap [\widetilde{B}(2N)\setminus \widetilde{B}(N)] }  <\infty \big)  
 		\gtrsim   N^{-\frac{d}{2}+1}.
 	\end{split}
 \end{equation} 
  		As a result, there exists $x_\dagger\in  \Omega_{n,N}$ such that 
  		\begin{equation}\label{new.442}
  			\widetilde{\mathbb{P}}_z\times \mathbb{P}\big(\overline{\mathsf{A}}_{x_\dagger} \big) \gtrsim |\Omega_{n,N} |^{-1}N^{-\frac{d}{2}+1} \asymp n^dN^{-\frac{3d}{2}+1}. 
  		\end{equation}
  		Since $\overline{\mathsf{A}}_{x_\dagger}\subset \{\bm{0}\xleftrightarrow{} \partial B_{x_\dagger}(n)\}$, we have  
  		\begin{equation}\label{newto444}
  			\widetilde{\mathbb{P}}_z\times \mathbb{P}\big(\overline{\mathsf{A}}_{x_\dagger} \big) \le \mathbb{E}\big[ \mathbbm{1}_{\bm{0}\xleftrightarrow{} \partial B_{x_\dagger}(n)} \cdot \widetilde{ \mathbb{P}}_z\big(\tau_{ B_{x_\dagger}(n)}<\tau_{\mathcal{C}_{\bm{0}}} \big) \big]. 
  		\end{equation}
 By the last-exit decomposition (see e.g. \cite[Section 8.2]{morters2010brownian}), one has 
 \begin{equation}
 \begin{split}
 	& \widetilde{ \mathbb{P}}_z\big(\tau_{ B_{x_\dagger}(n)}<\tau_{\mathcal{C}_{\bm{0}}} \big) \\ 
 	\lesssim & \sum_{y\in \partial B_{x_\dagger}(4dn),y'\in B_{x_\dagger}(4dn-1):\{y,y'\}\in \mathbb{L}^d}  \widetilde{G}_{\mathcal{C}_{\bm{0}}}(z,y) \cdot \widetilde{\mathbb{P}}_{y'}\big(\tau_{ B_{x_\dagger}(n)}<\tau_{\mathcal{C}_{\bm{0}}\cup \partial B_{x_\dagger}(4dn)} \big). 
 \end{split}
 \end{equation} 
Moreover, the formula (\ref{29}) implies that 
\begin{equation}
	 \widetilde{G}_{\mathcal{C}_{\bm{0}} }(z,y)\lesssim \mathbb{P}\big(z\xleftrightarrow{(\mathcal{C}_{\bm{0}})} y \big). 
\end{equation}
Meanwhile, applying the potential theory, one has 
\begin{equation}\label{newto447}
	\widetilde{\mathbb{P}}_{y'}\big(\tau_{ B_{x_\dagger}(n)}<\tau_{\mathcal{C}_{\bm{0}}\cup \partial B_{x_\dagger}(4dn)} \big)\le \widetilde{\mathbb{P}}_{y'}\big(\tau_{ B_{x_\dagger}(n)}<\tau_{  \partial B_{x_\dagger}(4dn)} \big) \lesssim n^{-1}. 
\end{equation}
Combining (\ref{newto444})-(\ref{newto447}), we obtain 
\begin{equation}
	\begin{split}
		\widetilde{\mathbb{P}}_z\times \mathbb{P}\big(\overline{\mathsf{A}}_{x_\dagger} \big)\lesssim & n^{-1}\sum\nolimits_{y\in \partial B_{x_\dagger}(4dn) } \mathbb{E}\big[ \mathbbm{1}_{\bm{0}\xleftrightarrow{} \partial B_{x_\dagger}(n)} \cdot  \mathbb{P}\big(z\xleftrightarrow{(\mathcal{C}_{\bm{0}})} y \big) \big]\\
		\overset{(\text{restriction property})}{=}&n^{-1}\sum\nolimits_{y\in \partial B_{x_\dagger}(4dn) } \mathbb{E}\big[ \mathbbm{1}_{z\xleftrightarrow{} y } \cdot  \mathbb{P}\big(\bm{0}\xleftrightarrow{(\mathcal{C}_{z})} \partial B_{x_\dagger}(n) \big) \big]\\
		\overset{(\text{Lemma}\ \ref{lemma_onecluster_box_point})}{\lesssim } &  n^{-\frac{d}{2}-1}  \sum\nolimits_{y\in \partial B_{x_\dagger}(4dn), w\in \partial \mathcal{B}_{x_\dagger}(d n) }  \mathbb{P}\big( \mathsf{C}[(y,w,z,\bm{0} )] \big), 
	\end{split}
\end{equation}
 where, according to Lemma \ref{lemma_roots}, each $\mathbb{P}\big( \mathsf{C}[(y,w,z,\bm{0} )] \big)$ is proportional to $\mathbb{P}\big( \mathsf{C}[\psi] \big)$ for all $\psi\in \Psi(n,N)$. Consequently,   
\begin{equation}
	\widetilde{\mathbb{P}}_z\times \mathbb{P}\big(\overline{\mathsf{A}}_{x_\dagger} \big)\lesssim  n^{\frac{3d}{2}-1}\cdot \mathbb{P}\big( \mathsf{C}[\psi] \big), \ \ \forall \psi\in \Psi(n,N). 
\end{equation}
  		Combined with (\ref{new.442}), it yields the desired bound (\ref{new44}).
  		\end{proof}

For the high-dimensional case $d\ge 7$, we present the following stronger result, which includes additional restrictions on the cluster volume. Recall $\mathcal{V}_\cdot(\cdot)$ in (\ref{def353}).

  \begin{lemma}\label{lemma_highd_volume_two_arm}
  	For any $d\ge 7$, there exist $\Cl\label{const_highd_volume_two_arm1},\Cl\label{const_highd_volume_two_arm2},\cl\label{const_highd_volume_two_arm3}>0$ such that for any $M\ge 1$,  $n\ge \Cref{const_highd_volume_two_arm1}$, $N\ge \Cref{const_highd_volume_two_arm2}n$ and $\psi\in \Psi(n,N)$, 
  	\begin{equation}\label{revise_lemma547}
  		\mathbb{P}\big(  \mathcal{V}_{v_1} (M) \land \mathcal{V}_{v_2} (M)  \ge \cref{const_highd_volume_two_arm3}  M^{4} ,   
		\mathsf{C}[\psi] \big) \gtrsim N^{4-2d}. 
  	\end{equation}
  \end{lemma}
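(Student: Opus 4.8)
The plan is to establish the lower bound by conditioning on the first cluster and reducing to a single-cluster problem plus a conditional estimate for the second cluster, in the spirit of the proof of Lemma~\ref{lemma_prob_Cpsi} but now invoking the high-dimensional volume machinery of \cite{cai2024incipient} (cf.\ Lemma~\ref{lemma_hit_capacity} and~(\ref{result_typical_volume})). Write $\psi=(v_1,v_2,w_1,w_2)$. By the restriction property (as in the proof of Lemma~\ref{lemma_point_to_boundary}),
\[
\mathbb{P}\big(\mathcal{V}_{v_1}(M)\land\mathcal{V}_{v_2}(M)\ge \cref{const_highd_volume_two_arm3}M^{4},\mathsf{C}[\psi]\big)=\mathbb{E}\Big[\mathbbm{1}_{\mathsf{G}}\cdot\mathbb{P}\big(\{\mathcal{V}_{v_2}(M)\ge \cref{const_highd_volume_two_arm3}M^{4}\}\cap\{v_2\xleftrightarrow{(\mathcal{C}_{v_1})}w_2\}\mid\mathcal{F}_{\mathcal{C}_{v_1}}\big)\Big],
\]
where $\mathsf{G}$ is the ``good event'' that $v_1\xleftrightarrow{}w_1$, that $\mathcal{V}_{v_1}(M)\ge \cref{const_highd_volume_two_arm3}M^{4}$, that $\mathcal{C}_{v_1}$ avoids $\mathbf{B}_{v_2}(\delta n)\cup\mathbf{B}_{w_2}(\delta N)$ for a suitable small $\delta$, and that $\mathrm{cap}(\mathcal{C}_{v_1}\cap B(2^{k}n))\le C(2^{k}n)^{4}$ for every $1\le 2^{k}n\le 10N$.

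Step~1: I would show $\mathbb{P}(\mathsf{G})\gtrsim N^{2-d}$. Since $\mathbb{P}(v_1\xleftrightarrow{}w_1)\asymp N^{2-d}$ by~(\ref{two-point1}), it suffices to bound, by a small constant, the conditional probability given $\{v_1\xleftrightarrow{}w_1\}$ of each of the complementary events: (a) $\mathcal{V}_{v_1}(M)<\cref{const_highd_volume_two_arm3}M^{4}$ --- this is the single-cluster volume lower bound, transferred from the conditioning $\bm0\xleftrightarrow{}y$ (Lemma~\ref{lemma_hit_capacity}, together with the translation/reflection symmetry of the two-point connection event, and quasi-multiplicativity to replace $\partial B(N)$ by $w_1$); (b) $\mathcal{C}_{v_1}\cap\mathbf{B}_{v_2}(\delta n)\ne\emptyset$ --- here I would use a tree-graph/BKR-type bound for $\mathbb{P}(v_1\xleftrightarrow{}w_1,v_1\xleftrightarrow{}\mathbf{B}_{v_2}(\delta n))$, decomposing over the meeting point $z$ and using \cite[Proposition~1.5]{cai2024quasi} together with $\sum_{|z-v_1|\le n}(|v_1-z|+1)^{2-d}\asymp n^2$ and $\theta_d(\delta n)\asymp(\delta n)^{-2}$, to get $\lesssim\delta^{d-4}N^{2-d}\le\epsilon N^{2-d}$; (c) $\mathcal{C}_{v_1}\cap\mathbf{B}_{w_2}(\delta N)\ne\emptyset$ --- this is the separation Lemma~\ref{newlemma36} with $(y_1,y_2)=(v_1,w_1)$ and the degenerate path at $z=w_2$, after first using \cite[Proposition~1.8]{cai2024quasi} (and the separation lemma) to push $w_1$ out to $\partial B(2N)$ so that the hypotheses of Lemma~\ref{newlemma36} are met; (d) the capacity bound fails at some scale --- this follows by a union bound over the $O(\log(N/n))$ dyadic scales from the decay of the capacity of large clusters (\cite{ding2020percolation,drewitz2023critical}) combined with quasi-multiplicativity.

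Step~2: on $\mathsf{G}$, I would show the conditional probability inside the expectation is $\gtrsim N^{2-d}$. By the isomorphism theorem it is comparable to $\mathbb{P}^{\mathcal{C}_{v_1}}$ of the event $\{v_2\xleftrightarrow{\ge0}w_2,\ \mathcal{V}_{v_2}(M)\ge \cref{const_highd_volume_two_arm3}M^{4}\}$, and since $\mathcal{C}_{v_1}$ is separated from $\{v_2,w_2\}$ we have $\widetilde{G}_{\mathcal{C}_{v_1}}(v_2,v_2)\asymp\widetilde{G}_{\mathcal{C}_{v_1}}(w_2,w_2)\asymp1$, so by~(\ref{29}) this is of order $\widetilde{\mathbb{P}}_{v_2}(\tau_{w_2}<\tau_{\mathcal{C}_{v_1}})$ times the conditional probability of the volume event in the $\mathcal{C}_{v_1}$-absorbed environment. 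For the hitting probability, I write $\widetilde{\mathbb{P}}_{v_2}(\tau_{w_2}<\tau_{\mathcal{C}_{v_1}})=\widetilde{\mathbb{P}}_{v_2}(\tau_{w_2}<\infty)-\widetilde{\mathbb{P}}_{v_2}(\tau_{\mathcal{C}_{v_1}}<\tau_{w_2}<\infty)$, bound the subtracted term by $\widetilde{\mathbb{P}}_{v_2}(\tau_{\mathcal{C}_{v_1}}<\infty)\cdot\max_{z\in\widetilde{\partial}\mathcal{C}_{v_1}}\widetilde{\mathbb{P}}_z(\tau_{w_2}<\infty)\lesssim\widetilde{\mathbb{P}}_{v_2}(\tau_{\mathcal{C}_{v_1}}<\infty)\,N^{2-d}$ (using the separation of $\mathcal{C}_{v_1}$ from $w_2$ and (\ref{310})), and finally show $\widetilde{\mathbb{P}}_{v_2}(\tau_{\mathcal{C}_{v_1}}<\infty)\le\epsilon_0$ for a small constant by splitting $\mathcal{C}_{v_1}$ into dyadic annuli around the origin and applying the capacity control from Step~1(d) together with the separation from $v_2$: this gives $\widetilde{\mathbb{P}}_{v_2}(\tau_{\mathcal{C}_{v_1}}<\infty)\lesssim\sum_{k\ge0}(2^{k}n)^{6-d}\lesssim n^{6-d}$, which is $\le\epsilon_0$ once $n$ is large since $d\ge7$. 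The conditional volume event has conditional probability $\ge1-\epsilon$ given $\{v_2\xleftrightarrow{\ge0}w_2\}$ in the $\mathcal{C}_{v_1}$-absorbed environment, again by the single-cluster volume lower bound, whose proof is insensitive to zero boundary conditions placed far from $\{v_2,w_2\}$ (Lemma~\ref{lemma_harnack}). Combining Steps~1 and~2 yields $\mathbb{P}(\cdots)\gtrsim N^{2-d}\cdot N^{2-d}=N^{4-2d}$.

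The main obstacle is Step~1, in particular items~(a) and~(d): one must transfer the single-cluster volume lower bound of \cite{cai2024incipient} and the capacity control from their ``canonical'' conditionings (the one-arm event, or $\bm0\xleftrightarrow{}y$) to the conditioning $\{v_1\xleftrightarrow{}w_1\}$ with $v_1$ and $w_1$ at scales comparable to $n$ and $N$ respectively, and to do so uniformly over all dyadic scales --- which uses the quasi-multiplicativity framework of \cite{cai2024quasi} but requires care with the error terms. By contrast, the inner-scale separation~(b), the outer separation~(c), and the hitting computation in Step~2 are comparatively routine given the tools already assembled in this paper. (The statement is understood for $M$ in the range where the single-cluster volume estimates apply, e.g.\ $M\le cN$.)
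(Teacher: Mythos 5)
Your high-level plan---condition on the first cluster and treat the second connection in the absorbed environment---matches the paper's general spirit, but the specific implementation differs in two places where there are genuine gaps, both of which the paper's own argument is built precisely to avoid.

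\textbf{Gap 1 (capacity control at all scales).} You impose, as part of the good event $\mathsf{G}$, the bound $\mathrm{cap}(\mathcal{C}_{v_1}\cap B(2^k n))\le C(2^k n)^4$ for \emph{every} dyadic scale $1\le 2^k n\le 10N$, with a single constant $C$. The tail available from the cited sources (and from the paper's own Lemma~\ref{lemma_C1} in the appendix) is only polynomial, of order $\lambda^{-1}$, so the union bound over the $O(\log(N/n))$ scales forces $C\gtrsim\log(N/n)$, and then the resulting estimate $\widetilde{\mathbb{P}}_{v_2}(\tau_{\mathcal{C}_{v_1}}<\infty)\lesssim Cn^{6-d}$ is not uniformly small when $N$ is much larger than $n$. (This particular issue is fixable with scale-dependent constants $C_k=2^{\alpha k}$, $0<\alpha<d-6$, but you do not note this.) The paper circumvents the entire almost-sure capacity control: it instead bounds the \emph{first moment} $\mathbb{E}[\mathbbm{1}_{v_1\leftrightarrow w_1}\cdot(\widetilde{G}(v_2,w_2)-\widetilde{G}_{\mathcal{C}_{v_1}}(v_2,w_2))]\lesssim n^{6-d}N^{4-2d}$ via Lemma~\ref{lemma_51_high_dimension}---a substantial technical lemma proved with the switching identity---combined with Lemma~\ref{lemma_different_two_point} to convert this into a lower bound on the conditional two-point function.

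\textbf{Gap 2 (volume of the second cluster in the absorbed environment).} Your Step~2 needs a version of the single-cluster volume lower bound (Lemma~\ref{lemma_hit_capacity}) to hold under the $\mathcal{C}_{v_1}$-absorbed measure $\mathbb{P}^{\mathcal{C}_{v_1}}$, and you justify this by saying the absorbing set is far from $\{v_2,w_2\}$, citing Lemma~\ref{lemma_harnack}. But the separation in your good event only keeps $\mathcal{C}_{v_1}$ at distance $\delta n$ from $v_2$, which is \emph{not} far in the scale of $M$ when $M\gtrsim n$ (and the statement allows $M$ up to $\asymp N$). Moreover, Lemma~\ref{lemma_harnack} controls point-to-set connection probabilities, not the volume estimate, so it cannot transfer the machinery behind Lemma~\ref{lemma_hit_capacity}. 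The paper avoids ever needing a conditioned-environment volume bound: it proves $\mathbb{P}(\mathcal{V}_{v_i}(M)\ge c_\dagger M^4,\mathsf{C}[\psi])\ge\tfrac{2}{3}\mathbb{P}(v_1\leftrightarrow w_1)\mathbb{P}(v_2\leftrightarrow w_2)$ for each $i$ separately (always applying the unconditioned Lemma~\ref{lemma_hit_capacity} to the cluster whose volume is being controlled and only estimating the \emph{other} connection in the absorbed environment), and then intersects the two events by inclusion--exclusion, using the BKR bound $\mathbb{P}(\mathsf{C}[\psi])\le\mathbb{P}(v_1\leftrightarrow w_1)\mathbb{P}(v_2\leftrightarrow w_2)$ to control the overlap.

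In summary, the shape of your argument is reasonable, but the two key technical steps---controlling $\widetilde{\mathbb{P}}_{v_2}(\tau_{\mathcal{C}_{v_1}}<\infty)$ and lower-bounding $\mathcal{V}_{v_2}(M)$ under zero boundary on $\mathcal{C}_{v_1}$---are the genuine difficulties, and your proposal treats both as routine when neither is covered by the tools you cite. The paper's Lemma~\ref{lemma_51_high_dimension} and its inclusion--exclusion strategy are designed exactly to sidestep both.
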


By Lemmas \ref{lemma_prob_Cpsi_small_n} and \ref{lemma_highd_volume_two_arm}, we already obtain the order of $\mathbb{P} ( \mathsf{C}[\psi] )$ for $d\ge 7$: 
\begin{equation}\label{order_highd_four_points}
	\mathbb{P} ( \mathsf{C}[\psi] ) \asymp N^{4-2d},\ \ \forall \psi\in \Psi(n,N). 
\end{equation}  
  To establish Lemma \ref{lemma_highd_volume_two_arm}, we need the following technical lemma, which shows that in high dimensions, imposing an absorbing boundary on a large cluster decreases the Green's function by only a small proportion.

\begin{lemma}\label{lemma_51_high_dimension}
	For any $d\ge 7$, there exists $\Cl\label{const_disjoint_highd1}>0$ such that for any $n\ge \Cref{const_disjoint_highd1}$, $N\ge \Cref{const_disjoint_highd1}n$, $\psi=(v_1,v_2,w_1,w_2)\in \Psi(n,N)$, 
	\begin{equation}\label{newadd.5.3}
	\mathbb{E}\Big[\mathbbm{1}_{v_1\xleftrightarrow{} w_1}\cdot \big(  \widetilde{G}(v_2,w_2) -  \widetilde{G}_{\mathcal{C}_{v_1}}(v_2,w_2) \big)  \Big]\lesssim n^{6-d}N^{4-2d}. 
	\end{equation} 
\end{lemma}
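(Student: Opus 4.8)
Here is my plan for proving Lemma \ref{lemma_51_high_dimension}.

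\textbf{Overall strategy.} The quantity $\widetilde{G}(v_2,w_2)-\widetilde{G}_{\mathcal{C}_{v_1}}(v_2,w_2)$ measures how much the cluster $\mathcal{C}_{v_1}$ (as absorbing boundary) reduces the Green's function between $v_2$ and $w_2$. By the formula (\ref{formula_two_green}) (or equivalently a last-exit / first-hitting decomposition of the Brownian motion from $v_2$ to $w_2$ via $\mathcal{C}_{v_1}$), this difference equals
\begin{equation*}
	\sum\nolimits_{z\in \widetilde{\partial}\mathcal{C}_{v_1}} \widetilde{\mathbb{P}}_{v_2}\big(\tau_{\mathcal{C}_{v_1}}=\tau_z<\infty\big)\cdot \widetilde{G}_{\mathcal{C}_{v_1}}(z,w_2) \le \widetilde{\mathbb{P}}_{v_2}\big(\tau_{\mathcal{C}_{v_1}}<\infty\big)\cdot \max_{z\in\widetilde{\partial}\mathcal{C}_{v_1}} \widetilde{G}(z,w_2),
\end{equation*}
but this crude bound loses too much; instead I would keep the decomposition and bound $\widetilde{G}_{\mathcal{C}_{v_1}}(z,w_2)\le \widetilde{G}(z,w_2)\lesssim (|z-w_2|+1)^{2-d}$, splitting the sum over $z\in\widetilde{\partial}\mathcal{C}_{v_1}$ according to $z\in \widetilde{B}(cN)$ or not. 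The contribution from $z\notin\widetilde{B}(cN)$ is handled by $|z-w_2|\asymp N$ together with an a priori bound on the capacity of $\mathcal{C}_{v_1}$ inside a large box; but the dominant piece comes from $z\in\widetilde{B}(cN)$, where $\widetilde{G}(z,w_2)\lesssim N^{2-d}$, so this piece is at most $N^{2-d}\cdot\widetilde{\mathbb{P}}_{v_2}(\tau_{\mathcal{C}_{v_1}\cap\widetilde{B}(cN)}<\infty)\lesssim N^{2-d}\cdot \big[\,\mathrm{dist}(v_2,\mathcal{C}_{v_1})+1\,\big]^{2-d}\mathrm{cap}\big(\mathcal{C}_{v_1}\cap\widetilde{B}(cN)\big)$ via (\ref{310})-type estimates and the strong Markov property.

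\textbf{Key steps.} First, rewrite the left-hand side of (\ref{newadd.5.3}) using the first-hitting decomposition and reduce it, up to constants, to controlling
\begin{equation*}
	N^{2-d}\cdot \mathbb{E}\Big[\mathbbm{1}_{v_1\xleftrightarrow{} w_1}\cdot \mathrm{cap}\big(\mathcal{C}_{v_1}\cap \widetilde{B}(cN)\big)\cdot \big(\,\mathrm{dist}(v_2,\mathcal{C}_{v_1})+1\,\big)^{2-d}\Big],
\end{equation*}
plus an error term from the far boundary of $\mathcal{C}_{v_1}$ which I expect to be of the same or smaller order. Second, bound $\mathrm{cap}(\mathcal{C}_{v_1}\cap\widetilde{B}(cN))$ deterministically by $\mathrm{cap}(\mathcal{C}_{v_1}\cap\widetilde{B}_{v_1}(cN))\lesssim \sum_{x\in\mathcal{C}_{v_1}\cap B_{v_1}(cN)} (\text{escape contributions})$, but more efficiently just use the crude bound $\mathrm{cap}(A)\lesssim \mathrm{vol}(A)$ for $A\subset\widetilde{B}(cN)$ when $d\ge 7$ is too lossy; instead I would write $\mathrm{cap}(\mathcal{C}_{v_1}\cap\widetilde{B}(cN))$ as a sum of single-point capacities $\sum_{x}\mathbbm{1}_{x\in\mathcal{C}_{v_1}}\,\mathrm{cap}(x)$ ignoring the (beneficial) sub-additivity slack, reducing everything to a sum over $x\in B(cN)$ of $\mathbb{P}(v_1\xleftrightarrow{}w_1, v_1\xleftrightarrow{} x)$ weighted by $(\,|v_2-x|+1\,)^{2-d}$ coming from $\mathrm{dist}(v_2,\mathcal{C}_{v_1})\le |v_2-x|$. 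Third, apply the rigidity lemma (Lemma \ref{lemma_rigidity}) or directly the BKR/tree-graph bound as in (\ref{add369}) to get $\mathbb{P}(v_1\xleftrightarrow{}w_1, v_1\xleftrightarrow{}x)\lesssim N^{d-2}\,\mathbb{P}(v_1\xleftrightarrow{}w_1)\,\mathbb{P}(v_1\xleftrightarrow{}x)\lesssim N^{2-d}\,(\,|v_1-x|+1\,)^{2-d}$ (using $|v_1-w_1|\asymp N$). Fourth, plug in and sum: the total becomes
\begin{equation*}
	N^{2-d}\cdot N^{2-d}\cdot N^{2-d}\sum_{x\in B(cN)} (\,|v_1-x|+1\,)^{2-d}(\,|v_2-x|+1\,)^{2-d}\lesssim N^{6-3d}\cdot (\,|v_1-v_2|+1\,)^{4-d}\lesssim n^{4-d} N^{6-3d},
\end{equation*}
using the convolution estimate (\ref{computation_2-d_2-d}) and $|v_1-v_2|\asymp n$; this is even stronger than claimed. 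To land exactly on the stated $n^{6-d}N^{4-2d}$ I may instead apply (\ref{computation_d-a}) with $a=d-2$ on the inner sum over $x\in B(cN)$, getting $\lesssim N^{2}$, hence total $\lesssim n^{4-d}\cdot N^{2}\cdot N^{4-2d}\cdot (\text{the }(|v_1-x|+1)^{2-d}\text{ factor summed} )$ — I would reconcile the two bookkeeping routes and keep whichever is cleaner, noting that $n^{6-d}N^{4-2d}$ is weaker and thus safe to state.

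\textbf{Main obstacle.} The delicate point is the $\mathrm{dist}(v_2,\mathcal{C}_{v_1})$ factor when $\mathcal{C}_{v_1}$ comes close to $v_2$ — i.e. controlling the regime where a boundary point $z$ of $\mathcal{C}_{v_1}$ is within $O(1)$ of $v_2$, so that $\widetilde{G}_{\mathcal{C}_{v_1}}(z,w_2)$ is not small but $\widetilde{G}_{\mathcal{C}_{v_1}}(v_2,v_2)\asymp\mathrm{dist}(v_2,\mathcal{C}_{v_1})$ is tiny. Here one must be careful that $\widetilde{\mathbb{P}}_{v_2}(\tau_{\mathcal{C}_{v_1}}=\tau_z<\infty)$ carries the compensating smallness: when $\mathrm{dist}(v_2,\mathcal{C}_{v_1})=\varepsilon$ is small, $\widetilde{\mathbb{P}}_{v_2}(\tau_{\mathcal{C}_{v_1}}<\infty)\asymp 1$ but $\widetilde{G}_{\mathcal{C}_{v_1}}(z,w_2)$ for $z$ near $v_2$ is at most $\widetilde G(z,w_2)\lesssim N^{2-d}$, so there is no blow-up — the subtlety is purely in organizing the $z$-sum so that each contribution is $\lesssim N^{2-d}\cdot\widetilde{\mathbb{P}}_{v_2}(\tau_{\mathcal{C}_{v_1}}=\tau_z<\infty)$ and then recognizing $\sum_z\widetilde{\mathbb{P}}_{v_2}(\tau_{\mathcal{C}_{v_1}}=\tau_z<\infty)\le 1$, after which the expectation of $\mathbbm{1}_{v_1\xleftrightarrow{}w_1}$ times this $\le \mathbb{P}(v_1\xleftrightarrow{}w_1)\asymp N^{2-d}$ gives only $N^{4-2d}$ with no $n$-gain. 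To actually extract the $n^{6-d}$ (rather than $n^0$) improvement one genuinely needs the quantitative decomposition through $x\in\mathcal{C}_{v_1}\cap B(cN)$ and the three-point estimate of Step 3, so the real work is Steps 2–4; I expect Step 3 (the two-arm three-point bound, citing (\ref{add369}) and Lemma \ref{lemma_roots}) to be where all the weight lies, and the far-boundary error term to require a separate but routine application of (\ref{310}) plus the capacity tail of a single large cluster.
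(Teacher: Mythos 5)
The opening decomposition via \eqref{formula_two_green} and the union bound over unit boxes containing the first-hitting point is the right starting move, and is indeed where the paper begins. But your Step 3 is a genuine gap, and the mistake is exactly at the point you yourself single out as carrying all the weight.

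The estimate you invoke, $\mathbb{P}(v_1\xleftrightarrow{} w_1,\, v_1\xleftrightarrow{} x)\lesssim N^{d-2}\,\mathbb{P}(v_1\xleftrightarrow{} w_1)\,\mathbb{P}(v_1\xleftrightarrow{} x)$, is not what \eqref{add369} says (that estimate concerns heterochromatic two-arm events and uses the rigidity lemma), and once you substitute $\mathbb{P}(v_1\xleftrightarrow{} w_1)\asymp N^{2-d}$ it collapses to the tautology $\mathbb{P}(v_1\xleftrightarrow{} w_1,\, v_1\xleftrightarrow{} x)\lesssim \mathbb{P}(v_1\xleftrightarrow{} x)\asymp(|v_1-x|+1)^{2-d}$. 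That bound throws away the entire cost $N^{2-d}$ of the arm to $w_1$. Plugging it into your Step 4 and summing with \eqref{computation_2-d_2-d} gives at best $N^{2-d}\,n^{4-d}$, which overshoots the target $n^{6-d}N^{4-2d}$ by the divergent factor $N^{d-2}/n^{2}$. The displayed computation with three factors of $N^{2-d}$ compounds this with an arithmetic slip, and the ``even stronger than claimed'' $n^{4-d}N^{6-3d}$ is an artifact of that slip, not a real gain. What you actually need is the high-dimensional tree-expansion bound
\begin{equation*}
	\mathbb{P}\big(v_1\xleftrightarrow{} w_1,\, v_1\xleftrightarrow{} x\big)\;\lesssim\; N^{2-d}\,(|v_1-x|+1)^{4-d}
\end{equation*}
(uniformly over $x\in B(cN)$), whose summed form is precisely Lemma \ref{lemme_technical_high_dimension}. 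With that bound your Step 4 becomes $N^{2-d}\cdot N^{2-d}\sum_x(|v_1-x|+1)^{4-d}(|v_2-x|+1)^{2-d}\lesssim N^{4-2d}\,n^{6-d}$ by \eqref{computation_4-d_2-d}, which does close. So the exponent gap is exactly the upgrade from $(|v_1-x|+1)^{2-d}$ to $N^{2-d}(|v_1-x|+1)^{4-d}$ --- a tree-graph input, not a BKR/rigidity input.

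Two further remarks on how this compares to the paper. First, the paper does not apply a global union bound over all of $\mathbb{Z}^d$; it splits $\mathcal{C}_{v_1}$ into the pieces near $v_1$, near $w_1$, and the remainder $\widehat{\mathcal{C}}_{v_1}$. The first two pieces are handled by Lemma \ref{lemme_technical_high_dimension} exactly as above, but the ``middle'' piece is treated with the switching identity (Lemma \ref{lemma_switching}) and a case analysis over the excursion components $\mathcal{P}^{(1)},\dots,\mathcal{P}^{(4)}$; this is the technically heaviest part of the paper's proof and is not the ``routine application of \eqref{310}'' your ``far-boundary error'' paragraph hopes for. Second, if you do obtain a pointwise tree-graph bound uniformly in $x$ (including the cases where $x$ is near $w_1$ or $w_2$, where the constants must be tracked), a global union bound would give an alternative proof avoiding the switching identity --- but this is a nontrivial lemma that neither you nor the paper states in that pointwise form, and establishing it is essentially the content that your proposal leaves blank.
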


 \begin{proof} 
	By (\ref{formula_two_green}), the left-hand side of (\ref{newadd.5.3}) can be written as 
	\begin{equation}\label{addnew5.18}
		\begin{split}
			&\mathbb{E}\Big[\mathbbm{1}_{v_1\xleftrightarrow{} w_1}\cdot  \sum\nolimits_{z\in \mathcal{C}_{v_1}}\widetilde{\mathbb{P}}_{v_2}\big( \tau_{\mathcal{C}_{v_1}}=\tau_z <\infty \big)\cdot \widetilde{G}(z,w_2) \Big]\\
			\lesssim  &\mathbb{I}(B_{{v}_1}(\tfrac{n}{10})) + \mathbb{I}(B_{{w}_1}(\tfrac{N}{10})) +\mathbb{J},
		\end{split}
	\end{equation}
	where $\mathbb{I}(\cdot )$ and $\mathbb{J}$ are defined by 
	\begin{equation}
		\mathbb{I}(A):= \sum\nolimits_{x\in A} \mathbb{P}\big(v_1\xleftrightarrow{} w_1, v_1\xleftrightarrow{}x \big)\cdot \widetilde{\mathbb{P}}_{v_2}\big(\tau_{x}<\infty\big)\cdot \widetilde{G}(x,w_2),  
	\end{equation}
	\begin{equation}
	\mathbb{J}:= \mathbb{E}\Big[\mathbbm{1}_{v_1\xleftrightarrow{} w_1}\cdot  \sum\nolimits_{z\in \widetilde{\partial} \widehat{\mathcal{C}}_{v_1}}\widetilde{\mathbb{P}}_{v_2}\big( \tau_{\widehat{\mathcal{C}}_{v_1}}=\tau_z <\infty \big)\cdot \widetilde{G}(z,w_2) \Big] 
	\end{equation}
	with $\widehat{\mathcal{C}}_{v_1}:=\mathcal{C}_{v_1}\setminus [B_{{v}_1}(\tfrac{n}{10})\cup B_{{w}_1}(\tfrac{N}{10})]$.

	For $\mathbb{I}(B_{{v}_1}(\tfrac{n}{10}))$, since $\mathrm{dist}(v_2,B_{{v}_1}(\tfrac{n}{10}))\gtrsim n$ and $\mathrm{dist}(v_2,B_{{w}_1}(\tfrac{N}{10}))\gtrsim N$, we have 
	\begin{equation}\label{addnew.5.21}
		\begin{split}
			\mathbb{I}(B_{{v}_1}(\tfrac{n}{10}))\lesssim & n^{2-d}N^{2-d} \sum\nolimits_{x\in B_{{v}_1}(\frac{n}{10})}  \mathbb{P}\big(v_1\xleftrightarrow{} w_1, v_1\xleftrightarrow{}x \big)\\
			\overset{(\text{Lemma}\ \ref{lemme_technical_high_dimension})}{\lesssim} & n^{6-d}N^{4-2d}.
		\end{split}
	\end{equation}
	Similarly, $\mathbb{I}(B_{{w}_1}(\tfrac{N}{10}))$ can be estimated as follows: 
	\begin{equation}\label{addnew.5.22}
		 \mathbb{I}(B_{{w}_1}(\tfrac{N}{10})) \lesssim  N^{4-2d} \sum\nolimits_{x\in B_{{v}_1}(\frac{N}{10})}  \mathbb{P}\big(v_1\xleftrightarrow{} w_1, v_1\xleftrightarrow{}x \big) 	\overset{(\text{Lemma}\ \ref{lemme_technical_high_dimension})}{\lesssim}   N^{10-3d}.
	\end{equation}

	 It remains to bound the quantity $\mathbb{J}$. By (\ref{formula_two_green}) and Lemma \ref{lemma25}, one has 
	 \begin{equation}\label{addnew5.23}
	 	\begin{split}
	 	\mathbb{J}_1:=	&\mathbb{E}\Big[\mathbbm{1}_{v_1\xleftrightarrow{} w_1, \widehat{\mathcal{L}}_{1/2}^{w_1}\ge [\ln(n)]^2}\cdot  \sum\nolimits_{z\in \widetilde{\partial}\widehat{\mathcal{C}}_{v_1}}\widetilde{\mathbb{P}}_{v_2}\big( \tau_{\widehat{\mathcal{C}}_{v_1}}=\tau_z <\infty \big)\cdot \widetilde{G}(z,w_2) \Big] \\
	 		\overset{}{=}&\mathbb{E}\Big[\mathbbm{1}_{v_1\xleftrightarrow{} w_1, \widehat{\mathcal{L}}_{1/2}^{w_1}\ge [\ln(n)]^2}\cdot \max\nolimits_{z\in \widetilde{\partial}\widehat{\mathcal{C}}_{v_1} }  \widetilde{G}(z,w_2)    \Big]\\
	 		\lesssim &N^{2-d}\cdot  \mathbb{P}\big(v_1\xleftrightarrow{} w_1, \widehat{\mathcal{L}}_{1/2}^{w_1}\ge [\ln(n)]^2  \big)  
	 		\overset{(\text{Lemma}\ \ref{lemma25})}{\lesssim }  e^{-c [\ln(N)]^2 }N^{4-2d}. 
	 	\end{split}
	 \end{equation}
	Let $\mathbb{J}_2:=\mathbb{J}-\mathbb{J}_1$. By the switching identity (see Lemma \ref{lemma_switching}) one has 
	 \begin{equation}\label{524}
	 	\begin{split}
	 		\mathbb{J}_2 \lesssim  & N^{2-d} \int_{0\le a,b\le [\ln(n)]^2} \widecheck{\mathbb{E}}_{v_1\leftrightarrow w_1,a,b}\Big[  \sum\nolimits_{z\in \widetilde{\partial}\widehat{\mathcal{C}}_{v_1}}\widetilde{\mathbb{P}}_{v_2}\big( \tau_{\widehat{\mathcal{C}}_{v_1}}=\tau_z <\infty \big)  \widetilde{G}(z,w_2)  \Big] \\
	 		&\ \ \ \ \ \ \ \ \ \ \ \ \ \ \ \ \ \ \ \ \ \ \ \  \cdot  \mathfrak{p}_{v_1\leftrightarrow w_1,a,b} \mathrm{d}a  \mathrm{d}b.
	 	\end{split}
	 \end{equation}
     Moreover, under the measure $\widecheck{\mathbb{E}}_{v_1\leftrightarrow w_1,a,b}$, the number of excursions in $\mathcal{P}^{(2)}$ that intersect $\partial B_{{v}_1}(\frac{n}{100})$ is a Poisson random variable with parameter
     \begin{equation}\label{new525}
     	a \cdot \mathbf{e}_{v_1,v_1}^{\{w_1\}}\big(\big\{ \widetilde{\eta}: \mathrm{ran}(\widetilde{\eta})\cap \partial B_{{v}_1}(\tfrac{n}{100})\neq \emptyset \big\} \big) \overset{(\text{Lemma}\ \ref{lemma_new216})}{\lesssim} a\cdot n^{2-d}. 
     \end{equation}
	 Therefore, for any $0\le a\le [\ln(n)]^2$, we have 
	 \begin{equation}\label{526}
	 	\begin{split}
	 		&\widecheck{\mathbb{E}}_{v_1\leftrightarrow w_1,a,b}\Big[\mathbbm{1}_{ (\cup \mathcal{P}^{(2)})\cap \partial B_{{v}_1}(\frac{n}{100})\neq  \emptyset} \cdot  \sum\nolimits_{z\in \widetilde{\partial}\widehat{\mathcal{C}}_{v_1}}\widetilde{\mathbb{P}}_{v_2}\big( \tau_{\widehat{\mathcal{C}}_{v_1}}=\tau_z <\infty \big)\cdot \widetilde{G}(z,w_2)  \Big]\\
	 		\overset{}{\le }& \widecheck{\mathbb{E}}_{v_1\leftrightarrow w_1,a,b}\Big[\mathbbm{1}_{ (\cup \mathcal{P}^{(2)})\cap \partial B_{{v}_1}(\frac{n}{100})\neq  \emptyset} \cdot \max\nolimits_{z\in \widetilde{\partial}\widehat{\mathcal{C}}_{v_1}} \widetilde{G}(z,w_2) \Big]\\
	 		\lesssim & N^{2-d}\cdot \widecheck{\mathbb{P}}_{v_1\leftrightarrow w_1,a,b}\big( (\cup \mathcal{P}^{(2)})\cap \partial B_{{v}_1}(\tfrac{n}{100})\neq  \emptyset \big) \overset{(\ref{new525})}{\lesssim } [\ln(n)]^2 n^{2-d}N^{2-d}. 
	 	\end{split}
	 \end{equation}
	 Similarly, we also have: for any $0\le b\le [\ln(n)]^2$, 
	 \begin{equation}\label{527}
	 	\begin{split}
	 		&\widecheck{\mathbb{E}}_{v_1\leftrightarrow w_1,a,b}\Big[\mathbbm{1}_{ (\cup \mathcal{P}^{(3)})\cap \partial B_{{w}_1}(\frac{N}{100})\neq  \emptyset} \cdot  \sum\nolimits_{z\in \widetilde{\partial}\widehat{\mathcal{C}}_{v_1}}\widetilde{\mathbb{P}}_{v_2}\big( \tau_{\widehat{\mathcal{C}}_{v_1}}=\tau_z <\infty \big)\cdot \widetilde{G}(z,w_2)  \Big]\\
	 		\lesssim &N^{2-d} \cdot \widecheck{\mathbb{P}}_{v_1\leftrightarrow w_1,a,b}\big( (\cup \mathcal{P}^{(3)})\cap \partial B_{{w}_1}(\tfrac{N}{100})\neq  \emptyset \big) 	\lesssim   [\ln(n)]^2N^{4-2d}.  
	 	\end{split}
	 \end{equation}
	 Let $\mathsf{F}:=\{\cup \mathcal{P}^{(2)}\subset \widetilde{B}_{{v}_1}(\frac{n}{100}) ,\cup \mathcal{P}^{(3)}\subset \widetilde{B}_{{w}_1}(\frac{N}{100})\}$. By (\ref{524}), (\ref{526}) and (\ref{527}), 
	 \begin{equation}\label{new.add.5.28}
	 	\begin{split}
	 			\mathbb{J}_2\lesssim & N^{2-d} \int_{0\le a,b\le [\ln(n)]^2} \widecheck{\mathbb{E}}_{v_1\leftrightarrow w_1,a,b}\Big[ \mathbbm{1}_{\mathsf{F}}   \sum_{z\in \widetilde{\partial}\widehat{\mathcal{C}}_{v_1}}\widetilde{\mathbb{P}}_{v_2}\big( \tau_{\widehat{\mathcal{C}}_{v_1}}=\tau_z <\infty \big)  \widetilde{G}(z,w_2)  \Big]\\
	 		&\ \ \ \ \ \ \ \ \ \ \ \ \ \ \ \ \ \ \ \ \ \ \ 	\cdot  \mathfrak{p}_{v_1\leftrightarrow w_1,a,b} \mathrm{d}a  \mathrm{d}b + [\ln(n)]^2 n^{2-d}N^{4-2d}.	 	\end{split}
	 \end{equation}
 On the event $\mathsf{F}$, when $\widetilde{S}_\cdot \sim \widetilde{\mathbb{P}}_{v_2}$ first hits $\widehat{\mathcal{C}}_{v_1}$ at $z$, one of the following occurs: 
 \begin{enumerate}

 	\item[(i)]  $z$ is connected to $[\cup (\mathcal{P}^{(2)}+\mathcal{P}^{(4)}) ]\cap B_{{v}_1}(\tfrac{n}{100})$ by $\cup \mathcal{P}^{(1)}$;

 	\item[(ii)] $z$ is connected to $[\cup (\mathcal{P}^{(3)}+\mathcal{P}^{(4)})]\cap B_{{w}_1}(\tfrac{N}{100})$ by $\cup \mathcal{P}^{(1)}$;

 	\item[(iii)] $z$ is connected to $(\cup \mathcal{P}^{(4)}) \setminus [ B_{{v}_1}(\tfrac{n}{100})\cup   B_{{w}_1}(\tfrac{N}{100})]$ by $\cup \mathcal{P}^{(1)}$.

 \end{enumerate}
For $\diamond\in \{\mathrm{i},\mathrm{ii},\mathrm{iii}\}$, we define $\mathsf{F}_\diamond$ as the sub-event of $\mathsf{F}$ restricted to Case $(\diamond)$, and let  
\begin{equation}
	\mathbb{U}_\diamond:=  \widecheck{\mathbb{E}}_{v_1\leftrightarrow w_1,a,b}\Big[ \mathbbm{1}_{\mathsf{F}_\diamond} \cdot    \sum\nolimits_{z\in \widetilde{\partial} \widehat{\mathcal{C}}_{v_1}}\widetilde{\mathbb{P}}_{v_2}\big( \tau_{\widehat{\mathcal{C}}_{v_1}}=\tau_z <\infty \big)  \widetilde{G}(z,w_2)  \Big]. 
\end{equation}

 For $\mathbb{U}_{\mathrm{i}}$, by the union bound, $\mathbb{U}_{\mathrm{i}}$ is at most proportional to 
 \begin{equation}\label{newadd.5.30}
 	\begin{split}
 	 &  \Big(  \widecheck{\mathbb{E}}_{v_1\leftrightarrow w_1,a,b}\big[ \mathrm{vol}\big(\cup \mathcal{P}^{(2)}\cap B_{{v}_1}(\tfrac{n}{100}) \big) \big] + \widecheck{\mathbb{E}}_{v_1\leftrightarrow w_1,a,b}\big[ \mathrm{vol}\big( \cup \mathcal{P}^{(4)}) \cap B_{{v}_1}(\tfrac{n}{100})  \big]  \Big)\\
 		&\cdot \max_{y\in B_{{v}_1}(\frac{n}{100})} \sum_{z \in  [B_{{v}_1}(\frac{n}{10})\cup B_{{w}_1}(\frac{N}{10})]^c}  \mathbb{P}\big(y\xleftrightarrow{} \widetilde{B}_z(1) \big)  \widetilde{\mathbb{P}}_{v_2}\big(\tau_{z}<\infty\big)  \widetilde{G}(z,w_2). 
 	\end{split}
 \end{equation}
 Since $\mathbf{e}_{v_1,v_1}\big( \big\{ \widetilde{\eta}: y\in \mathrm{ran}(\widetilde{\eta}) \big\}  \big)\lesssim |v_1-y|^{4-2d}$ for all $y\in [\mathbf{B}_{v_1}(1)]^c$, one has 
 \begin{equation}\label{addneww531}
 	\begin{split}
 		& \widecheck{\mathbb{E}}_{v_1\leftrightarrow w_1,a,b}\big[ \mathrm{vol}\big(\cup \mathcal{P}^{(2)}\cap B_{{v}_1}(\tfrac{n}{100}) \big) \big] 
 		 \lesssim   \sum_{y\in B_{{v}_1}(\frac{n}{100})} (|v_1-y|+1)^{4-2d} \overset{(\ref{computation_d-a})}{\lesssim} 1. 
 	\end{split}
 \end{equation} 
Meanwhile, under the measure $\mathbf{e}_{v_1,w_1}$, the probability of an excursion intersecting $y\in B_{{v}_1}(\frac{n}{100})$ is at most $(|v_1-y|+1)^{2-d}$. Therefore, by (\ref{property_poisson_odd_number}) we have 
\begin{equation}\label{newaddd_5.33}
	 \widecheck{\mathbb{E}}_{v_1\leftrightarrow w_1,a,b}\big[ \mathrm{vol}\big(\cup \mathcal{P}^{(4)}\cap B_{{v}_1}(\tfrac{n}{100}) \big) \big]  \lesssim  \sum\nolimits_{y \in B_{{v}_1}(\frac{n}{100})} (|v_1-y|+1)^{2-d} \overset{(\ref{computation_d-a})}{\lesssim} n^2.
\end{equation} 
For any $y\in B_{{v}_1}(\frac{n}{100})$ and $z\in [B_{{v}_1}(\frac{n}{10})\cup B_{{w}_1}(\frac{N}{10})]^c$, since $|y-z|\asymp |z-v_1|$, one has 
 \begin{equation}\label{newaddd_5.34}
 	\mathbb{P}\big(y\xleftrightarrow{} \widetilde{B}_z(1) \big)\asymp (|z-v_1|+1)^{2-d}.  \end{equation} 
 	Thus, plugging (\ref{addneww531}), (\ref{newaddd_5.33}) and (\ref{newaddd_5.34}) into (\ref{newadd.5.30}), we derive that  
 \begin{equation}\label{addnew.5.32}
 	\begin{split}
 		\mathbb{U}_{\mathrm{i}} \lesssim n^2 \sum\nolimits_{z \in \mathbb{Z}^d}  (|z-v_1|+1)^{2-d} (|v_2-z|+1)^{2-d}(|z-w_2|+1)^{2-d}. 
 	\end{split}
 \end{equation} 
Since $|v_2-w_2|\gtrsim N$, one has $ |v_2-z|\vee |z-w_2|\gtrsim N$. Restricted to $ |v_2-z|\gtrsim N$, the sum in (\ref{addnew.5.32}) is bounded from above by 
 \begin{equation}\label{addnew.5.33}
 	CN^{2-d}\sum\nolimits_{z\in \mathbb{Z}^d}(|z-v_1|+1)^{2-d} (|z-w_2|+1)^{2-d}\overset{(\ref{computation_2-d_2-d})}{\lesssim} N^{6-2d}.
 \end{equation}
Similarly, when restricted to $|z-w_2|\gtrsim N$, this sum is upper-bounded by 
\begin{equation}\label{addnew.5.34}
	CN^{2-d}\sum\nolimits_{z\in \mathbb{Z}^d}(|z-v_1|+1)^{2-d} (|v_2-z|+1)^{2-d}\lesssim  n^{4-d}N^{2-d}. 
\end{equation}
 Combining (\ref{addnew.5.32}), (\ref{addnew.5.33}) and (\ref{addnew.5.34}), we get 
 \begin{equation}\label{revise_newadd_568}
 	\mathbb{U}_{\mathrm{i}}\lesssim n^{2}\big(N^{6-2d} +n^{4-d}N^{2-d}  \big) \lesssim n^{6-d}N^{2-d}. 
 \end{equation}

 For $\mathbb{U}_{\mathrm{ii}}$, by the same argument as in the proof of (\ref{addnew.5.32}), one has 
 \begin{equation}\label{newadd.536}
 \begin{split}
  	\mathbb{U}_{\mathrm{ii}} \lesssim & N^2 \sum\nolimits_{z \in  \mathbb{Z}^d}  (|z-w_1|+1)^{2-d} (|v_2-z|+1)^{2-d}(|z-w_2|+1)^{2-d}\\
\overset{(|z-w_1|\gtrsim N)}{\lesssim} &	N^{4-d}\sum\nolimits_{z \in  \mathbb{Z}^d}  (|v_2-z|+1)^{2-d}(|z -w_2|+1)^{2-d} \overset{(\ref{computation_2-d_2-d})}{ \lesssim} N^{8-2d}. 
 \end{split}
 \end{equation}

 We now estimate $\mathbb{U}_{\mathrm{iii}}$. Recalling the construction of Case (iii), we have 
 \begin{equation}\label{newadd.5.37}
 	\begin{split}
 	 \mathbb{U}_{\mathrm{iii}}  
 		\lesssim   &\sum_{y\in [B_{{v}_1}(\frac{n}{100})\cup B_{{w}_1}(\frac{N}{100})]^c,z\in  [B_{{v}_1}(\frac{n}{10})\cup B_{{w}_1}(\frac{N}{10})]^c} \widecheck{\mathbb{P}}_{v_1\leftrightarrow w_1,a,b}\big( y\in \cup \mathcal{P}^{(4)} \big)\\
 		& \ \ \ \ \ \ \ \ \ \ \ \ \ \ \ \ \ \ \ \ \ \ \ \ \ \ \ \ \ \cdot \mathbb{P}\big(y \xleftrightarrow{ } \widetilde{B}_z(1) \big) \cdot  \widetilde{\mathbb{P}}_{v_2}\big(\tau_{z}<\infty\big)\cdot \widetilde{G}(z , w_2).
 		 	\end{split}
 \end{equation}
 In addition, by (\ref{property_poisson_odd_number}), the probability $\widecheck{\mathbb{P}}_{v_1\leftrightarrow w_1,a,b}\big( y\in \cup \mathcal{P}^{(4)} \big)$ is at most
\begin{equation}\label{newadd.5.38}
\begin{split}
 &	\max_{y'\in \partial B_{{v_1}}(\frac{n}{200})} \widetilde{\mathbb{P}}_{y'}\big(\tau_{y} < \tau_{\{v_1,w_1\}} \mid \tau_{\{v_1,w_1\}}=\tau_{w_1}<\infty \big)\\
		\lesssim  & N^{d-2}(|v_1-y|+1)^{2-d}(|y-w_1|+1)^{2-d} \overset{(|y-w_1|\gtrsim N)}{\lesssim } (|v_1-y|+1)^{2-d}.
\end{split}
\end{equation} 
	 By (\ref{newadd.5.37}), (\ref{newadd.5.38}) and $\mathbb{P}\big(y \xleftrightarrow{ } \widetilde{B}_z(1) \big)\asymp (|y-z|+1)^{2-d}$, we have  
	 \begin{equation}\label{revise_new_572}
	 	\begin{split}
	 		\mathbb{U}_{\mathrm{iii}}  \lesssim & \sum\nolimits_{y,z\in \mathbb{Z}^d} (|v_1-y|+1)^{2-d} (|y-z|+1)^{2-d}\\
	 		&\ \ \ \ \ \ \ \ \ \ \  \cdot (|v_2-z|+1)^{2-d}(|z-w_2|+1)^{2-d}\\
	 	\overset{(\ref{computation_2-d_2-d})}{	\lesssim }&\sum\nolimits_{z\in \mathbb{Z}^d} (|v_1-z|+1)^{4-d}(|v_2-z|+1)^{2-d}(|z-w_2|+1)^{2-d}. 
	 	\end{split}
	 \end{equation}
	 As in (\ref{addnew.5.33}), restricted to $|v_2-z|\gtrsim N$, this sum is bounded from above by 
	 \begin{equation}\label{newadd.5.40}
	 	CN^{2-d} \sum\nolimits_{z\in \mathbb{Z}^d} (|v_1-z|+1)^{4-d}(|z-w_2|+1)^{2-d}\lesssim N^{8-2d}.
	 \end{equation}
	 Here we used the elementary estimate that (see e.g. \cite[(4.10)]{cai2024high})
	 \begin{equation}\label{computation_4-d_2-d}
	 	\sum\nolimits_{x\in \mathbb{Z}^d} (|v-x|+1)^{4-d}(|x-w|+1)^{2-d} \lesssim (|v-w|+1)^{6-d}.
	 \end{equation}
	 This estimate also implies that restricted to $|z-w_2|\gtrsim N$, the sum is at most 
	 \begin{equation}\label{newadd.5.42}
	 	CN^{2-d}\sum\nolimits_{z \in \mathbb{Z}^d} (|v_1-z|+1)^{4-d}(|v_2-z|+1)^{2-d}\lesssim n^{6-d}N^{2-d}. 
	 \end{equation}
	 Combining (\ref{revise_new_572}), (\ref{newadd.5.40}) and (\ref{newadd.5.42}), we have  
	 	 \begin{equation}\label{newadd.543}
	 	\mathbb{U}_{\mathrm{iii}}  \lesssim  N^{8-2d}+ n^{6-d}N^{2-d}\lesssim n^{6-d}N^{2-d}. 
	 \end{equation}

	 Plugging (\ref{revise_newadd_568}), (\ref{newadd.536}), (\ref{newadd.543}) into (\ref{new.add.5.28}), we obtain
	 \begin{equation}\label{addnew5.44}
	 	\begin{split}
	 		\mathbb{J}_2\lesssim N^{2-d}\cdot \big( n^{6-d}N^{2-d}+ N^{8-2d}   \big) +  [\ln(n)]^2 n^{2-d}N^{4-2d}\lesssim n^{6-d}N^{4-2d}. 
	 	\end{split}
	 \end{equation}
	 By substituting (\ref{addnew.5.21}), (\ref{addnew.5.22}), (\ref{addnew5.23}),  (\ref{addnew5.44}) into (\ref{addnew5.18}), we derive (\ref{newadd.5.3}).  
	   \end{proof}

 Now we are ready to prove Lemma \ref{lemma_highd_volume_two_arm}.

 \begin{proof}[Proof of Lemma \ref{lemma_highd_volume_two_arm}]
	According to Lemma \ref{lemma_hit_capacity}, there exists $c_\dagger>0$ such that for any $\psi=(v_1,v_2,w_1,w_2)\in \Psi(n,N)$,
 \begin{equation}\label{addnew55}
 	\mathbb{P}\big(\mathcal{V}_{v_i} (M)\ge c_\dagger M^{4} , v_i\xleftrightarrow{} w_i \big) \ge \tfrac{3}{4}\cdot \mathbb{P}\big(v_i\xleftrightarrow{} w_i \big), \ \ \forall i\in \{1,2\}. 
 \end{equation}
 For any $i\in \{1,2\}$, we denote the event 
 \begin{equation}
 	\mathsf{F}_{i}:=  \big\{ \mathcal{V}_{v_i} (M)\ge c_\dagger M^{4},v_i \xleftrightarrow{} w_i  \big\}. 
 \end{equation}
 By the restriction property, one has 
 \begin{equation}\label{newadd57}
 	\begin{split}
 		  \mathbb{P}\big(  \mathcal{V}_{v_i} (M)   \ge c_\dagger M^{4} ,   
		\mathsf{C}[\psi] \big)  
	\ge  & \mathbb{E}\Big[ \mathbbm{1}_{\mathsf{F}_{i } }\cdot \mathbb{P}\big( v_{3-i}\xleftrightarrow{(\mathcal{C}_{v_i})} w_{3-i}  \big)  \Big]\\
	\overset{(\text{Lemma} \ \ref{lemma_different_two_point})}{\ge} &\mathbb{I}_1-C\cdot  \mathbb{I}_2 ,
 	\end{split}
 \end{equation}
 where $\mathbb{I}_1$ and $\mathbb{I}_2$ are defined by 
 \begin{equation}
 	\mathbb{I}_1 := \mathbb{P}\big(\mathsf{F}_{i } \big)\cdot  \mathbb{P}\big( v_{3-i}\xleftrightarrow{} w_{3-i}  \big), 
 \end{equation}
 \begin{equation}
 	\mathbb{I}_2:= \mathbb{E}\big[ \mathbbm{1}_{v_i \xleftrightarrow{} w_i}\cdot \big(  \widetilde{G}(v_{3-i},w_{3-i}) -  \widetilde{G}_{\mathcal{C}_{v_i}}(v_{3-i},w_{3-i}) \big) \big].
 \end{equation}

 For $\mathbb{I}_1$, by (\ref{addnew55}) we have 
\begin{equation}\label{newadd.511}
	\mathbb{I}_1 \ge  \tfrac{3}{4}  \cdot  \mathbb{P}\big( v_{1}\xleftrightarrow{} w_{1}  \big)\cdot  \mathbb{P}\big( v_{2}\xleftrightarrow{} w_{2}  \big). 
\end{equation} 
 For $\mathbb{I}_2$, it directly follows from Lemma \ref{lemma_51_high_dimension} that 
\begin{equation}\label{newadd.512}
	\mathbb{I}_2 \lesssim n^{6-d}N^{4-2d}\asymp n^{6-d}\cdot \mathbb{P}\big( v_{1}\xleftrightarrow{} w_{1}  \big)\cdot  \mathbb{P}\big( v_{2}\xleftrightarrow{} w_{2}  \big).  
\end{equation}
Plugging (\ref{newadd.511}) and (\ref{newadd.512}) into (\ref{newadd57}), we have 
\begin{equation}\label{ineq513}
\begin{split}
		   \mathbb{P}\big(  \mathcal{V}_{v_i} (M) \ge c_\dagger M^{4} ,   
		\mathsf{C}[\psi] \big)  
	   \ge   &   \big[ \tfrac{3}{4}  -C  n^{6-d}  \big]\cdot \mathbb{P}\big( v_{1}\xleftrightarrow{} w_{1}  \big)\cdot  \mathbb{P}\big( v_{2}\xleftrightarrow{} w_{2}  \big) \\
	  \overset{}{\ge} & \tfrac{2}{3}\cdot \mathbb{P}\big( v_{1}\xleftrightarrow{} w_{1}  \big)\cdot  \mathbb{P}\big( v_{2}\xleftrightarrow{} w_{2}  \big).
\end{split}
\end{equation}
Thus, applying the inclusion-exclusion principle, we obtain (\ref{revise_lemma547}) as follows:   
\begin{equation}\label{addnew513}
\begin{split}
		 & \mathbb{P}\big(  \mathcal{V}_{v_1} (M) \land \mathcal{V}_{v_2} (M)  \ge c_\dagger M^{4} ,   
		\mathsf{C}[\psi] \big) \\
		\ge & \sum\nolimits_{i\in \{1,2\}}  \mathbb{P}\big(  \mathcal{V}_{v_i} (M) \ge c_\dagger M^{4} ,   
		\mathsf{C}[\psi] \big) -  \mathbb{P}\big( \mathsf{C}[\psi] \big)\\
		 \overset{(\ref{new.45}),(\ref{ineq513})}{\ge} &  \tfrac{1}{3} \cdot \mathbb{P}\big( v_{1}\xleftrightarrow{} w_{1}  \big)\cdot  \mathbb{P}\big( v_{2}\xleftrightarrow{} w_{2}  \big)  \gtrsim N^{4-2d}. \qedhere
\end{split}
\end{equation}
\end{proof}

 {\color{ForestGreen}

}

By Lemmas \ref{lemma_prob_Cpsi_small_n}, \ref{lemma_prob_Cpsi} and \ref{lemma_highd_volume_two_arm}, we have verified the upper bound in (\ref{thm1_small_n_four_point}) and the lower bound in (\ref{thm1_large_n_four_point}). 
\qed

\vspace{0.2cm}



We record the analogue of Lemma \ref{lemma_separation} for $d\ge 7$ as follows.

\begin{lemma}

Under the same assumptions as in Lemma \ref{lemma_highd_avoid_box_twopoint}, one has 
 \begin{equation}
		\mathbb{P}\big( z_i^{\diamond} \xleftrightarrow{}  \widetilde{B}_x(\cref{const_highd_avoid_box_twopoint2} m_\diamond ) \cup \mathfrak{B}^{\diamond}_{m_\diamond,\cref{const_highd_avoid_box_twopoint2}}  \mid  \mathsf{C}[\psi] \big) \le  \epsilon. 
\end{equation}
\end{lemma}
\begin{proof}
	As in Lemma \ref{lemma_avoid_path_two_point}, we only consider the case $\diamond=\mathrm{out}$ and $i=1$. Using the BRK inequality and Lemma \ref{lemma_highd_avoid_box_twopoint}, we have 
	\begin{equation}
		\begin{split}
			& 	\mathbb{P}\big( w_1 \xleftrightarrow{}  B_x(\delta N ) \cup [B(\delta^{-1}N)]^c  ,   \mathsf{C}[\psi] \big) \\
				\lesssim & \mathbb{P}\big( w_1 \xleftrightarrow{}  B_x(\delta N ) \cup [B(\delta^{-1}N)]^c  ,   w_1\xleftrightarrow{} v_1 \big) \cdot   \mathbb{P}\big(   v_2 \xleftrightarrow{} w_2 \big)\lesssim o_{\delta}(1)N^{4-2d}. 
		\end{split}
	\end{equation}
	Combined with (\ref{order_highd_four_points}), it implies this lemma. 
\end{proof}

 \section{Estimates on heterochromatic two-arm probabilities}\label{section_estimate_coexistence}

 This section is mainly devoted to proving Theorem \ref{thm1}, which provides the exact order of the probability $\mathbb{P}(\mathsf{H}_{v'}^{v}(N))$. Unless specified otherwise, we assume that $N$ is sufficiently large and that $v,v'\in \widetilde{B}(cN)$. By the isomorphism theorem, it suffices to prove the analogue of Theorem \ref{thm1} for $\overline{\mathsf{H}}_{v'}^{v}(N):=\big\{v\xleftrightarrow{} \partial B(N) \Vert v'\xleftrightarrow{} \partial B(N) \big\}$: 
  	 \begin{align}
	 	 &\text{when}\ \chi=:|v-v'|\le C, \  \mathbb{P}\big(\overline{\mathsf{H}}_{v'}^{v}(N) \big)\asymp \chi^{\frac{3}{2}}N^{-[(\frac{d}{2}+1)\boxdot 4]};   \label{thm1_small_n_bar}\\
	&\text{when}\ \chi\ge C,\   \ \ \ \  \  \ \ \ \ \ \ \   \  \ 	\mathbb{P}\big(\overline{\mathsf{H}}_{v'}^{v}(N) \big)\asymp \chi^{(3-\frac{d}{2})\boxdot 0}N^{-[(\frac{d}{2}+1)\boxdot 4]}.   \label{thm1_large_n_bar} 
	\end{align}
	  We first present a quantitative relation between the probability of $\mathsf{C}[\psi]$ and that of a modified event where certain points are replaced by boxes and some restrictions are imposed on the volume and capacity of the cluster. We then establish (\ref{thm1_small_n_bar}) and (\ref{thm1_large_n_bar}) in Sections \ref{newsubsection4.1} and \ref{newsubsection4.2} respectively. In Section \ref{subsection_theorem_four_point_remaining}, we prove the remaining estimates in Theorem \ref{theorem_four_point}.

	 Before showing the result, we record some notations as follows.


\begin{itemize}


	\item  For $\psi=(v_1,v_2,w_1,w_2)\in \Psi(n,N)$, $\beta \in (0,1)$ and $\diamond \in \{\mathrm{in},\mathrm{out}\}$, we define 
 \begin{equation}\label{newfix489}
 	\mathsf{C}^{\diamond}[\psi,\beta]:= \big\{ B_{{z}_1^{\diamond}}(\beta m_{\diamond})\xleftrightarrow{} z_1^{-\diamond} \Vert B_{{z}_2^{\diamond}}(\beta m_{\diamond})\xleftrightarrow{} z_2^{-\diamond} \big\}. 
 \end{equation}

	\item  For any $\delta \in (0,1)$, $i\in \{1,2\}$ and $\diamond \in \{\mathrm{in},\mathrm{out}\}$, we denote
 \begin{equation}\label{newfix490}
 	 \mathsf{S}^{\diamond}_i[\psi,\delta]:=\big\{ 	z_i^{-\diamond}\xleftrightarrow{} B_{{z}_{3-i}^{\diamond}}(\delta m_{\diamond})\cup B_{{z}_{3-i}^{-\diamond}}( \delta m_{-\diamond}) \cup B(\delta n) \cup [B(\delta^{-1}N)]^c \big\}.  
 \end{equation}
   In addition, we denote $\mathsf{S} ^{\diamond}[\psi,\delta]:=\cup_{i\in \{1,2\}}\mathsf{S}_i^{\diamond}[\psi,\delta]$.


	\item  For any $z,z'  \in \widetilde{\mathbb{Z}}^d$ and $M\ge 1$, we define	\begin{equation}\label{def353}
		\mathcal{V}_{z,z'}(M):= \mathrm{vol}\big(\mathcal{C}_{z}\cap \widetilde{B}_{{z}'}(M)\big), \ \  \mathcal{T}_{z,z'}(M):= \mathrm{cap}\big(\mathcal{C}_{z}\cap \widetilde{B}_{{z}'}(M)\big). 
	\end{equation}
	Especially, when $z=z'$, we denote $\mathcal{V}_{z}:=\mathcal{V}_{z,z}$ and $\mathcal{T}_{z}:=\mathcal{T}_{z,z}$ for brevity.

	\item For $\diamond\in \{\mathrm{in},\mathrm{out}\}$, we define the truncated event $\widehat{\mathsf{C}}^{\diamond}$ separately in low- and high-dimensional cases as follows:
	 \begin{enumerate}

	 \item[(i)] When $3\le d \le 5$, for any $\gamma>0$, define 
	 \begin{equation*}
	 	\widehat{\mathsf{C}}^{\diamond}[\psi,\beta,\delta,\gamma]:=    \bigcap_{i\in \{1,2\}} \big\{ \mathcal{V}_{z_i^{-\diamond},z_i^{\diamond}}(\beta m_{\diamond})\ge \gamma m_{\diamond}^{ \frac{d}{2}+1 }, \mathcal{T}_{z_i^{-\diamond},z_i^{\diamond}}(\beta m_{\diamond})\ge \gamma m_{\diamond}^{ d-2 }  \big\}\cap \big(\mathsf{S}^{\diamond}[\psi,\delta] \big)^c;
	 \end{equation*}
	 In particular, for any $n\ge 1$, $m\ge Cn$, $N\ge Cm$, $\psi=(v_1,v_2,w_1,w_2)\in \Psi(n,m)$ and $\psi'=(v_1',v_2',w_1',w_2')\in \Psi(10m,N)$, when $\widehat{\mathsf{C}}^{\mathrm{out}}[\psi,\beta,\delta,\gamma]$ and $\widehat{\mathsf{C}}^{\mathrm{in}}[\psi',\beta,\delta,\gamma]$ both occur, since the four involved clusters all have capacity $cm^{d-2}$ inside $\widetilde{B}(100m)$, one may add two loops to $\widetilde{\mathcal{L}}_{1/2}$ that connect $\mathcal{C}_{v_1}$ to $\mathcal{C}_{v_1'}$, and $\mathcal{C}_{v_2}$ to $\mathcal{C}_{v_2'}$ respectively, while changing the probability by only a constant factor; in addition, after this modification, the event $\mathsf{C}^{\mathrm{out}}[(v_1,v_2,w_1',w_2'),\beta]\cap \{\mathcal{V}_{v_1}(100m)\land \mathcal{V}_{v_2}(100m)\ge \gamma m^{\frac{d}{2}+1}\}$ occurs. As shown in later sections, this observation plays a key role in proving the upper bounds in (\ref{thm1_large_n}) and (\ref{thm1_large_n_four_point}), and Theorem \ref{thm3_volume} in the low-dimensional case.

	 \item[(ii)]  When $d\ge 7$, define 
	 \begin{equation}
	 	\widehat{\mathsf{C}}^{\diamond}[\psi,\beta,\delta]:= \mathsf{C}^{\diamond}[\psi,\beta]\cap \big(\mathsf{S}^{\diamond}[\psi,\delta] \big)^c. 
	 \end{equation}
	 Here we do not impose the volume and capacity constraints as above because the proofs in high dimensions follow a different approach from the one described in Item (i).

	 \end{enumerate}

\end{itemize}

  \begin{lemma}\label{lemma_point_to_boundary}
 	 For any $d\ge 3$ with $d\neq 6$, there exist $\Cl\label{const_point_to_boundary1},\cl\label{const_point_to_boundary2},\cl\label{const_point_to_boundary3},\cl\label{const_point_to_boundary4}>0$ such that for any $n\ge \Cref{const_point_to_boundary1}$, $N\ge \Cref{const_point_to_boundary1}n$, $\psi\in \Psi(n,N)$ and $\diamond \in \{\mathrm{in},\mathrm{out}\}$, the following hold:
 	  	   	 \begin{align}
 	&\text{when}\ 3\le d\le 5,\   \mathbb{P}\big( \mathsf{C}^{\diamond}[\psi, \cref{const_point_to_boundary2}]  \big) \asymp  m^{d-2}_{\diamond} \cdot 	\mathbb{P}\big(  \mathsf{C}[\psi]  \big)  \asymp    \mathbb{P}\big( \widehat{\mathsf{C}}^{\diamond}[\psi, \cref{const_point_to_boundary2},\cref{const_point_to_boundary3},\cref{const_point_to_boundary4}]  \big); \label{good360}  \\ 
 	&\text{when}\ d\ge 7,  \ \ \ \ \ \   \mathbb{P}\big( \mathsf{C}^{\diamond}[\psi, \cref{const_point_to_boundary2}]  \big) \asymp  m_{\diamond}^{2d-8}\cdot 	\mathbb{P}\big(  \mathsf{C}[\psi]  \big)\asymp    \mathbb{P}\big( \widehat{\mathsf{C}}^{\diamond}[\psi, \cref{const_point_to_boundary2},\cref{const_point_to_boundary3}]  \big) . \label{good361}
 \end{align} 	 
 \end{lemma}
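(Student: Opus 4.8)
The plan is to prove both $\asymp$'s in (\ref{good360}) and (\ref{good361}) by reducing to single-cluster regularity, handling the two clusters of $\mathsf C^{\diamond}[\psi,\beta]$ one at a time, with the separation lemma supplying the uniformity needed to pass conditioning through. The single-cluster input is \cite[Proposition 1.9]{cai2024quasi}: for a point $z$ at scale $m$, a target set $A$ at a strictly larger scale, and any $D$ disjoint from a fixed dilate of $B_z(\beta m)$, one has $\mathbb P^{D}(\partial B_{z}(\beta m)\xleftrightarrow{\ge 0}A)\asymp m^{(\frac d2-1)\boxdot(d-4)}\,\mathbb P^{D}(z\xleftrightarrow{\ge 0}A)$, where the high-dimensional factor $m^{d-4}$ is obtained from (\ref{crossing_high}), (\ref{one_arm_high}) and \cite[Proposition 1.5]{cai2024quasi}. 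Since $z_i^{-\diamond}$ lies outside $B_{z_i^{\diamond}}(\beta m_\diamond)$ once $\beta$ is small, $\{B_{z_i^{\diamond}}(\beta m_\diamond)\xleftrightarrow{}z_i^{-\diamond}\}=\{\partial B_{z_i^{\diamond}}(\beta m_\diamond)\xleftrightarrow{}z_i^{-\diamond}\}$, so this comparison applies directly to the events in (\ref{newfix489}).

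The first step is a box analogue of the separation lemma, namely $\mathbb P(\mathsf C^{\diamond}[\psi,\beta]\cap(\mathsf S^{\diamond}[\psi,\delta])^{c})\asymp\mathbb P(\mathsf C^{\diamond}[\psi,\beta])$ for a suitable $\delta=\delta(\beta)>0$. Since each $\mathsf S^{\diamond}_i[\psi,\delta]$ (see (\ref{newfix490})) is increasing with respect to $\mathcal C_{z_i^{-\diamond}}$, the rigidity lemma (Lemma \ref{lemma_rigidity}) reduces this, exactly as in the proof of Lemma \ref{lemma_separation}, to a single-cluster statement: conditioned on $z_i^{-\diamond}\xleftrightarrow{}B_{z_i^{\diamond}}(\beta m_\diamond)$, the connecting cluster is unlikely to enter the $\delta$-neighbourhoods of the remaining base points or to leave the window $\widetilde B(\delta^{-1}N)\setminus\widetilde B(\delta n)$. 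This is a routine adaptation of Lemma \ref{newlemma36} with the point $y_1$ (equivalently $z$) there inflated to a box of size $\beta m_\diamond$, re-running the same switching-identity decomposition and exploration-martingale estimates (Lemma \ref{lemma_technical_lemma36}); the inflation only changes constants. Writing $\widehat{\mathsf C}^{\diamond}_{\mathrm{sep}}:=\mathsf C^{\diamond}[\psi,\beta]\cap(\mathsf S^{\diamond}[\psi,\delta])^{c}$, on this event the cluster $\mathcal C_{z_{3-i}^{-\diamond}}$, when imposed as zero boundary conditions, stays at distance $\gtrsim\delta m_\diamond$ from $B_{z_i^{\diamond}}(\beta m_\diamond)$, so by (\ref{315}), (\ref{addnew25}) and Harnack's inequality (Lemma \ref{lemma_harnack}) it affects the relevant single-cluster probabilities only by constant factors.

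Now for the first $\asymp$: starting from $\mathbb P(\mathsf C^{\diamond}[\psi,\beta])\asymp\mathbb P(\widehat{\mathsf C}^{\diamond}_{\mathrm{sep}})$, condition on $\mathcal C_{z_2^{-\diamond}}$ and use the single-cluster comparison above to shrink $B_{z_1^{\diamond}}(\beta m_\diamond)$ down to the point $z_1^{\diamond}$, then condition on $\mathcal C_{z_1^{-\diamond}}$ and shrink $B_{z_2^{\diamond}}(\beta m_\diamond)$ down to $z_2^{\diamond}$; each shrink costs a factor $\asymp m_\diamond^{(\frac d2-1)\boxdot(d-4)}$, and at the end Lemma \ref{lemma_separation} removes the remaining separation constraint, giving $\mathbb P(\mathsf C^{\diamond}[\psi,\beta])\asymp m_\diamond^{(d-2)\boxdot(2d-8)}\,\mathbb P(\mathsf C[\psi])$. (Throughout, the separation constraint on the not-yet-shrunk cluster is carried along and added or removed using the box version of Lemma \ref{newlemma36}, and $\mathbb P(\mathsf C[\psi])$ is replaced by and compared with $\mathbb P(\mathsf C[\psi'])$ for other $\psi'\in\Psi$ using Lemma \ref{lemma_roots}.) For the second $\asymp$, in high dimensions $\widehat{\mathsf C}^{\diamond}[\psi,\beta,\delta]$ is by definition $\widehat{\mathsf C}^{\diamond}_{\mathrm{sep}}$, so (\ref{good361}) is complete. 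In low dimensions we additionally need that, with probability bounded away from $0$ given $\mathsf C^{\diamond}[\psi,\beta]$, each $\mathcal C_{z_i^{-\diamond}}$ has $\mathcal V_{z_i^{-\diamond},z_i^{\diamond}}(\beta m_\diamond)\ge\gamma m_\diamond^{\frac d2+1}$ and $\mathcal T_{z_i^{-\diamond},z_i^{\diamond}}(\beta m_\diamond)\ge\gamma m_\diamond^{d-2}$ for a small $\gamma$; conditioning on $\mathcal C_{z_{3-i}^{-\diamond}}$ on $\widehat{\mathsf C}^{\diamond}_{\mathrm{sep}}$ turns this into a single-cluster statement, where the volume bound comes from the typical-volume estimate (\ref{result_typical_volume}) (i.e.\ \cite[Theorem 1.2]{cai2024incipient}) and the capacity bound from the capacity tail of \cite{ding2020percolation,drewitz2023critical}, both transported from the one-arm conditioning via quasi-multiplicativity \cite{cai2024quasi}. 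A union bound over $i\in\{1,2\}$ and over the separation, volume and capacity failures then yields $\mathbb P(\widehat{\mathsf C}^{\diamond}[\psi,\beta,\delta,\gamma])\ge\tfrac12\mathbb P(\mathsf C^{\diamond}[\psi,\beta])$, and the reverse inequality is immediate.

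The main obstacle is the box analogue of the separation estimate, i.e.\ extending Lemma \ref{newlemma36} (and hence the conclusion of the rigidity lemma) to the situation where one endpoint of a connection is a box of macroscopic radius rather than a point; this requires re-examining the switching-identity machinery behind Lemma \ref{lemma_rigidity} with a point inflated to a box. Once this is in place, the uniformity of all the single-cluster inputs over the conditioned cluster follows from separation, and the remaining bookkeeping of exponents — together with the dimension-dependent dichotomy and the exclusion of $d=6$ — is routine. A secondary technical point is checking that the volume and capacity lower bounds, which are typical conditioned on a single macroscopic cluster, survive the passage to the box-to-point conditioning used here; this is where quasi-multiplicativity and the estimates of \cite{cai2024incipient} enter.
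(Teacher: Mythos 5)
Your proposal takes a genuinely different route from the paper, and it has two gaps, one of which you acknowledge but underestimate, and one which you gloss over entirely.

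The more serious gap is the claim that, after conditioning on $\mathcal C_{z_{3-i}^{-\diamond}}$ on the separation event, the volume and capacity lower bounds in (\ref{good360}) can be obtained from the typical-volume estimate (\ref{result_typical_volume}) and the capacity tail of \cite{ding2020percolation,drewitz2023critical} ``transported via quasi-multiplicativity.'' The paper states explicitly, in the middle of this very proof, that the usual second-moment (for volume) and exploration-martingale (for capacity) arguments ``are no longer valid for the analysis of the involved clusters in two-arm events,'' and this is precisely why they introduce the auxiliary Brownian motion $\widetilde S^z$. Conditioning on the other cluster does reduce to a single-cluster statement with zero boundary conditions, but that single-cluster statement is a \emph{conditional} (on a box-to-point two-arm event, under a random far-away Dirichlet boundary) version of those volume/capacity estimates, and you have not shown it follows from quasi-multiplicativity — indeed, the paper's own Appendix~\ref{appendix3} has to rerun the exploration-martingale argument from scratch to get the high-dimensional analogue (Lemma~\ref{lemma_hit_capacity}), and there is no low-dimensional analogue in the paper at all. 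The device of introducing an independent Brownian motion $\widetilde S^z$ and forming $\mathbf X$ in (\ref{newfixadd4.63}) is the actual content of the lemma: the first moment $\mathbb E[\mathbf X\mid\mathcal F_{\mathcal C_{\{w_1,w_2\}}}]$ reads off the volume via the occupation-time formula (\ref{379}), and $\mathbb P(\mathbf X>0\mid\mathcal F_{\mathcal C_{\{w_1,w_2\}}})$ reads off the capacity via (\ref{379.5}), so both bounds come out simultaneously from a single Paley--Zygmund step (\ref{ineq376}), with the cross-terms controlled by the \emph{point-to-point} rigidity lemma (\ref{add369}) and Lemma~\ref{lemme_technical}. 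Your proposal replaces this by an unproved transport.

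The secondary gap is the ``box version'' of Lemma~\ref{newlemma36}/the rigidity lemma, which you flag as the main obstacle but call a ``routine adaptation.'' The switching identity (Lemma~\ref{lemma_switching}) and the decomposition into $\mathcal P^{(1)},\dots,\mathcal P^{(4)}$ are intrinsically two-\emph{point} objects: the excursion measures $\mathbf e_{v,v}^{\{w\}}$, $\mathbf e_{w,w}^{\{v\}}$, $\mathbf e_{v,w}$ all refer to the two marked points. Replacing $v$ by a box $B_{z_1^{\diamond}}(\beta m_\diamond)$ is not a re-run of the same proof; there is no analogous excursion decomposition of the cluster of a box. The paper never needs it because (\ref{newfixadd4.63}) and (\ref{good368}) sum over points $x\in B_{v_1}(\cref{const_point_to_boundary2}n)$ and then apply the rigidity lemma at each $(x,w_1)$ pair, so only the point-to-point version is used throughout, together with Lemma~\ref{lemma_roots} for uniformity over $x$. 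If you want to pursue your shrinking-one-box-at-a-time scheme, you would have to either actually prove the box switching identity, or find another way to get uniformity of the single-cluster comparison over the conditioned cluster; both are substantial extra work that the paper's route avoids.

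What your scheme would buy, if the two obstacles were overcome, is a proof that leans more on the quasi-multiplicativity machinery of \cite{cai2024quasi} and less on second-moment computations. But as written it does not close, and the auxiliary-Brownian-motion second moment argument in the paper is not an interchangeable technical choice — it is how one avoids both of the obstacles you run into.
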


 \begin{proof}

 	 We provide the proof details only for the case $\diamond=\mathrm{in}$, as the case $\diamond=\mathrm{out}$ is similar. Throughout this proof, we assume that $\psi=(v_1,v_2,w_1,w_2)$.

 Using the restriction property and Lemma \ref{lemma_onecluster_box_point}, we have 
  \begin{equation}\label{ineq364}
 	\begin{split}
 		 &\mathbb{P}\big( \mathsf{C}^{\mathrm{in}}[\psi, \cref{const_point_to_boundary2}]  \big) =  \mathbb{E}\Big[ \mathbbm{1}_{B_{{v}_1}(\cref{const_point_to_boundary2} n)\xleftrightarrow{} w_1 }   \mathbb{P}\big( B_{{v}_2}(\cref{const_point_to_boundary2} n)\xleftrightarrow{(\mathcal{C}_{w_1})} w_2  \mid \mathcal{F}_{\mathcal{C}_{w_1}} \big) \Big] \\ 
 		 \overset{(\text{Lemma}\ \ref{lemma_onecluster_box_point})}{\lesssim } & n^{-(\frac{d}{2}\boxdot 3)} \sum_{x \in \partial \mathcal{B}_{{v}_2}(\cref{const_point_to_boundary2} d n)} \mathbb{E} \Big[ \mathbbm{1}_{B_{{v}_1}(\cref{const_point_to_boundary2} n)\xleftrightarrow{} w_1 }   \mathbb{P}\big( x \xleftrightarrow{(\mathcal{C}_{w_1})} w_2  \mid \mathcal{F}_{\mathcal{C}_{w_1}} \big)  \Big]\\  
 		 =& n^{-(\frac{d}{2}\boxdot 3)} \sum_{x \in \partial \mathcal{B}_{{v}_2}(\cref{const_point_to_boundary2} d n)}   \mathbb{P}\big(B_{{v}_1}(\cref{const_point_to_boundary2} n)\xleftrightarrow{} w_1 \Vert x \xleftrightarrow{} w_2 \big). 
 		  	\end{split}
 \end{equation} 
Similarly, for any $x\in \partial \mathcal{B}_{{v}_2}(\cref{const_point_to_boundary2} d n)$, we have 
\begin{equation}\label{ineq364_newadd}
\begin{split}
	 & \mathbb{P}\big(B_{{v}_1}(\cref{const_point_to_boundary2} n)\xleftrightarrow{} w_1 \Vert x \xleftrightarrow{} w_2 \big) \\
	 = & \mathbb{E}\Big[ \mathbbm{1}_{x \xleftrightarrow{} w_2}   \mathbb{P}\big( B_{{v}_1}(\cref{const_point_to_boundary2} n)\xleftrightarrow{(\mathcal{C}_{w_2})} w_1 \big) \Big] \\
	 	 \overset{(\text{Lemma}\ \ref{lemma_onecluster_box_point})}{\lesssim } & n^{-(\frac{d}{2}\boxdot 3)}  \sum\nolimits_{y\in \partial \mathcal{B}_{{v}_1}(\cref{const_point_to_boundary2} d n) } \mathbb{P}\big(x \xleftrightarrow{} w_2\Vert y\xleftrightarrow{} w_1 \big)\\
	 	 	  \overset{(\text{Lemma}\ \ref{lemma_roots}),(\ref{order_highd_four_points})}{\asymp }& n^{ (\frac{d}{2}-1)\boxdot (d-4)} \cdot  \mathbb{P}\big( \mathsf{C}[\psi]  \big). 
\end{split}
\end{equation}
 Plugging (\ref{ineq364_newadd}) into (\ref{ineq364}), we obtain 
 \begin{equation}
 	\begin{split}
 		\mathbb{P}\big( \mathsf{C}^{\mathrm{in}}[\psi, \cref{const_point_to_boundary2}]  \big) \lesssim n^{(d-2)\boxdot (2d-8)}  \cdot \mathbb{P}\big( \mathsf{C}[\psi]  \big). 
 	\end{split}
 \end{equation} 
Thus, since $\widehat{\mathsf{C}}^{\mathrm{in}}$ is a sub-event of $\mathsf{C}^{\mathrm{in}}$, it remains to show that the following hold:
 	  	   	 \begin{align}
 	&\text{when}\ 3\le d\le 5,\       \mathbb{P}\big( \widehat{\mathsf{C}}^{\mathrm{in}}[\psi, \cref{const_point_to_boundary2},\cref{const_point_to_boundary3},\cref{const_point_to_boundary4}]  \big)\gtrsim  n^{d-2}  \cdot 	\mathbb{P}\big(  \mathsf{C}[\psi]  \big) ;  \label{good363}  \\ 
 	&\text{when}\ d\ge 7,  \ \ \ \ \ \  \  \mathbb{P}\big( \widehat{\mathsf{C}}^{\mathrm{in}}[\psi, \cref{const_point_to_boundary2},\cref{const_point_to_boundary3} \big)\gtrsim  n^{2d-8}  \cdot 	\mathbb{P}\big(  \mathsf{C}[\psi]  \big).  \label{good364}
 \end{align} 	
We achieve these estimates through the second moment method, as shown below.


 \textbf{When $3\le d\le 5$.} In previous works, the volume and capacity of the sign cluster can be estimated via the second moment method \cite{cai2024quasi} and the exploration martingale \cite{ding2020percolation} respectively. However, due to the correlation between clusters, these methods are no longer valid for the analysis of the involved clusters in two-arm events. To resolve this, we introduce an independent Brownian motion and calculate its hitting probability and expected occupation time in the target cluster, which in turn yields the required lower bounds on the volume and capacity. To be precise, we arbitrarily take a point $z\in \partial B_{{v}_1}(4\cref{const_point_to_boundary2} n)$ and let $\widetilde{S}^z_{\cdot}\sim \widetilde{\mathbb{P}}_z$ be a Brownian motion starting from $z$, independent of the loop soup. Recall the event $\widehat{\mathsf{C}}[\cdot]$ below (\ref{def353}). Consider the quantity
\begin{equation}\label{newfixadd4.63}
	\mathbf{X}:= \sum\nolimits_{x\in B_{{v}_1}(\cref{const_point_to_boundary2} n)}  \mathbbm{1}_{\{\widetilde{S}^z_{\cdot}\ \text{hits}\ x\}\cap \widehat{\mathsf{C}}[(x,v_2,w_1,w_2), \cref{const_lemma_separation3}]  }
\end{equation}
Using Lemmas \ref{lemma_roots} and \ref{lemma_separation}, one has 
\begin{equation}\label{367}
	\begin{split}
		\mathbb{E}\big[ \mathbf{X} \big] =   \sum_{x\in B_{{v}_1}(\cref{const_point_to_boundary2} n)}  \widetilde{\mathbb{P}}_z\big(\tau_{x} <\infty \big)\cdot \mathbb{P}\big( \widehat{\mathsf{C}}[(x,v_2,w_1,w_2), \cref{const_lemma_separation3}]  \big) 
		\asymp   n^2 \cdot  \mathbb{P}\big( \mathsf{C}[\psi]  \big) .
			\end{split}
\end{equation}
For the second moment of $\mathbf{X}$, we have 
\begin{equation}\label{good368}
	\begin{split}
		\mathbb{E}\big[\mathbf{X}^2 \big] \le & \sum_{x_1,x_2\in B_{{v}_1}(\cref{const_point_to_boundary2} n)} \widetilde{\mathbb{P}}_z\big(\tau_{x_1} <\infty,\tau_{x_2} <\infty \big) \\
		 & \ \ \ \ \ \ \ \ \ \     \cdot  \mathbb{P}\big( x_1\xleftrightarrow{} w_1, x_2\xleftrightarrow{} w_1 \Vert  v_2\xleftrightarrow{} w_2, \{ v_2\xleftrightarrow{} B_{v_1}(\cref{const_lemma_separation3}n ) \}^c \big). 
	\end{split}
\end{equation}
For the first probability on the right-hand side, by the strong Markov probability of $\widetilde{S}_{\cdot}^z$ and (\ref{310}), one has
 \begin{equation} \label{app1}
  	\begin{split}
 		\widetilde{\mathbb{P}}_z(\tau_{x_1}<\infty,\tau_{x_2}<\infty )\le &\sum\nolimits_{i\in \{1,2\}} \widetilde{\mathbb{P}}_z(\tau_{x_i}\le \tau_{x_{3-i}}<\infty)\\
 		\lesssim &\sum\nolimits_{i\in \{1,2\}} (|z-x_i|+1)^{2-d}  (|x_1-x_{2}|+1)^{2-d}\\
 		\lesssim & n^{2-d}  (|x_1-x_{2}|+1)^{2-d}. 
 	\end{split}
 \end{equation}
 For the second probability, using the restriction property, we have 
 \begin{equation}\label{add369}
 	\begin{split}
 		& \mathbb{P}\big( x_1\xleftrightarrow{} w_1, x_2\xleftrightarrow{} w_1 \Vert  v_2\xleftrightarrow{} w_2, \{ v_2\xleftrightarrow{} B_{v_1}(\cref{const_lemma_separation3}n ) \}^c \big) \\
 	=& \mathbb{E}\big[\mathbbm{1}_{v_2\xleftrightarrow{} w_2, \{ v_2\xleftrightarrow{} B_{v_1}(\cref{const_lemma_separation3}n ) \}^c} \cdot \mathbb{P}\big(x_1\xleftrightarrow{(\mathcal{C}_{v_2})} w_1, x_2\xleftrightarrow{(\mathcal{C}_{v_2})} w_1  \big)  \big]\\
 	\overset{(\text{Lemma}\ \ref{lemma_cite_three_point})}{\lesssim } & (|x_1-x_{2}|+1)^{-\frac{d}{2}+1} \cdot \mathbb{P}(\mathsf{C}[(x_1,v_2,w_1,w_2)])\\ 
 	 	\overset{(\text{Lemma}\ \ref{lemma_roots})}{\lesssim } & (|x_1-x_{2}|+1)^{-\frac{d}{2}+1} \cdot \mathbb{P}(\mathsf{C}[\psi]). 
 	\end{split}
 \end{equation} 
Plugging (\ref{app1}) and (\ref{add369}) into (\ref{good368}), we have  
\begin{equation}\label{371}
\begin{split}
		\mathbb{E}\big[\mathbf{X}^2 \big] \lesssim  & n^{2-d}\cdot \mathbb{P}\big(\mathsf{C}[\psi]  \big) \sum\nolimits_{x_1,x_2\in B_{{v}_1}(\cref{const_point_to_boundary2} n)} (|x_1-x_{2}|+1)^{-\frac{3d}{2}+3} \\
	 \overset{}{ \lesssim}  & n^{5-\frac{d}{2}}\cdot \mathbb{P}\big(\mathsf{C}[\psi]  \big). 
\end{split}
\end{equation}
 where the last inequality follows from the following basic fact (see e.g. \cite[(4.5)]{cai2024high}): 
  \begin{equation}\label{computation_d-a}
  	\max\nolimits_{z\in \mathbb{Z}^d} \sum\nolimits_{z'\in B(M)} (|z-z'|+1)^{-a}\lesssim M^{(d-a)\vee 0}, \ \ \forall a\neq d.
  	  \end{equation}
By applying the Paley-Zygmund inequality and using (\ref{367}) and (\ref{371}), we get 
 \begin{equation}\label{372}
  	\mathbb{P}\big(\mathbf{X}>0 \big)\ge \frac{\big( 	\mathbb{E}\big[ \mathbf{X} \big] \big)^2  }{	\mathbb{E}\big[\mathbf{X}^2 \big] } \gtrsim n^{\frac{d}{2}-1}\cdot  \mathbb{P}\big( \mathsf{C}[\psi]  \big). 
  \end{equation}
   Meanwhile, since $\{\mathbf{X}>0\}\subset  \mathring{\mathsf{C}}[\psi] := \big\{ B_{{v}_1}(\cref{const_point_to_boundary2} n) \xleftrightarrow{(\mathcal{C}_{w_2})} w_1 \Vert v_2  \xleftrightarrow{} w_2\big\}\cap \big(  \mathsf{S}^{\mathrm{in}}[\psi,\cref{const_lemma_separation3}] \big)^c$, it follows from (\ref{ineq364_newadd}) that  
   \begin{equation}\label{new372}
  \mathbb{P}\big(\mathbf{X}>0 \big)\le	\mathbb{P}\big( \mathring{\mathsf{C}}[\psi] \big)  \overset{(\ref{ineq364_newadd})}{\lesssim} n^{\frac{d}{2}-1} \cdot  \mathbb{P}\big( \mathsf{C}[\psi]  \big),    
  \end{equation}
  Combining (\ref{372}) and (\ref{new372}), we obtain that 
    \begin{equation}\label{newnew374}
  	\mathbb{P}\big( \mathring{\mathsf{C}}[\psi] \big)   \asymp n^{\frac{d}{2}-1} \cdot  \mathbb{P}\big( \mathsf{C}[\psi]  \big), 
  \end{equation}
  which in turn, together with (\ref{367}) and (\ref{371}), yields that 
  \begin{equation}\label{add373}
  	\mathbb{E}\big[ \mathbf{X} \mid \mathring{\mathsf{C}}[\psi]  \big] \asymp   n^{3-\frac{d}{2} } \ \  \text{and}  \ \ 	\mathbb{E}\big[ \mathbf{X}^2 \mid \mathring{\mathsf{C}}[\psi]  \big]\lesssim n^{6-d}. 
  \end{equation} 
  By the Paley-Zygmund inequality and (\ref{add373}), we derive that for some $c_\dagger,c_\spadesuit >0$, 
 \begin{equation}\label{ineq376}
 	\mathbb{P}\big(\mathbf{X}\ge c_\dagger n^{3-\frac{d}{2}} \mid \mathring{\mathsf{C}}[\psi] \big) \ge c_\spadesuit. 
 \end{equation} 
 Note that given $\mathcal{C}_{\{ w_1,w_2\} }$, the quantity $\mathbf{X}$ depends only on $\widetilde{S}^z_{\cdot}$. Hence, the event $\mathsf{G}:= \big\{ \mathbb{P} \big(\mathbf{X}\ge c_\dagger n^{3-\frac{d}{2}} \mid  \mathcal{F}_{ \mathcal{C}_{\{ w_1,w_2\} } } \big)\ge \frac{1}{2}c_\spadesuit  \big\} $ is measurable with respect to the cluster $\mathcal{C}_{\{ w_1,w_2\} }$. Therefore, it follows from (\ref{ineq376}) that  
  \begin{equation}
 \begin{split}
 	 	c_\spadesuit\le & \mathbb{P}\big(\mathsf{G}  \mid \mathring{\mathsf{C}}[\psi] \big)+  \mathbb{P}\big(\mathbf{X}\ge c_\dagger n^{3-\frac{d}{2}}, \mathsf{G}^c \mid \mathring{\mathsf{C}}[\psi] \big) \\
 	 	\le &\mathbb{P}\big(\mathsf{G}  \mid \mathring{\mathsf{C}}[\psi] \big) + \frac{\mathbb{E}\big[\mathbbm{1}_{ \mathsf{G}^c,\mathring{\mathsf{C}}[\psi] } \cdot \mathbb{P} \big(\mathbf{X}\ge c_\dagger n^{3-\frac{d}{2}} \mid  \mathcal{F}_{ \mathcal{C}_{\{ w_1,w_2\} } } \big)  \big] }{\mathbb{P}\big(\mathring{\mathsf{C}}[\psi] \big) }\\
 	 	\le & \mathbb{P}\big(\mathsf{G}  \mid \mathring{\mathsf{C}}[\psi] \big) + \tfrac{1}{2}c_\spadesuit,
 	 	 \end{split}
 \end{equation}
which further implies  
 \begin{equation}\label{revise_new_4.112}
 	\mathbb{P}\big(\mathsf{G}  \mid \mathring{\mathsf{C}}[\psi] \big) \ge \tfrac{1}{2}c_\spadesuit. 
 \end{equation}

 We claim that for some $c_{ \clubsuit}>0$, 
 \begin{equation}\label{newnew378}
 	\mathsf{G} \subset \overline{\mathsf{G}} := \big\{\mathcal{V}_{w_1,v_1}(\cref{const_point_to_boundary2} n)\ge c_{ \clubsuit} n^{\frac{d}{2}+1}  \big\}\cap \big\{\mathcal{T}_{w_1,v_1}(\cref{const_point_to_boundary2} n)\ge c_{ \clubsuit}  n^{d-2} \big\}. 
 \end{equation}
  To see this, since $\mathring{\mathsf{C}}[\psi]$ is independent from $\widetilde{S}^{z}_{\cdot}$, one has 
  \begin{equation}\label{379}
  	\begin{split}
  		 \mathbb{E} \big(\mathbf{X}  \mid  \mathcal{F}_{ \mathcal{C}_{\{ w_1,w_2\} } } \big)
  		=  \sum\nolimits_{x\in  \mathcal{C}_{w_1}\cap B_{{v}_1}(\cref{const_point_to_boundary2} n)} \widetilde{\mathbb{P}}_{z}\big(\tau_x<\infty  \big)  
  		 \asymp   n^{2-d}\cdot \mathcal{V}_{w_1,v_1}(\cref{const_point_to_boundary2} n),
  	\end{split}
  \end{equation}
  \begin{equation}\label{379.5}
  	\mathbb{P} \big(\mathbf{X}>0 \mid  \mathcal{F}_{ \mathcal{C}_{\{ w_1,w_2\} } } \big) \asymp n^{2-d}\cdot \mathcal{T}_{w_1,v_1}(\cref{const_point_to_boundary2} n). 
  \end{equation}
  Meanwhile, on the event $\mathsf{G}$, we have 
 \begin{equation}\label{380}
 \begin{split}
 	 \mathbb{E} \big(\mathbf{X}  \mid  \mathcal{F}_{ \mathcal{C}_{\{ w_1,w_2\} } } \big)  
 		 \ge   c_\dagger n^{3-\frac{d}{2}}\cdot \mathbb{P} \big(\mathbf{X}\ge c_\dagger n^{3-\frac{d}{2}} \mid  \mathcal{F}_{ \mathcal{C}_{\{ w_1,w_2\} } } \big)\ge \tfrac{1}{2} c_\spadesuit c_\dagger n^{3-\frac{d}{2}}, 
 		  \end{split}
 \end{equation}
 \begin{equation}\label{380.5}
 	\begin{split}
 		\mathbb{P} \big(\mathbf{X}>0 \mid  \mathcal{F}_{ \mathcal{C}_{\{ w_1,w_2\} } } \big)  \ge \mathbb{P} \big(\mathbf{X}\ge c_\dagger n^{3-\frac{d}{2}} \mid  \mathcal{F}_{ \mathcal{C}_{\{ w_1,w_2\} } } \big)\ge \tfrac{1}{2} c_\spadesuit. 
 	\end{split}
 \end{equation}
 Combining (\ref{379}) with (\ref{380}), and (\ref{379.5}) with (\ref{380.5}), we obtain (\ref{newnew378}).

  By (\ref{newnew374}), (\ref{revise_new_4.112}) and (\ref{newnew378}), we have 
  \begin{equation}\label{ineq383}
  	\begin{split}
  		\mathbb{P}\big( \big\{ \overline{\mathsf{G}}  \Vert v_2 \xleftrightarrow{} w_2  \big\}\cap  \big(  \mathsf{S}^{\mathrm{in}}[\psi,\cref{const_lemma_separation3}] \big)^c  \big) \gtrsim n^{\frac{d}{2}-1}\cdot  \mathbb{P}\big( \mathsf{C}[\psi]  \big). 
  	\end{split}
  \end{equation}
  We repeat the second moment method used above for the quantity 
  \begin{equation}
  	\begin{split}
  		\mathbf{X}':= \sum\nolimits_{x\in B_{{v}_2}(\cref{const_point_to_boundary2} n)}  \mathbbm{1}_{\{\widetilde{S}^{z'}_{\cdot}\ \text{hits}\ x\}\cap  \{ \overline{\mathsf{G}}  \Vert x \xleftrightarrow{} w_2   \}\cap   (  \mathsf{S}^{\mathrm{in}}[\psi,\cref{const_lemma_separation3}]  )^c}, 
  	\end{split}
  \end{equation}
  where $z'\in B_{{v}_2}(4\cref{const_point_to_boundary2} n)$, and $\widetilde{S}^{z'}_{\cdot}\sim \widetilde{\mathbb{P}}_{z'}$ is an independent Brownian motion starting from $z'$. This yields that for some constants $\cref{const_point_to_boundary2},\cref{const_point_to_boundary3},\cref{const_point_to_boundary4}>0$, 
  \begin{equation}\label{newnew385}
  		\mathbb{P}\big( \widehat{\mathsf{C}}^{\mathrm{in}}[\psi, \cref{const_point_to_boundary2},\cref{const_point_to_boundary3},\cref{const_point_to_boundary4}]  \big) \gtrsim  n^{\frac{d}{2}-1}\cdot \mathbb{P}\big( \big\{ \overline{\mathsf{G}}  \Vert v_2 \xleftrightarrow{} w_2  \big\}\cap  \big(  \mathsf{S}^{\mathrm{in}}[\psi,\cref{const_lemma_separation3}] \big)^c  \big). 
  \end{equation}
The only difference from the previous case is the need for the following inequality, which plays the role of Lemma \ref{lemma_roots} during the proof: 
\begin{equation}
\mathbb{P}\big( \big\{ \overline{\mathsf{G}}  \Vert v \xleftrightarrow{} w_2  \big\}\cap  \big(  \mathsf{S}^{\mathrm{in}}[\psi,\cref{const_lemma_separation3}] \big)^c  \big) \asymp 	\mathbb{P}\big( \big\{ \overline{\mathsf{G}}  \Vert v' \xleftrightarrow{} w_2  \big\}\cap  \big(  \mathsf{S}^{\mathrm{in}}[\psi,\cref{const_lemma_separation3}] \big)^c  \big)  
\end{equation}
for all $v,v'\in \widetilde{B}_{{v}_2}(\cref{const_point_to_boundary2} n)$. It can be derived as follows: 
\begin{equation}\label{3.18_new_6.37}
	\begin{split}
		& \mathbb{P}\big( \big\{ \overline{\mathsf{G}}  \Vert v\xleftrightarrow{} w_2  \big\}\cap  \big(  \mathsf{S}^{\mathrm{in}}[\psi,\cref{const_lemma_separation3}] \big)^c  \big) \\
	\overset{(\ref{ineq383})}{	\gtrsim}  & n^{\frac{d}{2}-1}\cdot  \mathbb{P}\big( \mathsf{C}[(v,v_2,w_1,w_2)]  \big) \\
	 \overset{(\ref{3.18_ineq_lemma4.1})}{\asymp } &  n^{\frac{d}{2}-1}\cdot  \mathbb{P}\big( \mathsf{C}[(v',v_2,w_1,w_2)]  \big) 
	\overset{(\ref{new372})}{ \gtrsim }  \mathbb{P}\big( \big\{ \overline{\mathsf{G}}   \Vert v' \xleftrightarrow{} w_2  \big\}\cap  \big(  \mathsf{S}^{\mathrm{in}}[\psi,\cref{const_lemma_separation3}] \big)^c  \big). 
	\end{split}
\end{equation}
By plugging (\ref{ineq383}) into (\ref{newnew385}), we obtain (\ref{good363}).

     \textbf{When $d\ge 7$.} Compared to the proof in low dimensions, this one follows the same spirit and is more straightforward. We consider the quantity 
    \begin{equation}
     \mathbf{Y}:=	\sum\nolimits_{x\in B_{{v}_1}(\cref{const_point_to_boundary2} n)}  \mathbbm{1}_{ \widehat{\mathsf{C}}[(x,v_2,w_1,w_2)}. 
    \end{equation}
    For the first moment of $\mathbf{Y}$, as in (\ref{367}), one has 
    \begin{equation}\label{good386}
    	\mathbb{E}\big[\mathbf{Y} \big] =   \sum\nolimits_{x\in B_{{v}_1}(\cref{const_point_to_boundary2} n)}   \mathbb{P}\big( \widehat{\mathsf{C}}[(x,v_2,w_1,w_2), \cref{const_lemma_separation3}]  \big)  \asymp   n^d \cdot  \mathbb{P}\big( \mathsf{C}[\psi]  \big).
    \end{equation}
    For the second moment, by the BKR inequality, we have 
    \begin{equation}\label{good387}
    	\begin{split}
    			\mathbb{E}\big[\mathbf{Y}^2 \big]\le & \sum\nolimits_{x_1,x_2\in B_{{v}_1}(\cref{const_point_to_boundary2} n)} \mathbb{P}\big(  x_1\xleftrightarrow{} w_1,x_2\xleftrightarrow{} w_1   \big) \cdot  \mathbb{P}\big( v_2\xleftrightarrow{} w_2 \big)  \\
    			\overset{(\ref{two-point1}),(\text{Lemma}\ \ref{lemme_technical_high_dimension})}{\lesssim } &   |B(\cref{const_point_to_boundary2} n)|\cdot n^4N^{2-d}\cdot N^{2-d} \overset{(\ref{order_highd_four_points})}{\asymp}  n^{d+4}\cdot \mathbb{P}\big(\mathsf{C}[\psi]  \big). 
    	\end{split}
    \end{equation}
 Recall the event $\mathring{\mathsf{C}}[\psi]$ below (\ref{372}). By the Paley-Zygmund inequality, (\ref{good386}) and (\ref{good387}), we obtain that 
    \begin{equation}\label{good388}
    	\mathbb{P}\big( \mathring{\mathsf{C}}[\psi] \big)= \mathbb{P}\big(\mathbf{Y}>0 \big)\gtrsim n^{d-4}\cdot \mathbb{P}\big(\mathsf{C}[\psi]  \big). 
    \end{equation}
   As in (\ref{newnew385}), repeating this second moment method yields that 
   \begin{equation}\label{good389}
   	\mathbb{P}\big( \widehat{\mathsf{C}}^{\mathrm{in}}[\psi, \cref{const_point_to_boundary2},\cref{const_point_to_boundary3}]  \big) \gtrsim  n^{d-4}\cdot  \mathbb{P}\big( \mathring{\mathsf{C}}[\psi] \big). 
   \end{equation}
    Combining (\ref{good388}) and (\ref{good389}), we get (\ref{good364}), thereby completing the proof.  
 \end{proof}

 \subsection{Proof of (\ref{thm1_small_n_bar})}\label{newsubsection4.1}
 Without loss of generality, we assume that $v\neq v'\in \widetilde{B}(\Cref{const_prob_Cpsi_small_n1})$ (recall the constant $\Cref{const_prob_Cpsi_small_n1}$ from Lemma \ref{lemma_prob_Cpsi_small_n}).

 For the upper bound in (\ref{thm1_small_n_bar}), let $\Omega_1$ (resp. $\Omega_2$) denote a collection of $O(1)$-many boxes of the form $B_x(\cref{const_point_to_boundary2}N)$ whose union contains $\partial B(\frac{N}{4})$ (resp. $\partial B(\frac{N}{2})$). By the union bound, one has (recalling $\mathsf{C}^{\mathrm{out}}[\cdot]$ from (\ref{newfix489})), 
  \begin{equation}\label{new.462}
 	\begin{split}
 		\mathbb{P}\big( \overline{\mathsf{H}}_{v'}^{v}(N) \big) \le  \sum_{\psi=(v,v',w_1,w_2):w_1\in \Omega_1,w_2\in \Omega_2 }\mathbb{P}\big( \mathsf{C}^{\mathrm{out}}[\psi, \cref{const_point_to_boundary2}]  \big). 
 	\end{split}
 \end{equation}
 Moreover, by the same argument as in the proof of (\ref{ineq364_newadd}), we have 
 \begin{equation}\label{new.463}
 	\begin{split}
 		\mathbb{P}\big( \mathsf{C}^{\mathrm{out}}[\psi, \cref{const_point_to_boundary2}]  \big) \lesssim N^{(d-2)\boxdot (2d-8)}   \cdot \mathbb{P}\big( \mathsf{C}[\psi]  \big)  
 	\end{split}
 \end{equation}
 for all $\psi=(v,v',w_1,w_2)$ with $w_1\in \Omega_1$ and $w_2\in \Omega_2$.  Plugging (\ref{new.463}) into (\ref{new.462}), and then using Lemma \ref{lemma_prob_Cpsi_small_n}, we obtain the upper bound in (\ref{thm1_small_n_bar}).

 For the lower bound in (\ref{thm1_small_n_bar}), we prove the following stronger result, which will be useful for the companion paper \cite{inpreparation_second_monent}.

 \begin{lemma}\label{additional_lemma_two_arm}
	For any $d\ge 3$ with $d\neq 6$, there exist $\Cl\label{const_additional_lemma_two_arm0},\Cl\label{const_additional_lemma_two_arm1},\Cl\label{const_additional_lemma_two_arm2},\cl\label{const_additional_lemma_two_arm3} >0$ such that for any $v\neq v'\in \widetilde{B}(\Cref{const_additional_lemma_two_arm0})$ and any sufficiently large $N\ge 1$,
	\begin{equation}\label{newfixto65}
	\mathbb{P}\big(	\widehat{\mathsf{H}}_{v'}^{v}(N) \big)	\gtrsim   |v-v'|^{\frac{3}{2}}N^{-[(\frac{d}{2}+1)\boxdot 4]},
	\end{equation}
	where $\widehat{\mathsf{H}}_{v'}^{v}(N)$ is defined as $\mathsf{H}_{v'}^{v}(N)\cap \{v\xleftrightarrow{\ge 0} \partial B(\Cref{const_additional_lemma_two_arm1}N)  \}^c \cap \{v' \xleftrightarrow{\le 0} \partial B(\Cref{const_additional_lemma_two_arm1}N) \}^c 
			 	  \cap \big\{ |\widetilde{\phi}_{v}|,|\widetilde{\phi}_{v'}| \in [\cref{const_additional_lemma_two_arm3}, \Cref{const_additional_lemma_two_arm2}] \big\} $.  
\end{lemma}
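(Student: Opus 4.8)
\textbf{Proof strategy for Lemma \ref{additional_lemma_two_arm}.}
The plan is to construct a small interval $I_{[v,v']}$ (or a suitable neighborhood of it when $v,v'$ are not both in the interior of a single edge interval) and prescribe a favorable local configuration of the GFF near $v$ and $v'$, then glue this local event to a pair of coexisting macroscopic clusters using the restriction property of the loop soup (equivalently, the strong Markov property of the GFF together with the isomorphism theorem). Concretely, I would first use Lemma \ref{lemma_prob_Cpsi} (for $3\le d\le 5$) and Lemma \ref{lemma_highd_volume_two_arm} (for $d\ge 7$) to produce, for a pair of well-separated lattice points $x_+\in \partial B(2C_\dagger)$ and $x_-\in \partial B(4C_\dagger)$, two distinct clusters reaching $\partial B(c\Cref{const_additional_lemma_two_arm1}N)$ with positive probability of order $N^{-[(\frac{d}{2}+1)\boxdot 4]}$, and in the low-dimensional case with the additional capacity lower bound of order $(C_\dagger)^{d-2}$ inside $\widetilde{B}(100 C_\dagger)$ coming from the truncated event $\widehat{\mathsf{C}}^{\mathrm{out}}$ in Lemma \ref{lemma_point_to_boundary}. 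Crucially I would also require, via the separation lemma (Lemma \ref{lemma_separation}), that these two clusters stay away from the $(c_\triangle C_\dagger)$-neighborhood of $\{v,v'\}$, so that the local configuration near $v,v'$ is independent of them (after conditioning) up to zero boundary conditions imposed on a set at distance $\asymp C_\dagger$ from $v,v'$.

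Next, conditioned on these two clusters $D_+:=\mathcal{C}_{x_+}$ and $D_-:=\mathcal{C}_{x_-}$ (serving as zero boundary conditions on $D:=D_+\cup D_-$), I would estimate the conditional probability that (i) $\widetilde\phi_v>0$, $\widetilde\phi_{v'}<0$ with $|\widetilde\phi_v|,|\widetilde\phi_{v'}|\in[\cref{const_additional_lemma_two_arm3},\Cref{const_additional_lemma_two_arm2}]$; (ii) $v$ is connected to $D_+$ (more precisely to the boundary point of $D_+$ nearest the segment) by a positive path of bounded length but $v$ is \emph{not} connected to $v'$; and (iii) $v'$ is connected to $D_-$ by a negative path of bounded length. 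Observation (ii) is where the exponent $\chi^{3/2}=\chi\cdot\chi^{1/2}$ is built: given $\widetilde\phi_v\asymp 1$, the probability that the positive excursion cluster of $v$ inside the interval fails to reach $v'$ forces $0<\widetilde\phi_v\lesssim\chi^{1/2}$ (Lemma \ref{lemma_connect_close_points}, with the Brownian scaling $\widetilde G_{D\cup\mathcal{C}_{v'}}(v,v)\asymp\chi$), a constraint of probability $\asymp\chi^{1/2}$, and then connecting $v$ to the nearby boundary of $D_+$ conditioned on $\widetilde\phi_v\asymp\chi^{1/2}$ costs another factor $\asymp\chi^{1/2}$ by (\ref{formula_two_point}) and Lemma \ref{lemma_Kappa}; one factor $\chi^{1/2}$ is "absorbed'' by the width of the allowed window and the net is $\chi\cdot\chi^{1/2}=\chi^{3/2}$ after accounting for the negative side (iii), which given $\widetilde\phi_{v'}\asymp\chi^{1/2}$ from the zero boundary condition $\widetilde G_{D\cup\mathcal{C}_v}(v',v')\asymp\chi$ and $|\widetilde\phi_{x_-}|\asymp 1$ contributes $\asymp\chi^{1/2}$ via (\ref{formula_two_point}). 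I would make these precise by conditioning on the occupation fields $\widehat{\mathcal{L}}_{1/2}^v=a$, $\widehat{\mathcal{L}}_{1/2}^{v'}=b$, using the excursion decomposition and Lemma \ref{lemma_total_measure_K} to compute connection probabilities, and integrating $a,b$ over windows of size $\asymp\chi$. Finally, I would verify that on this glued event $\widehat{\mathsf H}^v_{v'}(N)$ actually holds: $v\xleftrightarrow{\ge 0}\partial B(N)$ through $D_+$, $v'\xleftrightarrow{\le 0}\partial B(N)$ through $D_-$, distinctness of the two sign clusters is automatic since $v\not\leftrightarrow v'$ and $D_+\cap D_-=\emptyset$, the one-arm truncations $\{v\xleftrightarrow{\ge 0}\partial B(\Cref{const_additional_lemma_two_arm1}N)\}^c$ etc. are arranged by choosing the macroscopic clusters to die before scale $\Cref{const_additional_lemma_two_arm1}N$ (using (\ref{one_arm_low})–(\ref{one_arm_high}) and the separation lemma to control the probability that they escape), and the GFF-value window is imposed directly.

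The main obstacle I anticipate is handling the geometric case distinction near $v,v'$ cleanly: when $v$ and $v'$ do not both lie in the interior of the same edge interval (e.g. one is a lattice point, or the segment $I_{[v,v']}$ passes through a vertex), the "one-dimensional Brownian bridge" picture that yields the $\chi^{1/2}$ factors must be replaced by an argument on the star of intervals around the relevant vertex, using (\ref{ostnew}) and (\ref{conduct_line}) to recover the same scaling; this is routine but must be done carefully to keep all constants uniform. A secondary technical point is ensuring the independence of the local event from the macroscopic clusters: because the separation lemma only gives separation with high \emph{conditional} probability, I would instead work on the explicit truncated event $\widehat{\mathsf{C}}^{\mathrm{out}}[\psi,\beta,\delta,\gamma]$ from Lemma \ref{lemma_point_to_boundary}, whose occurrence \emph{deterministically} guarantees $\mathcal{C}_{x_\pm}$ avoids $B(\delta C_\dagger)\supset$ the $(c_\triangle C_\dagger)$-neighborhood of $\{v,v'\}$, so that conditioning on $\mathcal F_{\mathcal C_{x_+}}\vee\mathcal F_{\mathcal C_{x_-}}$ leaves a GFF with zero boundary on a set at distance $\asymp C_\dagger$ from $v,v'$, for which all the local estimates above apply verbatim. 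Summing the product of the $\asymp N^{-[(\frac d2+1)\boxdot 4]}$ macroscopic probability and the $\asymp\chi^{3/2}$ local probability then gives (\ref{newfixto65}).
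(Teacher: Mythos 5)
Your high-level blueprint (macroscopic existence from Lemmas~\ref{lemma_prob_Cpsi}/\ref{lemma_highd_volume_two_arm} and Lemma~\ref{lemma_point_to_boundary}, separation, then a local gluing near $v,v'$) matches the paper's. But the gluing step as you describe it is logically impossible, and this is the crux of the lemma, so the gap is serious.

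You propose conditioning on the \emph{full} sign clusters $D_+ := \mathcal{C}_{x_+}$ and $D_- := \mathcal{C}_{x_-}$ as zero boundary data, and then asking for the event ``$v$ connects to $D_+$ by a positive path, $v'$ connects to $D_-$ by a negative path.'' But once you condition on $\mathcal{F}_{\mathcal{C}_{x_+}} \vee \mathcal{F}_{\mathcal{C}_{x_-}}$, the restriction property says the residual configuration is $\widetilde{\mathcal{L}}_{1/2}^{D}$ with $D = D_+\cup D_-$, whose clusters are \emph{disjoint from} $D$. Equivalently, $\mathcal{C}_{x_+}$ is by definition the maximal loop cluster of $x_+$: nothing in the remainder can merge with it, so $\{v \xleftrightarrow{} D_+\}$ has conditional probability zero. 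If instead you tried to connect to $\widetilde{\partial}D_+$ with positive GFF, that fails too: the residual field vanishes on $\widetilde{\partial}D_+$, and formula (\ref{formula_two_point}) does not apply to a target inside $D$ (the exponent $2ab\,\mathbb{K}$ would be zero). Your claim that this connection costs ``another factor $\asymp \chi^{1/2}$'' by (\ref{formula_two_point}) and Lemma~\ref{lemma_Kappa} is therefore unsound.

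This is exactly what the paper's construction is designed to circumvent. The paper never conditions on a full cluster near $v,v'$. Instead, the macroscopic event is re-expressed in terms of the \emph{partial} clusters $\mathcal{C}^*_j = \{w : x^*_j \xleftrightarrow{\widetilde{E}^{\ge 0}\cap[\widetilde{B}(8C_\dagger)]^c} w\}$, explored only \emph{outside} $\widetilde{B}(8C_\dagger)$ with a union bound over gateway points $x_1^*,x_2^* \in \widetilde{\partial}\widetilde{B}(8C_\dagger)$, and the GFF values at those gateways are controlled in a fixed window $[\sqrt{2C_\clubsuit^{-1}},\sqrt{2C_\clubsuit}]$ via Corollary~\ref{coro_local_time} and the switching-identity estimate. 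The inward extension to $v,v'$ is then built \emph{deterministically by hand}: the events $\mathsf{F}_k^\pm$ fix the GFF sign and size along edge intervals of the paths $\eta_j^*$, and only at the final interval are the events $\mathsf{G}^\pm$ introduced, where the two-point formula is applied with \emph{both} endpoint values positive and $\asymp \chi^{1/2}$ and $\asymp 1$ respectively --- which is how the factor $\chi^{1/2}$ legitimately appears. In your scheme there is no nonzero boundary value in sight near $v$, so there is no mechanism producing $\chi^{1/2}$ rather than $\chi$ (the cost of an excursion count hitting a distant set given local time $a^2 \asymp \chi$), and no way to realize the connection at all.

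A secondary inconsistency: you list $|\widetilde{\phi}_v|,|\widetilde{\phi}_{v'}| \in [\cref{const_additional_lemma_two_arm3},\Cref{const_additional_lemma_two_arm2}]$ as a condition and then derive $\widetilde{\phi}_v \asymp \chi^{1/2}$ from Lemma~\ref{lemma_connect_close_points}; for small $\chi$ these are incompatible and your remark that ``one factor $\chi^{1/2}$ is absorbed by the width of the allowed window'' does not reconcile them. You should either flag this explicitly or restrict to $\chi \gtrsim 1$.

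To salvage your approach you would need to replace ``condition on the full clusters'' with ``condition on the partial clusters outside $\widetilde{B}(8C_\dagger)$ together with the boundary field values on $\widetilde{\partial}\widetilde{B}(8C_\dagger)$,'' and then your local calculation would have a nonzero boundary datum to work against --- at which point you have essentially reconstructed the paper's argument.
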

\begin{proof}
	 Recall $\mathsf{C}^{\diamond}[\cdot]$ in (\ref{newfix489}), and recall $\mathsf{S}^{\diamond}[\cdot]$ below (\ref{newfix490}). By Lemmas \ref{lemma_point_to_boundary}, \ref{lemma_prob_Cpsi} and \ref{lemma_highd_volume_two_arm}, there exist $C_\dagger, c_\star ,c_{ \triangle}  > 0$ such that for any $\psi\in \Psi(C_\dagger,N)$, 
  \begin{equation}\label{464}
  \begin{split}
   \mathbb{P}\big( \mathsf{C}^{\mathrm{out}}[\psi, c_\star ]  \big) \overset{ \text{Lemma}\ \ref{lemma_point_to_boundary} }{\asymp} &	\mathbb{P}\big( \mathsf{C}^{\mathrm{out}}[\psi, c_\star ]\cap ( \mathsf{S}^{\mathrm{out}}[\psi,c_{ \triangle}])^c  \big) \\
\overset{ \text{Lemma}\ \ref{lemma_point_to_boundary} }{\asymp}   &N^{(d-2)\boxdot (2d-8)} \mathbb{P}\big( \mathsf{C}[\psi ]  \big) \overset{ \text{Lemmas}\ \ref{lemma_prob_Cpsi}\ \text{and}\ \ref{lemma_highd_volume_two_arm} }{\gtrsim}  N^{-[(\frac{d}{2}+1)\boxdot 4]}.
  \end{split}
 \end{equation}
	 We arbitrarily take $v_1\in B(2C_\dagger)$, $v_2\in \partial B(4C_\dagger)$, $w_1\in \partial B(2N)$ and $w_2\in \partial B(4N)$. Note that (\ref{464}) holds for $\psi_{\spadesuit}=(v_1,v_2,w_1,w_2)$. On $ \mathsf{C}^{\mathrm{out}}[\psi_{\spadesuit}, c_\star ]\cap ( \mathsf{S}^{\mathrm{out}}[\psi_{\spadesuit},c_{ \triangle}])^c $, there exist two disjoint paths $\widetilde{\eta}_1$ and $\widetilde{\eta}_2$ such that for $j\in \{1,2\}$, $\widetilde{\eta}_j$ is contained in $\mathcal{C}_{v_j}$, starts from $w_j$ and ends at $v_j$. Let $x_j$ be the point where $\widetilde{\eta}_j$ first hits $\widetilde{B}(8C_\dagger)$. It follows from the construction that the events 
   \begin{equation}
  	\mathsf{A}_{x_1,x_2}:= \big\{x_1\xleftrightarrow{ \mathcal{C}_{x_1}\cap  [ \widetilde{B}(8C_\dagger)]^c}  B_{{w}_1}(c_\star N) \Vert x_2\xleftrightarrow{\mathcal{C}_{x_2}\cap  [ \widetilde{B}(8C_\dagger)]^c} B_{{w}_2}(c_\star N)  \big\} 
  \end{equation} 
 and $( \mathsf{S}^{\mathrm{out}}[(x_1,x_2,w_1,w_2),c_{ \triangle}])^c$ both occur. Combined with (\ref{464}), it yields that for some $x_1^*\neq x_2^*\in \widetilde{\partial} \widetilde{B}(8C_\dagger)$ (letting $\psi_*:= (x_1^*,x_2^*,w_1,w_2)$),
 \begin{equation}\label{newadd.466}
\mathbb{P}\big( \widehat{\mathsf{A}}_{x_1^*,x_2^*} \big):= 	\mathbb{P}\big( \mathsf{A}_{x_1^*,x_2^*} \cap ( \mathsf{S}^{\mathrm{out}}[\psi_*,c_{ \triangle}])^c \big)\gtrsim N^{-[(\frac{d}{2}+1)\boxdot 4]}.  
 \end{equation}

	 For any measurable set $F\subset   (0,\infty)$, any $x_1\neq x_2\in \widetilde{\partial} \widetilde{B}(8C_\dagger)$ and $j\in \{1,2\}$, by the restriction property, we have (letting $\psi:=(x_1,x_2,w_1,w_2)$)
   \begin{equation}\label{465}
 	\begin{split}
 		&  \mathbb{P}\big( \mathsf{A}_{x_1,x_2} \cap ( \mathsf{S}^{\mathrm{out}}[\psi,c_{ \triangle}])^c , \widehat{\mathcal{L}}_{1/2}^{x_j}\in F \big) \\
 		  \le  & \mathbb{E}\Big[\mathbbm{1}_{x_{3-j} \xleftrightarrow{} B_{w_{3-j}}( c_\star N), \{x_{3-j} \xleftrightarrow{} B_{x_j }(c_{ \triangle}C_\dagger ) \}^c} \\
 		  &\ \ \ \  \cdot \mathbb{P}\big( x_j   \xleftrightarrow{(\mathcal{C}_{x_{3-j} })} B_{w_j}( c_\star N), \widehat{\mathcal{L}}_{1/2}^{\mathcal{C}_{x_{3-j} }, x_j}\in F \mid  \mathcal{F}_{\mathcal{C}_{x_{3-j}}} \big) \Big]. 
 	\end{split}
 \end{equation}
 On the event $\{x_{3-j} \xleftrightarrow{} B_{x_{j} }(c_{ \triangle}C_\dagger ) \}^c$, one has $\widetilde{G}_{\mathcal{C}_{x_{3-j}}}(x_{j} ,x_{j} )\asymp 1$ and hence, 
  \begin{equation}\label{466}
 	\mathbb{P}\big( \widehat{\mathcal{L}}_{1/2}^{\mathcal{C}_{x_{3-j} }, x_j}\le t  \mid  \mathcal{F}_{\mathcal{C}_{x_{3-j}}} \big)  =  \mathbb{P}^{\mathcal{C}_{x_{3-j} }}\big( |  \widetilde{\phi}_{x_j} | \le \sqrt{2t} \big)\lesssim  t^{\frac{1}{2}}, \ \ \forall t>0. 
 	\end{equation}
  Therefore, applying the FKG inequality, we get 
 \begin{equation}\label{newaddto470}
 \begin{split}
 &	\mathbb{P}\big( x_j \xleftrightarrow{(\mathcal{C}_{x_{3-j} })} B_{w_j}( c_\star N), \widehat{\mathcal{L}}_{1/2}^{\mathcal{C}_{x_{3-j} }, x_j}\le t \mid  \mathcal{F}_{\mathcal{C}_{x_{3-j}}}  \big) \\
 	 \lesssim  & t^{\frac{1}{2}} \cdot \mathbb{P}\big( x_j  \xleftrightarrow{(\mathcal{C}_{x_{3-j} })} B_{w_j}( c_\star N) \mid  \mathcal{F}_{\mathcal{C}_{x_{3-j}}}   \big).
 \end{split}
 \end{equation}
 Combined with (\ref{465}), it yields that 
 \begin{equation}\label{newadd.470}
 \begin{split}
 		&\mathbb{P}\big( \mathsf{A}_{x_1 ,x_2 } \cap ( \mathsf{S}^{\mathrm{out}}[\psi ,c_{ \triangle}])^c , \widehat{\mathcal{L}}_{1/2}^{x_j }\le t \big) \\\lesssim &t^{\frac{1}{2}} \cdot   \mathbb{P}\big( \mathsf{C}^{\mathrm{out}}[\psi , c_\star ]  \big)  \le t^{\frac{1}{2}} \cdot  \mathbb{P}\big( \overline{\mathsf{H}}_{x_2}^{x_1}(N)  \big)\lesssim t^{\frac{1}{2}}N^{-[(\frac{d}{2}+1)\boxdot 4]}.
 \end{split}
  \end{equation}
  Here in the last inequality we used the upper bound in (\ref{thm1_small_n_bar}), which has been confirmed below (\ref{new.463}). Moreover, for any $T>0$, by Markov's inequality, one has  
  \begin{equation}
	\begin{split}
		&\mathbb{P}\big( x_j   \xleftrightarrow{(\mathcal{C}_{x_{3-j} })} B_{w_j}( c_\star N), \widehat{\mathcal{L}}_{1/2}^{\mathcal{C}_{x_{3-j}}, x_j}\ge T \mid  \mathcal{F}_{\mathcal{C}_{x_{3-j}}}   \big) \\
		\le & T^{-1} \cdot  \mathbb{E}\Big[ \mathbbm{1}_{x_j   \xleftrightarrow{(\mathcal{C}_{x_{3-j} })} B_{w_j}( c_\star N)}\cdot \widehat{\mathcal{L}}_{1/2}^{\mathcal{C}_{x_{3-j} }, x_j} \mid  \mathcal{F}_{\mathcal{C}_{x_{3-j}}}  \Big]\\
		 \lesssim & T^{-1}\cdot \mathbb{P}\big( x_j   \xleftrightarrow{(\mathcal{C}_{x_{3-j} })} B_{w_j}( c_\star N) \mid  \mathcal{F}_{\mathcal{C}_{x_{3-j}}}  \big), 
	\end{split}
\end{equation}
where the last inequality follows from \cite[(2.67)]{cai2024quasi}. Combined with (\ref{465}), it implies 
\begin{equation}\label{newadd.472}
\begin{split}
	& 	\mathbb{P}\big( \mathsf{A}_{x_1 ,x_2 } \cap ( \mathsf{S}^{\mathrm{out}}[\psi ,c_{ \triangle}])^c , \widehat{\mathcal{L}}_{1/2}^{x_j}\ge T \big)\\
		\lesssim & T^{-1}\cdot \mathbb{P}\big( \mathsf{C}^{\mathrm{out}}[\psi , c_\star ]  \big) \lesssim T^{-1} N^{-[(\frac{d}{2}+1)\boxdot 4]}. 
\end{split}
\end{equation}
Putting (\ref{newadd.466}), (\ref{newadd.470}) and (\ref{newadd.472}) together, we obtain that for some $C_{\clubsuit}>0$, 
\begin{equation}\label{new4.73}
	\begin{split}
		\mathbb{P}\Big(\widehat{\mathsf{A}}_{x_1^*,x_2^*}, \bigcap_{y\in (\mathcal{C}_{x_1^*}\cup \mathcal{C}_{x_2^*})\cap \widetilde{\partial} \widetilde{B}(8C_\dagger)}\big\{   \widehat{\mathcal{L}}_{1/2}^{y }\in [C_{\clubsuit}^{-1},C_{\clubsuit}]   \big\} \Big)\gtrsim N^{-[(\frac{d}{2}+1)\boxdot 4]}. 
	\end{split}
\end{equation} 
We define the clusters 
\begin{equation}
	\mathcal{C}_1^*:=  \big\{ v\in \widetilde{\mathbb{Z}}^d:  x_1^*\xleftrightarrow{ \widetilde{E}^{\ge 0}\cap  [ \widetilde{B}(8C_\dagger)]^c} v \big\}, \ \ \mathcal{C}_2^*:=  \big\{ v\in \widetilde{\mathbb{Z}}^d:  x_2^*\xleftrightarrow{ \widetilde{E}^{\le 0}\cap  [ \widetilde{B}(8C_\dagger)]^c} v \big\}. 
\end{equation}
The estimate (\ref{new4.73}) together with the isomorphism theorem implies that 
  \begin{equation}\label{newadd474}
  	\mathbb{P}\big(\mathsf{A}_{*}^{\pm} \big)\gtrsim N^{-[(\frac{d}{2}+1)\boxdot 4]}.
  \end{equation}
 Here the event $\mathsf{A}_{*}^{\pm}$ is defined by  
  \begin{equation*}
  \begin{split}
  	  	\mathsf{A}_{*}^{\pm}:=& \bigcap_{j\in \{1,2\}}\big\{   \mathcal{C}^*_j\cap B_{{w}_j}(c_\star N) \neq \emptyset \big\}\cap \bigcap_{y\in (\mathcal{C}_1^*\cup \mathcal{C}_2^*)\cap \widetilde{\partial} \widetilde{B}(8C_\dagger)} \big\{ \sqrt{2C_{\clubsuit}^{-1}} \le |\widetilde{\phi}_{y}| \le \sqrt{2C_{\clubsuit}}\big\},  \end{split}
  \end{equation*}
which is measurable with respect to $\mathcal{F}_{\mathcal{C}_1^*\cup \mathcal{C}_2^*}$.

We enumerate the points in $[(\mathcal{C}_1^*\cup \mathcal{C}_2^*)\cap \widetilde{\partial} \widetilde{B}(8C_\dagger)]\setminus \{x_1^*,x_2^*\}$ as $y_1,...,y_K$. For each $1\le j\le K$, we denote by $I^*_j$ the interval incident to $y_j$ such that $I_j^*\subset \mathcal{C}_1^*\cup \mathcal{C}_2^*$. When the distance between $y_j$ and the boundary with GFF values fixed is at least $d$ and the absolute values of all boundary conditions lie in $[\sqrt{2C_{\clubsuit}^{-1}}, \sqrt{2C_{\clubsuit}}]$, the conditional probability of $\mathsf{D}_j:=\big\{ y_j\xleftrightarrow{\widetilde{E}^{\ge 0}\setminus I_j^*} \widetilde{\partial}\mathbf{B}_{y_j}(1)  \big\}^c \cup \big\{ y_j\xleftrightarrow{\widetilde{E}^{\le 0}\setminus I_j^*} \widetilde{\partial}\mathbf{B}_{y_j}(1)  \big\}^c$ is of order $1$. As a result, on the event $\mathsf{A}_{*}^{\pm}$, 
\begin{equation}\label{newfix6.18}
	\begin{split}
		 \mathbb{P}\big(\cap_{1\le j\le K} \mathsf{D}_j \mid \mathcal{F}_{\mathcal{C}_1^*\cup \mathcal{C}_2^*} \big) 
		= \prod\nolimits_{1\le j\le K}  \mathbb{P}\big(  \mathsf{D}_j  \mid \mathcal{F}_{\mathcal{C}_1^*\cup \mathcal{C}_2^*}, \cap_{1\le l\le j-1}\mathsf{D}_{l}  \big) \overset{K=O(1)}{\gtrsim} 1,
	\end{split}
\end{equation}
 which together with (\ref{newadd474}) implies that 
\begin{equation}\label{newfix6.19}
	\mathbb{P}\big( \widehat{\mathsf{A}}_*^{\pm} \big):= \mathbb{P}\big(\cap_{1\le j\le K} \mathsf{D}_j \cap  \mathsf{A}_*^{\pm}  \big) \gtrsim N^{-[(\frac{d}{2}+1)\boxdot 4]}.
\end{equation}
 For each $i\in \{1,2\}$ and $y\in [\mathcal{C}_i^* \cap \widetilde{\partial} \widetilde{B}(8C_\dagger)]\setminus \{x_i^*\}$, we define the cluster 
\begin{equation}
	\mathcal{C}_{i,y}:=\big\{z \in \mathbf{B}_{y}(1) :y \xleftrightarrow{\widetilde{E}^{\ge 0}\setminus I_j^*} z \ \text{or}\  y_j\xleftrightarrow{\widetilde{E}^{\le 0}\setminus I_j^*} z  \big\}. 
\end{equation}
We then define $\widehat{\mathcal{C}}_i^*$ as the union of all these clusters and $\mathcal{C}_i^*$. Note that the event $\widehat{\mathsf{A}}_*^{\pm} $ is measurable with respect to $\mathcal{F}_{\widehat{\mathcal{C}}_1^*\cup \widehat{\mathcal{C}}_2^*}$.


We first present the remainder of the proof for the case $\chi := |v-v'| \le 1$. The case $\chi > 1$ follows from a similar argument, which is considerably simpler. Let $y_1^*$ and $y_2^*$ be two lattice points that are contained in the same interval as $v$ and $v'$ respectively, and satisfy $\mathrm{dist}\big(I_{[y_1^*, v] }, I_{[y_2^*, v'] } \big)=\chi$. Note that when $\chi \le 1$, at least one of the intervals $I_{[y_1^*, v] }$ and $ I_{[y_2^*, v'] }$ has length at least $1$ (recall that the length of a single interval is $d \ge 3$). Without loss of generality, we assume that $I_{[y_1^*, v] }$ does. We then take two disjoint paths $\eta_1^*$ and $\eta_2^*$ on $\mathbb{Z}^d$ (with lengths $l_1^*$ and $l_2^*$) such that $\eta_i^*$ starts from $x_i^*$ and ends at $y_i^*$. The $k$-th point of $\eta_i^*$ is denoted by $\eta_i^*(k)$. For each $0\le k\le l_1^*-1$, we define the event 
	 \begin{equation}\label{newfix6.20}
	 	\begin{split}
	 		\mathsf{F}_k^+:= & \big\{\sqrt{2C_{\clubsuit}^{-1}} \le \widetilde{\phi}_{z} \le \sqrt{2C_{\clubsuit}} ,\  \forall z\in I_{\{\eta_1^*(k),\eta_1^*(k+1)  \}}  \big\}\\&\cap \big\{\eta_1^*(k) \xleftrightarrow{\widetilde{E}^{\ge 0}\setminus I_{\{\eta_1^*(k),\eta_1^*(k+1)  \}}} \widetilde{\partial} \mathbf{B}_{\eta_1^*(k)}(1)  \big\}^c.
	 	\end{split}
	 \end{equation}
	  For the same reason as in (\ref{newfix6.18}), we have: on the event $ \widehat{\mathsf{A}}_*^{\pm} $, 
	 \begin{equation}\label{newfix6.21}
	 		 \mathbb{P}\big(\cap_{0\le k\le l_1^*-1} \mathsf{F}_k^+  \mid \mathcal{F}_{\widehat{\mathcal{C}}_1^*\cup \widehat{\mathcal{C}}_2^*}  \big)  = \prod\nolimits_{0\le k\le l_1^*-1}  \mathbb{P}\big( \mathsf{F}_k^+  \mid \mathcal{F}_{\mathcal{C}_1^*\cup \mathcal{C}_2^*}, \cap_{0\le l\le k-1} \mathsf{F}_k^+   \big){\gtrsim} 1. 
	 \end{equation}
	 Moreover, we consider the event 
	 \begin{equation}
	 	\mathsf{G}^{+}:=  \big\{  \widetilde{\phi}_{v}\in [\chi^{\frac{1}{2}}, 2\chi^{\frac{1}{2}}] \big\} \cap \big\{ y_1^*\xleftrightarrow{ \widetilde{E}^{\ge 0}\cap I_{[y_1^*,v]} } v \big\}\cap \big\{ v\xleftrightarrow{\widetilde{E}^{\ge 0}\setminus I_{[y_1^*,v]} } \widetilde{\partial}\mathbf{B}_{v}(\tfrac{\chi}{10})  \big\}^c.  
	 \end{equation}
	 Conditioned on $\cap_{0\le k\le l_1^*-1} \mathsf{F}_k^+\cap \widehat{\mathsf{A}}_*^{\pm}$, since the distance between $v$ and the boundary with GFF values fixed is at least $1$ (here we used the assumption that the length of $I_{[y_1^*, v] }$ is at least $1$) and the absolute values of all boundary conditions are contained in $[\sqrt{2C_{\clubsuit}^{-1}}, \sqrt{2C_{\clubsuit}}]$, the probability of $\widetilde{\phi}_{v}\in [\chi^{\frac{1}{2}}, 2\chi^{\frac{1}{2}}]$ is of order $\chi^{\frac{1}{2}}$. Moreover, conditioned on $\big\{\widetilde{\phi}_{v}\in [\chi^{\frac{1}{2}}, 2\chi^{\frac{1}{2}}]\big\}$, the formula (\ref{formula_two_point}) implies that the  probability of $\big\{ y_1^*\xleftrightarrow{ \widetilde{E}^{\ge 0}\cap I_{[y_1^*,v]} } v \big\}$ is at least of order $\chi^{\frac{1}{2}}$. Furthermore, after sampling $\widetilde{\phi}_\cdot$ on $I_{[y_1^*,v]}$ with $\widetilde{\phi}_{v}\in [\chi^{\frac{1}{2}}, 2\chi^{\frac{1}{2}}]$, the conditional probability of the event $\big\{ v\xleftrightarrow{\ge 0 } \widetilde{\partial}\mathbf{B}_{v}(\tfrac{\chi}{10}) \big\}^c$ is of order $1$. Combined with (\ref{newfix6.21}), it yields that on the event $ \widehat{\mathsf{A}}_*^{\pm} $, 
	 \begin{equation}\label{fixnewto6.23}
	 	 \mathbb{P}\big(\cap_{0\le k\le l_1^*-1} \mathsf{F}_k^+ \cap \mathsf{G}^{+}  \mid \mathcal{F}_{\widehat{\mathcal{C}}_1^*\cup \widehat{\mathcal{C}}_2^*}  \big)\gtrsim \chi. 
	 \end{equation}
	 	 Note that $\mathring{\mathsf{A}}:=\cap_{0\le k\le l_1^*-1} \mathsf{F}_k^+ \cap \mathsf{G}^{+}  \cap  \widehat{\mathsf{A}}_*^{\pm} $ is measurable with respect to $\mathcal{F}_{\mathcal{C}_{x_1^*}^+\cup \widehat{\mathcal{C}}_2^*}$.

	 As in (\ref{newfix6.20}), for each $0\le k\le l_2^*-1$, we define the event 
	 \begin{equation}
	 	\begin{split}
	 		\mathsf{F}_k^-:= & \big\{- \sqrt{2C_{\clubsuit}} \le \widetilde{\phi}_{v} \le  -\sqrt{2C_{\clubsuit}^{-1}},\  \forall v\in I_{\{\eta_2^*(k),\eta_2^*(k+1)  \}}  \big\}\\&\cap \big\{\eta_2^*(k) \xleftrightarrow{\widetilde{E}^{\le 0}\setminus I_{\{\eta_2^*(k),\eta_2^*(k+1)  \}}} \widetilde{\partial} \mathbf{B}_{\eta_2^*(k)}(1)  \big\}^c.
	 	\end{split}
	 \end{equation}
	 Similar to (\ref{newfix6.18}), we have: on the event $\mathring{\mathsf{A}}$,  
	 \begin{equation}\label{newfixto6.25}
	 	 \mathbb{P}\big(\cap_{0\le k\le l_2^*-1} \mathsf{F}_k^-  \mid \mathcal{F}_{\mathcal{C}_{x_1^*}^+\cup \widehat{\mathcal{C}}_2^*}  \big)  = \prod\nolimits_{0\le k\le l_2^*-1}  \mathbb{P}\big( \mathsf{F}_k^-  \mid \mathcal{F}_{\mathcal{C}_{x_1^*}^+\cup \widehat{\mathcal{C}}_2^*}, \cap_{0\le l\le k-1} \mathsf{F}_k^-   \big){\gtrsim} 1. 
	 \end{equation}
	 In fact, conditioned on $\cap_{0\le k\le l_2^*-1} \mathsf{F}_k^-\cap \mathring{\mathsf{A}}$, since the distance between $v'$ and the boundary with GFF values fixed is of order $\chi$, the mean and variance of $\widetilde{\phi}_{v'}$ are both $O(\chi)$. As a result, the event $\big\{\widetilde{\phi}_{v'}\in [\chi^{\frac{1}{2}}, 2\chi^{\frac{1}{2}}]\big\}$ occurs with uniform positive probability. Moreover, conditioned on this event, the probability of $\mathsf{G}^{-}:=\{y_2^*\xleftrightarrow{\le 0} v' \}\cap \{v'\xleftrightarrow{\widetilde{E}^{\le 0}\setminus I_{[y_2^*,v']}} \widetilde{\partial } \mathbf{B}_{v'}(1) \}^c$ is proportional to $\chi^{\frac{1}{2}}$ (using (\ref{formula_two_point})). Combined with (\ref{fixnewto6.23}) and (\ref{newfixto6.25}), it yields that on the event $\widehat{\mathsf{A}}_*^{\pm} $, 
\begin{equation}
	 \mathbb{P}\big(\cap_{0\le k\le l_2^*-1} \mathsf{F}_k^- \cap  \mathsf{G}^{-} \cap ( \cap_{0\le k\le l_1^*-1} \mathsf{F}_k^+ )\cap   \mathsf{G}^{+} \mid \mathcal{F}_{\widehat{\mathcal{C}}_1^* \cup \widehat{\mathcal{C}}_2^*}  \big)\gtrsim \chi^{\frac{3}{2}}. 
\end{equation}
	 This together with (\ref{newfix6.19}) implies that  
	 \begin{equation}
	 	\mathbb{P}\big(\cap_{0\le k\le l_2^*-1} \mathsf{F}_k^- \cap \mathsf{G}^{-}\cap \mathring{\mathsf{A}}  \big)  \gtrsim \chi^{\frac{3}{2}} \cdot  N^{-[(\frac{d}{2}+1)\boxdot 4]}. 
	 \end{equation}
	 Noting that the event on the left-hand side implies $\widehat{\mathsf{H}}_{v'}^{v}(N)$ with $\Cref{const_additional_lemma_two_arm1}=c_\star^{-1}$, $\cref{const_additional_lemma_two_arm3}=\sqrt{2C_{\clubsuit}^{-1}}$ and $\Cref{const_additional_lemma_two_arm2}=\sqrt{2C_{\clubsuit}}$, we obtain the desired bound (\ref{newfixto65}) for $\chi \le 1$.

	 For the case $\chi > 1$, we select two paths $\widetilde{\eta}_1$ and $\widetilde{\eta}_2$ on $\widetilde{\mathbb{Z}}^d$ such that $\widetilde{\eta}_1$ connects $x_{1}^*$ and $v$, $\widetilde{\eta}_2$ connects $x_{2}^*$ and $v'$, and $\mathrm{dist}(\mathrm{ran}(\widetilde{\eta}_1),\mathrm{ran}(\widetilde{\eta}_2))\ge 1$. We then define $\mathsf{H}_1$ (resp. $\mathsf{H}_2$) as the event that $x_{1}^*$ and $v$ (resp. $x_{2}^*$ and $v'$) are connected by a positive (resp. negative) cluster within $\cup_{z\in \mathrm{ran}(\widetilde{\eta}_1) }\mathbf{B}_z(\frac{1}{10})$ (resp. $\cup_{z\in \mathrm{ran}(\widetilde{\eta}_2)}\mathbf{B}_z(\frac{1}{10})$). Note that $\widehat{\mathsf{A}}_*^{\pm}\cap \mathsf{H}_1\cap \mathsf{H}_2\subset \widehat{\mathsf{H}}_{v'}^{v}(N)$. Moreover, following an argument similar to that for the case $\chi \le 1$, we can also derive that given $ \widehat{\mathsf{A}}_*^{\pm}$ occurs, the conditional probability of $\mathsf{H}_1\cap \mathsf{H}_2 $ is of order $1$. To sum up, we obtain (\ref{newfixto65}) for $\chi > 1$. 
	   \end{proof}

	 	 Since $\widehat{\mathsf{H}}_{v'}^{v}(N)\subset \mathsf{H}_{v'}^{v}(N)$, the proof of (\ref{thm1_small_n_bar}) is now complete.   \qed

	Lemmas \ref{lemma_prob_Cpsi} and \ref{lemma_highd_volume_two_arm} imply that for any sufficiently large $N\ge 1$ and any $\psi=(v,v',w,w')\in \Psi(C,N)$, 
	\begin{equation}\label{newfix_6.27}
		\mathbb{P}\big(  \mathsf{H}_{v',w'}^{v,w}  \big)\gtrsim  N^{-[(\frac{3d}{2}-1)\boxdot (2d-4)]}. 
	\end{equation}
	By employing the arguments in the proof of Lemma \ref{additional_lemma_two_arm} (where we only need to replace the bound (\ref{464}) with (\ref{newfix_6.27})), we derive the following strengthened version of the lower bound in (\ref{thm1_small_n_four_point}).

	 \begin{lemma}\label{additional_lemma_four_point}
	 	For any $d\ge 3$ and $d\neq 6$, there exist $\Cl\label{const_additional_lemma_four_point1},\Cl\label{const_additional_lemma_four_point2}, \Cl\label{const_additional_lemma_four_point3},\cl\label{const_additional_lemma_four_point4}>0$ such that for any $v\neq v'\in \widetilde{B}(\Cref{const_additional_lemma_four_point1})$, any sufficiently large $N\ge 1$, and $w,w'\in \widetilde{B}(10N)\setminus \widetilde{B}(N)$ with $|w-w'|\ge N$, 
	 	\begin{equation}
	 		\mathbb{P}\big( \widehat{\mathsf{H}}_{v',w'}^{v,w}   \big)\gtrsim |v-v'|^{\frac{3}{2}}N^{-[(\frac{3d}{2}-1)\boxdot (2d-4)]},
	 	\end{equation}
	 	where $\widehat{\mathsf{H}}_{v',w'}^{v,w}$ is defined as $\mathsf{H}_{v',w'}^{v,w} \cap \{v\xleftrightarrow{\ge 0} \partial B(\Cref{const_additional_lemma_four_point2}N)  \}^c \cap \{v' \xleftrightarrow{\le 0} \partial B(\Cref{const_additional_lemma_four_point2}N) \}^c 
			 	  \cap \big\{ |\widetilde{\phi}_{v}|,|\widetilde{\phi}_{v'}| \in [\cref{const_additional_lemma_four_point4}, \Cref{const_additional_lemma_four_point3}] \big\} $. 
	 \end{lemma}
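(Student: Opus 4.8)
\textbf{Proof proposal for Lemma \ref{additional_lemma_four_point}.} The plan is to mirror, almost verbatim, the proof of Lemma \ref{additional_lemma_two_arm}, replacing the inner boundary box event $\mathsf{C}^{\mathrm{out}}[\cdot]$-based estimate (\ref{464}) with the corresponding four-point lower bound (\ref{newfix_6.27}), which has just been established via Lemmas \ref{lemma_prob_Cpsi} and \ref{lemma_highd_volume_two_arm}. First I would fix $v\neq v'\in \widetilde{B}(\Cref{const_additional_lemma_four_point1})$ (with $\Cref{const_additional_lemma_four_point1}$ a small constant to be chosen as in Lemma \ref{lemma_prob_Cpsi_small_n}), $N$ large, and $w,w'\in \widetilde{B}(10N)\setminus\widetilde{B}(N)$ with $|w-w'|\ge N$. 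Just as before, I would first prove the bound for the case where the roots have been pulled out to the intermediate sphere: take $x_1^*\neq x_2^*\in\widetilde{\partial}\widetilde{B}(8C_\dagger)$ (for a suitable constant $C_\dagger$), and using (\ref{newfix_6.27}) together with Lemma \ref{lemma_separation} and Lemma \ref{lemma_roots}, find such $x_1^*,x_2^*$ for which the separated-arm event $\widehat{\mathsf{A}}_{x_1^*,x_2^*}$ (connecting $x_1^*$ to $w$ and $x_2^*$ to $w'$ by disjoint clusters lying outside $\widetilde{B}(8C_\dagger)$, while avoiding the forbidden neighbourhoods appearing in $\mathsf{S}^{\mathrm{out}}$) has probability $\gtrsim N^{-[(\frac{3d}{2}-1)\boxdot(2d-4)]}$.

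Next, exactly as in (\ref{465})--(\ref{newadd.472}), I would use the restriction property, the conditional local-time tail bound (Lemma \ref{lemma25}) and Markov's inequality applied to $\widehat{\mathcal{L}}^{x_j}_{1/2}$ together with \cite[(2.67)]{cai2024quasi}, plus the already-established \emph{upper} bound in (\ref{thm1_small_n_four_point}) (proved in Section \ref{section5_four_point}) to control $\mathbb{P}(\widehat{\mathsf{A}}_{x_1^*,x_2^*},\widehat{\mathcal{L}}^{x_j}_{1/2}\le t)\lesssim t^{1/2}N^{-[(\frac{3d}{2}-1)\boxdot(2d-4)]}$ and $\mathbb{P}(\widehat{\mathsf{A}}_{x_1^*,x_2^*},\widehat{\mathcal{L}}^{x_j}_{1/2}\ge T)\lesssim T^{-1}N^{-[(\frac{3d}{2}-1)\boxdot(2d-4)]}$. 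This yields a version of (\ref{new4.73}): an event $\widehat{\mathsf{A}}_*^{\pm}$, measurable with respect to the two outer clusters, on which the endpoint GFF values at all points of $(\mathcal{C}_1^*\cup\mathcal{C}_2^*)\cap\widetilde{\partial}\widetilde{B}(8C_\dagger)$ lie in a fixed compact interval $[\sqrt{2C_\clubsuit^{-1}},\sqrt{2C_\clubsuit}]$, and still $\mathbb{P}(\widehat{\mathsf{A}}_*^{\pm})\gtrsim N^{-[(\frac{3d}{2}-1)\boxdot(2d-4)]}$. Then I would run the same ``local gluing'' construction as in (\ref{newfix6.18})--(\ref{newfixto6.25}): build disjoint lattice paths $\eta_1^*,\eta_2^*$ from $x_1^*,x_2^*$ to lattice points $y_1^*,y_2^*$ near $v,v'$, force the GFF to have a prescribed sign and bounded magnitude along each interval of these paths (each step succeeds with uniform positive probability by the Markov property of the metric-graph GFF and the formula (\ref{formula_two_point})), and finally force $\widetilde{\phi}_v,\widetilde{\phi}_{v'}\in[\chi^{1/2},2\chi^{1/2}]$ and the short connections from $y_1^*,y_2^*$ to $v,v'$ — producing the factor $\chi^{1/2}\cdot\chi^{1/2}=\chi$ from the two Gaussian events and $\chi^{1/2}$ again from the near-critical connections along the last short intervals, for a total of $\chi^{3/2}$ when $\chi\le 1$, and an $O(1)$ conditional factor when $\chi>1$ (handled by the separate paragraph at the end of the proof of Lemma \ref{additional_lemma_two_arm}).

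Since $\widehat{\mathsf{H}}^{v,w}_{v',w'}\subset\mathsf{H}^{v,w}_{v',w'}$ and the constructed event is contained in $\widehat{\mathsf{H}}^{v,w}_{v',w'}$ (with $\Cref{const_additional_lemma_four_point2}=c_\star^{-1}$ — note here the inner radius constant $c_\star$ should be replaced by the constant controlling how far outside $\widetilde{B}(8C_\dagger)$ the outer clusters are forced to reach, consistent with the definition of $\mathsf{S}^{\mathrm{out}}$ — $\cref{const_additional_lemma_four_point4}=\sqrt{2C_\clubsuit^{-1}}$, $\Cref{const_additional_lemma_four_point3}=\sqrt{2C_\clubsuit}$), the claimed lower bound follows. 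I do not expect any genuinely new obstacle here: the only structural difference from Lemma \ref{additional_lemma_two_arm} is that the outer target is a pair of points $w,w'$ at mutual distance $\ge N$ rather than a sphere $\partial B(N)$, but the separation lemma (Lemma \ref{lemma_separation}) and Lemma \ref{lemma_roots} are stated precisely at the level of quadruples $\psi\in\Psi(n,N)$, so they apply directly; and the two-point asymptotics $\mathbb{P}(v\xleftrightarrow{\ge 0}w)\asymp|v-w|^{2-d}\asymp N^{2-d}$ play exactly the role that $\theta_d(N)$ played before. The one point requiring care is bookkeeping of which normalization is being divided out when invoking the upper bound (\ref{thm1_small_n_four_point}) in the Markov/tail step — one must make sure the exponent $N^{-[(\frac{3d}{2}-1)\boxdot(2d-4)]}$ is used consistently rather than $N^{-[(\frac{d}{2}+1)\boxdot 4]}$ — but this is purely notational. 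I would therefore present the proof as: ``The argument is identical to that of Lemma \ref{additional_lemma_two_arm}, with (\ref{464}) replaced by (\ref{newfix_6.27}), the upper bound in (\ref{thm1_small_n_bar}) replaced by the upper bound in (\ref{thm1_small_n_four_point}), and $\partial B(N)$ replaced by $\{w\},\{w'\}$ throughout; we omit the repetition.''
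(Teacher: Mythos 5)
Your proposal is correct and takes essentially the same route as the paper: the paper itself does not spell out a separate proof of this lemma, but simply states that it follows "by employing the arguments in the proof of Lemma \ref{additional_lemma_two_arm} (where we only need to replace the bound (\ref{464}) with (\ref{newfix_6.27}))", which is precisely what you propose, with the additional (correct) observations that the tail/Markov step now uses the upper bound from Lemma \ref{lemma_prob_Cpsi_small_n} in place of the upper bound in (\ref{thm1_small_n_bar}), and that $\Cref{const_additional_lemma_four_point2}$ must be chosen so that $\widetilde{B}(\Cref{const_additional_lemma_four_point2}N)$ contains $w,w'\in\widetilde{B}(10N)$.
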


 \subsection{Proof of (\ref{thm1_large_n_bar})}\label{newsubsection4.2}
We begin by verifying the lower bound in (\ref{thm1_large_n_bar}). By Lemmas \ref{lemma_point_to_boundary}, \ref{lemma_prob_Cpsi} and \ref{lemma_highd_volume_two_arm}, we have: for any $\chi \ge C$, $N\ge C'\chi $ and $\psi\in \Psi(\chi,2N)$, 
\begin{equation}\label{new4.65}
	\mathbb{P}\big( \mathsf{C}^{\mathrm{out}}[\psi,\cref{const_point_to_boundary2} ] \big) \gtrsim  \chi^{(3-\frac{d}{2})\boxdot 0}N^{-[(\frac{d}{2}+1)\boxdot 4]}. 
\end{equation}
 For any $\psi=(v_1,v_2,w_1,w_2)\in \Psi(\chi,2N)$, since $w_1,w_2$ are contained in $[B(2N)]^c$, one has $B_{w_i}(\cref{const_point_to_boundary2}N)\subset [B(N)]^c$ for $i\in \{1,2\}$. Hence, we have $\mathsf{C}^{\mathrm{out}}[\psi,\cref{const_point_to_boundary2} ]\subset \overline{\mathsf{H}}_{v'}^{v}(N)$. Combined with (\ref{new4.65}), it implies the lower bound in (\ref{thm1_large_n_bar}).

  For the upper bound in (\ref{thm1_large_n_bar}), when $d\ge 7$, by the BKR inequality we have 
 \begin{equation}
 	\mathbb{P}\big( \overline{\mathsf{H}}_{v'}^{v}(N) \big)\lesssim  \big[\theta_d(cN)\big]^2\lesssim N^{-4}. 
 \end{equation}
 It remains to prove the upper bound for $3\le d\le 5$. Using the decomposition in (\ref{new.462}), it suffices to prove that for any $\psi=(v_1,v_2,w_1,w_2)\in \Psi(\chi,N)$,  
  \begin{equation}\label{489}
  	\begin{split}
  		\mathbb{P}\big( \mathsf{C}^{\mathrm{out}}[ \psi, \cref{const_point_to_boundary2}  ]  \big)\lesssim \chi^{ 3-\frac{d}{2} }N^{-  \frac{d}{2}-1 }. \
  	\end{split}
  \end{equation}
   To achieve this, we arbitrarily take $\psi':=(x_1,x_2,y_1,y_2)\in \Psi(\Cref{const_point_to_boundary1},c_\dagger \chi)$, where $c_\dagger$ will be determined later. We take two paths $\widetilde{\eta}_1$ and $\widetilde{\eta}_2$ satisfying the following: 
  \begin{itemize}
  	\item    $\widetilde{\eta}_i$ starts from $v_j$ and ends at $y_j$;

  	\item   	 $\widetilde{\eta}_i$ consists of at most $2d$ line segments, and is contained in $\widetilde{B}(10\chi)\setminus \widetilde{B}(\frac{c_\dagger \chi}{10})$;

  	\item $	\mathrm{dist}\big( \mathrm{ran}(\widetilde{\eta}_1), \mathrm{ran}(\widetilde{\eta}_2)  \big) \ge c_\star \chi $ for some constant $c_\star>0$.

  \end{itemize}
   We denote $D_j^{\delta}:=\cup_{v\in \mathrm{ran}(\widetilde{\eta}_j)} \widetilde{B}_v(\delta \chi)$. It follows from (\ref{3.5_ineq_lemma_separation_1}) that 
    \begin{equation}
    	\mathbb{P}\big( \mathsf{D}_\delta     \mid \mathsf{C}[\psi] \big)  := 	\mathbb{P}\big(\cup_{j\in \{1,2\}}  \{w_j\xleftrightarrow{}  D_{3-j}^{\delta} \}     \mid \mathsf{C}[\psi] \big) 
   \end{equation} 
  uniformly converges to zero as $\delta\to 0$.
 Combined with (\ref{3.5_ineq_lemma_separation_3}), it yields that (recall the notation $\widehat{\mathsf{C}}[\cdot]$ before Lemma \ref{lemma_separation})
   \begin{equation}
   	  	\mathbb{P}\big( \widehat{\mathsf{C}}[\psi, \cref{const_lemma_separation3}]\cap (\mathsf{D}_{\delta_\ddagger})^c    \big)  \asymp   	\mathbb{P}\big(  \mathsf{C}[\psi] \big) 
   \end{equation}
   for a sufficiently small constant $\delta_\ddagger >0$. Based on this estimate, we may employ the second moment method as in the proof of Lemma \ref{lemma_point_to_boundary} to derive that for some $c^{(1)}_{\ddagger},c^{(2)}_{\ddagger},c^{(3)}_{\ddagger}>0$ depending on $d$ and $\delta_\ddagger$ (recall $\widehat{\mathsf{C}}^{\diamond}[\cdot]$ before Lemma \ref{lemma_point_to_boundary}), 
    \begin{equation}\label{493}
\mathbb{P}\big( \mathsf{A}  \big) :=   	 \mathbb{P}\big(   \widehat{\mathsf{C}}^{\mathrm{in}}[\psi, c^{(1)}_{\ddagger} ,c^{(2)}_{\ddagger},c^{(3)}_{\ddagger}] \cap (\mathsf{D}_{\delta_\ddagger}) ^c  \big) \asymp \mathbb{P}\big(  \mathsf{C}[\psi] \big) \cdot \chi^{ d-2 }. 
   \end{equation}
   Specifically, similar to (\ref{newfixadd4.63}), we take a point $z\in \partial B_{v_1}(4c^{(1)}_{\ddagger}\chi  )$, define $\widetilde{S}_\cdot^z\sim \widetilde{\mathbb{P}}_z$, and then consider the quantity 
   \begin{equation}
   	\mathbf{X}:= \sum\nolimits_{x\in B_{v_1}(c^{(1)}_{\ddagger}\chi  )} \mathbbm{1}_{ \{\widetilde{S}_\cdot^z\ \text{hits}\ x\} \cap  \widehat{\mathsf{C}}^{\mathrm{in}}[(x,v_2,w_1,w_2), c^{(1)}_{\ddagger} ,c^{(2)}_{\ddagger},c^{(3)}_{\ddagger}] \cap (\mathsf{D}_{\delta_\ddagger}) ^c}.
   \end{equation}
    By (\ref{493}), the first moment of $\mathbf{X}$ satisfies $\mathbb{E}[\mathbf{X}]\asymp \chi^2  \mathbb{P}\big(  \mathsf{C}[\psi] \big)$. Moreover, by (\ref{good368}), the second moment of $\mathbf{X}$ is at most $C\chi^{5-\frac{d}{2}}\cdot \mathbb{P}\big(  \mathsf{C}[\psi] \big)$. Thus, as in (\ref{372}), applying the Paley-Zygmund inequality, one has $\mathbb{P}\big(\mathbf{X} >0 \big)\gtrsim \chi^{\frac{d}{2}-1}\mathbb{P}\big(  \mathsf{C}[\psi] \big)$. Next, following the same arguments as in (\ref{new372})-(\ref{380.5}), this yields 
    \begin{equation}
    	\begin{split}
    		\mathbb{P}\big(  \{\overline{\mathsf{G}}  \Vert v_2\xleftrightarrow{} w_2 \} \cap  (\mathsf{D}_{\delta_\ddagger}) ^c \big) \gtrsim \chi^{\frac{d}{2}-1}\cdot \mathbb{P}\big(  \mathsf{C}[\psi] \big),
    	\end{split}
    \end{equation}
     where $\overline{\mathsf{G}}$ is defined by $ \big\{\mathcal{V}_{w_1,v_1}(c^{(2)}_{\ddagger}\chi )\ge c^{(3)}_{\ddagger} \chi^{\frac{d}{2}+1}  \big\}\cap \big\{\mathcal{T}_{w_1,v_1}(c^{(2)}_{\ddagger}\chi )\ge c^{(3)}_{\ddagger}  \chi^{d-2} \big\}$. We then repeat this second moment method for the quantity (here $\widetilde{S}_\cdot^{z'}\sim \widetilde{\mathbb{P}}_{z'}$ is an independent Brownian motion with $z'\in \partial B_{v_2}(4c^{(1)}_{\ddagger}\chi  )$)
    \begin{equation}
    	\mathbf{X}':= \sum\nolimits_{x\in B_{v_2}(c^{(1)}_{\ddagger}\chi  )} \mathbbm{1}_{ \{\widetilde{S}_\cdot^{z'}\ \text{hits}\ x\} \cap \{\overline{\mathsf{G}}  \Vert v_2\xleftrightarrow{} w_2 \} \cap  (\mathsf{D}_{\delta_\ddagger}) ^c},  
    \end{equation}
    thereby deriving the claimed bound (\ref{493}).

   Similarly to (\ref{493}), we also have (letting $\mathsf{D}_\delta':=\cup_{j\in \{1,2\}}  \{x_j\xleftrightarrow{}  D_{3-j}^{\delta} \}$)
   \begin{equation}\label{494}
   	\mathbb{P}\big( \mathsf{A}'  \big) :=  	 \mathbb{P}\big(   \widehat{\mathsf{C}}^{\mathrm{out}}[\psi',c^{(1)}_{\ddagger} ,c^{(2)}_{\ddagger},c^{(3)}_{\ddagger}] \cap (\mathsf{D}_{\delta_\ddagger}' )^c  \big)  \asymp \mathbb{P}\big(  \mathsf{C}[\psi'] \big) \cdot \chi^{ d-2 }. 
   \end{equation}
We require that $c_\dagger<\frac{1}{10}(c^{(2)}_{\ddagger})^2$. Since the event $\mathsf{A}$ depends only on the loops outside $\widetilde{B}(c^{(2)}_{\ddagger}\chi)$, while $\mathsf{A}'$ depends only on the loops within $\widetilde{B}((c^{(2)}_{\ddagger})^{-1}c_\dagger \chi)$, one has  
\begin{equation}\label{495}
 	\begin{split}
 		 	\mathbb{P}\big(\mathsf{A}\cap  \mathsf{A}'  \big) = \mathbb{P}\big(\mathsf{A}  \big) \cdot \mathbb{P}\big(  \mathsf{A}'  \big).
 	\end{split}
 \end{equation}

   Conditioned on $\{ \mathcal{C}_{w_1}=A_1,\mathcal{C}_{w_2}=A_2, \mathcal{C}_{x_1}=A_1',\mathcal{C}_{x_2}=A_2'\}$, where $A_1,A_2,A_1'$ and $A_2'$ are proper realizations of the corresponding clusters such that $\mathsf{A}\cap  \mathsf{A}'$ occurs, the distribution of the remaining loops is given by $\widetilde{\mathcal{L}}_{1/2}^{\overline{A}}$ with $\overline{A}:=A_1\cup A_2\cup A_1'\cup A_2'$. For any $U\subset \widetilde{\mathbb{Z}}^d$ and $\epsilon>0$, we define the event  
    \begin{equation}
    	\mathsf{F}_\epsilon^{U}:=  \cup_{j\in \{1,2\}}\big\{D_j^{\epsilon \cdot \delta_\ddagger} \xleftrightarrow{ (U) }  \widetilde{\partial} D_j^{\delta_\ddagger}   \big\}.  
    \end{equation}
    In addition, we define $\mathsf{F}_\epsilon$ as $\mathsf{F}_\epsilon^{\overline{A}}$ when $\{ \mathcal{C}_{w_1}=A_1,\mathcal{C}_{w_2}=A_2, \mathcal{C}_{x_1}=A_1',\mathcal{C}_{x_2}=A_2'\}$ occurs. Since $\widetilde{\mathcal{L}}_{1/2}^{\overline{A}}\subset \widetilde{\mathcal{L}}_{1/2}$, it follows from (\ref{crossing_low}) and the FKG inequality that for a small constant $\epsilon_\ddagger>0$, the conditional probability of $(\mathsf{F}_{\epsilon_\ddagger})^c $ given $\mathcal{F}_{\mathcal{C}_{\{w_1,w_2,x_1,x_2\}}}$ is uniformly bounded away from zero. Thus, we have 
    \begin{equation}\label{addnew496}
    		\mathbb{P}\big(\mathsf{A}\cap  \mathsf{A}' \cap  (\mathsf{F}_{\epsilon_\ddagger})^c \big) \asymp   	\mathbb{P}\big(\mathsf{A}\cap  \mathsf{A}'  \big) . 
    \end{equation}  
 Moreover, on the event $\mathsf{A}\cap  \mathsf{A}' \cap  (\mathsf{F}_{\epsilon_\ddagger})^c$, if one adds two loops $\widetilde{\ell}_1$ and $\widetilde{\ell}_2$ to $\widetilde{\mathcal{L}}_{1/2}$ such that $\widetilde{\ell}_j$ is contained in $D_j^{\epsilon_\ddagger\cdot \delta_\ddagger}$ and connects $\mathcal{C}_{w_j}$ and $\mathcal{C}_{x_j}$, then the following occur (see Figure \ref{fig1} for an illustration): 
 \begin{enumerate}

 	\item[(i)] For $j\in \{1,2\}$, the new cluster of $\widetilde{\mathcal{L}}_{1/2}$ containing $x_j$ (denoted by $\overline{\mathcal{C}}_j$) consists of $\mathcal{C}_{w_j}$, $\mathcal{C}_{x_j}$, $\widetilde{\ell}_j$ and loops of $\widetilde{\mathcal{L}}_{1/2}$ contained in $D_j^{\delta_\ddagger}$.

 	\item[(ii)]   The clusters $\overline{\mathcal{C}}_1$ and $\overline{\mathcal{C}}_2$ are disjoint.

 	\item[(iii)] The cluster $\overline{\mathcal{C}}_j$ connects $x_j$ and $w_j$, and has volume at least $c^{(3)}_{\ddagger}n^{d-2}$ within $\widetilde{B}_{{x}_j}(20\chi)$.

 \end{enumerate}

\begin{figure}[h]
	\centering
	\includegraphics[width=0.6\textwidth]{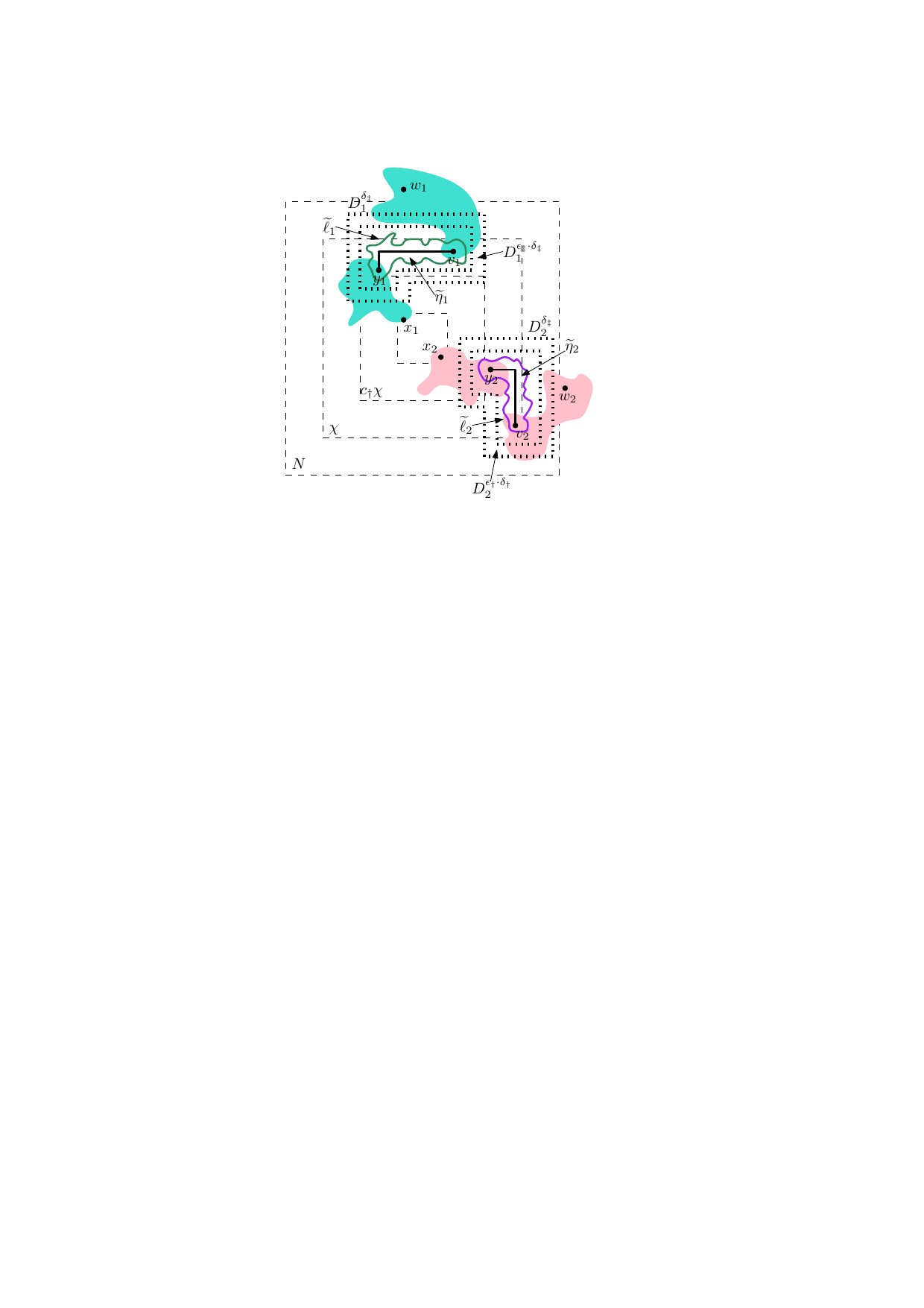}
	\caption{In this illustration, the cyan (resp. pink) regions represent the disjoint clusters $\mathcal{C}_{w_1}$ and $\mathcal{C}_{x_1}$ (resp. $\mathcal{C}_{w_2}$ and $\mathcal{C}_{x_2}$). In order to connect these two pairs of clusters separately, the loops $\widetilde{\ell}_1$ and $\widetilde{\ell}_2$ (i.e., the green and purple curves) are added to $\widetilde{\mathcal{L}}_{1/2}$. For $j\in \{1,2\}$, since $\widetilde{\ell}_j$ is contained in $D_j^{\epsilon_\ddagger\cdot \delta_{\ddagger}}$, the absence of the event $\mathsf{F}_{\epsilon_\ddagger}$ ensures that after adding $\widetilde{\ell}_j$, the enlarged part of the new cluster $\overline{\mathcal{C}}_j$ containing $x_j$ (i.e., the union of clusters intersecting $\widetilde{\ell}_j$) is contained in $D_j^{\delta_{\ddagger}}$ and hence, is disjoint from $\overline{\mathcal{C}}_{3-j}$. Thus, after this modification, the event $\mathsf{C}[(x_1,x_2,w_1,w_2)]$ occurs. 
		 }\label{fig1}
\end{figure}

 We denote the intersection of these events by $\mathsf{G}$. In particular, one has $\mathsf{G}\subset \mathsf{C}[(x_1,x_2,w_1,w_2)]$. Note that the total mass of loops $\widetilde{\ell}_1$ and $\widetilde{\ell}_2$ is of order $1$ since $\mathrm{cap}(\mathcal{C}_{w_j}\cap \widetilde{B}_{v_j}(c_\ddagger^{(1)}n))\ge c_\ddagger^{(3)}\chi^{d-2}$ and $\mathrm{cap}(\mathcal{C}_{x_j}\cap \widetilde{B}_{y_j}(c_\ddagger^{(1)}\chi))\ge c_\ddagger^{(3)}(c_\dagger \chi)^{d-2}$. Thus, 
 \begin{equation}\label{addnew497}
 \begin{split}
 	 	&\mathbb{P}\big(	\mathsf{G}\big)\asymp \mathbb{P}\big(\mathsf{A}\cap  \mathsf{A}' \cap ( \mathsf{F}_{\epsilon_\ddagger})^c \big)   \\\overset{(\ref{495}),(\ref{addnew496})}{\gtrsim}& \mathbb{P} (\mathsf{A}  )  \cdot  \mathbb{P} (  \mathsf{A}'  ) \overset{ (\ref{493}), (\ref{494})}{\gtrsim}   \chi^{2d-4}\cdot \mathbb{P}\big(  \mathsf{C}[\psi] \big)\cdot \mathbb{P}\big(  \mathsf{C}[\psi'] \big). 
 \end{split}
 \end{equation}
 Combined with $\mathsf{G}\subset \mathsf{C}[(x_1,x_2,w_1,w_2)]$ and Lemmas \ref{lemma_prob_Cpsi_small_n} and \ref{lemma_prob_Cpsi}, this implies (\ref{489}), which in turn yields the upper bound in (\ref{thm1_large_n_bar}). In conclusion, we complete the proof of Theorem \ref{thm1}.  \qed

 \begin{remark}[extension with zero boundary conditions]\label{remark_extension_zero_boundary}
 Throughout the proof of Theorem \ref{thm1}, all relevant quantities---including the hitting probabilities of Brownian motion, Green's functions, two-point functions, one-arm probabilities, and crossing probabilities---will change by at most a constant factor when zero boundary conditions are imposed on either a distant region or a small neighborhood (see e.g., \cite[Proposition 1.8, Lemmas 2.1 and 2.2]{cai2024quasi}). Thus, applying the same argument yields analogous bounds in Theorem \ref{thm1} for the heterochromatic two-arm probability under appropriate zero boundary conditions. Specifically, for any $d\ge 3$ with $d\neq 6$, there exist $C,C',c,c'>0$ such that for any $N\ge C$ and $v,v'\in \widetilde{B}(cN)$, the following hold:
 \begin{enumerate}

 	\item  When $ |v-v'|\le 1$, for any $D\subset [\widetilde{B}(C'N)]^c$, 
 	\begin{equation}
 		\mathbb{P}^{D}\big( \mathsf{H}_{v'}^v(N) \big)\asymp |v-v'|^{\frac{3}{2}}N^{-[(\frac{d}{2}+1)\boxdot 4]}; 
 	\end{equation}

 	\item When $ |v-v'|\ge 1$, if there exists $n\in [1,c'N]$ such that $v,v'\in \widetilde{B}(10n)\setminus \widetilde{B}(n)$, then for any $D\subset \widetilde{B}(\frac{n}{C'})\cup [\widetilde{B}(C'N)]^c$, 
 	\begin{equation}
 		\mathbb{P}^{D}\big( \mathsf{H}_{v'}^v(N) \big)\asymp |v-v'|^{(3-\frac{d}{2})\boxdot 0}N^{-[(\frac{d}{2}+1)\boxdot 4]}. 
 	\end{equation}

 \end{enumerate}

 \end{remark}

 \subsection{Proof of Theorem \ref{theorem_four_point}}\label{subsection_theorem_four_point_remaining}
 Recall that (\ref{thm1_small_n_four_point}) has been verified by Lemmas \ref{lemma_prob_Cpsi_small_n} and \ref{additional_lemma_four_point}, and that the lower bound in (\ref{thm1_large_n_four_point}) has been proved in Lemmas \ref{lemma_prob_Cpsi} and \ref{lemma_highd_volume_two_arm}. Hence, it remains to consider the upper bound in (\ref{thm1_large_n_four_point}). By the isomorphism theorem and Lemma \ref{lemma_point_to_boundary}, we have: for any $\psi=(v,v',w,w')\in \Psi(\chi,N)$, 
\begin{equation}\label{final640}
	\begin{split}
		\mathbb{P}\big( \mathsf{H}_{v',w'}^{v,w}\big)\asymp 	\mathbb{P}\big( \mathsf{C}[\psi]\big) \asymp & N^{-[(d-2)\boxdot (2d-8)]} \cdot \mathbb{P}\big( \mathsf{C}^{\mathrm{out}}[\psi,\cref{const_point_to_boundary2} ]\big)\\
		\asymp & N^{-[(d-2)\boxdot (2d-8)]} \cdot \mathbb{P}\big( \mathsf{H}^{v, B_{{w}_1}(\cref{const_point_to_boundary2} N)}_{v', B_{{w}_2}(\cref{const_point_to_boundary2} N)} \big). 
	\end{split}
\end{equation}
  Meanwhile, by $\mathsf{H}^{v, B_{{w}_1}(\cref{const_point_to_boundary2} N)}_{v', B_{{w}_2}(\cref{const_point_to_boundary2} N)} \subset \mathsf{H}^{v}_{v'}(\frac{N}{10})$ and Theorem \ref{thm1}, one has 
 \begin{equation}\label{final641}
 	\mathbb{P}\big( \mathsf{H}^{v, B_{{w}_1}(\cref{const_point_to_boundary2} N)}_{v', B_{{w}_2}(\cref{const_point_to_boundary2} N)} \big) \le 	\mathbb{P}\big( \mathsf{H}^{v}_{v'}(\tfrac{N}{10}) \big) \lesssim \chi^{(3-\frac{d}{2})\boxdot 0}\cdot N^{-[(\frac{d}{2}+1)\boxdot 4]}.
 \end{equation}
 Plugging (\ref{final641}) into (\ref{final640}), we get the upper bound in (\ref{thm1_large_n_four_point}), and thus conclude the proof of Theorem \ref{theorem_four_point}.       \qed

  As noted in Remark \ref{remark_extension_zero_boundary}, imposing suitable zero boundary conditions also preserves the estimates in Theorem \ref{theorem_four_point}, which leads to the following result:

 \begin{lemma}
 	 For any $d\ge 3$ with $d\neq 6$, there exist $C,C',c,c'>0$ such that for any $N\ge C$, $v,v'\in \widetilde{B}(cN)$ and $w,w'\in \widetilde{B}(10N)\setminus \widetilde{B}(N)$ with $|w-w'|\ge N$, 
 	  \begin{enumerate}

 	\item  When $ |v-v'|\le 1$, for any $D\subset [\widetilde{B}(C'N)]^c$, 
 	\begin{equation}
 		\mathbb{P}^{D}\big( \mathsf{H}_{v',w'}^{v,w}  \big)\asymp |v-v'|^{\frac{3}{2}}N^{-[(\frac{3d}{2}-1)\boxdot (2d-4)]}; 
 	\end{equation}

 	\item When $ |v-v'|\ge 1$, if there exists $n\in [1,c'N]$ such that $v,v'\in \widetilde{B}(10n)\setminus \widetilde{B}(n)$, then for any $D\subset \widetilde{B}(\frac{n}{C'})\cup [\widetilde{B}(C'N)]^c$, 
 	\begin{equation}
 		\mathbb{P}^{D}\big( \mathsf{H}_{v',w'}^{v,w}  \big)\asymp |v-v'|^{(3-\frac{d}{2})\boxdot 0}N^{-[(\frac{3d}{2}-1)\boxdot (2d-4)]}. 
 	\end{equation}

 \end{enumerate}
 \end{lemma}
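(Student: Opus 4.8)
The plan is to deduce this lemma from Theorem~\ref{theorem_four_point} together with the extension mechanism described in Remark~\ref{remark_extension_zero_boundary}. The statement asserts that the estimates on the heterochromatic four-point function $\mathbb{P}(\mathsf{H}_{v',w'}^{v,w})$ established in Theorem~\ref{theorem_four_point} remain valid (up to constant factors depending only on $d$) when the unconditioned measure $\mathbb{P}$ is replaced by $\mathbb{P}^D$, provided the conditioning set $D$ is either far from all four marked points (case~1, when $|v-v'|\le 1$) or is confined to a small inner ball together with a far-away outer region (case~2, when $v,v'$ lie in an annulus $\widetilde{B}(10n)\setminus\widetilde{B}(n)$). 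The proof is a matter of tracking through every quantitative input used in the proof of Theorem~\ref{theorem_four_point} and checking that each is stable under these zero boundary conditions, then reassembling the argument verbatim.

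First I would isolate the list of ingredients that enter the proof of Theorem~\ref{theorem_four_point}: the Brownian hitting probability estimate \eqref{new2.1}, the Green's function bounds \eqref{23} and \eqref{addnew25}, the two-point function formula \eqref{29} and its perturbed version Lemma~\ref{lemma_different_two_point}, the one-arm and crossing probability asymptotics \eqref{one_arm_low}--\eqref{crossing_high}, the regularity results from \cite{cai2024quasi} (Lemmas~\ref{lemma_harnack}, \ref{lemma_onecluster_box_point}, \ref{lemma_cite_three_point}, \ref{lemma_new_revise}), the separation and rigidity machinery (Lemmas~\ref{lemma_rigidity}, \ref{lemma_separation}, \ref{lemma_roots}, \ref{lemma_point_to_boundary}), and the key lower-bound lemmas \ref{lemma_prob_Cpsi}, \ref{lemma_prob_Cpsi_small_n}, \ref{lemma_highd_volume_two_arm}. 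For each of these, one invokes the fact---cited in Remark~\ref{remark_extension_zero_boundary} and traceable to \cite[Proposition~1.8, Lemmas~2.1 and~2.2]{cai2024quasi}---that imposing zero boundary conditions on a set $D$ that stays at distance $\gtrsim N$ from the relevant points (for the outer part) or lies inside a ball of radius $\lesssim n/C'$ around the origin (for the inner part) changes the quantity by at most a multiplicative constant. Concretely, for case~2 one works under $\mathbb{P}^D$ with $D\subset\widetilde{B}(n/C')\cup[\widetilde{B}(C'N)]^c$: the points $v,v'$ are separated from $D$ by distance $\gtrsim n$ on the inside and $\gtrsim N$ on the outside, and $w,w'$ are separated from $D$ by distance $\gtrsim N$; since the whole Theorem~\ref{theorem_four_point} argument only ever uses local geometry near $\{v,v'\}$ (at scale $\chi$) and near $\{w,w'\}$ (at scale $N$) plus the macroscopic crossing of the annulus $\widetilde{B}(N)\setminus\widetilde{B}(\chi)$, the conditioning set is ``invisible'' at each scale where an estimate is applied. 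One then re-derives the analogue of Lemma~\ref{lemma_point_to_boundary}, the separation lemma, and the second-moment computations under $\mathbb{P}^D$, using at each step the $D$-stable versions of \eqref{new2.1}, \eqref{23}, \eqref{29}, \eqref{one_arm_low}, \eqref{crossing_low}, etc., and concludes $\mathbb{P}^D(\mathsf{H}_{v',w'}^{v,w})\asymp\mathbb{P}(\mathsf{H}_{v',w'}^{v,w})$, which combined with Theorem~\ref{theorem_four_point} gives the claimed two-sided bounds. Case~1 is the same but simpler, since only the outer conditioning $D\subset[\widetilde{B}(C'N)]^c$ appears and the local analysis near $\{v,v'\}$ at scale $O(1)$ is entirely unaffected.

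The main obstacle---really more of a bookkeeping burden than a conceptual one---is that the proof of Theorem~\ref{theorem_four_point} is not self-contained: it funnels through Lemma~\ref{lemma_point_to_boundary}, which in turn rests on the rigidity lemma, the separation lemma, and a delicate second-moment argument with an auxiliary independent Brownian motion, and all of these are stated only for the unconditioned loop soup $\widetilde{\mathcal{L}}_{1/2}$. One must verify that the loop-soup restriction property, Werner's switching identity (Lemma~\ref{lemma_switching}), and the excursion-measure estimates (Lemmas~\ref{lemma_total_measure_K}, \ref{lemma_excursion_hitting_point}, \ref{lemma_new216}) all have clean analogues under the killed loop soup $\widetilde{\mathcal{L}}_{1/2}^D$—which they do, since $\widetilde{\mathcal{L}}_{1/2}^D$ is again a loop soup and all these identities are structural. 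The remaining care is purely quantitative: one checks that the constants $C,C',c,c'$ can be chosen uniformly so that, for the stated placements of $D$, every ``distance $\gtrsim$ scale'' hypothesis invoked in the original argument continues to hold with $D$ adjoined to the absorbing set. I would organize the write-up as: (i) state the $D$-stability of the elementary estimates; (ii) note that Lemmas~\ref{lemma_rigidity}--\ref{lemma_point_to_boundary} go through under $\mathbb{P}^D$ because their proofs use only the $D$-stable inputs and the structural loop-soup properties; (iii) re-run the proofs of Lemmas~\ref{lemma_prob_Cpsi_small_n}, \ref{lemma_prob_Cpsi}, \ref{lemma_highd_volume_two_arm} under $\mathbb{P}^D$; (iv) assemble as in Section~\ref{section5_four_point} and Section~\ref{subsection_theorem_four_point_remaining} to get $\mathbb{P}^D(\mathsf{H}_{v',w'}^{v,w})\asymp N^{-[(\frac{3d}{2}-1)\boxdot(2d-4)]}\cdot\chi^{\frac32}$ or $\cdot\chi^{(3-\frac d2)\boxdot 0}$ as appropriate. $\qed$
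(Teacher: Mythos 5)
Your proposal matches the paper's approach exactly: the paper gives no detailed proof of this lemma, instead noting immediately before the statement that it follows "as noted in Remark~\ref{remark_extension_zero_boundary}" by observing that every quantitative input to the proof of Theorem~\ref{theorem_four_point} is stable under imposing zero boundary conditions on a set $D$ placed far from (or close enough to the origin relative to) all four marked points. Your write-up simply makes explicit the bookkeeping that the paper compresses into that one-sentence citation.
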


\section{Cluster volumes under heterochromatic two-arm events}\label{section_clusters_volume}

In this section, we aim to establish Theorem \ref{thm3_volume}, which states that conditioned on the two-arm events, the volume growth of each involved cluster is of the same order as that of an unconditioned cluster (as shown in (\ref{result_typical_volume})). The proof is divided into the following two parts.
\begin{enumerate}

	\item[\textbf{Section \ref{section5.1_upper_volume}:}] We prove through constructions that given the occurrence of two distinct large clusters, both clusters have sufficiently large volume with a uniform positive probability.

	\item[\textbf{Section \ref{section5.2_lower_volume}:}]  We first establish an upper bound on the expected volume of the clusters involved in two-arm events. This bound, combined with Markov's inequality, yields the desired upper bound on their typical volumes.


\end{enumerate}

\subsection{Lower bound on the growth rate}\label{section5.1_upper_volume}
Recall $\mathcal{V}_{z,z'}$ and $\mathcal{T}_{z,z'}$ in (\ref{def353}). For brevity, we denote $\mathcal{V}_{z}:=\mathcal{V}_{z,z}$ and $\mathcal{T}_{z}:=\mathcal{T}_{z,z}$. We aim to prove that for some $c>0$, 
\begin{equation}\label{volume_part1}
		\mathbb{P}\big(  \mathcal{V}_v (M) \land  \mathcal{V}_{v'}(M) \ge c M^{(\frac{d}{2}+1)\boxdot 4}  \mid \overline{\mathsf{H}}_{v'}^{v}(N) \big) \gtrsim 1. 
		\end{equation}
		In what follows, we first prove this inequality under the restriction $|v-v'|\ge C$, and then extend it to the case when $|v - v'| \leq C$.


			Arbitrarily take $z_1,z_2\in \partial B(2N)$ with $|z_1-z_2|\ge d N$, and two sufficiently small constants $c_\ddagger,c_{ \star}>0$. For any $w_1\in B_{z_1}(c_\ddagger^{2}N)$ and $w_2\in B_{z_2}(c_\ddagger^{2}N)$, we define 
			\begin{equation}
				\begin{split}
					\mathsf{A}_{w_1,w_2}:=&  \mathsf{C}[(v,v',w_1,w_2)] \cap \{ \mathcal{V}_{v} (M) \land \mathcal{V}_{v'} (M) \ge c_{ \star} M^{(\frac{d}{2}+1)\boxdot 4} \} \\
					& \cap \big\{ v\xleftrightarrow{}  B_{z_2}(c_\ddagger N)  \big\}^c   
					  \cap \big\{ v' \xleftrightarrow{}   B_{z_1}(c_\ddagger N)    \big\}^c. 
				\end{split}
			\end{equation}
			We then consider the quantity
			\begin{equation}
 	\mathbf{X}_{w_2}:= \sum\nolimits_{w_1\in B_{z_1}(c_\ddagger^{2}N)} \mathbbm{1}_{\mathsf{A}_{w_1,w_2}}. 
 \end{equation}
			For the first moment of $\mathbf{X}_{w_2}$, we claim that 
\begin{equation}\label{3.11_7.3}
	\mathbb{E}[\mathbf{X}_{w_2}] \gtrsim  |v-v'|^{(3-\frac{d}{2})\boxdot 0}N^{-[(\frac{d}{2}-1)\boxdot (d-4)]}. 
\end{equation}

\textbf{When $3\le d\le 5$.} We employ $\chi $, $N$, $\psi=(v_1,v_2,w_1,w_2)$, $\psi':=(x_1,x_2,y_1,y_2)$ and $\mathsf{G}$ as in Section \ref{newsubsection4.2}. When $M\ge c_\ddagger^{-2}|v - v'|$, we take $\chi =M$, $x_1=v$ and $x_2=v'$. Recall the event $\mathsf{G}$ before (\ref{addnew497}), and note that $\mathsf{G}\subset \mathsf{A}_{w_1,w_2}$. As a result,  
    \begin{equation}\label{new52}
	\begin{split}
		 \mathbb{P} (  \mathsf{A}_{w_1,w_2} ) 
		\ge  \mathbb{P}\big(	\mathsf{G}\big) \overset{(\ref{addnew497})}{\gtrsim }\chi^{2d-4}\cdot \mathbb{P}\big(  \mathsf{C}[\psi] \big) \cdot \mathbb{P}\big(  \mathsf{C}[\psi'] \big)\overset{(\ref{new44})}{\gtrsim } |v-v'|^{3-\frac{d}{2}} N^{-\frac{3d}{2}+1}. 
	\end{split}
\end{equation}

 When $1\le M\le c_\ddagger^{-2}|v - v'|$, we denote $\hat{M}:=c_\ddagger^2 \cdot (M\land  |v - v'|)$. Arbitrarily take $x\in \partial B_v(\hat{M})$ and $x'\in \partial B_{v'}(\hat{M})$. For any $y\in B_x(c_\ddagger \hat{M})$ and $y'\in B_{x'}(c_\ddagger \hat{M})$, define 
\begin{equation}
	\begin{split}
		\mathsf{F}_{w_1,w_2}^{y,y'} :=&  \mathsf{C}[(y,y',w_1,w_2)]\cap \big\{ w_1 \xleftrightarrow{} B_v(c_\ddagger\hat{M})  \cup B_{v'}(c_\ddagger |v-v'|)  \cup B_{z_2}(c_\ddagger N) \big\}^c \\
		& \cap \big\{ w_2 \xleftrightarrow{} B_{v'}(c_\ddagger\hat{M})  \cup B_{v}(c_\ddagger |v-v'|) \cup B_{z_1}(c_\ddagger N)  \big\}^c. 
	\end{split}
\end{equation}
 By Theorem \ref{theorem_four_point} and Lemma \ref{lemma_separation}, one has 
\begin{equation}\label{3.18_ineq_7.7}
	\begin{split}
		\mathbb{P}\big( \mathsf{F}_{w_1,w_2}^{y,y'} \big)  \gtrsim  |v-v'|^{3-\frac{d}{2}} N^{-\frac{3d}{2}+1}.   
	\end{split}
\end{equation}
Repeating the arguments in the proof of (\ref{good363}), with $\widehat{\mathsf{C}}[(\cdot,\cdot,w_1,w_2), \cref{const_lemma_separation3}]$ replaced by $\mathsf{F}_{w_1,w_2}^{\cdot,\cdot}$, and with $B_{{v}_1}(\cref{const_point_to_boundary2} n),B_{{v}_2}(\cref{const_point_to_boundary2} n)$ replaced by $B_x(c_\ddagger \hat{M}),B_{x'}(c_\ddagger \hat{M})$, we obtain  
 \begin{equation}\label{3.11_7.8}
	\mathbb{P}\big( \widehat{\mathsf{F}}_{w_1,w_2} \big) \gtrsim \hat{M}^{d+2} |v-v'|^{3-\frac{d}{2}} N^{-\frac{3d}{2}+1}, 
\end{equation}
where $\widehat{\mathsf{F}}_{w_1,w_2}$ is defined by 
\begin{equation*}
	\begin{split}
 	 \widehat{\mathsf{F}}_{w_1,w_2}:=& \big\{ \mathrm{cap}\big( \mathcal{C}_{w_1}\cap \widetilde{B}_x(c_\ddagger \hat{M}) \big)  \ge c\hat{M}^{d-2}  \Vert \mathrm{cap}\big( \mathcal{C}_{w_2}\cap \widetilde{B}_{x'}(c_\ddagger \hat{M}) \big)  \ge c\hat{M}^{d-2}  \big\} \\
 	& \cap \big\{ w_1 \xleftrightarrow{} B_v(c_\ddagger\hat{M})  \cup B_{v'}(c_\ddagger |v-v'|)  \cup B_{z_2}(c_\ddagger N) \big\}^c  \\
		&  \cap \big\{ w_2 \xleftrightarrow{} B_{v'}(c_\ddagger\hat{M})  \cup B_{v}(c_\ddagger |v-v'|) \cup B_{z_1}(c_\ddagger N)  \big\}^c. 
	\end{split}
\end{equation*}


Let $u \in \{v,v'\}$ and $D\subset [B_u(c_\ddagger\hat{M})]^c$. It follows from (\ref{29}) and Lemma \ref{lemma_avoid_path_two_point} that  
\begin{equation}\label{3.18_add_7.9}
	\begin{split}
		\mathbb{P}\big( u \xleftrightarrow{(D)}  y, \{u \xleftrightarrow{(D)}\partial  B_u(c_\ddagger\hat{M})  \}^c\big) \asymp (|u-y|+1)^{2-d}, \ \forall y\in \widetilde{B}_{u}(c_\ddagger^2\hat{M}). 
	\end{split}
\end{equation}
 In addition, Lemma \ref{lemma_cite_three_point} implies that for any $y_1,y_2\in \widetilde{B}_{u}(c_\ddagger^2\hat{M})$, 
\begin{equation}\label{3.18_add_7.10}
	\mathbb{P}\big( u \xleftrightarrow{(D)}  y_1 ,u \xleftrightarrow{(D)}  y_2 \big)\lesssim (|y_1-y_2|+1)^{-\frac{d}{2}+1} \mathbb{P}\big( u \xleftrightarrow{(D)}  y_1  \big). 
\end{equation}
In fact, applying the second moment method as in Lemma \ref{lemma_point_to_boundary}, one can obtain
 \begin{equation}\label{3.11_7.11}
\mathbb{P}\big( \mathsf{G}_u^D \big)  \gtrsim 	\hat{M}^{-\frac{d}{2}+1}, 
 \end{equation}
where the event $\mathsf{G}_u^D$ is defined by 
\begin{equation*}
\begin{split}
	\mathsf{G}_u^D:= &  \big\{ \mathrm{cap}\big(\mathcal{C}_u^D\cap \widetilde{B}_{u}(c_\ddagger^2\hat{M}) \big) \ge c\hat{M}^{d-2} \big\}\cap \big\{ \mathrm{vol}\big(\mathcal{C}_u^D\cap \widetilde{B}_{u}(c_\ddagger^2\hat{M}) \big) \ge c \hat{M}^{\frac{d}{2}+1} \big\} \\
	& \cap \big\{u\xleftrightarrow{(D)}  \partial  B_u(c_\ddagger\hat{M}) \big\}^c. 
\end{split}	
\end{equation*} 
Precisely, as in (\ref{newfixadd4.63}), we take an independent Brownian motion $\widetilde{S}^z_\cdot$ starting from some $z\in \partial B_{u}(4c_{\ddagger}^2\hat{M})$, and then consider the quantity 
\begin{equation}
	\mathbf{M}:= \sum\nolimits_{y\in B_{u}(c_\ddagger^2\hat{M})\setminus B_{u}(c_\ddagger^2\hat{M}/2)}  \mathbbm{1}_{\{\widetilde{S}^z_{\cdot}\ \text{hits}\ y\}\cap \{u \xleftrightarrow{(D)}  y\}\cap  \{u \xleftrightarrow{(D)}\partial  B_u(c_\ddagger\hat{M})  \}^c}. 
\end{equation} 
By applying the Paley-Zygmund inequality, and using (\ref{3.18_add_7.9}) and (\ref{3.18_add_7.10}) to control the first and second moments of $\mathbf{M}$ respectively, one has 
\begin{equation}\label{3.18_nice_7.13}
	\mathbb{P}\big( \mathbf{M}>0 \big)\gtrsim  \hat{M}^{-\frac{d}{2}+1}. 
\end{equation}
Meanwhile, since $\{\mathbf{M}>0\}\subset \{u\xleftrightarrow{} \partial B_{u}(c_\ddagger^2\hat{M}/2)\}$, it follows from (\ref{one_arm_low}) that (\ref{3.18_nice_7.13}) is sharp up to multiplicative constants. Finally, by repeating the arguments leading from (\ref{add373}) to (\ref{380.5}), we derive (\ref{3.11_7.11}).


 By the restriction property, (\ref{3.11_7.8}) and (\ref{3.11_7.11}), we have 
\begin{equation}
	\begin{split}
\mathbb{P}(\mathsf{H}):=\mathbb{P}\big(\widehat{\mathsf{F}}_{w_1,w_2} , \mathsf{G}_v^{\emptyset}, \mathsf{G}_{v'}^{\emptyset},  \{v\xleftrightarrow{} w_1\}^c,  \{v'\xleftrightarrow{} w_2\}^c  \big) \gtrsim |v-v'|^{3-\frac{d}{2}} N^{-\frac{3d}{2}+1}. 
	\end{split}
\end{equation}
On the event $\mathsf{H}$, if one adds two loops $\ell$ and $\ell'$ satisfying 
 \begin{itemize}

 	\item   $\mathrm{ran}(\ell)\subset \widetilde{B}_{v}(c_\ddagger |v-v'|)$ and $\mathrm{ran}(\ell')\subset \widetilde{B}_{v'}(c_\ddagger |v-v'|)$;

 	\item  $\ell$ connects $\mathcal{C}_{v}$ and $\mathcal{C}_{w_1}$, and $\ell'$ connects $\mathcal{C}_{v'}$ and $\mathcal{C}_{w_2}$,

 \end{itemize}
then $\mathsf{A}_{w_1,w_2}$ occurs (since $M\asymp \hat{M}$). Noting that the total mass of such loops is uniformly bounded away from zero (by the lower bounds on capacities), we have 
 \begin{equation}\label{3.11_7.13}
 	\mathbb{P}(\mathsf{A}_{w_1,w_2}  ) \gtrsim \mathbb{P}(\mathsf{H}) \gtrsim |v-v'|^{3-\frac{d}{2}} N^{-\frac{3d}{2}+1}. 
 \end{equation}
 The estimates (\ref{new52}) and (\ref{3.11_7.13}) together yield (\ref{3.11_7.3}) for $3\le d\le 5$.

\textbf{When $d\ge 7$.} The claim (\ref{3.11_7.3}) follows from Lemma \ref{lemma_highd_volume_two_arm}: 
 \begin{equation}
 	\mathbb{E} [ \mathbf{X}_{w_2} ]\gtrsim | B_{z_1}(c_\ddagger^{2}N)| \cdot N^{4-2d}\asymp N^{4-d}. 
 \end{equation}

 \vspace*{0.25cm}

\noindent For the second moment of $\mathbf{X}_{w_2}$, we claim that 
\begin{equation}\label{3.11_7.15}
	\begin{split}
		\mathbb{E}\big[\mathbf{X}_{w_2}^2\big] \lesssim   |v-v'|^{(3-\frac{d}{2})\boxdot 0}N^{2\boxdot (8-d)} . 
	\end{split}
\end{equation}

 \textbf{When $3\le d\le 5$.} For any $w_1,w_1'\in B_{z_1}(c_\ddagger^{2}N)$, using the restriction property and Lemma \ref{lemma_cite_three_point}, we have  
\begin{equation}
	\begin{split}
		\mathbb{P}\big(\mathsf{A}_{w_1,w_2} \cap \mathsf{A}_{w_1',w_2}  \big) \le&  \mathbb{P}\big(v \xleftrightarrow{}  w_1\xleftrightarrow{} w_1'   \Vert v'\xleftrightarrow{} w_2, \{v'\xleftrightarrow{}B_{z_1}(c_\ddagger N)  \}^c \big) \\
		\lesssim & (|w_1-w_1'|+1)^{-\frac{d}{2}+1} \mathbb{P}\big( \mathsf{C}[(v,v',w_1,w_2)] \big)\\
		\overset{}{\lesssim} &  (|w_1-w_1'|+1)^{-\frac{d}{2}+1}|v-v'|^{3-\frac{d}{2}} N^{-\frac{3d}{2}+1}. 
	\end{split}
\end{equation}
Summing over all $w_1,w_1'\in B_{z_1}(c_\ddagger^{2}N)$, this yields the claim (\ref{3.11_7.15}).

 \textbf{When $d\ge 7$.} By the BKR inequality we have 
 \begin{equation}
 	\begin{split}
 		\mathbb{E}\big[\mathbf{X}_{w_2}^2\big] \le   \sum_{w_1,w_1'\in B_{z_1}(c_\ddagger^{2}N)} \mathbb{P}\big( v\xleftrightarrow{} w_1\xleftrightarrow{} w_1'   \big) \cdot \mathbb{P}\big(  v'\xleftrightarrow{} w_2  \big) 
 		\overset{(\text{Lemma}\ \ref{lemme_technical_high_dimension})}{\lesssim } N^{8-d}. 
 	\end{split}
 \end{equation}

{\color{red}


 
 }


 \vspace*{0.25cm}

For any $w_2\in B_{z_2}(c_\ddagger^{2}N)$, we define the event 
\begin{equation}
\begin{split}
		\widehat{\mathsf{A}}_{w_2} := & \big\{ v\xleftrightarrow{} \partial B(N) \Vert v'\xleftrightarrow{} w_2 \big\} \cap \big\{ v\xleftrightarrow{} B_{z_2}(c_\ddagger N) \big\}^c \\
		& \cap  \big\{ \mathcal{V}_{v} (M) \land \mathcal{V}_{v'} (M) \ge c_{ \star} M^{(\frac{d}{2}+1)\boxdot 4} \big\}. 
\end{split}
\end{equation}
By the Paley-Zygmund inequality, (\ref{3.11_7.3}) and (\ref{3.11_7.15}), we have 
  \begin{equation}
  	\mathbb{P}\big(\widehat{\mathsf{A}}_{w_2}  \big) \ge \mathbb{P}\big(\mathbf{X}_{w_2} >0 \big)\ge  \frac{(\mathbb{E}\big[ \mathbf{X}_{w_2}  \big])^2}{\mathbb{E}\big[ \mathbf{X}_{w_2} ^2\big]} \gtrsim   |v-v'|^{(3-\frac{d}{2})\boxdot 0} N^{-d}. 
  \end{equation}
   Repeating the second moment method above to the quantity 
   \begin{equation}
   	\widehat{\mathbf{X}}:= \sum\nolimits_{w_2\in B_{z_2}(c_\ddagger^{2}N)} \mathbbm{1}_{\widehat{\mathsf{A}}_{w_2}}, 
   \end{equation}
  we derive the following bound: 
  \begin{equation}
  \begin{split}
  	  	 & \mathbb{P}\big(  \mathcal{V}_v (M) \land  \mathcal{V}_{v'}(M) \ge c_\star  M^{(\frac{d}{2}+1)\boxdot 4} ,  \overline{\mathsf{H}}_{v'}^{v}(N) \big)  \\
  	  	 \ge & \mathbb{P}\big( 	\widehat{\mathbf{X}} >0 \big)  \gtrsim  |v-v'|^{(3-\frac{d}{2})\boxdot 0} N^{-[(\frac{d}{2}+1)\boxdot 4]}, 
  \end{split}
  \end{equation}   
  which together with Theorem \ref{thm1} implies (\ref{volume_part1}) for $|v-v'| \ge C$.


As for the case $|v-v'|\le C$, recall the argument in Section \ref{newsubsection4.1}, which was used to extend the lower bound in (\ref{thm1_small_n_bar}) from $|v-v'|\ge C$ to $|v-v'|\le C$. Specifically, we apply a union bound to identify two distinct points $x_1^*,x_2^*\in \partial B(C')$ such that the parts of their sign clusters outside $\widetilde{B}(C')$ reach $\partial B(N)$ with significant probability, and then explore $\widetilde{\phi}_\cdot$ inside $\widetilde{B}(C')$ to connect these two partial clusters to $v$ and $v'$ respectively. By repeating the same argument, we can similarly extend (\ref{volume_part1}) from $|v-v'|\ge C$ to $|v-v'| \le C$ with an additional factor of $|v-v'|^{\frac{3}{2}}$, i.e., 
 \begin{equation}
 	\mathbb{P}\big(  \mathcal{V}_{v} (M) \land  \mathcal{V}_{v'}(M) \ge c M^{(\frac{d}{2}+1)\boxdot 4} ,  \overline{\mathsf{H}}_{x_2}^{x_1}(N) \big) \gtrsim |v-v'|^{\frac{3}{2}}N^{-\frac{d}{2}-1}. 
 \end{equation}
Combined with Theorem \ref{thm1}, it yields (\ref{volume_part1}) for $|v-v'| \le C$.  \qed

\subsection{Upper bound on the growth rate }\label{section5.2_lower_volume}

 The goal of this subsection is to prove that for any $\lambda>1$,  
\begin{equation}\label{volume_part2}
	\mathbb{P}\big(  \mathcal{V}_v (M) \vee   \mathcal{V}_{v'}(M) \ge \lambda M^{(\frac{d}{2}+1)\boxdot 4}  \mid \overline{\mathsf{H}}_{v'}^{v}(N) \big) \lesssim  \lambda^{-1}.
  \end{equation}
By Markov's inequality and Theorem \ref{thm1}, it suffices to prove that for $x\in \{v,v'\}$, 
     \begin{equation}\label{revisenew_78}
  \sum\nolimits_{y\in B_{x}(M)} 	 \mathbb{P}\big(  x\xleftrightarrow{} y ,  \overline{\mathsf{H}}_{v'}^{v}(N) \big)  \lesssim  f(|v-v'|) \cdot \big(\tfrac{M}{N} \big)^{(\frac{d}{2}+1)\boxdot 4}, 
   \end{equation} 
  where $f(a)$ is defined by $f(a) = a^{\frac{3}{2}}$ for $0<a\le 1$, and $f(a) = a^{(3 - \frac{d}{2})\boxdot 0}$ for $a\ge 1$.

  Without loss of generality, we only consider the case $x=v$. By the restriction property, we have 
   \begin{equation}\label{new.54}
   	\begin{split}
   		 \mathbb{P}\big(  v\xleftrightarrow{} y,  \overline{\mathsf{H}}_{v'}^{v}(N) \big) 
  	\le &  \mathbb{E} \big[ \mathbbm{1}_{ v\xleftrightarrow{} y, v\xleftrightarrow{} \partial B(N) }\cdot \mathbb{P}^{\mathcal{C}_v}\big(v'\xleftrightarrow{} \partial B(\tfrac{N}{2})  \big) \big]\\
  		\overset{(\text{Lemma}\ \ref{lemma_onecluster_box_point})}{\lesssim } & N^{- (\frac{d}{2} \boxdot 3)}\sum_{w_2\in \partial \mathcal{B}(\frac{N}{2d})} \mathbb{P} \big(  v\xleftrightarrow{} y,v\xleftrightarrow{} \partial B(N) \Vert v'\xleftrightarrow{} w_2    \big). 
   	\end{split}
   \end{equation}    
  As in (\ref{3.4_4.33}), $\mathbb{P} (  v\xleftrightarrow{} y,v\xleftrightarrow{} \partial B(N) \Vert v'\xleftrightarrow{} w_2 )$ can be written as 
 \begin{equation}\label{3.12_7.26}
  \begin{split}
  	 			\int_{a_1,b_1,a_2,b_2>0}    \mathbb{P} \big(   \{ \cup \mathfrak{L}_1   \xleftrightarrow{\cup \mathcal{P}} \cup \mathfrak{L}_2   \}^c, \cup \mathfrak{L}_1 \xleftrightarrow{\cup \mathcal{P}}   \partial B(N) \big) 
 		 \bar{\mathfrak{p}}   \mathrm{d}a_1\mathrm{d}b_1\mathrm{d}a_2\mathrm{d}b_2. 
  \end{split}
 \end{equation} 
 Here $\mathfrak{L}_1$ (resp. $\mathfrak{L}_2$) has the same law as $\mathcal{P}^{(4)}$ under $\widecheck{\mathbb{P}}_{v\leftrightarrow y,a_1,b_1}$ (resp. $\widecheck{\mathbb{P}}^{\{v,y\}}_{v'\leftrightarrow w_2,a_2,b_2}$), $\mathcal{P}:=\mathcal{P}_{v,v',y,w_2}^{a_1,a_2,b_1,b_2}$ (defined below (\ref{3.9_4.17})), and $\bar{\mathfrak{p}}:=\mathfrak{p}_{v \leftrightarrow y ,a_1,b_1}\cdot \mathfrak{p}_{v'\leftrightarrow w_2,a_2,b_2}^{\{v,y\}}$. For each $z\in \{v,v',y,w_2\}$, let $\mathcal{P}_z$ denote the point measure consisting of excursions in $\mathcal{P}$ starting from $z$. On $\{ \cup \mathfrak{L}_1   \xleftrightarrow{\cup \mathcal{P}} \cup \mathfrak{L}_2   \}^c$, the cluster $\mathcal{C}_2:=\{ z\in \widetilde{\mathbb{Z}}^d: z\xleftrightarrow{\cup \mathcal{P} } \cup \mathfrak{L}_2 \}$ must be disjoint from $\cup (\mathcal{P}_v+\mathcal{P}_y +\mathfrak{L}_1)$. Moreover, if $\{\cup \mathfrak{L}_1 \xleftrightarrow{\cup \mathcal{P}}   \partial B(N)\}$ also occurs, then $\cup (\mathcal{P}_v+\mathcal{P}_y +\mathfrak{L}_1)$ is connected to $\partial B(N)$ by $\widetilde{\mathcal{L}}_{1/2}^{\mathcal{C}_2\cup \{v,y\}}$. Thus, letting $\mathsf{F}:=\{ \cup (\mathcal{P}_v+\mathcal{P}_y +\mathfrak{L}_1 ) \subset \widetilde{B}(cN) \}$ and applying Lemma \ref{lemma_onecluster_box_point}, we have 
 \begin{equation}\label{3.12_7.27}
 	 \begin{split}
 	& 	  \mathbb{P} \big(   \{ \cup \mathfrak{L}_1   \xleftrightarrow{\cup \mathcal{P}} \cup \mathfrak{L}_2   \}^c, \cup \mathfrak{L}_1 \xleftrightarrow{\cup \mathcal{P}}   \partial B(N) , \mathsf{F} \big) \\
   \lesssim &  N^{- (\frac{d}{2} \boxdot 3)}\sum\nolimits_{w_1\in \partial \mathcal{B}(d^{-1}N)}   \mathbb{P} \big(   \{ \cup \mathfrak{L}_1   \xleftrightarrow{\cup \mathcal{P}} \cup \mathfrak{L}_2   \}^c, \cup \mathfrak{L}_1 \xleftrightarrow{\cup \mathcal{P}} w_1 \big). 
 	 \end{split}
 \end{equation}
 Next, we estimate the probability of $\mathsf{G}:=\{ \cup \mathfrak{L}_1   \xleftrightarrow{\cup \mathcal{P}} \cup \mathfrak{L}_2   \}^c\cap \{ \cup \mathfrak{L}_1 \xleftrightarrow{\cup \mathcal{P}}   \partial B(N)\}\cap  \mathsf{F}^c$. Let $\chi:=|v-v'|$, $\hat{\chi}:=\chi\land 1$ and $r:=|v-y|+1$. Note that $\mathsf{G}$ implies $v'\notin \cup \mathcal{P}_v$, whose probability is at most $e^{-ca_1 \hat{\chi}^{-1}}$ (by (\ref{3.11_3.71})). Meanwhile, using Lemma \ref{lemma_new216} and (\ref{property_poisson_odd_number}), one has 
 \begin{equation}
 	\begin{split}
 		\mathbb{P}( \mathsf{F}^c)  \lesssim (a_1+ b_1) N^{2-d} + \sqrt{a_1b_1}\big(\tfrac{r}{N}\big)^{d-2}.  
 	\end{split}
 \end{equation}
Conditioned on $\mathsf{F}^c$, $\cup \mathfrak{L}_1$ and $\cup \mathfrak{L}_2$ both stochastically dominate a Brownian excursion from $\partial B_v(C(\chi+1))$ to $\partial B(cN)$; in addition, as noted in Remark \ref{remark_order_escape_prob}, the probability of two such excursions not being connected by the loop soup is proportional to $N^{2d-4}\cdot \mathbb{P}(\mathsf{C}[(\psi)])$ for $\psi\in \Psi(C(\chi+1),N)$, which is of order $[(\chi+1)/N]^{(3-\frac{d}{2})\boxdot 0}$ by Theorem \ref{theorem_four_point}. To sum up, 
  \begin{equation}\label{3.12_7.29}
  	\begin{split}
  		\mathbb{P}(\mathsf{G})\lesssim e^{-ca_1\hat{\chi}^{-1}} \big[ (a_1+ b_1) N^{2-d} + (a_1b_1)^{\frac{1}{2}}\big(\tfrac{r}{N}\big)^{d-2}\big] \cdot \big( \tfrac{\chi+1}{N }\big)^{(3-\frac{d}{2})\boxdot 0}.
  		  	\end{split}
  \end{equation}
 Combining (\ref{3.12_7.26}), (\ref{3.12_7.27}) and (\ref{3.12_7.29}), we have
 \begin{equation}\label{3.12_7.30}
 	\begin{split}
 		\mathbb{P} (  v\xleftrightarrow{} y,v\xleftrightarrow{} \partial B(N) \Vert v'\xleftrightarrow{} w_2 ) \lesssim N^{- (\frac{d}{2} \boxdot 3)} \mathbb{J}_1(y,w_2)+ \big( \tfrac{\chi+1}{N }\big)^{(3-\frac{d}{2})\boxdot 0} \mathbb{J}_2, 
 	\end{split}
 \end{equation}
 where $\mathbb{J}_1(y,w_2):=\sum\nolimits_{w_1\in \partial \mathcal{B}(d^{-1}N)} \mathbb{P} (  v\xleftrightarrow{} y,v\xleftrightarrow{} w_1 \Vert v'\xleftrightarrow{} w_2 ) $ and 
 \begin{equation*}
 	\mathbb{J}_2:= \mathbb{P}\big(v'\xleftrightarrow{(\{v,y\})} w_2\big) \int_{a_1,b_1>0} e^{-ca_1 \hat{\chi}^{-1}}\big[(a_1+ b_1) N^{2-d} + (a_1b_1)^{\frac{1}{2}}\big(\tfrac{r}{N}\big)^{d-2}  \big] \mathfrak{p}_{v \leftrightarrow y ,a_1,b_1}  \mathrm{d}a_1\mathrm{d}b_1.  
 \end{equation*}
  It follows from (\ref{250}) that $\mathfrak{p}_{v \leftrightarrow y ,a_1,b_1}  \lesssim r^{2-d}e^{-c(a_1+b_1)}$, which implies 
  \begin{equation}
  	\begin{split}
  			\mathbb{J}_2 \lesssim & \hat{\chi}^{\frac{1}{2}}r^{2-d}N^{4-2d} \int_{a_1,b_1>0} e^{-c(a_1 \hat{\chi}^{-1}+b_1)}\big[ a_1+ b_1   + (a_1b_1)^{\frac{1}{2}}r^{d-2}  \big]   \mathrm{d}a_1\mathrm{d}b_1 \\
  			\lesssim &  \hat{\chi}^{\frac{1}{2}}r^{2-d}N^{4-2d}(\hat{\chi}^2+ \hat{\chi}+ \hat{\chi}^{\frac{3}{2}}r^{d-2})\lesssim \hat{\chi}^{\frac{3}{2}}N^{4-2d}. 
  	\end{split}
  \end{equation}
  Combined with (\ref{new.54}) and (\ref{3.12_7.30}), it yields 
  \begin{equation}\label{3.12_7.32}
  	\begin{split}
  &	  \sum\nolimits_{y\in B_{v}(M)}  	\mathbb{P}\big(  v\xleftrightarrow{} y,  \overline{\mathsf{H}}_{v'}^{v}(N) \big) \\
  	   \lesssim   &N^{-(d\boxdot 6)}  \sum\nolimits_{y\in B_{v}(M),w_2\in \partial \mathcal{B}(\frac{N}{2d})} \mathbb{J}_1(y,w_2)  + f(\chi)\cdot \big( \tfrac{M}{N}\big)^d. 
  	\end{split}
  \end{equation}

 In what follows, we prove (\ref{revisenew_78}) for $3\le d\le 5$ and $d\ge 7$ separately.

 \textbf{When $3\le d\le 5$.} By Lemma \ref{lemma_five_point} and its extension in Remark \ref{3.12_remark_4.7},
 \begin{equation}\label{3.12_7.33}
 	\begin{split}
 		& \sum\nolimits_{y\in B_{v}(M),w_2\in \partial \mathcal{B}(\frac{N}{2d})} \mathbb{J}_1(y,w_2) \\
 		\lesssim & \sum\nolimits_{y\in B_{v}(M),w_1\in \partial \mathcal{B}(\frac{N}{d}) ,w_2\in \partial \mathcal{B}(\frac{N}{2d})}  (|v-y|+1)^{-\frac{d}{2}+1}\cdot \mathbb{P}\big( \mathsf{C}[(v,v',w_1,w_2)] \big) \\
 		\lesssim &f(\chi) \cdot N^{\frac{d}{2}-1} \sum\nolimits_{y\in B_{v}(M) }  (|v-y|+1)^{-\frac{d}{2}+1}   \lesssim f(\chi) \cdot  M^{\frac{d}{2}+1}N^{\frac{d}{2}-1}. 
 	\end{split}
 \end{equation}
 Plugging (\ref{3.12_7.33}) into (\ref{3.12_7.32}), we derive (\ref{revisenew_78}): 
 \begin{equation*}
 	\begin{split}
 		 \sum\nolimits_{y\in B_{v}(M)}  	\mathbb{P}\big(  v\xleftrightarrow{} y,  \overline{\mathsf{H}}_{v'}^{v}(N) \big)  \lesssim f(\chi)\cdot \big( \tfrac{M}{N}\big)^{\frac{d}{2}+1} + f(\chi)\cdot \big( \tfrac{M}{N}\big)^d \lesssim f(\chi)\cdot \big( \tfrac{M}{N}\big)^{\frac{d}{2}+1}. 
 	\end{split}
 \end{equation*}

  \textbf{When $d\ge 7$.} By the restriction property, one has 
     \begin{equation}
  	\begin{split}
  		& \sum\nolimits_{y\in B_{v}(M)}  \mathbb{P} (  v\xleftrightarrow{} y,v\xleftrightarrow{} w_1 \Vert v'\xleftrightarrow{} w_2 )  \\
  		= &   \sum\nolimits_{y\in B_{v}(M)} \mathbb{E}\big[ \mathbbm{1}_{  v\xleftrightarrow{} y,v\xleftrightarrow{} w_1,\{v\xleftrightarrow{} v'\}^c} \cdot \mathbb{P}\big(  v'\xleftrightarrow{(\mathcal{C}_v)} w_2 \big) \big] \\ 
  		\overset{v\in \mathcal{C}_v,(\text{FKG})}{\le }  & \mathbb{P}\big(  v'\xleftrightarrow{(v)} w_2 \big) \cdot \mathbb{P}(\{v\xleftrightarrow{} v'\}^c) \cdot \sum\nolimits_{y\in B_{v}(M)}   \mathbb{P} (  v\xleftrightarrow{} y,v\xleftrightarrow{} w_1  ) \\
  		\overset{ (\ref{29}), (\ref{3.12_3.69}), (\ref{3.12_3.39})}{\lesssim } & \hat{\chi}^{\frac{1}{2}} N^{2-d} \cdot \hat{\chi}  \cdot  M^4N^{2-d} = \hat{\chi}^{\frac{3}{2}}M^{4}N^{4-2d}. 
  	\end{split}
  \end{equation}
  Combined with (\ref{3.12_7.32}), it yields (\ref{revisenew_78}):  
  \begin{equation*}
  	 \sum\nolimits_{y\in B_{v}(M)}  	\mathbb{P}\big(  v\xleftrightarrow{} y,  \overline{\mathsf{H}}_{v'}^{v}(N) \big)  \lesssim  f(\chi)\cdot \big( \tfrac{M}{N}\big)^4 + f(\chi)\cdot \big( \tfrac{M}{N}\big)^d \lesssim  f(\chi)\cdot \big( \tfrac{M}{N}\big)^4.
  \end{equation*}

 In conclusion, we obtain the desired bound (\ref{volume_part2}). \qed





    {\color{red}

    }


   {\color{blue}

  }

With (\ref{volume_part1}) and (\ref{volume_part2}) in hand, the proof of Theorem \ref{thm3_volume} is straightforward.

 \begin{proof}[Proof of Theorem \ref{thm3_volume}]
 	By (\ref{volume_part1}) and (\ref{volume_part2}), there exists $C>0$ such that 
 	\begin{equation}
 		\begin{split}
 				\mathbb{P}\big(  \mathcal{V}_v(M), \mathcal{V}_{v'}(M) \in [C^{-1}M^{(\frac{d}{2}+1)\boxdot 4},CM^{(\frac{d}{2}+1)\boxdot 4}]   \mid \overline{\mathsf{H}}_{v'}^{v}(N) \big) \gtrsim 1. 
 		\end{split}
 	\end{equation}
 	This together with the isomorphism theorem implies the desired bound (\ref{ineq_thm3}). 
 \end{proof}

\section*{Acknowledgments} 

We warmly thank Gady Kozma for valuable discussions. We are also very grateful to Wendelin Werner for his insightful comments, which significantly improved the presentation of the manuscript. J. Ding is supported by the National Natural Science Foundation of China (Grant No. 12231002, 12595284, 12595280), and by the New Cornerstone Science Foundation through the New Cornerstone Investigator Program and XPLORER PRIZE.


\appendix

\section{Proof of Lemma \ref{lemme_technical_high_dimension}}\label{appendix2}

 For convenience, we set $0^{-a}:=1$ for all $a>0$.

The tree expansion argument (see \cite[Section 3.4]{cai2024high}) implies that on the event $\{\bm{0}\xleftrightarrow{} x_1,\bm{0}\xleftrightarrow{} x_2\}$, there exists a loop $\widetilde{\ell}_*\in \widetilde{\mathcal{L}}_{1/2}$ and three clusters $\mathcal{C}_1$, $\mathcal{C}_2$ and $\mathcal{C}_3$, each consisting of disjoint collections of loops in $\widetilde{\mathcal{L}}_{1/2}-\mathbbm{1}_{\widetilde{\ell}_*}$, such that the cluster $\mathcal{C}_1$, $\mathcal{C}_2$ and $\mathcal{C}_3$ connect $\mathrm{ran}(\widetilde{\ell}_*)$ to $x_1$, $x_2$ and $\bm{0}$ respectively. As a result, there exist $w_1,w_2,w_3\in \mathbb{Z}^d$ such that $\widetilde{\ell}_*$ intersects $\widetilde{B}_{w_j}(1)$ for $j\in \{1,2,3\}$, and that the events $w_1\xleftrightarrow{} x_1$, $w_2\xleftrightarrow{} x_2$ and $w_3\xleftrightarrow{} \bm{0}$ happen disjointly and without using $\widetilde{\ell}_*$. Recall that the total mass of loops intersecting $\widetilde{B}_{w_j}(1)$ for $j\in \{1,2,3\}$ is at most  $C|w_1-w_2|^{2-d}|w_2-w_3|^{2-d}|w_3-w_1|^{2-d}$ (see e.g.,  \cite[Lemma 2.3]{cai2024high}). In conclusion,  
\begin{equation}\label{B1}
	\begin{split}
		&\sum\nolimits_{x_2\in B_y(M)}\mathbb{P}\big( \bm{0}\xleftrightarrow{} x_1,\bm{0}\xleftrightarrow{} x_2 \big) \\
		\lesssim &\mathbb{J}:= \sum_{x_2\in B_y(M)}\sum_{w_1,w_2,w_3\in \mathbb{Z}^d}|w_1-w_2|^{2-d}|w_2-w_3|^{2-d}\\
		&\ \ \ \  \ \ \ \ \ \ \ \ \cdot |w_3-w_1|^{2-d}|w_1-x_1|^{2-d}|w_2-x_2|^{2-d}|w_3|^{2-d}. 
	\end{split}
\end{equation}
For $i\in \{1,2\}$, we denote by $\mathbb{J}_i$ the partial sum of $\mathbb{J}$ restricted to $w_i\in [B_{x_i}(\frac{N}{10})]^c$. In addition, let $\mathbb{J}_3$ denote the partial sum of $\mathbb{J}$ restricted to $w_3\in [B(\frac{N}{10})]^c$, and let $\mathbb{J}_4:=\mathbb{J}-\sum_{i\in \{1,2,3\}}\mathbb{J}_i$.

 For $i\in \{1,2\}$, since $|w_i-x_i|\gtrsim N$, we have 
 \begin{equation*}
 	\begin{split}
 		\mathbb{J}_i \lesssim & N^{2-d} \sum_{x_2\in B_y(M)}\sum_{w_1,w_2,w_3\in \mathbb{Z}^d}|w_1-w_2|^{2-d}|w_2-w_3|^{2-d}\\
		&\ \ \ \ \ \ \ \ \ \ \ \ \  \ \ \ \ \ \ \ \ \ \ \ \ \ \ \ \ \  \cdot |w_3-w_1|^{2-d}|w_{3-i}-x_{3-i}|^{2-d}|w_3|^{2-d}\\
		\overset{(\ref{computation_2-d_2-d})}{\lesssim } & N^{2-d}  \sum_{x_2\in B_y(M)} |x_{3-i}|^{4-d} \overset{(|x_{3-i}|\gtrsim N)}{\lesssim } N^{6-2d}M^d, 
 	\end{split}
 \end{equation*}
 where in the second inequality we first summed over $w_i$, then over $w_{3-i}$, and finally over $w_3$; in addition, when summing over $w_{3-i}$, we used the following fact: 
 \begin{equation}\label{computation_6-2d_2-d}
 	\sum\nolimits_{x\in \mathbb{Z}^d} |v-x|^{6-2d}|x-w|^{2-d}\lesssim |v-w|^{2-d}. 
 \end{equation}

For $\mathbb{J}_3$, since $|w_3|\gtrsim N$, one has 
\begin{equation*}
	\begin{split}
		\mathbb{J}_3 \lesssim &N^{2-d} \sum_{x_2\in B_y(M)}\sum_{w_1,w_2,w_3\in \mathbb{Z}^d}|w_1-w_2|^{2-d}|w_2-w_3|^{2-d}\\
		&\ \ \ \ \ \ \ \ \ \ \ \ \  \ \ \ \ \ \ \ \ \ \ \ \ \ \ \ \ \  \cdot |w_3-w_1|^{2-d}|w_{1}-x_{1}|^{2-d}|w_{2}-x_{2}|^{2-d}\\
		\overset{(\ref{computation_2-d_2-d}), (\text{\ref{computation_6-2d_2-d}})}{\lesssim } & N^{2-d}  \sum_{x_2\in B_y(M)} |x_1-x_2|^{4-d} \overset{(\ref{computation_d-a})}{\lesssim } N^{2-d}M^4. 
	\end{split}
\end{equation*}
Here in the second inequality we first summed over $w_3$, and then over $w_{1}$ and $w_2$.

Now we estimate $\mathbb{J}_4$. For each $i\in \{1,2\}$, note that $w_i\in B_{x_i}(\frac{N}{10})$ and $w_3\in B(\frac{N}{10})$ imply $|w_i-w_3|\gtrsim N$. Therefore, we have 
\begin{equation*}
	\begin{split}
		\mathbb{J}_4 \lesssim & N^{4-2d} \sum_{x_2\in B_y(M)}\sum_{w_1,w_2,w_3\in \mathbb{Z}^d}|w_1-w_2|^{2-d}|w_{1}-x_{1}|^{2-d}|w_{2}-x_{2}|^{2-d}|w_3|^{2-d} \\
	 \overset{(\ref{computation_d-a}) }{\lesssim } & N^{6-2d}\sum_{x_2\in B_y(M)}\sum_{w_1,w_2\in \mathbb{Z}^d}|w_1-w_2|^{2-d}|w_{1}-x_{1}|^{2-d}|w_{2}-x_{2}|^{2-d} \\
	\overset{(\ref{computation_2-d_2-d}),(\ref{computation_4-d_2-d}) }{\lesssim }& N^{6-2d} \sum_{x_2\in B_y(M)} |x_1-x_2|^{6-d}  \overset{(\ref{computation_d-a}) }{\lesssim } N^{6-2d}M^6. 
	\end{split}
\end{equation*}
Plugging these estimates for $\{\mathbb{J}_i\}_{1\le i\le 4}$ into (\ref{B1}), we obtain the lemma: 
\begin{equation}
	\begin{split}
		&\sum\nolimits_{x_2\in B_y(M)}\mathbb{P}\big( \bm{0}\xleftrightarrow{} x_1,\bm{0}\xleftrightarrow{} x_2 \big) \\
		 \lesssim & N^{6-2d}M^d + N^{2-d}M^4+ N^{6-2d}M^6 \lesssim N^{2-d}M^4.  
\pushQED{\qed} 
 \qedhere
\popQED
	\end{split}
\end{equation}


\section{Proof of Lemma \ref{lemma_hit_capacity}}\label{appendix3}

We follow the proof strategy of \cite[Theorem 1.2]{cai2024incipient} for $d\ge 7$ (see \cite[Section 5.1]{cai2024incipient}). Recall that $y\in \partial B(N)$. For any $m\in [1,M]$, we consider the partial cluster 
\begin{equation}
	\widehat{\mathcal{C}}_m^+:= \big\{ v\in \widetilde{B}_y(m) : v\xleftrightarrow{\widetilde{E}^{\ge 0}\cap \widetilde{B}_y(m) } y \big\},  
\end{equation}
 and we define the harmonic average $\widehat{\mathcal{H}}_{m,v}^+:=\mathcal{H}_v(\widehat{\mathcal{C}}_m^+)$ for $v\in \widetilde{\mathbb{Z}}^d$ (recall $\mathcal{H}_v(\cdot)$ below (\ref{revise_new_334})).  According to \cite[Lemma 2.1]{cai2024incipient}, one has: for any $w\in [B_y(2dm)]^c$, 
\begin{equation}\label{C2}
\begin{split}
		\mathbb{P}\big( z\xleftrightarrow{\ge 0} y  \mid \mathcal{F}_{\widehat{\mathcal{C}}_m^+} \big)\asymp &\big( \tfrac{m}{|w-y|} \big)^{2-d}\cdot |\partial \mathcal{B}_y(dm)|^{-1}\sum\nolimits_{z\in \partial \mathcal{B}_y(dm) } \widehat{\mathcal{H}}_{m,z}^+\\
		\overset{(\ref{310})}{\asymp} & \big( \tfrac{N}{ |w-y|} \big)^{d-2} \cdot \widehat{\mathcal{H}}_{m,\bm{0}}^+. 
\end{split}
\end{equation}

 The following lemma, analogous to \cite[Lemma 5.3]{cai2024incipient}, shows that conditioning on $\{\bm{0}\xleftrightarrow{\ge 0} y\}$, the aforementioned harmonic average is typically of order $M^2N^{2-d}$.

\begin{lemma}\label{lemma_C1}
	For any $d\ge 7$ and $\lambda>2$, there exist $\Cl\label{const_prepare_highd1}(d,\lambda),\Cl\label{const_prepare_highd2}(d)>0$ such that for any $m\ge \Cref{const_prepare_highd1}$, $N\ge 10dm$ and $y\in \partial B(N)$, 
	\begin{equation}\label{C3}
		\mathbb{P}\big( \widehat{\mathcal{H}}_{m,\bm{0}}^+ \in \big[\lambda^{-1}m^2N^{2-d},\lambda m^2N^{2-d} \big]  \mid \bm{0}\xleftrightarrow{\ge 0} y \big)\le  \Cref{const_prepare_highd2} \lambda^{-1}. 
	\end{equation}
\end{lemma}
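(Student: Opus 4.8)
\textbf{Proof strategy for Lemma \ref{lemma_C1}.}
The point of the estimate is \emph{not} concentration of $\widehat{\mathcal{H}}_{m,\bm 0}^+$: rather, the harmonic average $\widehat{\mathcal{H}}_{m,\bm 0}^+$, conditioned on $\{\bm 0\xleftrightarrow{\ge 0} y\}$, behaves like a random variable whose bulk sits at scale $m^2N^{2-d}$ but which has no atom near any particular multiple of $m^2N^{2-d}$; in particular the probability of landing in a \emph{fixed multiplicative window} $[\lambda^{-1}m^2N^{2-d},\lambda m^2N^{2-d}]$ is bounded by $C\lambda^{-1}$. The key structural fact is that $\widehat{\mathcal{H}}_{m,\bm 0}^+$ can be built up through a \emph{multi-scale martingale}: write $m_j:=2^j$ for $0\le j\le J$ with $2^J\asymp m$, and set $\mathcal{M}_j:=\mathbb{E}\big[\widehat{\mathcal{H}}_{m,\bm 0}^+\cdot N^{d-2}\mid \mathcal{F}_{\widehat{\mathcal{C}}_{m_j}^+}\big]$. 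Using (\ref{C2}) at scale $m_j$ together with the strong Markov property of $\widetilde\phi_\cdot$ (see (\ref{revise_new_334})), $\mathcal{M}_j$ is (up to constants) a positive martingale, and—this is the crucial point, which is where the argument of \cite[Section 5.1]{cai2024incipient} is invoked—conditionally on $\{\bm 0\xleftrightarrow{\ge 0} y\}$ and on $\mathcal{F}_{\widehat{\mathcal{C}}_{m_j}^+}$, the increment $\mathcal{M}_{j+1}$ is a genuinely spread-out multiple of $\mathcal{M}_j$: there is a constant $c>0$ such that for any interval $[s,\lambda^2 s]$,
\begin{equation}\label{eqplan1}
\mathbb{P}\big(\mathcal{M}_{j+1}\in[s,\lambda^2 s]\mid \mathcal{F}_{\widehat{\mathcal{C}}_{m_j}^+},\ \bm 0\xleftrightarrow{\ge 0} y\big)\le C\lambda^{-1}\quad\text{on a good event of conditional probability }\ge c.
\end{equation}
Indeed, passing from scale $m_j$ to $m_{j+1}$ adds the contribution of the annulus $\widetilde B_y(m_{j+1})\setminus \widetilde B_y(m_j)$, whose harmonic mass (the quantity $\sum_{z\in\partial\mathcal{B}_y(dm_{j+1})}\widehat{\mathcal{H}}^+_{m_{j+1},z}$) is, conditionally, a sum over the ``new'' pieces of cluster reaching that annulus, and each such piece has a capacity whose tail is of Gaussian/polynomial type with no atoms—this is exactly the annulus-by-annulus asymptotic-independence mechanism described below Theorem \ref{thm3_volume} and carried out in \cite{cai2024incipient}.

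\textbf{Main steps, in order.} (i) Set up the dyadic scales and the martingale $\mathcal{M}_j$, and verify via (\ref{C2}) and the strong Markov property that it is a positive martingale (up to multiplicative constants absorbed once and for all), with $\mathcal{M}_J\asymp \widehat{\mathcal{H}}_{m,\bm 0}^+\cdot N^{d-2}$. (ii) Establish the anti-concentration input (\ref{eqplan1}) for a single scale step: decompose the annular contribution using the isomorphism theorem / the excursion-decomposition tools of Section \ref{section3_notation} (Lemmas \ref{lemma_total_measure_K}, \ref{lemma_connect_close_points}, \ref{lemma25}), so that the conditional law of the new harmonic mass, given the previous scale, has a density bounded on multiplicative windows; this is where I would cite \cite[Lemma 5.3]{cai2024incipient} and adapt its proof, tracking that the conditioning on $\{\bm 0\xleftrightarrow{\ge 0}y\}$ only costs a bounded Radon–Nikodym factor at each scale (by the same reasoning as in (\ref{new.234}) and Lemma \ref{lemma_onecluster_box_point}, the point-to-point connection probability localises the relevant ``last scale'' and the remaining scales are nearly unconditioned). (iii) Chain the single-scale estimate: condition successively on $\mathcal{F}_{\widehat{\mathcal{C}}_{m_j}^+}$ and use that on the good event of each step, the probability that $\mathcal{M}_{j+1}/\mathcal{M}_j$ lies in any fixed multiplicative window is $\le C\lambda^{-1}$; since $\mathcal{M}_J$ is a product of such (roughly independent) ratios, the event $\{\mathcal{M}_J\in[\lambda^{-1}m^2,\lambda m^2]\cdot N^{2-d}\}$ forces at least one ratio into a window of multiplicative width controlled by $\lambda^{O(1/J)}$—but more cleanly, one fixes the \emph{last} scale step $J-1\to J$ and notes that, conditionally on $\mathcal{F}_{\widehat{\mathcal{C}}_{m_{J-1}}^+}$, the target event is contained in an event of the form $\{\mathcal{M}_J\in[a,\lambda^2 a]\}$ for a $\mathcal{F}_{\widehat{\mathcal{C}}_{m_{J-1}}^+}$-measurable $a$, hence has conditional probability $\le C\lambda^{-1}$ on the good event, and the good event has conditional probability $\ge c>0$. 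Taking expectations and using $c>0$ absolute gives (\ref{C3}), after absorbing the (bounded) conditioning cost from $\{\bm 0\xleftrightarrow{\ge 0}y\}$ and choosing $\Cref{const_prepare_highd1}$ large enough that all scales $m_j$ exceed the thresholds required by the cited lemmas.

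\textbf{Where the difficulty lies.} The routine parts are the martingale setup (step i) and the chaining bookkeeping (step iii). The real obstacle is step (ii): proving that the conditional law of the single-scale harmonic-mass increment, \emph{given the coarser-scale cluster and given the far-away connection event} $\{\bm 0\xleftrightarrow{\ge 0}y\}$, has no concentration on multiplicative windows. This requires (a) identifying the increment with a sum of capacities of ``fresh'' clusters emanating into the annulus, (b) showing these capacities have spread-out laws—here the excursion-measure description of the occupation field (Lemma \ref{lemma_switching}) and the capacity tail $\mathbb{P}(\mathrm{cap}(\mathcal C)\ge T)\sim CT^{-1/2}$ recalled in Section \ref{section_heuristic} are the engine—and (c) controlling the Radon–Nikodym derivative between the conditioned and unconditioned annular laws, which one does by the same localisation-of-the-dominant-scale argument that underlies Lemma \ref{lemma_onecluster_box_point} and the regularity results of \cite{cai2024quasi}. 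I expect essentially all of this to be a faithful adaptation of \cite[Section 5.1, in particular Lemma 5.3]{cai2024incipient}, with the only genuinely new check being that the extra conditioning on $\{\bm 0\xleftrightarrow{\ge 0}y\}$ (as opposed to $\{\bm 0\xleftrightarrow{\ge 0}\partial B(N)\}$ there) is harmless, which follows from $\mathbb{P}(\bm 0\xleftrightarrow{\ge 0}y)\asymp N^{2-d}$ and the point-to-point analogues of the quasi-multiplicativity estimates.
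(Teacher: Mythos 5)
You have misread the target, and the misreading propagates into a step that would fail. As printed, \eqref{C3} contains a sign typo: the paper's own proof establishes the two tail bounds $\mathbb{P}\big(0<\widehat{\mathcal{H}}_{m,\bm{0}}^+\le \lambda^{-1}m^2N^{2-d},\,\bm{0}\xleftrightarrow{\ge 0} y\big)\lesssim \lambda^{-1}N^{2-d}$ and $\mathbb{P}\big(\widehat{\mathcal{H}}_{m,\bm{0}}^+\ge \lambda m^2N^{2-d},\,\bm{0}\xleftrightarrow{\ge 0} y\big)\lesssim \lambda^{-1}N^{2-d}$, and the lemma is then used to make the \emph{complement} of the window have small conditional probability (this is exactly how $\mathbb{P}\big((\mathsf{G}^\star_M)^c\cup(\mathsf{G}^\star_{\hat{M}})^c\mid \bm{0}\xleftrightarrow{\ge 0} y\big)\le\epsilon/2$ is deduced), so the intended statement has ``$\notin$'' in place of ``$\in$''. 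The statement you set out to prove, taken literally with ``$\in$'', is false for large $\lambda$, and your own description exposes the contradiction: if the conditional bulk of $\widehat{\mathcal{H}}_{m,\bm{0}}^+$ sits at scale $m^2N^{2-d}$, then a window of multiplicative width $\lambda^2$ around that scale carries conditional probability close to $1$, not $\le C\lambda^{-1}$. The same objection destroys your key single-scale input (the anti-concentration claim for $\mathcal{M}_{j+1}$ on multiplicative windows): given $\mathcal{F}_{\widehat{\mathcal{C}}_{m_j}^+}$ and the connection event, the next-scale harmonic mass is of order $m_{j+1}^2N^{2-d}$ up to constants, so any width-$\lambda^2$ window containing that scale has conditional probability bounded below by a constant. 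No chaining can repair this; and since you explicitly set concentration aside, your plan also cannot yield the corrected ``$\notin$'' statement, which \emph{is} a concentration-type bound on both tails.

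What the paper actually does is short, direct, and needs none of the machinery you invoke (dyadic martingales, excursion decompositions, the switching identity, capacity tails, Radon--Nikodym control of the conditioning). For the lower tail, condition on $\mathcal{F}_{\widehat{\mathcal{C}}_m^+}$: by \eqref{C2} the conditional probability of $\{\bm{0}\xleftrightarrow{\ge 0} y\}$ is $\lesssim \widehat{\mathcal{H}}_{m,\bm{0}}^+\le\lambda^{-1}m^2N^{2-d}$ on the bad event, while $\mathbb{P}\big(\widehat{\mathcal{H}}_{m,\bm{0}}^+>0\big)=\mathbb{P}\big(y\xleftrightarrow{\ge 0}\partial B_y(m)\big)\asymp m^{-2}$ by \eqref{one_arm_high}, giving the bound $\lesssim\lambda^{-1}N^{2-d}$. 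For the upper tail, compare two estimates of $\sum_{x\in B_y(40dm)\setminus B_y(10dm)}\mathbb{P}\big(\bm{0}\xleftrightarrow{\ge 0}x,\bm{0}\xleftrightarrow{\ge 0}y\big)$: it is $\lesssim N^{2-d}m^4$ by Lemma \ref{lemme_technical_high_dimension}, whereas conditioning on $\mathcal{F}_{\widehat{\mathcal{C}}_m^+}$, restricting to $\{\widehat{\mathcal{H}}_{m,\bm{0}}^+\ge\lambda m^2N^{2-d}\}$, and using the FKG inequality together with \eqref{C2} for both connection events bounds it below by $c\,\lambda m^4\,\mathbb{P}\big(\widehat{\mathcal{H}}_{m,\bm{0}}^+\ge\lambda m^2N^{2-d},\,\bm{0}\xleftrightarrow{\ge 0}y\big)$. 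Dividing both tail bounds by $\mathbb{P}\big(\bm{0}\xleftrightarrow{\ge 0}y\big)\asymp N^{2-d}$ gives the lemma. If you rewrite your argument, replace the multi-scale anti-concentration scheme by these two conditional comparisons; the only substantive external input is Lemma \ref{lemme_technical_high_dimension} together with \eqref{C2} and the one-arm estimate.
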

\begin{proof}
 On the one hand, it follows from (\ref{C2}) that 
\begin{equation}\label{C4}
	\begin{split}
		&\mathbb{P}\big(0< \widehat{\mathcal{H}}_{m,\bm{0}}^+ \le \lambda^{-1}m^2N^{2-d} , \bm{0}\xleftrightarrow{\ge 0} y \big) \\
		=&\mathbb{E}\big[\mathbbm{1}_{0< \widehat{\mathcal{H}}_{m,\bm{0}}^+ \le \lambda^{-1}m^2N^{2-d}}\cdot \mathbb{P}\big(\bm{0}\xleftrightarrow{\ge 0} y \mid \mathcal{F}_{\widehat{\mathcal{C}}_m^+} \big) \big]\\
		\lesssim & \lambda^{-1}m^2N^{2-d}  \cdot \mathbb{P}\big(0< \widehat{\mathcal{H}}_{m,\bm{0}}^+ \le \lambda^{-1}m^2N^{2-d} \big)\lesssim  \lambda^{-1}N^{2-d}, 
			\end{split}
\end{equation}
Here in the last inequality we used the fact that $\{\widehat{\mathcal{H}}_{m,\bm{0}}^+>0 \}= \{y\xleftrightarrow{\ge 0} \partial B_y(m) \}$, whose probability is  of order $m^{-2}$ (by (\ref{one_arm_high})).

On the other hand, by Lemma \ref{lemme_technical_high_dimension} one has 
\begin{equation}\label{C5}
\mathbb{I}:=	\sum\nolimits_{x\in B_y(40dm)\setminus B_y(10dm)} \mathbb{P}\big( \bm{0}\xleftrightarrow{\ge 0} x, \bm{0}\xleftrightarrow{\ge 0} y \big)\lesssim N^{2-d}m^4. 
\end{equation}
Meanwhile, using the FKG inequality and (\ref{C2}), we have   
\begin{equation}
	\begin{split}
		\mathbbm{I}\ge  &	\sum_{x\in B_y(40dm)\setminus B_y(10dm)} \mathbb{E}\big[ \mathbbm{1}_{ \widehat{\mathcal{H}}_{m,\bm{0}}^+ \ge \lambda m^2N^{2-d}}  \mathbb{P}\big(\bm{0}\xleftrightarrow{\ge 0} x,\bm{0}\xleftrightarrow{\ge 0} y \mid \mathcal{F}_{\widehat{\mathcal{C}}_m^+} \big) \big]\\
		\overset{}{\ge} &\sum _{x\in B_y(40dm)\setminus B_y(10dm)} \mathbb{E}\big[ \mathbbm{1}_{ \widehat{\mathcal{H}}_{m,\bm{0}}^+ \ge \lambda m^2N^{2-d}} \mathbb{P}\big(\bm{0}\xleftrightarrow{\ge 0} x  \mid \mathcal{F}_{\widehat{\mathcal{C}}_m^+} \big)   \mathbb{P}\big( \bm{0}\xleftrightarrow{\ge 0} y \mid \mathcal{F}_{\widehat{\mathcal{C}}_m^+} \big) \big]\\
		\gtrsim  &  \sum _{x\in B_y(40dm)\setminus B_y(10dm)}  \lambda m^{4-d}\cdot \mathbb{P}\big(\widehat{\mathcal{H}}_{m,\bm{0}}^+ \ge \lambda m^2N^{2-d},   \bm{0}\xleftrightarrow{\ge 0} y\big)\\
		\asymp &    \lambda m^{4} \cdot \mathbb{P}\big(\widehat{\mathcal{H}}_{m,\bm{0}}^+ \ge \lambda m^2N^{2-d},   \bm{0}\xleftrightarrow{\ge 0} y\big). 
	\end{split}
\end{equation}
Combined with (\ref{C5}), it implies that 
\begin{equation}\label{C7}
	 \mathbb{P}\big(\widehat{\mathcal{H}}_{m,\bm{0}}^+ \ge \lambda m^2N^{2-d},   \bm{0}\xleftrightarrow{\ge 0} y\big) \lesssim \lambda^{-1}N^{2-d}. 
\end{equation}

By (\ref{C4}) and (\ref{C7}), we obtain the desired bound (\ref{C3}). 
\end{proof}

For any $\epsilon>0$, we denote $C_\star(d,\epsilon):=4\Cref{const_prepare_highd2}\epsilon^{-1}$. For any $M\ge 1$, let $\hat{M}:=\frac{M}{C_\dagger C_\star}$, where $C_\dagger>0$ is a large constant to be determined later. For any $m\ge 1$, we define the event $\mathsf{G}^\star_m:= \{ C_{\star}^{-1}m^{2}N^{2-d} \le \widehat{\mathcal{H}}_{m,\bm{0}}^+ \le  C_{\star}m^{2}N^{2-d}\}$. Lemma \ref{lemma_C1} implies that 
\begin{equation}
	\mathbb{P}\big( (\mathsf{G}^\star_M)^c\cup (  \mathsf{G}^\star_{\hat{M}})^c  \mid \bm{0} \xleftrightarrow{\ge 0} y \big)\le \tfrac{\epsilon}{2}. 
	\end{equation}
Thus, to verify Lemma \ref{lemma_hit_capacity}, it remains to show that for some $c_{\clubsuit}(d,\epsilon)>0$,  
\begin{equation}\label{C9}
 \mathbb{P}\big( \mathcal{V}^+_y(M)\le c_{\clubsuit}M^4, \mathsf{G}^\star_M, \mathsf{G}^\star_{\hat{M}} \mid \bm{0} \xleftrightarrow{\ge 0} y  \big) \le \tfrac{\epsilon}{2}. 
\end{equation}

To achieve (\ref{C9}), we employ the exploration martingale argument (see \cite{lupu2018random,ding2020percolation}). For any $t>0$, we define $\mathcal{I}_t:= \{v\in \widetilde{B}_y(M): \mathrm{dist}(v,  \widetilde{B}_y(\hat{M}))\le t\}$ and  
\begin{equation}
	\widetilde{\mathcal{C}}_t:= \big\{ v\in \mathcal{I}_t: y\xleftrightarrow{\widetilde{E}^{\ge 0}\cap \mathcal{I}_t} v \big\}. 
\end{equation}
Note that $\widetilde{\mathcal{C}}_0=\widehat{\mathcal{C}}_{\hat{M}}^+$ and $\widetilde{\mathcal{C}}_\infty=\widehat{\mathcal{C}}_M^+$. Since the $\sigma$-field $\mathcal{F}_{\widetilde{\mathcal{C}}_t}$ is increasing in $t$, the process $\mathcal{M}_t:= \mathbb{E}\big[\widetilde{\phi}_{\bm{0}}\mid  \mathcal{F}_{\widetilde{\mathcal{C}}_t}\big]$ for $t\ge 0$ is a martingale. In particular, $\mathcal{M}_0=\widehat{\mathcal{H}}_{\hat{M},\bm{0}}^+$ and $\mathcal{M}_\infty=\widehat{\mathcal{H}}_{M,\bm{0}}^+$. Let $\langle M \rangle_t$ denote the quadratic variation of $\mathcal{M}_t$. According to \cite[Corollary 10]{ding2020percolation}, $\langle M \rangle_t$ can be equivalently written as 
\begin{equation}
	\begin{split}
		\widetilde{G}_{\widetilde{\mathcal{C}}_0}(\bm{0},\bm{0}) - \widetilde{G}_{\widetilde{\mathcal{C}}_t}(\bm{0},\bm{0}) \overset{(\ref{formula_two_green})}{=}&  \sum\nolimits_{v\in \widetilde{\partial }\widetilde{\mathcal{C}}_t} \widetilde{\mathbb{P}}_{\bm{0}}\big(\tau_{\widetilde{\mathcal{C}}_t}  =\tau_{v}< \tau_{\widetilde{\mathcal{C}}_0}\big)\cdot \widetilde{G}_{\widetilde{\mathcal{C}}_0}(v,\bm{0})\\
		\overset{(\widetilde{\mathcal{C}}_t\subset \widehat{\mathcal{C}}_{M}^+ \subset \widetilde{B}_y(M))}{\lesssim} & N^{2-d}\cdot \widetilde{\mathbb{P}}_{\bm{0}}\big(\tau_{\widehat{\mathcal{C}}_{M}^+}  <\infty\big) \\
		\overset{(\ref{310})}{\asymp } &  N^{4-2d}\cdot \mathrm{cap}(\widehat{\mathcal{C}}_{M}^+)  \lesssim  N^{4-2d}\cdot \mathcal{V}^+_y(M), 
	 \end{split}
\end{equation}
  where the last inequality follows from the sub-additivity of the capacity. Therefore, there exists $C_\dagger>0$ such that 
  \begin{equation}\label{C12}
  	\big\{ \mathcal{V}^+_y(M)\le c_{\clubsuit}M^4  \big\} \subset \big\{\langle M \rangle_\infty\le  C_\dagger c_{\clubsuit}M^4N^{4-2d} \big\}.  
  \end{equation}
  By the martingale representation theorem (see e.g. \cite[Theorem 11]{ding2020percolation}), under certain time change, $\mathcal{M}_t$ behaves as a standard Brownian motion with duration $\langle M \rangle_\infty$. Meanwhile, on the $\mathsf{G}^\star_M \cap  \mathsf{G}^\star_{\hat{M}}$, one has  
  \begin{equation}
  \begin{split}
  	  	\mathcal{M}_\infty - \mathcal{M}_0=& \widehat{\mathcal{H}}_{M,\bm{0}}^+-\widehat{\mathcal{H}}_{\hat{M},\bm{0}}^+ \\
  	  	\ge &C_\star^{-1}M^2N^{2-d}-C_\star\hat{M}^2 N^{2-d}  \ge \tfrac{1}{2}C_\star^{-1}M^2N^{2-d},
  \end{split}
  \end{equation}
  implying that the aforementioned Brownian motion must reach $\tfrac{1}{2}C_\star^{-1}M^2N^{2-d}$ before time $\langle M \rangle_\infty$. As shown in \cite[(5.66)]{cai2024incipient}, this observation yields that for a sufficiently small $c_{\clubsuit}>0$, on the event $\mathsf{G}^\star_{\hat{M}}$, 
  \begin{equation}
  	\begin{split}
  		\mathbb{P}\big( \langle M \rangle_\infty\le  C_\dagger c_{\clubsuit}M^4N^{4-2d}, \mathsf{G}^\star_{M} \mid \mathcal{F}_{\widehat{\mathcal{C}}_{\hat{M}}^+} \big) \le e^{-\epsilon^{-1}}.
  		  	\end{split}
  \end{equation}
 By taking the integral of both sides, and using $\mathsf{G}^\star_{\hat{M}}\subset \{y\xleftrightarrow{} \partial B_y(\hat{M})\} $, we get 
 \begin{equation}\label{C15}
 	\begin{split}
 	  \mathbb{P}\big( \langle M \rangle_\infty\le  C_\dagger c_{\clubsuit}M^4N^{4-2d}, \mathsf{G}^\star_{M}, \mathsf{G}^\star_{\hat{M}} \big)  
 	\le e^{-\epsilon^{-1}} \cdot \mathbb{P}\big(\mathsf{G}^\star_{\hat{M}} \big) \lesssim  e^{-\epsilon^{-1}}\hat{M}^{-2}. 
 	\end{split}
 \end{equation}
  Moreover, since the events $\{\langle M \rangle_\infty\le  C_\dagger c_{\clubsuit}M^4N^{4-2d} \}$, $\mathsf{G}^\star_M$ and $\mathsf{G}^\star_{\hat{M}}$ are all measurable with respect to $\mathcal{F}_{\widehat{\mathcal{C}}_M^+}$, it follows from (\ref{C2}) and (\ref{C15}) that 
  \begin{equation}
  \begin{split}
  	  	&\mathbb{P}\big(\langle M \rangle_\infty\le  C_\dagger c_{\clubsuit}M^4N^{4-2d}, \mathsf{G}^\star_M, \mathsf{G}^\star_{\hat{M}}, \bm{0}\xleftrightarrow{\ge 0} y \big)  \\
  	  	=& \mathbb{E}\big[ \mathbbm{1}_{\langle M \rangle_\infty\le  C_\dagger c_{\clubsuit}M^4N^{4-2d}, \mathsf{G}^\star_M, \mathsf{G}^\star_{\hat{M}}} \cdot \mathbb{P}\big( \bm{0}\xleftrightarrow{\ge 0} y   \mid \mathcal{F}_{\widehat{\mathcal{C}}_M^+}\big)  \big]\\
  	  	\overset{(\text{\ref{C2}})}{\lesssim}  &  \mathbb{P}\big(\langle M \rangle_\infty\le  C_\dagger c_{\clubsuit}M^4N^{4-2d}, \mathsf{G}^\star_M, \mathsf{G}^\star_{\hat{M}} \big) \cdot C_\star M^2N^{2-d}\\
  	  	\overset{(\text{\ref{C15}})}{\lesssim}  & e^{-\epsilon^{-1}}\hat{M}^{-2} \cdot C_\star M^2N^{2-d}. 
  \end{split}
  \end{equation} Combined with (\ref{C12}), it yields that
    \begin{equation}
  \begin{split}
  	    	\mathbb{P}\big( \mathcal{V}^+_y(M)\le c_{\clubsuit}M^4, \mathsf{G}^\star_M, \mathsf{G}^\star_{\hat{M}} \mid \bm{0} \xleftrightarrow{\ge 0} y  \big) \lesssim  \epsilon^{-3}e^{-\epsilon^{-1}}.
  \end{split}
  \end{equation}
 This immediately implies that (\ref{C9}) holds for all sufficiently small $\epsilon>0$, thereby completing the proof of Lemma \ref{lemma_hit_capacity}. \qed

	\bibliographystyle{plain}
	\bibliography{ref}
	
\end{document}